\DeclareMathAlphabet{\mathcalligra}{T1}{calligra}{m}{n}
\DeclareMathAlphabet{\mathpzc}{OT1}{pzc}{m}{it}
\newtheorem{theoremABC}{Theorem}
\newtheorem{theorem}{Theorem}[section]
\newtheorem{corollary}[theorem]{Corollary}
\newtheorem{lemma}[theorem]{Lemma}
\newtheorem{proposition}[theorem]{Proposition}
\theoremstyle{definition}
\newtheorem{definition}[theorem]{Definition}
\newtheorem{hypothesis}[theorem]{Hypothesis}
\newtheorem{convention}[theorem]{Convention}
\newtheorem{remark}[theorem]{Remark}
\theoremstyle{remark}
\newcommand{\N}{{\mathbb{N}}}
\newcommand{\R}{{\mathbb{R}}}
\renewcommand{\SS}{{\mathbb{S}}}
\newcommand{\T}{{\mathbb{T}}}
\newcommand{\Aa}{{\mathcal{A}}}   
\newcommand{\Dd}{{\mathcal{D}}}
\newcommand{\Ff}{{\mathcal{F}}}
\newcommand{\Hh}{{\mathcal{H}}}
\newcommand{\Ll}{{\mathcal{L}}}   
\newcommand{\Mm}{{\mathcal{M}}}   
\newcommand{\Oo}{{\mathcal{O}}}
\newcommand{\Qq}{{\mathcal{Q}}}
\newcommand{\Tt}{{\mathcal{T}}}
\newcommand{\Ww}{{\mathcal{W}}}
\newcommand{\Xx}{{\mathcal{X}}}
\newcommand{\Zz}{{\mathcal{Z}}}
\newcommand{\coker}{{\rm coker\, }}  
\newcommand{\im}{{\rm im\, }}             
\newcommand{\osc}{{\rm osc }}        
\newcommand{\id}{{\rm id}}                
\newcommand{\Id}{{\rm Id}}
\newcommand{\dist}{{\rm dist}}            
\newcommand{\INDEX}{\mathop{\mathrm{index}}}     
\newcommand{\IND}{{\rm ind}}                  
\newcommand{\cgraph}[1]{\Gamma_{\kern-.5ex{}#1}}     
\newcommand{\Fix}{{\rm Fix}}            
\newcommand{\Crit}{{\rm Crit}}        
\newcommand{\HM}{{\rm HM}}             
\newcommand{\Arnold}{{Arnol$'$d}}           
\newcommand{\norm}{{\rm norm}}
\newcommand{\eps}{{\varepsilon}}
\newcommand{\G}{{\rm G}}
\newcommand{\qfrak}{{\mathfrak q}}
\newcommand{\FF}{{\mathfrak F}}
\newcommand{\ff}{{\mathfrak f}}
\newcommand{\inner}[2]{\langle #1, #2\rangle}   
\newcommand{\INNER}[2]{\left\langle #1, #2\right\rangle}
\def\NABLA#1{{\mathop{\nabla\kern-.5ex\lower1ex\hbox{$#1$}}}}
\def\Nabla#1{\nabla\kern-.5ex{}_{#1}}
\def\Tabla#1{\Tilde\nabla\kern-.5ex{}_{#1}}
\def\Babla#1{\widebar\nabla\kern-.5ex{}_{#1}}
\def\abs#1{\mathopen|#1\mathclose|}   
\def\Abs#1{\left|#1\right|}            
\def\norm#1{\mathopen\|#1\mathclose\|}
\def\Norm#1{\left\|#1\right\|}
\renewcommand{\Tilde}{\widetilde}
\newcommand{\p}{{\partial}}
\newcommand{\INTO}{\hookrightarrow}              
\renewcommand{\1}{{{\mathchoice {\rm 1\mskip-4mu l} {\rm 1\mskip-4mu l}
{\rm 1\mskip-4.5mu l} {\rm 1\mskip-5mu l}}}}
\newlength\eqshift
\renewcommand\theequation{\thesection.\arabic{equation}}
\let\savetheequation\theequation
\renewcommand*\env@matrix[1][\arraystretch]{%
  \edef\arraystretch{#1}%
  \hskip -\arraycolsep
  \let\@ifnextchar\new@ifnextchar
  \array{*\c@MaxMatrixCols c}}
\let\save@mathaccent\mathaccent
\newcommand*\if@single[3]{%
  \setbox0\hbox{${\mathaccent"0362{#1}}^H$}%
  \setbox2\hbox{${\mathaccent"0362{\kern0pt#1}}^H$}%
  \ifdim\ht0=\ht2 #3\else #2\fi
  }
\newcommand*\rel@kern[1]{\kern#1\dimexpr\macc@kerna}
\newcommand*\widebar[1]{\@ifnextchar^{{\wide@bar{#1}{0}}}{\wide@bar{#1}{1}}}
\newcommand*\wide@bar[2]{\if@single{#1}{\wide@bar@{#1}{#2}{1}}{\wide@bar@{#1}{#2}{2}}}
\newcommand*\wide@bar@[3]{%
  \begingroup
  \def\mathaccent##1##2{%
    \let\mathaccent\save@mathaccent
    \if#32 \let\macc@nucleus\first@char \fi
    \setbox\z@\hbox{$\macc@style{\macc@nucleus}_{}$}%
    \setbox\tw@\hbox{$\macc@style{\macc@nucleus}{}_{}$}%
    \dimen@\wd\tw@
    \advance\dimen@-\wd\z@
    \divide\dimen@ 3
    \@tempdima\wd\tw@
    \advance\@tempdima-\scriptspace
    \divide\@tempdima 10
    \advance\dimen@-\@tempdima
    \ifdim\dimen@>\z@ \dimen@0pt\fi
    \rel@kern{0.6}\kern-\dimen@
    \if#31
      \overline{\rel@kern{-0.6}\kern\dimen@\macc@nucleus\rel@kern{0.4}\kern\dimen@}%
      \advance\dimen@0.4\dimexpr\macc@kerna
      \let\final@kern#2%
      \ifdim\dimen@<\z@ \let\final@kern1\fi
      \if\final@kern1 \kern-\dimen@\fi
    \else
      \overline{\rel@kern{-0.6}\kern\dimen@#1}%
    \fi
  }%
  \macc@depth\@ne
  \let\math@bgroup\@empty \let\math@egroup\macc@set@skewchar
  \mathsurround\z@ \frozen@everymath{\mathgroup\macc@group\relax}%
  \macc@set@skewchar\relax
  \let\mathaccentV\macc@nested@a
  \if#31
    \macc@nested@a\relax111{#1}%
  \else
    \def\gobble@till@marker##1\endmarker{}%
    \futurelet\first@char\gobble@till@marker#1\endmarker
    \ifcat\noexpand\first@char A\else
      \def\first@char{}%
    \fi
    \macc@nested@a\relax111{\first@char}%
  \fi
  \endgroup
}
\newcommand{\nor}{\mathrm{nor}}     %
\newcommand{\Exp}{\mathrm{Exp}}     %
\long\def\symbolfootnote[#1]#2{\begingroup%
\def\thefootnote{\fnsymbol{footnote}}\footnote[#1]{#2}\endgroup}
\begin{document}
\sloppy

\author{\quad Urs Frauenfelder \quad \qquad\qquad
             Joa Weber\footnote{
  Email: urs.frauenfelder@math.uni-augsburg.de
  \hfill
    joa@math.uni-bielefeld.de
  }
        %
        %
    \\
    Universit\"at Augsburg \qquad\qquad
    UNICAMP
}

\title{Lagrange multipliers and adiabatic limits I}

\date{\today}

\maketitle 
%


%
%

%





\begin{abstract}
Critical points of a function subject to a constraint
can be either detected by restricting the function to the constraint
or by looking for critical points of the Lagrange multiplier
functional.
Although the critical points of the two functionals, namely the
restriction and the Lagrange multiplier functional
are in natural one-to-one correspondence this does not need to
be true for their gradient flow lines.
We consider a singular deformation of the metric and show by an
adiabatic limit argument that close to the singularity
we have a one-to-one correspondence between
gradient flow lines connecting critical points of Morse index
difference one.
We present a general overview of the adiabatic limit technique
in the article~\cite{Frauenfelder:2022h}.

The proof of the correspondence is carried out in two parts.
The current part I deals with linear methods leading to a singular
version of the implicit function theorem.
We also discuss possible infinite dimensional generalizations
in Rabinowitz-Floer homology.
In part II~\cite{Frauenfelder:2022g} we apply non-linear methods and
prove, in particular, a compactness result and uniform exponential
decay independent of the deformation parameter.
\end{abstract}

\tableofcontents

\newpage
\boldmath
\section{Introduction}
\label{sec:introduction}
\unboldmath

In 1806 it was the observation of Joseph Louis de
Lagrange~\cite{Lagrange:1806a} that critical points
of a function $F(x)$ subject to a constraint $H(x)=0$
correspond to critical points of the unconstrained function
$F_H(x,\tau)=F(x)+\tau H(x)$ which also depends
on a Lagrange multiplier $\tau$.
More precisely, suppose that $M$ is a finite dimensional manifold, not
necessarily symplectic, but equipped with a Riemannian metric~$G$.
Let $F$ and $H$ be smooth functions on $M$ such that
zero is a regular value of~$H$. Thus $\Sigma:=H^{-1}(0)$ is a
smooth level hypersurface. Under these assumptions
there is a bijection between the critical point sets of the following two
functions, namely the \textbf{Lagrange multiplier functional}
\[
   F_H\colon M\times\R\to\R,\quad (u,\tau)\mapsto F(u)+\tau H(u),
\]
and the \textbf{restriction function} of $F$ to the constraint
$\Sigma=H^{-1}(0)$, in symbols
\[
   f\colon \Sigma\to\R,\quad q\mapsto F(q) .
\]
The natural bijection is by forgetting the first factor, in symbols
\begin{equation}\label{eq:Crit=Crit}
   \Crit\, F_H\to \Crit f,\quad
   (x,\tau)\mapsto x .
\end{equation}
The Morse indices differ by $1$, namely
\[
   \IND_{F_H}(x,\tau)=\IND_f(x)+1.
\]
In particular, the difference of the Morse indices at two critical
points is independent of the choice of function $F_H$ or $f$.

Under local properness conditions it was shown in~\cite{Frauenfelder:2006a}
that the Morse homologies of the two functions coincide up to an
index shift by $1$, namely $\HM_*(F_H)\simeq\HM_{*+1}(f)$.
Therefore the Lagrange multiplier function computes the homology
of $\Sigma$ up to a grading shift by $1$.
The proof of this fact in~\cite{Frauenfelder:2006a}
uses normal deformations of the function $F$
and is hard to generalize to infinite dimensions.
Therefore we focus in the present paper on a completely different approach to
this homology equivalence which, as well, is much stronger since it
gives an isomorphism on chain level and not just on homology level.
This approach is based on the adiabatic limit technique
developed by Dostoglou and Salamon~\cite{dostoglou:1994a}
in their proof of a special case of the Atiyah-Floer conjecture.
The technique was successfully used and developed further
in the context of symplectic vortex equations~\cite{gaio:1999a,Gaio:2005a}
and the heat flow~\cite{weber:1999a,salamon:2006a}.

\medskip
In the context of Lagrange multipliers this adiabatic limit technique
works as follows. Pick a parameter $\eps\in(0,1]$. Then the gradient
flow equation of $F_H$ with respect to the product metric
$\G\oplus \eps^2$ on $M\times \R$ is given by
\begin{equation}\label{eq:DGF-eps}
   \p_s(u,\tau)+\nabla^\eps F_H(u,\tau)
   =
   \begin{pmatrix}
      \p_su+\Babla{} F|_u+\tau\Babla{} H|_u\\
      \tau^\prime+\eps^{-2}H\circ u
   \end{pmatrix}
   =
   \begin{pmatrix}
      0\\0
   \end{pmatrix} .
\end{equation}
for smooth maps $(u,\tau)\colon \R\to  M\times\R$
and where $\nabla^\eps$ is the gradient in the Riemannian manifold
$(M\times\R,G\oplus\eps^2)$
and $\Babla{}$ is the gradient in $(M,G)$.

Letting $\eps$ formally go to zero one obtains the pair of equations
\begin{equation}\label{eq:DGF-0}
   \begin{pmatrix}
      \p_su+\Babla{} F|_u+\tau\Babla{} H|_u\\
      H\circ u
   \end{pmatrix}
   =
   \begin{pmatrix}
      0\\0
   \end{pmatrix} .
\end{equation}
Equation two tells that $u$ actually takes values in $\Sigma=H^{-1}(0)$.
In this case equation one is the downward gradient equation on $\Sigma$
of the restriction $f$ of $F$ and with respect to the Riemannian
metric $g$ given by restricting~$G$ (Lemma~\ref{def:DGF-ff}).

\medskip
The main result of part I is the following theorem.
Suppose that $x^\mp\in\Crit f$ are critical points of Morse index
difference one. Then for each $\eps\in(0,\eps_0]$ we construct a time
shift invariant map $\Tt^\eps\colon\Mm^0_{x^-,x^+}\to\Mm^\eps_{x^-,x^+}$
between moduli spaces of gradient flow trajectories
$q\colon\R\to\Sigma$ and $(u,\tau)\colon\R\to M\times\R$
which at $\mp\infty$ converge to the critical points $x^\mp$,
respectively to $(x^\mp,\tau^\mp)\in\Crit F_H$.

\begin{theoremABC}\label{thm:main}
Assume $(f,g)$ is Morse-Smale.
Then there is a constant $\eps_0\in(0,1]$, such that
for every $\eps\in(0,\eps_0]$ and every pair $x^\mp\in\Crit f$ of
index difference one, the map
$\Tt^\eps\colon\Mm^0_{x^-,x^+}\to\Mm^\eps_{x^-,x^+}$ is injective.
\end{theoremABC}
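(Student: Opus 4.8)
The plan is to extract injectivity from the way $\Tt^\eps$ is constructed, using only the equivariance of that construction and a quantitative error estimate; no further nonlinear input is needed. Throughout write $\Mm^0$ for $\Mm^0_{x^-,x^+}$ and $\Mm^\eps$ for $\Mm^\eps_{x^-,x^+}$. Recall that $\Tt^\eps$ is produced by the singular implicit function theorem of this part: to a $g$--gradient trajectory $q\in\Mm^0$ one associates an \emph{approximate solution} $\iota^\eps(q)$ of~\eqref{eq:DGF-eps} --- whose $M$--component is a small normal modification of $q$ and whose $\R$--component is the Lagrange multiplier forced by the second line of~\eqref{eq:DGF-eps} --- and $\Tt^\eps(q)$ is the exact solution obtained by Newton's method started at $\iota^\eps(q)$. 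Two properties of this construction are all we use. First, both $\iota^\eps$ and $\Tt^\eps$ intertwine the time--shift $\R$--actions on $\Mm^0$ and $\Mm^\eps$, and the $\R$--action on $\Mm^\eps$ is free because $x^-\neq x^+$ (index difference one) forces every connecting trajectory to be nonconstant. Second, there is a uniform error estimate
\[
   \bigl\|\Tt^\eps(q)-\iota^\eps(q)\bigr\|_\eps\le C\eps^{\mu}
\]
in the relevant $\eps$--weighted norm $\|\cdot\|_\eps$, with constants $C$ and $\mu>0$ independent of $\eps$ and of $q\in\Mm^0$; uniformity in $q$ is legitimate because $(f,g)$ is Morse--Smale of index difference one, so $\Mm^0/\R$ is finite (a compact zero--dimensional manifold, with no broken trajectories since the index difference is minimal).

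\medskip

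To prove injectivity, suppose $q_0,q_1\in\Mm^0$ satisfy $\Tt^\eps(q_0)=\Tt^\eps(q_1)$. If $q_0$ and $q_1$ lie on the same $\R$--orbit, say $q_1=q_0(\,\cdot\,+T)$, then equivariance gives $\Tt^\eps(q_0)=\Tt^\eps(q_1)=\Tt^\eps(q_0)(\,\cdot\,+T)$, so freeness of the $\R$--action on $\Mm^\eps$ forces $T=0$ and hence $q_0=q_1$. If instead $q_0$ and $q_1$ lie on distinct orbits, then on the one hand the triangle inequality together with $\Tt^\eps(q_0)=\Tt^\eps(q_1)$ yields
\[
   \bigl\|\iota^\eps(q_0)-\iota^\eps(q_1)\bigr\|_\eps\le\bigl\|\iota^\eps(q_0)-\Tt^\eps(q_0)\bigr\|_\eps+\bigl\|\Tt^\eps(q_1)-\iota^\eps(q_1)\bigr\|_\eps\le 2C\eps^{\mu};
\]
on the other hand there is a constant $c>0$, independent of $\eps$, with $\bigl\|\iota^\eps(q_0)-\iota^\eps(q_1)\bigr\|_\eps\ge c$ for any pair of representatives of two distinct orbits. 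Indeed, since $\|\cdot\|_\eps$ is translation invariant this reduces to a statement about the finitely many pairs of \emph{distinct} unparametrised trajectories in $\Mm^0/\R$, and for each such pair the quantity $\inf_{T\in\R}\bigl\|\iota^\eps(q_0)-\iota^\eps(q_1)(\,\cdot\,+T)\bigr\|_\eps$ is positive and, because $\iota^\eps(q)$ differs from $q$ by $O(\eps)$ in directions on which $\|\cdot\|_\eps$ controls an $\eps$--independent norm, bounded below uniformly in $\eps\in(0,\eps_0]$. Shrinking the constant $\eps_0$ of the construction so that $2C\eps_0^{\mu}<c$ rules out this case for every $\eps\in(0,\eps_0]$. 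Therefore $q_0=q_1$, and $\Tt^\eps$ is injective.

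\medskip

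The step that carries the real weight is the input invoked in the first paragraph --- the $\eps$--uniform error estimate --- which in turn rests on the $\eps$--uniform surjectivity of the linearisation of~\eqref{eq:DGF-eps} at $\iota^\eps(q)$, equipped with a uniformly bounded right inverse, together with a uniform quadratic estimate: this is precisely the ``singular implicit function theorem'' that forms the linear core of this part. Granting that machinery, the argument above is essentially formal, the only mildly delicate point specific to it being the verification that $\iota^\eps$ keeps distinct trajectories uniformly apart in the degenerating norms $\|\cdot\|_\eps$. The converse --- surjectivity of $\Tt^\eps$ onto $\Mm^\eps$, hence the full chain--level bijection --- is the harder matter and is deferred to part~II~\cite{Frauenfelder:2022g}, where it requires $\eps$--uniform compactness and exponential decay proved by nonlinear methods.
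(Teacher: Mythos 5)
Your proposal is correct and follows essentially the same route as the paper: the paper's Lemma~\ref{le:injectivity} likewise separates distinct unparametrised trajectories by a uniform, shift-invariant distance $d_{\rm min}>0$ (using the $L^\infty$ bound $\norm{X}_\infty\le c\eps^{3/2}$ from the existence theorem to force $[q_1]=[q_2]$), and then kills the residual time shift via equivariance of $\Tt^\eps$ and strict monotonicity of $F_H$ along nonconstant trajectories. The only cosmetic differences are that the paper measures separation pointwise via $\sup_s\inf_t\dist(q_1(s),q_2(t))$ rather than in the $\eps$-weighted $L^2$ norms, and that the approximate solution here is exactly the canonical embedding $i(q)=(q,\chi(q))$ with no $\eps$-dependent normal modification.
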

 
\begin{remark}\label{rem:Schecter-Xu}
Under the assumption that the ambient manifold $M$ is compact
Theorem~\ref{thm:main} was first proved by Stephen Schecter and
Guangbo Xu~\cite{Schecter:2014a}.

\end{remark}

To prove Theorem~\ref{thm:main} we associate to $q\in\Mm^0_{x^-,x^+}$
a suitable pair $(q,\tau)$ which almost solves the
$\eps$-equation~(\ref{eq:DGF-eps}). Then we use the Newton
method to find a unique true solution nearby.
This is the content of part I (this article).

In part II~\cite{Frauenfelder:2022g} we shall prove surjectivity by
contradiction. If $\Tt^\eps$ is not surjective for $\eps>0$ small, 
there is a sequence of positive reals $\eps_i\to 0$ and a sequence
$(u_i,\tau_i)\in\Mm^{\eps_i}_{x^-,x^+}$ not in the image of $\Tt^{\eps_i}$.
We show that the maps $u_i$ take values near $\Sigma$ and
that they naturally project to maps $\qfrak_i\colon\R\to\Sigma$
which are almost solutions of the base equation~(\ref{eq:DGF-0}).
We identify true solutions $q_i\colon\R\to\Sigma$ nearby
and show that after suitable time shift $\sigma_i\in\R$ we have
$(u_i,\tau_i)=\Tt^{\eps_i}(q_i(\sigma_i+\cdot))$.
This contradiction proves surjectivity.

\begin{convention}[Notation]\label{con:notation}
\mbox{}
\\
a) Tangent and normal bundle of $\Sigma$ in $M$ are denoted by
$
   T\Sigma\oplus N\Sigma=T_\Sigma M
$.
Tangent vectors to $M$ based at $\Sigma$ decompose 
$X=\xi+\nu=\tan X+\nor\, X$.
The dimension of $\Sigma$ is $n$, hence $n+1=\dim M$.
\\
b) Arguments of maps $H(u)$ are likewise denoted by $H|_u$.
\\
c) For $u\colon\R\to M$, $q\colon\R\to\Sigma$, $\tau\colon\R\to\R$
we often de-parenthesify and write
\[
   u_s:=u(s),\quad q_s:=q(s),\quad \tau_s:=\tau(s),
\]
and
\[
   \p_su:=\tfrac{d}{ds}u,\quad \p_sq:=\tfrac{d}{ds}q
   ,\qquad\text{but $\tau^\prime:=\tfrac{d}{ds}\tau$} .
\]
d) The symbol $\abs{\cdot}$, applied to real numbers means absolute
value, applied to vectors it means vector norm, for example
$\Abs{\p_s u}:=\Abs{\p_s u}_G$ on $(M,G)$
and $\Abs{\p_s q}:=\Abs{\p_s q}_g$~on~$(\Sigma,g)$.
Throughout $\norm{\cdot}$ denotes $L^2$-norm.
\\
e) Inner products are denoted by $\INNER{\cdot}{\cdot}$.
Depending on context $\INNER{\cdot}{\cdot}$ abbreviates
$\INNER{\cdot}{\cdot}_g$ on $T\Sigma$,
$\INNER{\cdot}{\cdot}_G$ on $TM$,
$\INNER{\cdot}{\cdot}_2$ on an $L^2$ space, or other inner products.
\end{convention}

\smallskip\noindent
{\bf Acknowledgements.}
UF acknowledges support by DFG grant FR 2637/2-2.

\boldmath
\subsection{Outline}
\unboldmath

Let $(M,G)$ be a Riemannian manifold.
Let $F$ and~$H$ be smooth functions on~$M$.
The \textbf{Lagrange multiplier function} is defined by
\begin{equation*}
   F_H\colon M\times\R\to\R,\quad (x,\tau)\mapsto
   F(x)+\tau H(x) .
\end{equation*}

\begin{hypothesis}\label{hyp:uniform-Lag-bound-intro}
(i) Zero is a regular value of $H$.
(ii) {\bf Local properness:}
There exists a constant $\kappa>0$
such that $\Sigma_\kappa:=H^{-1}[-\kappa,\kappa]\subset M$ is compact.
\\
(iii) The Riemannian  metric $G$ on $M$ is geodesically complete.
\end{hypothesis}

By (i) and (ii) the zero level $\Sigma:=H^{-1}(0)$
is a smooth compact hypersurface~in~$M$,
we assume without boundary.
By~(iii) closed and bounded is equivalent to compact
(Theorem of Hopf-Rinow; see e.g.~\cite[Ch.\,5 Thm.\,21]{oneill:1983a}).
Local properness excludes that $H$ tends to zero at infinity.

\medskip
\textbf{Section~\ref{sec:functionals}
``Lagrange multiplier function and restriction''.}
The map
$$
   \iota\colon \Sigma=H^{-1}(0)\INTO M
   ,\quad
   q\mapsto q {\color{gray}\;=\iota(q)},
$$
given by inclusion
induces on $\Sigma$ the Riemannian metric $g:=\iota^*G$
and the function $f:=\iota^* F$, both given by restriction.
Let $\Babla{}$ be the Levi-Civita connection of $(M,G)$
and $\nabla$ the one of $(\Sigma,g)$. In
Section~\ref{sec:hypersurface-geometry} we briefly recall some
Riemannian hypersurface geometry of $(\Sigma,g,\nabla)$ in
$(M,G,\Babla{})$. Since $0$ is a regular value of $H$, along
$\Sigma=H^{-1}(0)$ there is an orthogonal decomposition
\[
   T_\Sigma M
   =T\Sigma\stackrel{\perp}{\oplus}\R\Babla{}H
   ,\qquad X=\xi+\nu.
\]
Let $\tan$ and $\nor$ be the corresponding orthogonal projections.
The function
\begin{equation*}
   \chi:=-\inner{\Babla{} F}{V}
   ,\;\;
   V:=\tfrac{\Babla{} H}{\abs{\Babla{} H}^2}
   ,\;\;
   \text{along $M_{\rm reg}:=\{p\in M\mid dH(p)\not= 0\}\supset\Sigma$}
\end{equation*}
has the fundamental significance that at each point of $\Sigma$
the value of $\chi$ is the unique real that makes the linear combination
\[
   \Babla{}F(q)+\chi(q)\Babla{}H(q)\in T_q\Sigma
   ,\quad  q\in\Sigma
\]
of the two $T_qM$-valued vectors $\Babla{}F|_q$ and $\Babla{}H|_q$
be tangent to $\Sigma$.
The function $\chi$ plays a crucial role throughout this article,
as hinted at by the gradient identities
\begin{equation*}
   \tan \Babla{} F=\Nabla{} f
   ,\qquad
   \nor\, \Babla{} F=-\chi \Babla{} H
   ,\qquad
   \Nabla{}f=\Babla{} F+\chi\Babla{}H ,
\end{equation*}
along $\Sigma$. The last identity translates the gradient flow of $f$ on the
\textbf{base} $\Sigma$ to the terminology of the \textbf{ambience} $M$.
The local flow $\{\varphi_r\colon\Sigma\to M_{\rm reg}\}$
generated by $V$ near $\Sigma$ transforms $H$ to the normal form
$H(\varphi_r q)=r$ in~(\ref{eq:normal-form-u}).
Further important roles play the graph map of $\chi$,
called the \textbf{canonical embedding}
\begin{equation*}
   i:\Sigma\to M\times\R,\quad
   q\mapsto \left(q,\chi(q)\right)
   {\color{gray}\;=\left(\iota(q),\chi(\iota(q))\right)},
\end{equation*}
and the derivative $I_q\xi:=di(q)\xi=\left(\xi,d\chi(q)\xi\right)$
for $q\in\Sigma$.
We show that the critical point sets $i\left(\Crit f\right)=\Crit F_H$
are in bijection through the canonical embedding $i$,
the inverse of the forgetful map~(\ref{eq:Crit=Crit}).
Then we show the Morse index identity
$\IND_{F_H}(x,\tau)=\IND_f(x)+1$ for critical points.

\medskip
\textbf{Section~\ref{sec:DGF}
``Downward gradient flows''.}
We introduce the downward gradient flow~(\ref{eq:DGF-0})
on the base $(\Sigma,g)$, whose solutions $q$ are called
\textbf{\boldmath$0$-solutions}.
We introduce the downward gradient flow~(\ref{eq:DGF-eps}) on the product
$(M\times\R,G\oplus\eps^2)$ where the metric is deformed by a
parameter $\eps>0$ and whose
solutions $z=(u,\tau)$ of~(\ref{eq:DGF-eps}) are called
\textbf{\boldmath$\eps$-solutions}.
\\
We define the base energy $E^0(q)$ and the $\eps$-energy
$E^\eps(u,\tau)$ for smooth maps $q\colon\R\to\Sigma$ and
$(u,\tau)\colon\R\to M\times\R$ and show the uniform
\textbf{energy estimates}
$
   E^0(q)
   =
   \Norm{\p_sq}^2
   \le \osc f
$
for base flow trajectories~$q$, but for $\eps$-flow trajectories
\begin{equation*}
   E^\eps(u,\tau)<\infty
   \quad\Rightarrow\quad
   E^\eps(u,\tau)
   =
   \Norm{\p_su}^2+\eps^2\norm{\tau^\prime}^2
   \le \osc f:=\max f-\min f .
\end{equation*}
Two critical points $x^\mp$ of $f\colon\Sigma\to\R$
are called \textbf{asymptotic boundary conditions}
of a smooth map $q\colon\R\to\Sigma$ if
$
   \lim_{s\to\mp\infty} q(s)=x^\mp 
$
and of a pair of smooth maps $(u,\tau)\colon\R\to M\times\R$ if
\begin{equation*}
   \lim_{s\to\mp\infty} \left(u(s),\tau(s)\right)
   =\left(x^\mp,\chi(x^\mp)\right) .
\end{equation*}
Observe that $\left(x^\mp,\chi(x^\mp)\right)\in\Crit F_H$.
With gradient equations and asymptotic boundary conditions
in place there are the usual \textbf{energy identities}
\begin{equation*}
   E^0(q)=f(x^-)-f(x^+)
   ,\qquad
   E^\eps(u,\tau)
   =
   f(x^-)-f(x^+)
   =: c^* ,
\end{equation*}
for base flow trajectories~$q$, respectively
for $\eps$-flow trajectories $(u,\tau)$.

\medskip
\textbf{``A priori estimates''.}
The following theorem, proved in
part~II~\cite{Frauenfelder:2022g}, provides uniform a priori bounds
for $\eps$-solutions $(u,\tau)$ and all derivatives.
The theorem is fundamental for all subsequent sections and it is also
rather surprising in view of the factor $\eps^{-2}$ in the deformed
equations~(\ref{eq:DGF-eps}). The theorem assumes only
finite energy of the $\eps$-solutions.

\begin{theorem}[Uniform a priori bounds for finite energy trajectories]
\label{thm:apriori-intro}
Assume Hypothesis~\ref{hyp:uniform-Lag-bound-intro} with
constant~$\kappa$.
Then there are, a compact subset $K\subset M$, and
constants $c_0,c_1,c_2,c_3>0$, with the following significance.
Assume $(u,\tau)\colon\R\to M\times\R$ solves the
$\eps$-equations~(\ref{eq:DGF-eps}) and is of 
finite energy ${E^\eps(u,\tau)<\infty}$.
\begin{itemize}\setlength\itemsep{0ex}
\item[\rm (i)]
If $\eps\in(0,1]$, then the component $u$ takes values in $K$
and there are bounds
\[
   \Abs{\tau(s)}\le c_0
   ,\qquad
   \Abs{\p_s u(s)}+ \Abs{\tau^\prime(s)}\le c_1
   ,\qquad
   \Abs{\Babla{s}\p_s u(s)}+ \Abs{\tau^{\prime\prime}(s)}\le c_2 ,
\]
and $\Abs{\Babla{s}\Babla{s}\p_s u(s)}\le c_3$
at every instant $s\in\R$.
%
\end{itemize}
\end{theorem}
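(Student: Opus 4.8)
\medskip
\noindent\textbf{Idea of proof.}
The plan is to reduce the a priori bounds to two scalar mechanisms and run them inside one bootstrap. The first is a maximum principle for the function $h:=H\circ u$, which satisfies a \emph{fast} scalar equation $h^{\prime\prime}=\eps^{-2}a\,h+r$ with $a:=\abs{\Babla{}H}^2_u>0$ and a forcing $r$ free of negative powers of $\eps$; the favourable sign of $\eps^{-2}a$ will pin $h$ to size $O(\eps^2)$. The second is a Bochner-type differential inequality for the energy density, in which the singular $\eps^{-2}$-terms assemble into nonnegative squares and can be discarded, or used to dominate the remaining cross terms, leaving an inequality with $\eps$-independent constants.

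\emph{Step 1: energy and the scalar equation.} Finite energy gives, by the energy estimate recalled above, $\Norm{\p_su}^2+\eps^2\norm{\tau^\prime}^2\le\osc f$, so in particular $\norm{H\circ u}=\eps^2\norm{\tau^\prime}\le\eps\,(\osc f)^{1/2}$. Put $h:=H\circ u$, $a:=\abs{\Babla{}H}^2_u$, $\theta:=\tau-\chi(u)$ (all well defined near $\Sigma$). Inserting $\p_su=-\Babla{}F|_u-\tau\Babla{}H|_u$ and the identity $\inner{\Babla{}F}{\Babla{}H}=-\chi\abs{\Babla{}H}^2$ gives
\[
   h^\prime=-a\,\theta,\qquad
   \theta^\prime=-\eps^{-2}h-\inner{\Babla{}\chi|_u}{\p_su},\qquad
   \tau^\prime=-\eps^{-2}h,
\]
hence $h^{\prime\prime}=\eps^{-2}a\,h+r$ with $r:=-a^\prime\theta+a\inner{\Babla{}\chi|_u}{\p_su}$ and $\tau^{\prime\prime}=-\eps^{-2}h^\prime$.

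\emph{Step 2: confinement and the $C^0$-estimates --- the main obstacle.} On any interval where $u$ stays in the compact collar $\Sigma_\kappa$ one has $a\ge a_0>0$ and, given a priori $C^0$-bounds for $\p_su,\tau$ there, also $\abs r\le R$, with $a_0,R$ depending only on $\Sigma_\kappa$. The maximum principle applied to $h^{\prime\prime}=\eps^{-2}a\,h+r$ (using $h\to0$ at $\pm\infty$, which finite energy forces) then gives $\abs h\le\eps^2R/a_0$, which for $\eps$ small is $<\kappa$ and keeps $u$ inside $\Sigma_\kappa$; since local properness makes $\Sigma_\kappa$ compact, $u$ is confined to a fixed compact $K$. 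The two ingredients this needs --- the $\eps$-uniform $C^0$-bound for $\p_su$ and the exclusion of escape to infinity --- are interdependent with the confinement and must be established simultaneously; this coupled bootstrap is precisely the compactness/no-bubbling statement supplied by the non-linear methods of part~II~\cite{Frauenfelder:2022g}, and I regard it as the heart of the argument. Granting $u\in K$, the bound $\abs{\p_su}\le c_1$ comes from the Bochner identity for $\Ee:=\abs{\p_su}^2+\eps^2\abs{\tau^\prime}^2$: the singular contributions appear as nonnegative squares (for instance $-\tau^{\prime\prime}\inner{\Babla{}H}{\p_su}=\eps^{-2}(h^\prime)^2\ge0$), so after discarding them and absorbing the rest one is left with $\Ee^{\prime\prime}\ge-C(\Ee+\Ee^2)$ with $C$ independent of $\eps$; combined with $\int\Ee\le\osc f$, the standard mean-value iteration gives $\Ee\le c_1^2$, hence $\abs{\p_su}\le c_1$ and $\eps\abs{\tau^\prime}\le c_1$. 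Pairing the first equation of~(\ref{eq:DGF-eps}) with $V=\Babla{}H/\abs{\Babla{}H}^2$ yields $\tau=\chi(u)-\inner{\p_su}{V(u)}$, hence $\abs\tau\le c_0$.

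\emph{Step 3: the $\tau$-derivative bounds via adiabatic averaging.} Now the forcing $r$ is bounded by an $\eps$-independent constant, so the maximum principle gives $\abs h\le C\eps^2$ and hence $\abs{\tau^\prime}=\eps^{-2}\abs h\le c_1$. For $\tau^{\prime\prime}=-\eps^{-2}h^\prime$ the crude interpolation bound $\abs{h^\prime}\lesssim\eps$ is one power short; I would instead use that the $s$-derivative of the Green's function of the model operator $-\partial_s^2+\eps^{-2}a$ (with $a$ frozen) is an \emph{odd} kernel, which annihilates the slowly varying part of the forcing and upgrades the estimate to $\abs{h^\prime}\le C\eps^2\norm{r^\prime}_{C^0}$. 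Since $r^\prime$ is controlled by Step~2 --- note $\theta^\prime=-\eps^{-2}h-\inner{\Babla{}\chi|_u}{\p_su}=O(1)$ because $\abs h=O(\eps^2)$ --- this gives $\abs{\tau^{\prime\prime}}\le c_2$. Differentiating the first equation of~(\ref{eq:DGF-eps}) once, resp.\ twice, and feeding in the bounds already obtained for $\tau,\tau^\prime,\tau^{\prime\prime},\p_su,\Babla{s}\p_su$ then gives $\abs{\Babla{s}\p_su}\le c_2$ and $\abs{\Babla{s}\Babla{s}\p_su}\le c_3$, with all constants depending only on $(M,G),F,H$ and $\kappa$.
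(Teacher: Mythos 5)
First, a point of order: this paper does not actually prove Theorem~\ref{thm:apriori-intro}. It states the result and explicitly defers the proof to part~II~\cite{Frauenfelder:2022g}, so there is no in-paper argument to compare yours against; your proposal has to be judged on its own. On its own terms, Steps~1 and~3 are sound and well adapted to the ODE setting: the scalar system $h^\prime=-a\theta$, $\tau^\prime=-\eps^{-2}h$, the equation $h^{\prime\prime}=\eps^{-2}a\,h+r$, the sign-based maximum principle giving $\abs{h}\le\eps^2R/a_0$, and the odd-kernel averaging that upgrades $\abs{h^\prime}\lesssim\eps$ to $\abs{h^\prime}\lesssim\eps^2\norm{r^\prime}_\infty$ (hence $\abs{\tau^{\prime\prime}}\le c_2$) are all correct computations, and the bootstrap to $\Babla{s}\p_su$ and $\Babla{s}\Babla{s}\p_su$ is routine once the lower-order bounds are in hand.

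The genuine gap is the one you name yourself in Step~2, and it is not a peripheral technicality but the theorem's entire content. Every constant in your argument --- $a_0$, $R$, the constants in the Bochner inequality --- presupposes that $u$ stays in the compact collar $\Sigma_\kappa$ and that $\abs{\p_su}$ and $\abs{\tau}$ carry $\eps$-independent pointwise bounds; but producing the compact set $K$ and the bounds $c_0,c_1$ from finite energy alone is exactly what the theorem asserts (and what the paper calls ``rather surprising in view of the factor $\eps^{-2}$''). Writing ``granting $u\in K$'' and then invoking a mean-value iteration whose hypotheses already encode $u\in K$ is a framework built around the hole, not a filling of it. Even within the conditional argument two steps are asserted rather than proved: the maximum principle needs $\sup\abs{h}<\infty$ to be attained or approached (finite energy gives $h\in L^2$, but decay of $h$ at $\pm\infty$ requires a bound on $h^\prime=\inner{\Babla{}H|_u}{\p_su}$, which is downstream of confinement); and in the Bochner inequality you must exhibit how the terms carrying a bare factor $\tau^\prime$, such as $-2\tau^\prime\inner{\Babla{\p_su}\Babla{}H}{\p_su}$, are absorbed into the nonnegative terms $a(\tau^\prime)^2$ and $\eps^{-2}(h^\prime)^2$ --- this does work, but the subsequent mean-value iteration for $\Ee^{\prime\prime}\ge-C(\Ee+\Ee^2)$ on all of $\R$ has a smallness threshold that the fixed bound $\int\Ee\le\osc f$ does not by itself satisfy. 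As it stands the proposal is a plausible architecture for part~II's proof, not a proof.
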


In part II~\cite{Frauenfelder:2022g} there is actually a
part (ii) of the theorem which generalizes the fact that along the
compact set $\Sigma$ the gradient $\abs{\Babla{}H}$ is bounded away
from zero to, roughly speaking, neighborhoods of $\Sigma$.

\medskip
\textbf{Section~\ref{sec:linear-operators}
``Linearized operators''.}
Fix $x^\mp\in\Crit f$. Let $\Qq_{x^-,x^+}$ be the Hilbert manifold
of $W^{1,2}$ paths $q\colon\R\to\Sigma$ with asymptotics $x^\mp$.
The formula
\[
   \Ff^0(q):=\p_s q+\Nabla{} f|_q
   =\p_sq+\Babla{}F(q)+\chi(q)\cdot\Babla{} H(q)
\]
defines a section of the Hilbert bundle $\Ll\to \Qq_{x^-,x^+}$ whose
fiber $\Ll_q$ over $q$ consists of the $T\Sigma$-valued $L^2$
vector fields along $q$. The zero set $\Mm^0_{x^-,x^+}:=(\Ff^0)^{-1}(0)$
is called \textbf{base moduli space}, the zeroes $q$
\textbf{connecting base trajectories}.
Linearize $\Ff^0$ at a zero $q$ to get a linear operator
$W^{1,2}(\R,q^*T\Sigma)\to L^2$ given by
\[
   D^0_q\xi
   =\Nabla{s}\xi-\Nabla{\xi}\Nabla{}f|_q
   =\Babla{s}\xi
   +\Babla{\xi}   \bigl(\Babla{}F|_q+\chi|_q\Babla{} H|_q\bigr).
\]
A pair $(f,g)$ is said \textbf{Morse-Smale} if
$D^0_q\colon W^{1,2}\to L^2$ is surjective for all
$q\in \Mm^0_{x^-,x^+}$ and $x^\mp\in\Crit f$.
The \textbf{trivialization} of $\Ff^0$ at $q\in \Qq_{x^-,x^+}$ is the map
\[
   \Ff^0_q\colon W^{1,2}(\R,q^*T\Sigma)\to L^2(\R,q^*T\Sigma),\quad
   \Ff^0_q(\xi)
   :=\phi(q,\xi)^{-1}\Ff^0(\exp_q\xi)
\]
defined for every $\xi$ of norm smaller than the
injectivity radius of $(\Sigma,g)$, cf.~(\ref{eq:Linfty-est}).
Here $\phi=\phi(q,\xi)\colon T_q\Sigma\to T_{\exp_q(\xi)}\Sigma$
is parallel transport, pointwise for $s\in\R$, along the geodesic
$r\mapsto\exp_{q(s)}(r\xi(s))$ defined in terms of the exponential map
of $(\Sigma,g)$.
The above formula for $D^0_q\xi$ makes sense for general
$q\in \Qq_{x^-,x^+}$, indeed we shall see that $d\Ff^0_q(0)\xi=D^0_q\xi$.
In the formula for the
\textbf{formal \boldmath$L^2$ adjoint}~$(D^0_q)^*$,
see~(\ref{eq:adjoint0-findim-lin}), the term $\Nabla{s}\xi$
changes sign, as is well known, but it is an interesting little detail
that in the ambient formulation a new term $\mathrm{II}$ appears twice
with the same sign, whereas in $D^0_q$ the two signs were opposite.
\\
The operators $D^0_q$ and $(D^0_q)^*$ are bounded,
see~(\ref{eq:findim-BaseEst-prop.D.2}).
If the asymptotics $x^\mp$ are non-degenerate, then both operators
are Fredholm and the Fredholm index is the Morse index difference
of the asymptotics, see Proposition~\ref{prop:findim-0-Fredholm}.

\smallskip
Let $\Zz_{x^-,x^+}$ be the Hilbert manifold of
$W^{1,2}$ paths $z=(u,\tau)\colon\R\to M\times\R$ with asymptotics
$z^\mp=(x^\mp,\chi(x^\mp))$. For $\eps>0$ the formula
\[
   \Ff^\eps(u,\tau)
   \stackrel{(\ref{eq:DGF-def-I})}{:=}
   \begin{pmatrix}
      \p_su+\Babla{} F|_u+\tau\Babla{} H|_u\\
      \tau^\prime+\eps^{-2}H\circ u
   \end{pmatrix}
\]
defines a section of the Hilbert bundle $\Ll\to \Qq_{x^-,x^+}$ whose
fiber $\Ll_{u,\tau}$ over $(u,\tau)$ consists of the $L^2$ vector fields
along $(u,\tau)$. The zero set $\Mm^\eps_{x^-,x^+}:=(\Ff^\eps)^{-1}(0)$
is called \textbf{\boldmath$\eps$-moduli space}, the zeroes $(u,\tau)$
\textbf{connecting \boldmath$\eps$-trajectories}.
Linearize $\Ff^\eps$ at a zero to get a linear map
$W^{1,2}(\R,u^*TM\oplus\R)\to L^2$ of the form
\[
   D^\eps_{u,\tau}
   \begin{pmatrix} X\\\ell\end{pmatrix}
   =\begin{pmatrix}
     \Babla{s} X
     +\Babla{X}\Babla{} F|_u+\tau\Babla{X}\Babla{} H|_u
     +\ell \Babla{} H|_u\\
     \ell^\prime+\eps^{-2}dH|_u X
   \end{pmatrix}.
\]
For general maps $(u,\tau)\in\Zz_{x^-,x^+}$ define
$D^\eps_{u,\tau}$ by the right hand side.
We use the exponential map $\Exp$ of $(M,G)$ to define,
about any map $(u,\tau)\in \Zz_{x^-,x^+}$, a
\textbf{trivialization} $\Ff^\eps_{u,\tau}$, see~(\ref{eq:eps-triv}),
and in~(\ref{eq:deriv-triv-origin}) we show that
$d \Ff^\eps_{u,\tau}(0)=D^\eps_{u,\tau}$.

\smallskip
To get uniform estimates with constants
independent of $\eps>0$ small, we must work
with $\eps$-dependent norms suggested
on $L^2$ by the $\eps$-energy identity
$E^\eps(u,\tau)=\Norm{\p_su}^2+\eps^2\norm{\tau^\prime}^2$
and on $W^{1,2}$ by the ambient linear estimate below.
For $\eps>0$ define
\begin{equation*}
\begin{split}
   \norm{Z}_{0,2,\eps}^2
   :&=\norm{X}^2+\eps^2\norm{\ell }^2
\\
   \norm{Z}_{1,2,\eps}^2
   :&=\norm{X}^2+\eps^2\norm{\ell }^2
   +\eps^2\norm{\Nabla{s} X}^2+\eps^4\norm{\ell ^\prime}^2
\\
   \norm{Z}_{0,\infty,\eps}
   :&=\norm{X}_\infty+\eps\norm{\ell}_\infty
   \le3\eps^{-1/2}\norm{Z}_{1,2,\eps} 
\end{split}
\end{equation*}
where $Z=(X,\ell)$; cf.~(\ref{eq:findim-0-2-eps}).
The formal adjoint $(D^\eps_{u,\tau})^*$ is defined via
the associated $(0,2,\eps)$ inner product and given by
formula~(\ref{eq:lin-epsadjoint-findim-lin}).
For non-degenerate boundary conditions $x^\mp$ both operators
$D^\eps_{u,\tau}$ and $(D^\eps_{u,\tau})^*$ are
Fredholm~(\ref{eq:F-index-eps}).

\smallskip
This article, part I, focusses on pairs $(u,\tau)=(q,\chi(q))$
with $q\in\Mm^0_{x^-,x^+}$. We abbreviate
(for the formulas see~(\ref{eq:triv-eps-q}) and~(\ref{eq:D^eps_q}))
\[
   \Ff^\eps_q:=\Ff^\eps_{q,\chi(q)}
   ,\qquad
   D^\eps_q:=D^\eps_{q,\chi(q)}
   ,\qquad
   (D^\eps_q)^*:=(D^\eps_{q,\chi(q)})^* .
\]
One of two most important linear estimates in adiabatic limit analysis
is the \textbf{ambient linear estimate}
\begin{equation*}
\begin{aligned}
   \eps^{-1}\norm{dH_qX}
   +\norm{\ell }
   +\norm{\Babla{s} X}
   +\eps\norm{\ell ^\prime}
   &
   \le C\left(\norm{D^\eps_q Z}_{0,2,\eps}+\norm{X}\right)
\end{aligned}
\end{equation*}
for every $Z=(X,\ell)\in W^{1,2}(\R,q^*TM\oplus\R)$,
see~(\ref{eq:amblinest}).

\medskip
\textbf{Section~\ref{sec:linear-estimates}
``Linear estimates''.}
The canonical embedding extends via pointwise evaluation to a map
$i\colon\Qq_{x^-,x^+}\to\Zz_{x^-,x^+}$, $q\mapsto(q,\chi(q))$,
between Hilbert manifolds. The linearization
$I_q=di(q)\colon T_q \Qq_{x^-,x^+}\to T_{i(q)}i(\Qq_{x^-,x^+})$
is the map $\xi\mapsto(\xi,d\chi|_q\xi)$.
To prepare Section~\ref{sec:IFT}, where we view
$q\in \Qq_{x^-,x^+}$ as an approximate zero $i(q)$ of $\Ff^\eps$,
see~(\ref{eq:approx-zero-intro}),
Section~\ref{sec:linear-estimates} provides estimates
for the linear operators \emph{along the image of $i$}.
For pairs $(q,\chi(q))$ we have nice control of the $\tau=\chi(q)$
component, because $q$ takes values in $\Sigma$ and $\Sigma$ is compact.

We need to show that if the base flow is Morse-Smale,
then so is the ambient $\eps$-flow for all $\eps>0$ small.
Let $x^\mp\in\Crit f$ be non-degenerate and
$q\in\Mm^0_{x^-,x^+}$ a connecting base trajectory.
Theorem~\ref{thm:KeyEst-thm.3.3} provides
the key estimates for $D^\eps_q$ along the image of $(D^\eps_q)^*$.
So the operator
\begin{equation*}
   R^\eps_q
   :=(D^\eps_q)^*\left(D^\eps_q(D^\eps_q)^*\right)^{-1}
   \colon L^2
   \stackrel{(...)^{-1}}{\longrightarrow} W^{2,2}
   \stackrel{(D^\eps_q)^*}{\longrightarrow} W^{1,2}
\end{equation*}
is a right inverse of the linearization $D^\eps_q$
and uniformly bounded in $\eps>0$ small.
Uniformity of the bound is crucial for the Newton iteration to work
in Section~\ref{sec:IFT}, it triggers the need for weighted
Sobolev norms, as mentioned above.

To carry out this program one needs to compare the, by Morse-Smale,
surjective base operator $D^0_q$ with the ambient operator $D^\eps_q$.
To this end we introduce the orthogonal projection
\[
   \Pi_\eps^\perp\colon
   T_{i(q)} \Zz_{x^-,x^+}
   \stackrel{\pi^\perp_\eps}{\longrightarrow}
   T_q\Qq_{x^-,x^+}
   \stackrel{I_q}{\longrightarrow}
   T_{i(q)} i(\Qq_{x^-,x^+})\subset T_{i(q)} \Zz_{x^-,x^+}
\]
onto the image of $I_q=di(q)$ and we show that the linear map
$\pi^\perp_\eps$ is given by
\begin{equation*}\label{eq:pi_eps-ff-intro}
   \pi_\eps (X,\ell)
   =\left(\1+\eps^\alpha\mu^2\, P\right)^{-1}
   \left(\tan X+\eps^\beta \ell \Nabla{}\chi|_q\right)
\end{equation*}
with $\alpha=\beta=2$ and where by definition
$P(q(s))\colon T_{q(s)}\Sigma\to \R\Nabla{}\chi(q(s))$
is the orthogonal projection, at each $s\in\R$, see~(\ref{eq:findim-P}).
In~(\ref{no-eq:4.1.5-findim}) we show that
$\norm{(\1+\eps^\alpha\mu^2\, P)^{-1}}\le 1$.
The linearizations are compared in the form
$D^0_q\pi_\eps-\pi_\eps D^\eps_q$.
The resulting \textbf{key estimates} are of the form
\begin{equation*}
\begin{aligned}
   \norm{ Z^*}_{1,2,\eps}
   &\le c_1\left(\eps
   \norm{D^\eps_q  Z^*}_{0,2,\eps}
   +\norm{\pi_\eps (D^\eps_q  Z^*)}
   \right)
\\
   \norm{dH|_qX^*}+\eps\norm{\ell^*}
   &\le c_1\eps
   \norm{D^\eps_q  Z^*}_{0,2,\eps} .
\end{aligned}
\end{equation*}
for every pair
$
   Z^*:=(X^*,\ell^*)
   \in\im (D^\eps_q)^*|_{W^{2,2}} \subset W^{1,2}(\R,q^*TM\oplus\R)
$.
In this article the analysis works
for $\alpha\in[1,2]$ and $\beta=2$,
so the orthogonal projection works. This is in sharp
contrast to the PDE adiabatic limit~\cite[(139)]{salamon:2006a}
where the analysis did work for the non-orthogonal projection
where $\alpha=1$ and $\beta=2$.

In~\cite{salamon:2006a} there was no analogue of the second of the
above key estimates. That second estimate plays a crucial role
to prove the uniqueness Theorem~\ref{thm:uniqueness-findim},
see estimate after~(\ref{eq:beta=2}). We arrived at this new twist
in the uniqueness proof by following the philosophy of {\Arnold}
that mathematics reveals itself through simple non-trivial examples,
in our case~\cite{Frauenfelder:2022h}.

\medskip
\textbf{Section~\ref{sec:IFT}
``Implicit function theorem I -- Ambience''.}
Suppose $(f,g)$ is Morse-Smale and pick a base connecting trajectory
$q\in\Mm^0_{x^-.x^+}$.
To find an $\eps$-solution near $q$
we utilize Newton's iteration method which requires
a map, say $\Ff^\eps_q$, defined on a Banach space, so it can be
iterated, and whose zeroes are in bijection with the zeroes of
$\Ff^\eps$. Qualitatively, three conditions need to be met. One needs,
firstly, a good starting point $Z_0$ in the sense that its value
$\Ff^\eps_q(Z_0)$ is almost zero, secondly, the derivative
$d \Ff^\eps_q(Z_0)$ must be 'steep enough' in the sense
it must admit a right inverse bounded uniformly in $\eps$ small and,
thirdly, the derivative must not oscillate too wildly near $Z_0$ which is
guaranteed via suitable quadratic estimates.

We are in good shape: The trivialized
ambient section $\Ff^\eps_q$ at the initial point $Z_0:=(0,0)$
of the Newton iteration has a vanishing first component
\begin{equation}\label{eq:approx-zero-intro}
   \Ff^\eps_q(0,0)
   =\Ff^\eps(q,\chi(q))
   :=\begin{pmatrix}
      \p_sq+\Babla{} F(q)+\chi(q)\Babla{} H(q)\\
      (\chi(q))^\prime+\eps^{-2}H(q)
   \end{pmatrix}
   =\begin{pmatrix}
      0\\
      d\chi|_q \p_sq
   \end{pmatrix}
\end{equation}
since $-\p_s q=\Nabla{}f(q)=\Babla{} F(u)+\chi(q)\Babla{} H(u)$.
So $\Norm{\Ff^\eps(q,\chi(q))}_{0,2,\eps}=\eps\Norm{d\chi|_q\p_s q}$
is small for $\eps$ small.
Use the right inverse to define the initial correction term
\[
   \zeta_0
   :=-{D^\eps_q}^*\left(D^\eps_q{D^\eps_q}^*\right)^{-1}\Ff^\eps_q(0)
   =-R^\eps_q \Ff^\eps_q(0) .
\]
Thus
$
   D^\eps_q\zeta_0
   =-\Ff^\eps_q(0)
   =(0,-d\chi|_q\p_s q)
$
and so by key estimate one we get
\begin{equation*}
\begin{split}
   \norm{\zeta_0}_{1,2,\eps}
   &\le
   c_1\left(
   \eps\norm{(0, d\chi|_q\p_s q)}_{0,2,\eps}
   +\norm{(\1+\eps^2\mu^2P)^{-1}
      \left(0+\eps^2(d\chi|_q \p_sq)\Nabla{}\chi\right)}
   \right)
   \\
   &\le {\rm const}\cdot \eps^2.
\end{split}
\end{equation*}
Now define $Z_1:=Z_0+\zeta_0$ and add zero in the form
$-\Ff^\eps_q(0)-D^\eps_u\zeta_0$ to get
\begin{equation*}
   \norm{\Ff^\eps_q(Z_1)}_{0,2,\eps}
   =\norm{\Ff^\eps_q(\zeta_0)-\Ff^\eps_q(0)-D^\eps_u\zeta}_{0,2,\eps}
   \le {\rm const}\cdot \eps^{5/2}
\end{equation*}
where the inequality uses the quadratic estimate~(\ref{eq:quadest-I}).
To the next correction term $\zeta_1:=-R^\eps_q\Ff^\eps_q(Z_1)$ apply
the key estimate observing that $D^\eps_q\zeta_1=-\Ff^\eps_q(Z_1)$.
Iteration provides a Cauchy sequence $Z_\nu$ whose limit $Z^\eps$
corresponds to a zero of $\Ff^\eps_q$ and
$\norm{Z^\eps}_{1,2,\eps}\le {\rm const}\cdot \eps^2$.
For the precise statement see the existence
Theorem~\ref{thm:existence-findim}.
The zero is unique in the sense of the uniqueness
Theorem~\ref{thm:uniqueness-findim}.
These two theorems allow to define the map
$\Tt^\eps$ and the short argument in Lemma~\ref{le:injectivity}
then completes the proof of Theorem~\ref{thm:main}.

\boldmath
\subsection{Motivation and general perspective}
\unboldmath

Let $(M,\omega)$ be an exact symplectic manifold
where $\omega=d\lambda$.
On the free loop space $\Ll M:=C^\infty(\SS^1,M)$
consider the negative \textbf{area functional} given by
\begin{equation*}
   \Aa\colon \Ll M\to\R,\quad
   v\mapsto -\int_0^1 v^*\lambda .
\end{equation*}
A smooth function $H\colon M\to\R$, called \textbf{Hamiltonian},
induces on the loop space the corresponding \textbf{mean value functional}
\begin{equation*}
   \Hh=\Hh_H\colon \Ll M\to\R,\quad
   v\mapsto \int_0^1 H\circ v(t)\, dt.
\end{equation*}
On loop space there is the \textbf{time reversal involution} defined by
\[
   \Tt\colon \Ll M\to\Ll M,\quad v\mapsto v^-,\qquad v^-(t):=v(-t).
\]
There are the following relations
\begin{equation}\label{eq:relations-Aa-Hh}
   \Aa\circ \Tt=-\Aa,\qquad \Hh\circ \Tt=\Hh.
\end{equation}
The \textbf{Rabinowitz action functional} is defined by
\begin{equation*}
   \Aa_\Hh\colon \Ll M\times\R\to\R,\quad
   (v,\tau)\mapsto \Aa(v)+\tau\Hh(v).
\end{equation*}
The \textbf{extended time reversal involution} is defined by
\[
   \widetilde\Tt\colon \Ll M\times\R\to\Ll M\times\R,\quad
   (v,\tau)\mapsto (v^-,-\tau).
\]
From~(\ref{eq:relations-Aa-Hh}) it follows the anti-invariance of the
Rabinowitz action functional under extended time reversal involution, in symbols
\[
   \Aa_\Hh\circ\widetilde\Tt=-\Aa_\Hh.
\]
This has the consequence that the extended time reversal involution
also acts involutive on the critical point set, in symbols
\[
   (v,\tau)\in\Crit \Aa_\Hh\quad\Leftrightarrow\quad
   \widetilde\Tt(v,\tau)=(v^-,-\tau)\in\Crit \Aa_\Hh .
\]
A critical point $(v,\tau)$ for $\tau$ positive corresponds to a periodic orbit
of the Hamiltonian vector field of $H$ of energy zero and period $\tau$.
The critical point $\widetilde\Tt(v,\tau)=(v^-,-\tau)$ corresponds to
this orbit traversed \emph{backward} in time.
The fixed point set $\Fix\,\widetilde\Tt|_{\Crit\Aa_\Hh}$ are pairs
$(x,0)$ where $x$ is a point on the energy hypersurface $\Sigma:=H^{-1}(0)$
interpreted as a constant loop.

There is no analogue of the time reversal
anti-invariance of the Rabinowitz action functional $\Aa_\Hh$
in symplectic homology or symplectic field theory
where periodic orbits are always traversed in \emph{forward} time.

From a physical perspective the time reversal anti-invariance
is reminiscent of the Feynman-Stueckelberg
interpretation~\cite{Stueckelberg:1941b,Feynman:1948a}
of a positron as an electron going backward in time.

From a mathematical perspective the time reversal anti-invariance of the
Rabinowitz action functional has strong connections to Tate cohomology,
Poincar\'e-duality, and Frobenius algebras. It led to the discovery by
Cieliebak and Oancea~\cite{Cieliebak:2022a}
of the structure of a topological quantum field theory (TQFT) on
Rabinowitz-Floer homology.
However, the topological quantum field theory structure of
Cieliebak and Oancea is not defined on Rabinowitz-Floer homology
directly, but on $V$-shaped symplectic homology. The latter is
known to be isomorphic to Rabinowitz-Floer homology
as shown by Cieliebak, Frauenfelder, and Oancea~\cite{Cieliebak:2010b}.
The difficulty to define the TQFT structure directly on Rabinowitz-Floer
homology is that, in general, the Rabinowitz action functional does not
behave additively with respect to concatenation of loops.
For that reason, to our knowledge, nobody defined product structures
directly on Rabinowitz-Floer homology. Instead of that,
product structures were defined on homologies
isomorphic to Rabinowitz-Floer homology, namely, $V$-shaped
symplectic homology by Cieliebak and Oancea~\cite{Cieliebak:2015a},
respectively, on extended phase space by Abbondandolo and
Merry~\cite{Abbondandolo:2018c}.

For the following reasons we would like to see TQFT structure
on Rabinowitz-Floer homology directly.
\begin{enumerate}
\item
  Time reversal anti-invariance for the functional gets lost
  when going over to $V$-shaped symplectic homology,
  respectively, to extended phase space homology.
  Therefore Poincar\'e-duality only holds on homology level
  and not on chain level, as in the case of Rabinowitz action functional.
\item
  In contrast to symplectic homology the Rabinowitz gradient flow
  equation is not a PDE but a delay equation.
  Although the critical points of the Rabinowitz action functional are
  still solutions of an ODE, the Rabinowitz action functional can
  easily be generalized to delay equations. In fact, the functional
  $\Hh$ not necessarily has to be the mean value of a Hamiltonian on 
  the underlying manifold, but can be a more interesting functional on
  the free loop space. In particular, in this way one can model
  interacting particles whose interaction is not necessarily
  instantaneous, but can happen with some delay~\cite{Frauenfelder:2020b}.
  This is in particular of interest in a semi-classical treatment of
  Helium~\cite{Cieliebak:2021a}.
\end{enumerate}

As mentioned above the major difficulty to define a TQFT structure
on Rabinowitz-Floer homology directly is the complicated behavior of
the Rabinowitz action functional on the concatenation of loops.
To remedy this situation it was proposed in~\cite{Frauenfelder:2022a}
to take advantage of the following elementary fact.
Critical points of a Lagrange multiplier functional are in 1-1
correspondence with critical points of the restriction of the first
function to the constraint given by the vanishing of the second function.
In the case of the Rabinowitz action functional it means the
following. One restricts the negative area functional $\Aa$ to the constraint
$\Hh^{-1}(0)$, namely the hypersurface in the free loop space
consisting of loops whose mean value vanishes.
Note that concatenating two loops of mean value zero leads to another
loop of mean value zero.
Therefore the hypersurface $\Hh^{-1}(0)$ is invariant under concatenation.
Moreover, note that the area functional is additive with respect to
concatenation. Therefore the restriction of the area functional to
$\Hh^{-1}(0)$ has the potential of leading to a TQFT for which
Poincar\'e-duality holds on chain level and which should also lead to
topological quantum field theories for Hamiltonian delay equations.

\medskip
In view of the above remarks it is of major interest to understand how
the semi-infinite dimensional Morse homology in the sense of Floer
of the Rabinowitz action functional $\Aa_\Hh$
is related to the one of the restriction of the area functional $\Aa$
to $\Hh^{-1}(0)$.
Motivated by the general perspective we treat in this article the 
finite dimensional analogue of this question which already has its own
interest.

\boldmath
\section[The Lagrange multiplier function \boldmath$F_H$ and its restriction $f$]
{Lagrange multiplier function and restriction}
\label{sec:functionals}
\unboldmath

Suppose that on a Riemannian manifold $(M,G)$ are given two
smooth functions
$$
   F,H\colon M\to\R
$$
such that $0$ is a regular value of $H$, in symbols $H\pitchfork 0$.
The function $H$ plays the role of providing a \textbf{constraint}, namely
the smooth Riemannian hypersurface
\begin{equation}\label{eq:iota}
   \Sigma:=H^{-1}(0)\stackrel{\iota}{\INTO} M
   ,\qquad
   g:=\iota^* G
   ,\qquad
   f:=F|_\Sigma:=F\circ\iota\colon\Sigma\to\R ,
\end{equation}
equipped with the restriction of $F$ and were $\iota$ is the inclusion map.
Throughout we assume that $\Sigma$ is compact and without boundary.
We call $\Sigma$ the \textbf{base} of the adiabatic limit construction.
Now add to $F$ the constraint function $H$ times a parameter $\tau$
to define the \textbf{Lagrange multiplier function}
\begin{equation}\label{eq:findim-F_H}
   F_H\colon M\times\R\to\R,\quad (x,\tau)\mapsto
   F(x)+\tau H(x) .
\end{equation}
The restriction $F_H|_\Sigma=f$ is equal to the restriction of $F$.
The function $F_H$ has the significance that its critical points are
in bijection with the critical points $x$ of the restriction $f$ via their
so-called Lagrange multipliers~$\chi(x)$, see Lemma~\ref{le:Crit-F_H}.

\boldmath
\subsection{Hypersurface geometry}
\label{sec:hypersurface-geometry}
\unboldmath

As a preparation we recall relevant facts about the geometry
of Riemannian submanifolds following the excellent presentation
of O'Neill~\cite[Chap.\,4]{oneill:1983a}.

\smallskip
Let $(M,G)$ be a smooth Riemannian manifold and $H\colon M\to\R$ a
smooth\footnote{throughout smooth means $C^\infty$ smooth}
function with regular value $0$.
The level set~(\ref{eq:iota}) endowed with the restriction metric
is a smooth Riemannian hypersurface $(\Sigma,g)$ of $(M,G)$.
Let $\Xx(M)$ be the smooth vector fields along $M$ and $\Xx(\Sigma)$
those along~$\Sigma$.
Let $\widebar\Xx(\Sigma)$ be the restrictions to $\Sigma$ of
vector fields along $M$, equivalently, the sections of the
pull-back bundle $\iota^*TM\to\Sigma$.
On $(M,G)$ and $(\Sigma,g)$, respectively,
we denote the Levi-Civita connections by $\Babla{}$ and $\nabla$
and the exponential maps by $\Exp$ and $\exp$.

Gradients are orthogonal to level sets.
By definition of regular value and codimension $1$
the gradient of $H$ is nowhere zero along the hypersurface
$\Sigma=H^{-1}(0)$. Thus $\Babla{} H$ generates the normal bundle
$N\Sigma=\R\Babla{} H$ of $\Sigma$ and
\[
   T_\Sigma M
   =T\Sigma\stackrel{\perp}{\oplus} N\Sigma
   ,\qquad X=\xi+\nu,
\]
is an orthogonal direct sum along $\Sigma$. Hence for any
and $X\in T\Sigma M$ there are unique vectors $\xi\in T\Sigma$ and
$\nu\in N\Sigma$ such that $X=\xi+\nu$.
This defines two orthogonal projections $\tan$ and $\nor$,
see~(\ref{eq:findim-orth-split}) and~(\ref{eq:findim-gradf}).

We denote vectors of $TM$ and vector fields taking values in $TM$
by capital letters such as $X,Y$ and, in contrast, vectors of $T\Sigma$
and vector fields taking values in $T\Sigma$ be greek letters such as
$\xi,\eta$. By $\nu$ we denote elements of $N\Sigma$.
See Convention~\ref{con:notation} for notation of norms and inner
products. Here and throughout we silently identify $q\in\Sigma$ with
$\iota(q)\in M$ and $\xi\in T\Sigma$ with $T\iota(\xi)\in TM$.

\boldmath
\subsubsection{Orthogonal splitting of $TM$ along a neighborhood of $\Sigma$}
\unboldmath

For $p\in M$ the \textbf{gradient} $\Babla{} H(p)$ is determined
by $dH(p) X=\inner{\Babla{} H(p)}{X}$ $\forall X\in T_pM$. An open
neighborhood of $\Sigma$ is provided by the set of regular points
\[
   \Sigma \subset M_{\rm reg}
   :=\{p\in M\mid dH(p)\not= 0\}
   \subset M.
\]
Since $\Babla{} H(p)\not=0$ for $p\in M_{\rm reg}$, there are
the \textbf{canonical vector fields}
\[
   U:=\frac{\Babla{} H}{\abs{\Babla{} H}}
   ,\quad
   V:=\frac{\Babla{} H}{\abs{\Babla{} H}^2}
   ,\quad\text{along $M_{\rm reg}$} .
\]
The smooth function defined by
\begin{equation}\label{eq:chi}
   \chi:=-\tfrac{\inner{\Babla{} F}{\Babla{} H}}{\abs{\Babla{} H}^2}
   \quad\text{along $M_{\rm reg}$}
\end{equation}
provides the coefficient of the orthogonal projection of $\Babla{}F$
onto $-\Babla{}H$; see~(\ref{eq:findim-orth-split}).
Since $\inner{\Babla{} H}{\xi}=dH\,\xi=0$ for $\xi\in T\Sigma$, the sum
$T_\Sigma M=T\Sigma+\R\cdot\Babla{} H$ is direct and orthogonal. Thus
the line bundle $N\Sigma:=\R\Babla{} H$ is the normal bundle of
$\Sigma$. There are the associated orthogonal projections
\begin{equation}\label{eq:findim-orth-proj}
   \tan\colon T_qM\to T_q\Sigma,\quad
   \nor\colon T_qM\to \R U_q,\quad
   \tan+\nor=\Id_{T_qM} .
\end{equation}
The vectors of $\R U_q$ are said \textbf{normal} to $\Sigma$.
A vector field $Z\in\widebar{\Xx}(\Sigma)$ is called normal
to $\Sigma$ if each vector $Z(q)$ is.
Let $\Xx(\Sigma)^\perp$ be the vector fields normal to $\Sigma$, 
that is the sections of the line bundle $\R\Babla{}H\to\Sigma$.
There is the orthogonal vector bundle sum
$\widebar{\Xx}(\Sigma)=\Xx(\Sigma)\oplus\Xx(\Sigma)^\perp$.
The resulting orthogonal projections
\begin{equation}\label{eq:findim-orth-split}
\begin{aligned}
   \nor\colon \widebar{\Xx}(\Sigma)&\to \Xx(\Sigma)^\perp,
   &&&
   \tan\colon \widebar{\Xx}(\Sigma)&\to \Xx(\Sigma),
\\
   X&\mapsto\inner{X}{U}\, U
   =\tfrac{\inner{X}{\Babla{} H}}{\abs{\Babla{} H}^2}\, \Babla{} H.
   &&&
   X&\mapsto X-\nor\, X,
\end{aligned}
\end{equation}
are $C^\infty(\Sigma)$-linear and there is the identity
$\widebar{\Xx}(\Sigma)\ni X=\tan X+\nor\, X$.

\begin{lemma}[Gradients and orthogonal decomposition]
\label{le:findim-grad-f}
It holds that
\begin{equation}\label{eq:findim-gradf}
\begin{aligned}
   \tan \Babla{} F
   &=\Nabla{} f
   &&&
   \nor\, \Babla{} F&=-\chi \Babla{} H
   &&&
   \Babla{} F
   &=\Nabla{}f-\chi\Babla{}H
\\
   \nor\, X
   &=\tfrac{(dH)\, X}{\abs{\Babla{} H}^2} \Babla{} H
   &&&
   \Abs{\nor\, X}&\le\tfrac{\abs{(dH)\, X}}{m_H}
   &&&
   m_H:&=\min_\Sigma \Abs{\Babla{} H}>0
\end{aligned}
\end{equation}
pointwise at $q\in\Sigma$ and for every tangent vector $X\in T_q M$.
\end{lemma}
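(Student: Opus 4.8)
The plan is to chase definitions, settling the normal identities first since the tangential one rests on them.

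First I would fix $q\in\Sigma$ and an arbitrary vector $X\in T_qM$ and write the orthogonal splitting $X=\xi+\nu$ with $\xi\in T_q\Sigma$ and $\nu\in N_q\Sigma=\R\Babla{} H(q)$. Because the normal line is one-dimensional, $\nu=t\,\Babla{} H(q)$ for a unique $t\in\R$; taking the $G$-inner product with $\Babla{} H(q)$ and using $\xi\perp N_q\Sigma$ gives $t\,\abs{\Babla{} H}^2=\inner{X}{\Babla{} H}=(dH)\,X$, the last equality being the defining property of the gradient. Hence $\nor\,X=\tfrac{(dH)\,X}{\abs{\Babla{} H}^2}\Babla{} H$, the asserted formula for $\nor\,X$. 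Taking norms yields $\Abs{\nor\,X}=\abs{(dH)\,X}/\abs{\Babla{} H}$; since $0$ is a regular value of $H$ the gradient $\Babla{} H$ is nowhere zero on the compact set $\Sigma$, so $m_H=\min_\Sigma\abs{\Babla{} H}$ is attained and positive, and therefore $\Abs{\nor\,X}\le\abs{(dH)\,X}/m_H$. Specializing $X=\Babla{} F(q)$ and using $(dH)\,\Babla{} F=\inner{\Babla{} F}{\Babla{} H}$ together with the definition~(\ref{eq:chi}) of $\chi$ gives $\nor\,\Babla{} F=\tfrac{\inner{\Babla{} F}{\Babla{} H}}{\abs{\Babla{} H}^2}\Babla{} H=-\chi\,\Babla{} H$.

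Next I would obtain $\tan\Babla{} F=\Nabla{} f$ by verifying the characterizing property of the intrinsic gradient on $(\Sigma,g)$. For every $\xi\in T_q\Sigma$ one has $df(q)\xi=dF(q)\xi$ since $f=F\circ\iota$, and $dF(q)\xi=\inner{\Babla{} F(q)}{\xi}_G=\inner{\tan\Babla{} F(q)}{\xi}_G$ because $\nor\,\Babla{} F(q)\perp T_q\Sigma$, and finally $\inner{\tan\Babla{} F(q)}{\xi}_G=\inner{\tan\Babla{} F(q)}{\xi}_g$ because $g=\iota^*G$ and both vectors lie in $T_q\Sigma$. Thus $\inner{\Nabla{} f|_q}{\xi}_g=\inner{\tan\Babla{} F(q)}{\xi}_g$ for all $\xi\in T_q\Sigma$, and since $\tan\Babla{} F(q)\in T_q\Sigma$ the uniqueness of the gradient forces $\tan\Babla{} F=\Nabla{} f$. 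Adding the two orthogonal pieces then gives the remaining identity $\Babla{} F=\tan\Babla{} F+\nor\,\Babla{} F=\Nabla{} f-\chi\,\Babla{} H$.

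I do not anticipate a genuine obstacle: the argument is pure definition-unwinding. The only points needing a little care are the bookkeeping of the silent identifications $T_q\Sigma\hookrightarrow T_qM$ and $g=\iota^*G$ --- exactly what makes orthogonal projection, the restricted metric, and the two gradients mutually compatible --- and the appeal to compactness of $\Sigma$ together with $0$ being a regular value of $H$ to guarantee $m_H>0$.
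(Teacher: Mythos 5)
Your proof is correct and follows essentially the same route as the paper's: the tangential identity via the characterizing property of the gradient on $(\Sigma,g)$ (subtracting the normal part and switching $G$ to $g$), and the normal identity by solving $\nor\,\Babla{}F=\alpha\Babla{}H$ through an inner product with $\Babla{}H$ and invoking the definition~(\ref{eq:chi}) of $\chi$. The only cosmetic difference is order: you derive the general formula for $\nor\,X$ first and specialize to $X=\Babla{}F$, while the paper does the tangential part first and treats $\nor\,X$ as obvious.
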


\begin{proof}
To identify $\Nabla{} f$ with the tangential part,
pick $\xi\in \Xx(\Sigma)$. Then
\begin{equation*}
\begin{split}
   \inner{\Nabla{}f}{\xi}_g
   &=df(\xi)=dF|_{\iota} \,d\iota(\xi)
   =\inner{\Babla{}F|_{\iota}}{d\iota(\xi)}_G
   =\inner{\Babla{}F|_{\iota}-\nor\, \Babla{}F|_{\iota}}{d\iota(\xi)}_G\\
   &=\inner{\tan \Babla{}F|_{\iota}}{\xi}_g .
\end{split}
\end{equation*}
We subtracted the normal since its inner product with the
tangent $d\iota(\xi)$ is zero. As the difference is tangent,
we change $G$ to $g$.
Next write $\nor(\Babla{} F)=\alpha \Babla{} H$ for some
$\alpha\in C^\infty(\Sigma)$.
Then the identity $\Babla{} F=\Nabla{}f+\alpha\Babla{}H$
is the splitting~(\ref{eq:findim-orth-proj}).
Scalar multiply the identity by the normal
$\Babla{} H$ to get that
\[
   \inner{\Babla{} F}{\Babla{} H}
  =0+\alpha\abs{\Babla{} H}^2 .
\]
Hence $\alpha=-\chi$ by~(\ref{eq:chi}). The term $\nor\, X$ is obvious.
\end{proof}

\boldmath
\subsubsection{Normal form of $H$ near $\Sigma$}
\unboldmath

Let $\kappa>0$ be the constant from the local properness
Hypothesis~\ref{hyp:uniform-Lag-bound-intro}.
The vector field $V:=\Babla{}H/\abs{\Babla{}H}^2$
along the open neighborhood $M_{\rm reg}:=\{dH\not=0\}$ of $M$
of $\Sigma$ in $M$ generates 
a local flow $\{\varphi_r\}$ on $M_{\rm reg}$.
Since $\Sigma$ is compact for $\delta\in(0,\kappa)$ small enough
the following map is a diffeomorphism onto its image
$$
   \varphi\colon\Sigma\times(-\delta,\delta)
   \to U_\Sigma=U_\Sigma(\delta):=\im\varphi\subset M
   ,\quad
   (q.r)\mapsto\varphi_r q .
$$
(The map $\varphi$ provides a retraction
$\rho=\rho^2\colon U_\Sigma\to U_\Sigma$.\footnote{
  To match the abstract approach~\cite{Frauenfelder:2022h} define,
  for each $t\in[0,1]$, a map $\rho_t\colon U_\Sigma\to U_\Sigma$,
  $p=\varphi_r q\mapsto\varphi_{-tr}p$. Then
  $\rho_0=\id_{U_\Sigma}$, $\rho_1\colon U_\Sigma\to\Sigma$,
  $\rho_t|_\Sigma=\id_\Sigma$ $\forall t\in[0,1]$.
  So $\rho:=\rho_1=\rho^2$.
  })
The identities
\[
   H(\varphi_0 q)=0
   ,\qquad
   \tfrac{d}{dr}H(\varphi_r q)
   =dH|_{\varphi_r q}\,\tfrac{d}{dr} \varphi_r q
   =\INNER{\Babla{}H|_{\varphi_r q}}{V|_{\varphi_r q}}=1,
\]
show that
\begin{equation}\label{eq:normal-form-H}
   H(\varphi_r q)=r
\end{equation}
for every $(q,r)\in\Sigma\times(-\delta,\delta)$.
Thus, for every map $u\colon\R\to M$ that takes values in the image of
the flow diffeomorphism $\varphi$, there are maps
$\qfrak\colon\R\to\Sigma$ and $r\colon\R\to(-\delta,\delta)$, namely
$(\qfrak,r):=\varphi^{-1}(u)$ pointwise, such that
\begin{equation}\label{eq:normal-form-u}
   u=\varphi_r (\qfrak)
   ,\qquad
   r=H(u) ,
\end{equation}
pointwise at $s\in\R$.

\boldmath
\subsubsection{Induced connection}
\unboldmath

The Levi-Civita connections associated to $(M,G)$ and
$(\Sigma,g)$ are maps
\begin{equation*}
   \widebar{\nabla}\colon
      \Xx(M)\times\Xx(M)\to\Xx(M),\qquad
   \nabla\colon
      \Xx(\Sigma)\times\Xx(\Sigma)\to\Xx(\Sigma) .
\end{equation*}
Via vector field extension from the domain $\Sigma$ to $M$ the
connection $\widebar{\nabla}$ gives rise to a map, independent of
the chosen extensions $\widebar{\xi},\widebar{X}$,
the \textbf{induced connection}
\[
   \widebar{\nabla}\colon
   \Xx(\Sigma)\times\widebar{\Xx}(\Sigma)\to\widebar{\Xx}(\Sigma)
   ,\quad
   (\xi,X)\mapsto \Babla{\xi}X:=\Babla{\widebar{\xi}}\widebar{X},
\]
still denoted by the same symbol $\widebar{\nabla}$.

\begin{lemma}\label{le:findim-ind-conn}
The induced connection
satisfies the five axioms that characterize the Levi-Civita
connection on the tangent bundle of a Riemannian manifold:
\begin{itemize}\setlength\itemsep{0ex} 
\item[\rm (i)]
  $C^\infty(\Sigma)$-linear in $\xi$
  \hfill
  $\Babla{f\xi} X=f \Babla{\xi} X$
\item[\rm (ii)]
  $\R$-linear in $X$
  \hfill
  $\Babla{\xi} (\alpha X)=\alpha \Babla{\xi} X$
\item[\rm (iii)]
  Leibniz rule
  \hfill
  $\Babla{\xi} (fX)=(\xi f) X+f \Babla{\xi} X$
\item[\rm (iv)]
  torsion free
  \hfill
  $[\xi,\eta]:=\xi\eta-\eta\xi=\Babla{\xi} \eta-\Babla{\eta} \xi$
\item[\rm (v)]
  metric
  \hfill
  $\xi\inner{X}{Y}=\inner{\Babla{\xi} X}{Y}+\inner{X}{\Babla{\xi} Y}$
\end{itemize}
for all $\alpha\in\R$, $f\in C^\infty(\Sigma)$,
$\xi,\eta\in\Xx(\Sigma)$, and $X,Y\in\widebar{\Xx}(\Sigma)$,
and where $\xi f$ is a convenient shorter way to write $df(\xi)$.
\end{lemma}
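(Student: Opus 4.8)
The plan is to verify the five axioms one at a time by lifting each identity to the ambient manifold, applying the corresponding property of the Levi--Civita connection $\widebar{\nabla}$ of $(M,G)$, and then restricting the result back to $\Sigma$. Two elementary observations will do most of the work. First, the defining formula $\Babla{\xi}X=\Babla{\widebar{\xi}}\widebar{X}$ is independent of the chosen extensions: by tensoriality of $\widebar{\nabla}$ in its first slot the value at $q\in\Sigma$ depends only on $\widebar{\xi}(q)=\xi(q)$, and since $\xi(q)\in T_q\Sigma$ one may evaluate $\Babla{\widebar{\xi}}\widebar{X}|_q$ by differentiating $\widebar{X}$ along a curve that stays inside $\Sigma$, so it depends only on $X=\widebar{X}|_\Sigma$. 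Second, for $\varphi\in C^\infty(M)$ and $\xi\in\Xx(\Sigma)$ one has $(\widebar{\xi}\varphi)|_\Sigma=\xi(\varphi|_\Sigma)$, because $\widebar{\xi}$ restricts along $\Sigma$ to a vector tangent to $\Sigma$; this is what lets us recognize the function-derivative terms below. Given $f\in C^\infty(\Sigma)$ I would fix an extension $\widebar{f}\in C^\infty(M)$, and similarly extend the various vector fields.

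Axioms (i), (ii), (iii) and (v) are then pure bookkeeping with these extensions. For (i), the vector field $\widebar{f}\widebar{\xi}$ extends $f\xi$, so $\Babla{f\xi}X=\Babla{\widebar{f}\widebar{\xi}}\widebar{X}=\widebar{f}\,\Babla{\widebar{\xi}}\widebar{X}$ by $C^\infty(M)$-linearity of $\widebar{\nabla}$ in the first argument, and restricting to $\Sigma$ yields $f\,\Babla{\xi}X$. For (ii), $\alpha\widebar{X}$ extends $\alpha X$ and $\widebar{X}+\widebar{Y}$ extends $X+Y$, so $\R$-linearity in the second argument descends from that of $\widebar{\nabla}$. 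For (iii), $\widebar{f}\widebar{X}$ extends $fX$ and the ambient Leibniz rule gives $\Babla{\widebar{\xi}}(\widebar{f}\widebar{X})=(\widebar{\xi}\widebar{f})\widebar{X}+\widebar{f}\,\Babla{\widebar{\xi}}\widebar{X}$; one restricts to $\Sigma$ and uses the second observation to rewrite $(\widebar{\xi}\widebar{f})|_\Sigma=\xi f$. For (v), $\inner{\widebar{X}}{\widebar{Y}}_G\in C^\infty(M)$ restricts to $\inner{X}{Y}$, and the ambient metric identity $\widebar{\xi}\inner{\widebar{X}}{\widebar{Y}}_G=\inner{\Babla{\widebar{\xi}}\widebar{X}}{\widebar{Y}}_G+\inner{\widebar{X}}{\Babla{\widebar{\xi}}\widebar{Y}}_G$ restricts, again via the second observation, to the asserted $\xi\inner{X}{Y}=\inner{\Babla{\xi}X}{Y}+\inner{X}{\Babla{\xi}Y}$.

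The one axiom needing an extra input is (iv). Here I would choose the extensions $\widebar{\xi},\widebar{\eta}\in\Xx(M)$ of $\xi,\eta\in\Xx(\Sigma)$, which are by construction tangent to $\Sigma$ at every point of $\Sigma$, and invoke the standard fact that the Lie bracket of two vector fields tangent to $\Sigma$ along $\Sigma$ is again tangent to $\Sigma$ along $\Sigma$, with $[\widebar{\xi},\widebar{\eta}]|_\Sigma=[\xi,\eta]$; the normal component of $[\widebar{\xi},\widebar{\eta}]$ vanishes on $\Sigma$ precisely because the normal components of $\widebar{\xi}$ and $\widebar{\eta}$ vanish there and are differentiated only tangentially. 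Restricting the ambient torsion-free identity $\Babla{\widebar{\xi}}\widebar{\eta}-\Babla{\widebar{\eta}}\widebar{\xi}=[\widebar{\xi},\widebar{\eta}]$ to $\Sigma$ then gives $\Babla{\xi}\eta-\Babla{\eta}\xi=[\xi,\eta]$; as a byproduct the left-hand side, a priori only an element of $\widebar{\Xx}(\Sigma)$, turns out to be tangent to $\Sigma$. This bracket-tangency step is the only nontrivial ingredient; everything else is a routine descent argument of the kind laid out in O'Neill~\cite[Chap.\,4]{oneill:1983a}, which one can simply cite.
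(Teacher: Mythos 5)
Your proof is correct. The paper itself gives no proof of this lemma — it simply states it, deferring to the presentation in O'Neill cited at the start of Section~\ref{sec:hypersurface-geometry} — and your extend-apply-restrict argument is exactly the standard one that reference supplies, including the correct identification of (iv) as the only axiom needing a genuine input (tangency of the bracket of $\Sigma$-tangent extensions, i.e.\ that $\widebar{\xi},\widebar{\eta}$ are $\iota$-related to $\xi,\eta$ and hence so are their brackets). One minor caveat: your parenthetical justification of that bracket fact ("the normal components \dots are differentiated only tangentially") is loose as stated, since the normal component of a derivative is not the derivative of the normal component; the clean route is $\iota$-relatedness, but since you invoke it as the standard fact it is, this does not affect correctness.
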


\begin{remark}\label{rem:finfim-tan-nor}
If both vector fields $\xi,\eta$ take values in $T\Sigma$, by torsion
freeness the difference $\Babla{\xi} \eta-\Babla{\eta} \xi$
takes values in $T\Sigma$ as well -- the commutator does. 
This is in general not true for the individual terms. 
Via the orthogonal projections~(\ref{eq:findim-orth-split}) one
decomposes the vector field $\Babla{\xi}\eta\in\widebar{\Xx}(\Sigma)$
into a tangent and a normal part
\begin{equation}\label{eq:findim-decomp-II}
   \Babla{\xi}\eta
   =\Nabla{\xi}\eta+\mathrm{II}(\xi,\eta)
\end{equation}
whenever $\xi,\eta\in\Xx(\Sigma)$ and where
\begin{equation}\label{eq:findim-decomp-III}
   \Nabla{\xi}\eta=\tan \Babla{\xi}\eta\in\Xx(\Sigma),\qquad
   \mathrm{II}(\xi,\eta):=\nor\, \Babla{\xi}\eta\in\Xx(\Sigma)^\perp .
\end{equation}
\end{remark}

The \textbf{second fundamental form tensor} $\mathrm{II}$
of the Riemannian submanifold $\Sigma$ of $M$ is
$C^\infty(\Sigma)$-bilinear and symmetric.
In our codimension $1$ case $U$ generates $\Xx(\Sigma)^\perp$,
so $\mathrm{II}(\xi,\eta)$ is a $C^\infty(\Sigma)$-multiple of $U$.
Multiply~(\ref{eq:findim-decomp-II}) by $U$ to~get
\begin{equation}\label{eq:findim-II}
   \mathrm{II}(\xi,\eta)=\mu(\xi,\eta)\cdot U
   =\tfrac{\inner{\Babla{\xi}\eta}{\Babla{}H}}{\abs{\Babla{}H}^2}\, \Babla{}H
   ,\qquad
   \mu(\xi,\eta)=\inner{\Babla{\xi}\eta}{U} .
\end{equation}
The tensor $\mathrm{II}$ appears in the formal adjoint operator
$(D^0_q)^*$, see~(\ref{eq:adjoint0-findim-lin}).
The \textbf{second fundamental form} $B$ and the
\textbf{shape operator} $S$, both associated to the unit normal vector
field $U$, so determined up to sign, are defined~by
\[
   B(\xi,\eta)
   :=\inner{S\xi}{\eta}
   \stackrel{\text{def. $S$}}{=}
   \inner{\mathrm{II}(\xi,\eta)}{U}
   \stackrel{(\ref{eq:findim-II})}{=}
   \inner{\Babla{\xi}\eta}{U}
\]
for all $\xi,\eta\in\Xx(\Sigma)$. But $0=\xi\inner{\eta}{U}
=\inner{\Babla{\xi}\eta}{U}+\inner{\eta}{\Babla{\xi}U}$. Therefore
the shape operator at $q\in\Sigma$ is the symmetric linear map
\[
   S\colon T_q\Sigma\to T_q\Sigma,\quad
  \xi\mapsto-\Babla{\xi}U .
\]
Implicitly this tells that $\Babla{\xi}U$ is tangent to $\Sigma$
(alternatively hit $\inner{U}{U}=1$ by $\xi$).

\boldmath
\subsection{Critical points are in canonical bijection}
\unboldmath

Critical points of $f=F|_\Sigma$ satisfy $x\in\Sigma$~and
\begin{equation}\label{eq:df}
   0
   =\Nabla{}f(x)
   \stackrel{(\ref{eq:findim-gradf})}{=}
   \bigl(\Babla{}F+\chi\Babla{}H\bigr)(x)
   \quad\Leftrightarrow\quad
   \bigl(dF+\chi dH\bigr)(x)=0 .
\end{equation}
A point $(p,\tau)\in M\times\R$ is critical for the function
$F_H(p,\tau)=F(p)+\tau H(p)$
iff the derivative vanishes
\begin{equation}\label{eq:dF_H}
   dF_H(p,\tau)\begin{pmatrix}X\\ \ell\end{pmatrix}
   =
   dF(p) X+\tau\cdot dH(p) X+\ell\cdot H(p)
   =0
\end{equation}
for all $X\in T_p M$ and $\ell\in\R$.
Fix $X=0$ to obtain $H(p)=0$, that is $p\in\Sigma$.
Now fix $\ell=0$ and set $x:=p$ to obtain that $(x,\tau)$
is a critical point of $F_H$ iff
\begin{equation*}
   dF(x)+\tau\cdot dH(x)=0,\qquad x\in\Sigma .
\end{equation*}

\boldmath
\subsubsection{Canonical embedding}
\unboldmath

\begin{definition}[Canonical embedding]
The graph map of $\chi\colon\Sigma\to\R$, cf.~(\ref{eq:chi}),
\begin{equation}\label{eq:can-emb}
   i:\Sigma\to M\times\R,\quad
   q\mapsto\left(q,\chi(q)\right)
   {\color{gray}\;=\left(\iota(q),\chi(\iota(q))\right)},
\end{equation}
is called the \textbf{canonical embedding}.
The derivative is denoted and given by
\begin{equation}\label{eq:can-emb-deriv}
   I_q:=di(q)\colon T_q\Sigma\to T_q M\times\R,\quad
   \xi\mapsto\left(\xi,d\chi(q)\xi\right) .
\end{equation}
\end{definition}
For simplicity of notation we usually abbreviate $\iota(q)$ by $q$
and $d\iota(q)\xi$ by $\xi$.
Graph maps of smooth functions are embeddings.
The Lagrange function $F_H(p,\tau)=F(p)+\tau H(p)$ coincides along
the image of $i$ with the restriction $f=F|_\Sigma$ to the
zero level $\Sigma$ of $H$, in symbols
\[
   F_H\circ i=f .
\]

\begin{lemma}\label{le:Crit-F_H}
The critical points of $F_H$ and $f$ are in bijection, more precisely
\begin{equation}\label{eq:Crit-F_H}
\begin{split}
   \Crit  F_H
   &=i(\Crit f)\\
   &=\{(x,\chi(x))\in\Sigma\times\R\mid dF(x)+\chi(x)\cdot dH(x)=0\} .
\end{split}
\end{equation}
In particular, along critical points $x$ both functions coincide
$f(x)=F_H(x,\chi(x))$.
\end{lemma}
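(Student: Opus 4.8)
The plan is to prove Lemma~\ref{le:Crit-F_H} by unwinding the two critical point conditions already computed in~(\ref{eq:df}) and~(\ref{eq:dF_H}) and matching them through the graph map $i$. First I would recall from the computation around~(\ref{eq:dF_H}) that $(p,\tau)\in\Crit F_H$ forces, upon testing $dF_H(p,\tau)$ against $(0,\ell)$, the constraint $H(p)=0$, i.e. $p\in\Sigma$; and then, testing against $(X,0)$ for arbitrary $X\in T_pM$, the relation $dF(p)+\tau\, dH(p)=0$. So the critical set of $F_H$ is exactly
\[
   \Crit F_H=\{(x,\tau)\in\Sigma\times\R\mid dF(x)+\tau\, dH(x)=0\}.
\]

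Next I would observe that for a point $x\in\Sigma$ there is \emph{at most one} $\tau$ with $dF(x)+\tau\, dH(x)=0$: along $\Sigma$ the covector $dH(x)$ is nonzero (since $0$ is a regular value of $H$), equivalently $\Babla{}H(x)\neq0$, so scalar-multiplying the equation $\Babla{}F(x)+\tau\Babla{}H(x)=0$ by $\Babla{}H(x)$ gives $\tau=-\inner{\Babla{}F(x)}{\Babla{}H(x)}/\abs{\Babla{}H(x)}^2=\chi(x)$ by the definition~(\ref{eq:chi}) of $\chi$. Hence whenever $(x,\tau)\in\Crit F_H$ we must have $\tau=\chi(x)$, so $(x,\tau)=i(x)$; this shows $\Crit F_H\subseteq i(\Sigma)$, and in fact $\Crit F_H=\{(x,\chi(x))\mid dF(x)+\chi(x)\, dH(x)=0\}$, which is the second displayed line of~(\ref{eq:Crit-F_H}).

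It then remains to identify the condition $dF(x)+\chi(x)\, dH(x)=0$ with $x\in\Crit f$. This is precisely~(\ref{eq:df}): by the gradient identity $\Nabla{}f=\Babla{}F+\chi\Babla{}H$ along $\Sigma$ from Lemma~\ref{le:findim-grad-f}, the vanishing of $dF+\chi\, dH$ at $x$ is equivalent to $\Nabla{}f(x)=0$, i.e.\ $x\in\Crit f$. Conversely, if $x\in\Crit f$ then $\Babla{}F(x)+\chi(x)\Babla{}H(x)=0$, so $(x,\chi(x))=i(x)$ satisfies the defining equation and lies in $\Crit F_H$; thus $i(\Crit f)=\Crit F_H$. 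The final sentence $f(x)=F_H(x,\chi(x))$ is immediate from the identity $F_H\circ i=f$ established just before the lemma (or directly: $F_H(x,\chi(x))=F(x)+\chi(x)H(x)=F(x)=f(x)$ since $H(x)=0$).

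I do not expect any genuine obstacle here: the statement is essentially a bookkeeping consequence of the two derivative computations and the definition of $\chi$, and the only substantive point is the uniqueness of the Lagrange multiplier, which follows instantly from $dH(x)\neq0$ on $\Sigma$. The mild care needed is to present the two inclusions cleanly and to make explicit that $i$ restricted to $\Crit f$ is the inverse of the forgetful map $(x,\tau)\mapsto x$ from~(\ref{eq:Crit=Crit}), so that ``bijection'' is justified and not merely ``equality of images''.
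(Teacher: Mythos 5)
Your proposal is correct and follows exactly the route the paper takes: its one‑line proof ("compare (\ref{eq:df}) and (\ref{eq:Crit-F_H}), where $dH(x)\neq0$ implies $\tau=\chi(x)$") is precisely your argument, relying on the computations around (\ref{eq:df}) and (\ref{eq:dF_H}) and on the uniqueness of the multiplier forced by $dH(x)\neq0$ on $\Sigma$. You merely spell out the details the paper leaves implicit, so there is nothing to add.
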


\begin{proof}
Compare~(\ref{eq:df}) and~(\ref{eq:Crit-F_H}) where $dH(x)\not=0$
implies $\tau=\chi(x)$.
\end{proof}

\boldmath
\subsubsection{Hessians and Morse indices}
\unboldmath

Suppose $x\in\Sigma$ is a \textbf{non-degenerate} critical point of
$f$, that is $0$ is not an eigenvalue of the Hessian operator,
the covariant derivative of $\Nabla{}f$ at $x$, namely
\[
   A_x^0\colon T_x\Sigma\to T_x\Sigma,\quad
   \xi\mapsto D\Nabla{}f(x)\xi=\Nabla{\xi}\Nabla{} f(x) .
\]
This linear map is symmetric;
see identity 2 and 3 in~(\ref{eq:adjoint0-findim-lin-2}) further below.
In local coordinates $A_x^0$ is represented by the Hessian matrix of
second derivatives $a_x^f=\left(\p_i\p_j f(x)\right)_{i,j=1}^n$. This matriz is
symmetric, hence admits $n$ real eigenvalues, counted with
multiplicities. While the Hessian matrix depends on the choice of
coordinates, the number of negative eigenvalues does not.
The number $k$ of negative eigenvalues, counted with multiplicity, of the
Hessian operator $A_x^0$ or, equivalently, of any Hessian matrix $a_x^f$
is called the \textbf{Morse index} of $x$, in symbols $\IND_f(x)=k$.

In the transition from $f$ to $F_H$, in terms of critical points
from $x\in\Sigma$ to $(x,\chi(x))\in M\times\R$, two new eigenvalues
appear, one is positive and the other one is negative.
This result is due to the first
author~\cite{Frauenfelder:2006a}
where the proof is in local coordinates.
It is easy to obtain such coordinates in our scenario:
for the submanifold $H^{-1}(0)\INTO M$ use submanifold coordinates
and for the orthogonal complement use the local flow generated by
the gradient of $H$ suitably rescaled.

\begin{lemma}[Morse index increases by $1$]
\label{le:findim-Morse-indices}
If $x\in\Crit\, f$ is non-degenerate, then so is $(x,\chi(x))\in\Crit\, F_H$
and the Morse index increases by one, in symbols
\begin{equation*}
\begin{split}
   \IND_{F_H}(x,\chi(x))=\IND_f(x)+1 .
\end{split}
\end{equation*}
\end{lemma}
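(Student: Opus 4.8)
\textbf{Proof strategy for Lemma~\ref{le:findim-Morse-indices}.}
The plan is to compute the Hessian of $F_H$ at the critical point $(x,\chi(x))$ directly and exhibit an orthogonal (or at least $F_H$-invariant) splitting of $T_xM\times\R = (T_x\Sigma)\oplus(\R\Babla{}H|_x)\oplus\R$ on which the Hessian quadratic form decomposes into the Hessian of $f$ on $T_x\Sigma$ plus a $2\times2$ block of signature $(1,1)$ on the remaining two-dimensional summand $(\R\Babla{}H|_x)\oplus\R$. First I would write out the second derivative: since $x$ is critical, $dF(x)+\chi(x)dH(x)=0$, and for $(X,\ell),(Y,m)\in T_xM\times\R$ one has
\begin{equation*}
   \Hess F_H(x,\chi(x))\bigl((X,\ell),(Y,m)\bigr)
   =\Hess_G\bigl(F+\chi(x)H\bigr)(x)(X,Y)+\ell\, dH(x)Y+m\, dH(x)X,
\end{equation*}
where $\Hess_G$ denotes the ambient covariant Hessian (the $\chi(x)$ here is the frozen constant, not the function $\chi$, because the mixed term $\tau dH$ differentiated in the $\tau$-direction is exactly the $\ell\,dH$ and $m\,dH$ terms). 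The key point is that the block on $T_x\Sigma\times\{0\}$ is $\Hess_G(F+\chi(x)H)(x)$ restricted to tangent vectors, and I would identify this with the intrinsic Hessian $A_x^0$ of $f$ using the Gauss-type formula $\Nabla{\xi}\Nabla{}f=\tan\Babla{\xi}\bigl(\Babla{}F+\chi\Babla{}H\bigr)$ together with the fact that, at a critical point, $\Babla{}F+\chi\Babla{}H$ vanishes so the derivative of the function $\chi$ drops out and one is left with $\Babla{\xi}(\Babla{}F+\chi(x)\Babla{}H)$; projecting tangentially and using $dH(x)\xi=0$ to kill the normal contributions shows this equals $A_x^0\xi$ up to terms that vanish on $T_x\Sigma$.

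Next I would analyze the complementary $2\times2$ block on $N_x\Sigma\times\R = \R\Babla{}H|_x\oplus\R$. Writing $X=a\Babla{}H|_x$, the cross terms $\ell\,dH(x)X = \ell a\abs{\Babla{}H|_x}^2$ and $m\,dH(x)X$ contribute an off-diagonal entry $\abs{\Babla{}H|_x}^2\neq0$, while the diagonal entry in the $X$-direction is $a^2\Hess_G(F+\chi(x)H)(x)(\Babla{}H,\Babla{}H)$ (some real number) and the diagonal entry in the $\ell$-direction is $0$ (since $F_H$ is linear in $\tau$). A $2\times2$ symmetric matrix $\begin{pmatrix} * & b \\ b & 0\end{pmatrix}$ with $b\neq0$ has determinant $-b^2<0$, hence eigenvalues of opposite sign: exactly one negative and one positive new eigenvalue. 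To conclude that the total index is $\IND_f(x)+1$ I must check that the cross-terms between the $T_x\Sigma$-block and the $N_x\Sigma\oplus\R$-block either vanish or can be removed by a change of basis without affecting the index count; the cross term $\Hess_G(F+\chi(x)H)(x)(\xi,\Babla{}H)$ need not vanish, but since the $2\times2$ block is nondegenerate, a standard linear-algebra lemma (completing the square / block-diagonalizing a symmetric bilinear form with respect to a nondegenerate subspace) lets me split off the $(1,1)$-block and leaves the signature on the complement unchanged; and the induced form on the complement is conjugate to $A_x^0$, whose negative part has dimension $\IND_f(x)$. This also shows nondegeneracy of $(x,\chi(x))$: the form is nondegenerate on $T_x\Sigma$ by hypothesis and on the $2\times2$ block because $\abs{\Babla{}H|_x}\neq0$.

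The main obstacle I anticipate is the bookkeeping in the tangential block: correctly relating the ambient covariant Hessian $\Hess_G(F+\chi(x)H)$ restricted to $T_x\Sigma$ with the intrinsic Hessian $A_x^0=\Nabla{}(\Nabla{}f)$, which differs a priori by second-fundamental-form terms $\mathrm{II}$ and by the derivative of the \emph{function} $\chi$ (as opposed to the frozen value $\chi(x)$). The saving grace is that everything is evaluated \emph{at a critical point}, where $\Nabla{}f(x)=0$ and $(\Babla{}F+\chi\Babla{}H)(x)=0$, so the Leibniz terms involving $d\chi$ are multiplied by a vanishing factor and the normal components that would carry $\mathrm{II}$ are annihilated upon projecting to $T_x\Sigma$ — this is precisely the content hinted at by the identities in~(\ref{eq:adjoint0-findim-lin-2}). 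An alternative, fully rigorous route that sidesteps the intrinsic/extrinsic comparison is to invoke the cited local-coordinate computation of~\cite{Frauenfelder:2006a}: choose submanifold coordinates for $H^{-1}(0)\INTO M$ and, transverse to $\Sigma$, the rescaled gradient flow of $H$ so that $H$ becomes the last coordinate; in these coordinates $F_H$ and $f$ have explicitly comparable Hessians and the two extra eigenvalues of opposite sign appear by inspection of the bordered matrix.
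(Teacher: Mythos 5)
Your argument is correct, and it takes a genuinely different route from the paper. The paper works in flow-box coordinates in which $H(z,r)=r$, writes the Hessian of $F_H$ as a bordered matrix, verifies the claim by inspection in the special case $F(z,r)=f(z)$, and then handles the general case by a homotopy $a_s$ of symmetric matrices whose kernels are shown to be trivial for all $s$, so that the index is constant along the path. You instead block-diagonalize the Hessian form invariantly with respect to the splitting $T_x\Sigma\oplus\R\Babla{}H|_x\oplus\R$: the tangential block is the intrinsic Hessian $A^0_x$ (your reduction here is right — the $d\chi$ and second-fundamental-form corrections are normal, hence killed when paired with tangential vectors, and the remaining $\mathrm{II}$-term carries the factor $\Nabla{}f(x)=0$), and the complementary $2\times2$ block $\bigl(\begin{smallmatrix}*&b\\ b&0\end{smallmatrix}\bigr)$ with $b=\abs{\Babla{}H|_x}^2\neq0$ has signature $(1,1)$. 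Your approach is more direct and coordinate-free; the paper's buys the same conclusion by a continuity-of-eigenvalues argument whose kernel computation is, in disguise, the same cancellation you need.

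One step deserves to be made explicit: after splitting off the nondegenerate $2\times2$ block, the induced form on the complement is the Schur complement $A^0_x-CB^{-1}C^{T}$, not $A^0_x$ itself, and for generic $C,B$ these differ. Here the correction vanishes for a structural reason: the cross-term matrix $C$ has zero column in the $\ell$-direction (since $dH(x)\xi=0$ for $\xi\in T_x\Sigma$), while $B^{-1}$ has zero in its $\Babla{}H$--$\Babla{}H$ entry (since the $\ell$--$\ell$ entry of $B$ is zero), so $CB^{-1}C^{T}=0$. Alternatively, using nondegeneracy of $A^0_x$ one may replace $\Babla{}H|_x$ by $\Babla{}H|_x+v$ with $v\in T_x\Sigma$ chosen to kill the cross terms, which leaves the off-diagonal entry $b$ of the $2\times2$ block unchanged. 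With either completion your proof is complete.
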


\begin{remark}
By Lemma~\ref{le:Crit-F_H} and~\ref{le:findim-Morse-indices},
if $f$ is Morse, so is $F_H$. Let $f$ be Morse.
Since the dimension difference $\dim M-\dim\Sigma=2$ is two,
there always arises together with the negative Hessian eigenvalue exactly one
positive eigenvalue. Consequently the Hessian
of $F_H$ at a critical point is never negative (positive\footnote{
  Replace $f$ by $-f$.
  })
definite. Hence critical points of $F_H$ are not minima (maxima), hence not
detectable by direct methods using minimization (maximization).
\end{remark}

\begin{proof}
Given $F,G\colon M\to\R$ with $G\pitchfork 0$, let
$\Sigma:=H^{-1}(0)\subset M$.
Pick a critical point $x$ of $f=F|_\Sigma\colon\Sigma\to\R$.
Choose a local coordinate chart between open subsets
\[
   \phi\colon M\supset V\to U\subset\R^n
   ,\quad p\mapsto\phi(p)=(z_1,\dots,z_{n},r)=(z,r),
\]
which takes $x$ to the origin of $\R^n$ and has the following properties:
\begin{itemize}\setlength\itemsep{0ex} 
\item[a)]
  the part of $\Sigma$ in $V$ corresponds to the part of
  $\R^{n}\times 0$ in $U$;
\item[b)]
  in local coordinates $H$ is given by $(z,r)\mapsto r$.
  \hfill {\small\color{gray} $H(z,r)=r$}
\end{itemize}
Such coordinates exist: By compactness of $\Sigma$ there is
a constant $\delta>0$ such that the vector field
$V=\Babla{}H/\abs{\Babla{}H}^2$ along $M_{\rm reg}$
generates a local flow, notation
$
   \varphi\colon\Sigma\times(-\delta,\delta)\to M
$,
$
  (q,r)\mapsto\varphi_r q
$.
The identities
\[
   H(\varphi_0 q)=0
   ,\qquad
   \tfrac{d}{dr}H(\varphi_r q)
   =dH|_{\varphi_r q}\,\tfrac{d}{dr} \varphi_r q
   =\INNER{\Babla{}H|_{\varphi_r q}}{\tilde U|_{\varphi_r q}}=1,
\]
show that $H(\varphi_r (q))=r$.
Compose $\varphi$ with submanifold coordinates of $\Sigma$ in $M$.

In the following local coordinate representations of maps
are denoted by the same symbols as the maps themselves.
For instance, for $F$ in our local coordinates we write $F(z,r)$.
In these local coordinates we have
\[
   \text{(i) $f(z)=F(z,0)$},\qquad
   \text{(ii) $F_H(z,r,\tau)=F(z,r)+\tau r$}.
\]
The proof proceeds in two steps. First we consider the special case where
$F(z,r)=f(z)$, second we homotop the general case to the special case.

\medskip
\noindent
\textbf{Special case \boldmath$F(z,r)=f(z)$.}
The gradient of $F_H(z,r,\tau)=f(z)+\tau r$
is $\Nabla{}F_H(z,r,\tau)=\left(\Nabla{}f(z),\tau,r\right)$, so
the Hessian at the critical point $(x,0,\chi(x))$ is
\[
   a_0:=a_{(x,0,\chi(x))}^{F_H}=
   \begin{bmatrix}
      a_x^f&0&0\\
      0&0&1\\
      0&1&0
   \end{bmatrix} .
\]
Since the lower $2\times 2$ diagonal block has eigenvalues $-1,+1$ we
are done.

\medskip
\noindent
\textbf{General case \boldmath$F(z,r)$.}
The gradient of $F_H(z,r,\tau)=F(z,r)+\tau r$ is given by
$\Nabla{}F_H(z,r,\tau)=\left(\Nabla{1}F(z,r),\Nabla{2}F(z,r)+\tau,r\right)$,
so the Hessian at $(x,0,\chi(x))$ is the matrix $a_1=a_{(x,0,\chi(x))}^{F_H}$
given by setting $s=1$ in the interpolating family

\[
   a_{\color{red}s}:=
   \begin{bmatrix}
      a_x^f&{\color{red}s}\Nabla{2}\Nabla{1} F(x,0)&0\\
      {\color{red}s}\Nabla{1}\Nabla{2} F|_{(x,0)}
         &{\color{red}s}\Nabla{2}\Nabla{2} F|_{(x,0)}&1\\
      0&1&0
   \end{bmatrix}
   ,\quad s\in[0,1] .
\]
Zero is not an eigenvalue of $a_1$: Let
$(\xi,R,T)\in\ker a_1\subset\R^{n}\times\R\times\R $, then
\[
   \begin{bmatrix}0\\0\\{\color{cyan} 0}\end{bmatrix}
   =a_1\begin{bmatrix}\xi\\R\\T\end{bmatrix}
   =\begin{bmatrix}
      a_x^f\xi+\Nabla{2}\Nabla{1} F(x,0)R\\
      \Nabla{1}\Nabla{2} F|_{(x,0)} \xi+\Nabla{2}\Nabla{2} F|_{(x,0)} R+T\\
      {\color{cyan} R}
   \end{bmatrix}
   =\begin{bmatrix}
      a_x^f\xi\\
      \Nabla{1}\Nabla{2} F|_{(x,0)} \xi+T\\
      0
   \end{bmatrix} .
\]
Thus $R=0$. Since $a_x^f$ does not have eigenvalue zero,
if $a_x^f\xi=0$, then $\xi=0$, so $T=0$.
For any $s\in[0,1)$ the same argument shows that the matrix~$a_s$
does not have eigenvalue $0$. But each eigenvalue depends
continuously on the matrix $a_s$, so $a_1$ and $a_0$ do have the same
number of negative/positive eigenvalues.
\end{proof}

\boldmath
\section{Downward gradient flows}
\label{sec:DGF}
\unboldmath

\boldmath
\subsection[Base downward gradient flow of $f$ on $\Sigma$]
{Base flow}
\label{sec:DGF-base}
\unboldmath

The downward gradient equation on the regular hypersurface
$(\Sigma,g)=(H^{-1}(0),\iota^*G)$ of the restriction
$f=F|_\Sigma\colon\Sigma\to\R$ is given by
\begin{equation}\label{eq:DGF-ff-pre}
\begin{aligned}
   \p_sq
   =-\Nabla{}f(q)
   &\stackrel{(\ref{eq:findim-gradf})}{=}
   -\bigl(\Babla{}F+\chi\Babla{}H\bigr)(q)
   \\
   f&\stackrel{{\color{white}(1.5)}}{=}
   F\circ\iota\colon\Sigma\to\R
\end{aligned}
\end{equation}
for smooth maps $q\colon\R\to \Sigma$ and where $\chi$ is defined
by~(\ref{eq:chi}).

Pointwise evaluation at $s\in\R$ extends the canonical
embedding~(\ref{eq:can-emb}) from points in $\Sigma$ to smooth maps
$q\colon\R\to\Sigma$. The induced embedding, still denoted~by
\begin{equation}\label{eq:finfim-i}
   i(q)=\left(\iota\circ q,\chi\circ\iota\circ q\right)=:(u,\tau)
   ,\qquad q\colon\R\to\Sigma,
\end{equation}
is injective consisting of a pair of maps
$u=\iota\circ q\colon\R\to M$
and $\tau=\chi\circ\iota\circ q\colon\R\to\R$.

Consider the pair of equations
\begin{equation}\label{eq:DGF-ff}
\begin{aligned}
   \p_su
   +\Babla{}F(u)+\tau\Babla{}H(u)
   &=0
   \\
   H\circ u&=0
\end{aligned}
\end{equation}
for smooth maps $(u,\tau)\colon\R\to M\times\R$.

\begin{lemma}[Base equation]\label{def:DGF-ff}
If $q\colon\R\to\Sigma$ solves~(\ref{eq:DGF-ff-pre}), then
$(u,\tau):=i(q)$ as defined by~(\ref{eq:finfim-i})
solves~(\ref{eq:DGF-ff}) and every solution of~(\ref{eq:DGF-ff})
arises this way.
\end{lemma}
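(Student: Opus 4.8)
The plan is to prove the two directions of the correspondence separately, both being essentially immediate unwindings of definitions together with the gradient identities already established in Lemma~\ref{le:findim-grad-f}. Write $i(q)=(u,\tau)$ with $u=\iota\circ q$ and $\tau=\chi\circ\iota\circ q$.

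\emph{From a base solution to an ambient solution.} Suppose $q\colon\R\to\Sigma$ solves~(\ref{eq:DGF-ff-pre}). Since $u=\iota\circ q$ takes values in $\Sigma=H^{-1}(0)$, the second equation $H\circ u=0$ holds trivially, so the content is the first equation. Under the silent identification of $q$ with $\iota(q)$ and $\partial_s q\in T\Sigma$ with its image in $TM$, the base equation reads $\partial_s q=-\Nabla{}f(q)$, and by the last gradient identity in~(\ref{eq:findim-gradf}) we have $\Nabla{}f(q)=\bigl(\Babla{}F+\chi\Babla{}H\bigr)(q)$. Substituting $\tau=\chi(q)$ and $u=q$ gives exactly $\partial_s u+\Babla{}F(u)+\tau\Babla{}H(u)=0$, i.e.~the first line of~(\ref{eq:DGF-ff}). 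Here I should be slightly careful that $\Babla{\xi}$ along $\Sigma$-valued maps is the \emph{induced} connection of Lemma~\ref{le:findim-ind-conn}, and that $\partial_s u$ computed in $M$ along $u=\iota\circ q$ agrees with $d\iota$ applied to $\partial_s q$ computed in $\Sigma$ — this is just naturality of pullback and the chain rule, but it is the only place any real identification is invoked.

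\emph{From an ambient solution to a base solution.} Conversely, let $(u,\tau)\colon\R\to M\times\R$ solve~(\ref{eq:DGF-ff}). The second equation $H\circ u\equiv 0$ forces $u$ to take values in $\Sigma$, so there is a unique smooth $q\colon\R\to\Sigma$ with $u=\iota\circ q$. It remains to identify $\tau$ with $\chi\circ q$ and to check $q$ solves the base equation. Differentiating $H\circ u\equiv 0$ gives $dH|_u\,\partial_s u=0$, i.e.~$\partial_s u\in T\Sigma$ pointwise; hence $\nor(\partial_s u)=0$. Apply $\nor$ to the first equation of~(\ref{eq:DGF-ff}): using $\nor\,\Babla{}F=-\chi\Babla{}H$ from~(\ref{eq:findim-gradf}) and $\nor\,\Babla{}H=\Babla{}H$, we get $0=-\chi(u)\Babla{}H(u)+\tau\Babla{}H(u)=\bigl(\tau-\chi(u)\bigr)\Babla{}H(u)$; since $\Babla{}H\ne 0$ along $\Sigma$ (regular value), $\tau=\chi(u)=\chi\circ q$, so $(u,\tau)=i(q)$. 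Now apply $\tan$ to the first equation: since $\tan(\partial_s u)=\partial_s u$ and $\tan\bigl(\Babla{}F+\chi\Babla{}H\bigr)=\Nabla{}f$ by the gradient identities, we obtain $\partial_s u+\Nabla{}f(u)=0$, which under the identification is precisely~(\ref{eq:DGF-ff-pre}) for $q$.

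There is no serious obstacle here; the argument is a two-line computation in each direction once~(\ref{eq:findim-gradf}) is in hand. If anything, the only point deserving a word of care is the bookkeeping of the identification between objects on $\Sigma$ and their images in $M$ — in particular that $\partial_s u$ and the induced covariant derivatives along $u=\iota\circ q$ really are the pushforwards of the corresponding objects along $q$, so that applying $\tan$ to the ambient first equation genuinely reproduces the intrinsic gradient equation on $(\Sigma,g)$. This is guaranteed by the fact (Lemma~\ref{le:findim-ind-conn}, and the definition of the induced connection) that $\nabla=\tan\circ\Babla{}$ on $T\Sigma$-valued fields, together with $g=\iota^*G$. I would state these identifications once at the start of the proof and then let the two computations run.
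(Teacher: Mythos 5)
Your proof is correct and follows essentially the same route as the paper's: the forward direction is the gradient identity $\Nabla{}f=\tan\Babla{}F=\Babla{}F+\chi\Babla{}H$ along $\Sigma$, and the converse uses that $H\circ u\equiv 0$ forces $u$ into $\Sigma$ and makes $\p_s u$ tangential, whence pairing the first equation with $\Babla{}H$ (equivalently, applying $\nor$ as you do) yields $\tau=\chi(u)$. The paper extracts $\tau=\chi(u)$ directly from the definition~(\ref{eq:chi}) rather than via $\nor\,\Babla{}F=-\chi\Babla{}H$, but this is the same computation.
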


\begin{proof}
Identifying domain and image of $\iota\colon\Sigma\to M$
the first lines of~(\ref{eq:DGF-ff-pre}) and of~(\ref{eq:DGF-ff}) are
just the same equation whenever $u=\iota(q)$ and $\tau=\chi(q)$.

Now suppose $(u,\tau)\colon\R\to M\times\R$ solves~(\ref{eq:DGF-ff}).
By the second equation $u$ takes values in $\Sigma$. This has two
consequences. 
  Firstly, we can view $u$ as a map to $\Sigma$, notation
$q_u\colon\R\to\Sigma$.
  Secondly, the derivative $\p_s u$ is tangential to
$\Sigma$, hence so is $-\p_su =\Babla{}F(u)+\tau\Babla{}H(u)$.
Take the inner product with the normal field $\Babla{}H(u)$ to get
$0=\inner{\Babla{}F(u)}{\Babla{}H(u)}+\tau\abs{\Babla{}H(u)}^2$.
By definition~(\ref{eq:chi}) this means that $\tau=\chi(u)=\chi(q_u)$.
Hence $i(q_u)=\left(\iota(q_u),\chi(\iota(q_u))\right)=(u,\tau)$.
\end{proof}

\boldmath
\subsubsection{Base energy $E^0$}
\unboldmath

Given critical points $x^\mp$ of $f\colon \Sigma\to\R$, we impose
on a smooth map $q\colon\R\to \Sigma$ the asymptotic
boundary conditions
\begin{equation}\label{eq:limit-ff}
   \lim_{s\to\mp\infty} q_s=x^\mp .
\end{equation}

\begin{definition}\label{def:energy-ff}
Define the \textbf{base energy} of a smooth map
$q\colon\R\to\Sigma$ by
\begin{equation*}
\begin{split}
   E^0(q)
   &\stackrel{\;\:\text{def.}\;\:}{=}
   \tfrac12\int_{-\infty}^\infty
   \Abs{\p_s q_s}^2
   +\Abs{\Nabla{}f(q_s)}^2 ds\\
   &\stackrel{(\ref{eq:DGF-ff-pre})}{=}
   \tfrac12\int_{-\infty}^\infty
   \Abs{\p_s q_s}^2
   +\Abs{\Babla{} F(q_s)+\chi(q_s)\cdot\Babla{} H(q_s)}^2 ds\\
   &\stackrel{\;\:\text{def.}\;\:}{=}
   E^0(q,\chi(q)) .
\end{split}
\end{equation*}
\end{definition}

\begin{lemma}[Energy identity]
\label{le:energy-identity-0}
Let $q\colon\R\to\Sigma$ be a smooth solution
of~(\ref{eq:DGF-ff-pre}).
Then the energy is bounded by the oscillation of $f$ and there is
the energy identity
\begin{equation}\label{eq:energy-0-osc}
   E^0(q)
   \stackrel{(\ref{eq:DGF-ff-pre})}{=}
   \Norm{\p_sq}^2
   \le\osc f
   :=\max f-\min f<\infty
\end{equation}
where $\norm{\cdot}$ is the $L^2$ norm.
With asymptotic boundary conditions~(\ref{eq:limit-ff}) it holds
\begin{equation}\label{eq:energy-0}
   E^0(q)
   \stackrel{(\ref{eq:DGF-ff-pre})}{=}
   \Norm{\p_sq}^2
   \stackrel{(\ref{eq:limit-ff})}{=}
   f(x^-)-f(x^+)=: c^*.
\end{equation}

\end{lemma}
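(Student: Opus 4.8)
The plan is to run the standard Floer-type energy computation: along a downward gradient trajectory the squared speed equals minus the derivative of the function, so the energy telescopes.

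First I would exploit the gradient equation pointwise. Since $q$ solves~(\ref{eq:DGF-ff-pre}), at every $s\in\R$ we have $\p_s q_s=-\Nabla{}f(q_s)$, hence
\[
   \Abs{\p_s q_s}^2
   =\Abs{\Nabla{}f(q_s)}^2
   =-\INNER{\p_s q_s}{\Nabla{}f(q_s)}
   =-\tfrac{d}{ds}\,f(q_s).
\]
In particular the two terms in the integrand defining $E^0(q)$ coincide, which is the first equality $E^0(q)=\Norm{\p_s q}^2$ in both~(\ref{eq:energy-0-osc}) and~(\ref{eq:energy-0}), read as an identity in $[0,\infty]$.

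Next I would integrate over a bounded interval. For $-\infty<a<b<\infty$ the fundamental theorem of calculus gives
\[
   \int_a^b\Abs{\p_s q_s}^2\,ds
   =f(q_a)-f(q_b)
   \le\max_\Sigma f-\min_\Sigma f
   =\osc f,
\]
where the extrema exist because $\Sigma$ is compact. The left-hand side is monotone (nondecreasing as $a\downarrow-\infty$ and as $b\uparrow+\infty$) and bounded above by $\osc f<\infty$; therefore the improper integral converges and $\Norm{\p_s q}^2\le\osc f$, which is~(\ref{eq:energy-0-osc}) and, en passant, yields $E^0(q)<\infty$.

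Finally, under the asymptotic conditions~(\ref{eq:limit-ff}), continuity of $f$ gives $f(q_a)\to f(x^-)$ as $a\to-\infty$ and $f(q_b)\to f(x^+)$ as $b\to+\infty$; letting $a\to-\infty$ and $b\to+\infty$ in the displayed identity produces $\Norm{\p_s q}^2=f(x^-)-f(x^+)=c^*$, which is~(\ref{eq:energy-0}). There is no serious obstacle here: the only points deserving a word are the convergence of the improper integral, handled by the monotonicity just noted, and the finiteness of $\osc f$, immediate from compactness of $\Sigma$.
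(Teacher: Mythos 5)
Your proof is correct and follows essentially the same route as the paper: use the gradient equation to turn $\Abs{\p_s q}^2$ into $-\tfrac{d}{ds}f(q_s)$, integrate, and evaluate the boundary terms via compactness of $\Sigma$ for~(\ref{eq:energy-0-osc}) and via the asymptotics for~(\ref{eq:energy-0}). The only difference is that you spell out the convergence of the improper integral by monotonicity, a detail the paper leaves implicit.
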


\begin{proof}
We see that
\begin{equation*}
\begin{split}
   E^0(q)
   &\stackrel{(\ref{eq:DGF-ff-pre})}{=}
   \lim_{T\to\infty}\int_{-T}^T \Abs{\p_sq_s}^2 ds
   \\
   &\stackrel{(\ref{eq:DGF-ff-pre})}{=}
   \lim_{T\to\infty}\int_{-T}^T
   -\inner{\Nabla{}f(q_s)}{\p_sq_s}\, ds
   \\
   &\stackrel{{\color{white}(3.33)}}{=}
   -\lim_{T\to\infty}\int_{-T}^T\tfrac{d}{ds} f(q_s)\, ds
   \\
   &\stackrel{{\color{white}(3.33)}}{=}
   \lim_{T\to\infty} \left(f(q_{-T})-f(q_T)\right) .
\end{split}
\end{equation*}
Now both, (\ref{eq:energy-0-osc}) and (\ref{eq:energy-0}), are obvious.
\end{proof}

\boldmath
\subsection[Ambient flow deformation by a parameter $\eps$]
{Ambient flow and deformation}
\label{sec:deformation}
\unboldmath

\textbf{Ambient flow.}
We endow the product $M\times\R$ with the product metric
$h^1:=G\oplus 1$ and the associated Levi-Civita connection $\nabla^1$.
The downward gradient equation for the function
$F_H\colon M\times\R\to\R$ from~(\ref{eq:findim-F_H}), namely
$\p_s z=-\nabla^1 F_H(z)$, is according to~(\ref{eq:dF_H}) given by
the pair of equations
\begin{equation}\label{eq:DGF-F_H}
   \begin{pmatrix}
      \p_su\\
      \tau^\prime
   \end{pmatrix}
   =\p_s z=-\nabla^1 F_H(z)
   =-
   \begin{pmatrix}
      \Babla{} F(u)+\tau\Babla{} H(u)\\
      H(u)
   \end{pmatrix}
\end{equation}
for smooth maps $z=(u,\tau)\colon\R\to M\times\R$.
The ambient energy $E^1$ is $E^{\eps=1}$
in Definition~\ref{def:energy-def-I}.

\medskip\noindent
\textbf{Deformed flow.}
For $\eps>0$ consider on $M\times\R$ the rescaled Riemannian metric
and associated Levi-Civita connection
\begin{equation}\label{eq:h^eps}
   h^\eps:=G\oplus\eps^2,\qquad \nabla^\eps.
\end{equation}
Thus the inner product of elements $Z=(X,\ell)$ and
$\tilde Z=(\tilde X,\tilde \ell )$ of $T_uM\times\R$ is
\[
   h^\eps(Z,\tilde Z)
   =\inner{X}{\tilde X}+\eps^2 \ell  \tilde \ell 
   ,\qquad
   \abs{Z}_\eps^2:=h^\eps(Z,Z)
   =\abs{X}^2+\eps^2 \ell ^2.
\]
By~(\ref{eq:dF_H}) the downward $\eps$-gradient equation for the
function $F_H$ on $M\times\R$ is
\begin{equation}\label{eq:DGF-def-I}
   \begin{pmatrix}
      \p_s u\\
      \tau^\prime
   \end{pmatrix}
   =\p_s z=-\nabla^\eps F_H(z)
   =-
   \begin{pmatrix}
      \Babla{}F(u)+\tau\Babla{}H(u)\\
      \eps^{-2} H(u)
   \end{pmatrix}
\end{equation}
for smooth maps $z=(u,\tau)\colon\R\to M\times\R$.

Multiply the second equation by $\eps^2$ and
formally set $\eps=0$ to obtain that $H(u_s)=0$ $\forall s\in\R$.
This suggests that in the limit
$\eps\to 0$ the solutions to~(\ref{eq:DGF-def-I}) converge to a
solution of the base equation~(\ref{eq:DGF-ff}).

\boldmath
\subsubsection{Ambient energy $E^\eps$}\label{sec:E^eps}
\unboldmath

Given critical points $x^\mp$ of $f\colon\Sigma\to\R$, impose
on a smooth map $(u,\tau)\colon\R\to M\times\R$
the asymptotic boundary conditions
\begin{equation}\label{eq:limit-F_H}
   \lim_{s\to\mp\infty} \left(u_s,\tau_s\right)
   =\left(x^\mp,\chi(x^\mp)\right)\stackrel{(\ref{eq:Crit-F_H})}{\in}\Crit F_H.
\end{equation}

\begin{definition}\label{def:energy-def-I}
The \textbf{\boldmath$\eps$-energy} of a smooth map
$z=(u,\tau)\colon\R\to M\times\R$ is
\begin{equation*}
\begin{split}
   E^\eps(u,\tau)
   :&=\tfrac12\int_{-\infty}^\infty
   \Abs{\p_s z_s}^2_{\eps}
   +\abs{\nabla^\eps F_H(z_s)}^2_{\eps} ds \\
   &=\tfrac12\int_{-\infty}^\infty
   \Abs{\p_s u_s}^2+\eps^2{\tau^\prime_s}^2
   +\Abs{\Babla{}F(u_s)+\tau_s\Babla{} H(u_s)}^2
   +\eps^{-2}H(u_s)^2
   ds.
\end{split}
\end{equation*}
\end{definition}

\begin{lemma}[Energy identity]\label{le:energy-identity-def-I}
Given $\eps>0$, let $(u,\tau)\colon\R\to M\times\R$ be a solution
of~(\ref{eq:DGF-def-I}). Then the following is true.
a) There is the identity
\begin{equation}\label{eq:energy-eps-osc}
   E^\eps(u,\tau)
   \stackrel{(\ref{eq:DGF-def-I})}{=}
   \Norm{\p_su}^2+\eps^2\norm{\tau^\prime}^2
   \in[0,\infty]
\end{equation}
where $\norm{\cdot}$ denotes $L^2$ norms.
b) If, in addition, the energy $E^\eps(u,\tau)<\infty$ is finite, then
the energy is bounded by the oscillation of $f$, in symbols
\[
   E^\eps(u,\tau)
   \le\max f-\min f=: \osc f<\infty .
\]
c) In case of asymptotic boundary
conditions~(\ref{eq:limit-F_H}) there is the energy identity
\begin{equation}\label{eq:energy-eps}
   E^\eps(u,\tau)
   \stackrel{(\ref{eq:DGF-def-I})}{=}
   \Norm{\p_su}^2+\eps^2\norm{\tau^\prime}^2
   \stackrel{(\ref{eq:limit-F_H})}{=}
   f(x^-)-f(x^+)=: c^* .
\end{equation}
\end{lemma}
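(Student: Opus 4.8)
The plan is to dispatch (a) and (c) by the standard gradient-flow computation, and then reduce (b) to (c).

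For (a): along a solution of~(\ref{eq:DGF-def-I}) one has $\p_s z=-\nabla^\eps F_H(z)$, so the two integrands in Definition~\ref{def:energy-def-I} agree and
\[
   E^\eps(u,\tau)=\int_{-\infty}^\infty\abs{\p_s z_s}_\eps^2\,ds
   =\Norm{\p_su}^2+\eps^2\norm{\tau^\prime}^2\in[0,\infty],
\]
with no further hypothesis. For (c): for each $T>0$ the chain rule and the gradient equation give
\[
   \int_{-T}^T\abs{\p_s z_s}_\eps^2\,ds
   =-\int_{-T}^T\inner{\nabla^\eps F_H(z_s)}{\p_s z_s}_\eps\,ds
   =-\int_{-T}^T\tfrac{d}{ds}F_H(z_s)\,ds
   =F_H(z_{-T})-F_H(z_T).
\]
Letting $T\to\infty$, the left side increases to $E^\eps(u,\tau)$ by (a), while the right side tends to $F_H(x^-,\chi(x^-))-F_H(x^+,\chi(x^+))$ by the asymptotic conditions~(\ref{eq:limit-F_H}) and continuity of $F_H$; since $F_H\circ i=f$, i.e.\ $F_H(x^\mp,\chi(x^\mp))=f(x^\mp)$, this is exactly~(\ref{eq:energy-eps}), and in particular $E^\eps(u,\tau)=c^*<\infty$ in this case.

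For (b) I would show that a finite-energy solution, even without prescribed asymptotics, accumulates at $\pm\infty$ on $\Crit F_H$, and then re-run the telescoping identity. Put $g(s):=F_H(z_s)$; then $g^\prime(s)=-\abs{\p_s z_s}_\eps^2\le 0$, so $g$ is non-increasing, and since $\int_{-\infty}^\infty(-g^\prime)=E^\eps(u,\tau)<\infty$ by (a) while $g(0)$ is a finite real, both limits $g(\pm\infty)$ exist, are finite, and $E^\eps(u,\tau)=g(-\infty)-g(+\infty)$. Because $\int_{-\infty}^\infty\abs{\p_s z_s}_\eps^2\,ds<\infty$, one may pick sequences $s_k^\pm\to\pm\infty$ with $\p_s z_{s_k^\pm}\to 0$, hence $\Babla{}F(u_{s_k^\pm})+\tau_{s_k^\pm}\Babla{}H(u_{s_k^\pm})\to 0$ and $H(u_{s_k^\pm})\to 0$. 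The decay $H(u_{s_k^\pm})\to 0$ forces $u_{s_k^\pm}$ eventually into the compact set $\Sigma_\kappa=H^{-1}[-\kappa,\kappa]$ of Hypothesis~\ref{hyp:uniform-Lag-bound-intro}(ii); along a subsequence $u_{s_k^\pm}\to u_\pm$ with $H(u_\pm)=0$, so $u_\pm\in\Sigma$. As $\Babla{}H(u_\pm)\ne 0$, the first limit then forces $\tau_{s_k^\pm}$ to converge (along a further subsequence) to the unique $\tau_\pm$ with $\Babla{}F(u_\pm)+\tau_\pm\Babla{}H(u_\pm)=0$, namely $\tau_\pm=\chi(u_\pm)$ by~(\ref{eq:chi}); thus $(u_\pm,\tau_\pm)\in\Crit F_H$. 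Hence $g(s_k^\pm)=F(u_{s_k^\pm})+\tau_{s_k^\pm}H(u_{s_k^\pm})\to F(u_\pm)=f(u_\pm)$, so by monotonicity $g(\pm\infty)=f(u_\pm)$ and
\[
   E^\eps(u,\tau)=g(-\infty)-g(+\infty)=f(u_-)-f(u_+)\le\max_\Sigma f-\min_\Sigma f=\osc f.
\]

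The formal parts are (a) and (c); the one point needing care is in (b), namely keeping $u_{s_k^\pm}$ in a compact region and $\tau_{s_k^\pm}$ bounded. This is exactly where local properness of $H$ and the non-degeneracy of its zero level are used, and it substitutes for the automatic bound $E^0(q)\le\osc f$ available for the base flow, where the hypersurface $\Sigma$ is itself compact (Lemma~\ref{le:energy-identity-0}).
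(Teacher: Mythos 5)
Your parts (a) and (c) are exactly the paper's computation: expand the energy using $\p_s z=-\nabla^\eps F_H(z)$, telescope to $F_H(z_{-T})-F_H(z_T)$, and pass to the limit using the asymptotics and $F_H\circ i=f$. For part (b), however, you take a genuinely different route. The paper first proves a quantitative statement (its Step~1): for every $\mu>0$ there is $\delta(\mu)>0$ such that $\abs{\nabla^\eps F_H(p,t)}_\eps^2\le\delta$ forces $\min f-\mu\le F_H(p,t)\le\max f+\mu$ (local properness puts $p$ near $\Sigma$, and the lower bound on $\abs{\Babla{}H}$ there controls $\abs{t}$); it then argues by contradiction, using monotonicity of $F_H\circ z$ and the existence of a small-gradient time $s_1\le s_0$, to get the pointwise bound $\min f\le F_H(z_s)\le\max f$ for \emph{all} $s$, from which $E^\eps\le\osc f$ follows by the telescoping identity. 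You instead extract sequences $s_k^\pm\to\pm\infty$ along which the gradient tends to zero, use local properness and compactness of $\Sigma_\kappa$ to get subsequential limits $u_\pm\in\Sigma$, recover $\tau_\pm=\chi(u_\pm)$ from the nonvanishing of $\Babla{}H$ along $\Sigma$, and conclude $g(\pm\infty)=f(u_\pm)$ by monotonicity of $g=F_H\circ z$. Both arguments rest on the same two ingredients (local properness and $\min_\Sigma\abs{\Babla{}H}>0$) and both are correct; the paper's version yields the slightly stronger pointwise bound $F_H(z_s)\in[\min f,\max f]$ for every $s$, while yours yields the sharper identity $E^\eps=f(u_-)-f(u_+)$ for actual (critical) points $u_\pm\in\Sigma$ even without prescribed asymptotics. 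The only steps you should make explicit are that finiteness of $\int\abs{\p_s z}_\eps^2$ gives $\liminf_{s\to\pm\infty}\abs{\p_s z_s}_\eps=0$ (hence the sequences $s_k^\pm$ exist), and that $\tau_{s_k^\pm}$ is \emph{bounded} before it converges, which follows from $\abs{\tau_{s_k}}\,\abs{\Babla{}H(u_{s_k})}\le\abs{\Babla{}F(u_{s_k})}+\abs{\p_s u_{s_k}}$ together with the uniform lower bound on $\abs{\Babla{}H}$ near $\Sigma$; as written these are implicit but easily supplied.
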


\begin{proof}
Fix $\eps>0$.
We see that
\begin{equation*}
\begin{split}
   E^\eps(u,\tau)
   &\stackrel{(\ref{eq:DGF-def-I})}{=}
   \lim_{T\to\infty}\int_{-T}^T
   \Bigl(\Abs{\p_su_s}^2+\eps^2{\tau^\prime_s}^2\Bigr)\, ds
   \\
   &\stackrel{(\ref{eq:DGF-def-I})}{=}
   \lim_{T\to\infty}\int_{-T}^T
   -\INNER{\p_s u_s}{\Babla{}F(u_s)-\tau_s\Babla{}H(u_s)}_G
   -\tau^\prime_s\cdot H(u_s)\, ds
   \\
   &\stackrel{{\color{white}(3.30)}}{=}
   -\lim_{T\to\infty}\int_{-T}^T
   dF|_{u_s} \p_su_s+\tau_s\, dH|_{u_s} \p_su_s 
   +\tau^\prime_s\cdot H(u_s)\, ds
   \\
   &\stackrel{{\color{white}(3.30)}}{=}
   -\lim_{T\to\infty}\int_{-T}^T
   \frac{d}{ds} \Bigl(F(u_s)+\tau_s
     H(u_s)\Bigr)\, ds
   \\
   &\stackrel{{\color{white}(3.30)}}{=}
   \lim_{T\to\infty} \left(F_H(u_{-T},\tau_{-T})
   -F_H(u_T,\tau_T)\right).
\end{split}
\end{equation*}
This proves a) and also c) since $F_H(x^-,\chi(x^-))=
F(x^-)+\chi(x^-) H(x^-)=f(x^-)\le\max f$ and similarly at $x^+$.
b) That the right hand side of the displayed formula is
$\le \max f-\min f$ will be proved in two steps.

\medskip
\noindent
\textbf{Step 1.}
{\it
Fix $\eps>0$.
For each $\mu>0$ there exists a $\delta=\delta(\mu)>0$ with the
following property. At any point $(p,t)\in M\times\R$ where the
gradient is $\delta$-small
\begin{equation}\label{eq:grad-delta}
   \abs{\nabla^\eps F_H(p,t)}_\eps^2
   =\abs{\Babla{}F(p)+t\Babla{}H(p)}^2+\eps^{-2} H(p)^2
   \le\delta
\end{equation}
the value of the multiplier function lies in the $\mu$-interval
\begin{equation}\label{eq:grad-delta-2}
   \min f-\mu\le F_H(p,t)\le \max f+\mu.
\end{equation}
}

\noindent
To prove Step~1 suppose that a point $(p,t)$
satisfies~(\ref{eq:grad-delta}). Hence $H(p)^2\le\delta\eps^2$.
Since $H$ is locally proper around zero by
Hypothesis~\ref{hyp:uniform-Lag-bound-intro}, it follows
that for any open neighborhood $U$ of $\Sigma$
there exists a $\delta_U>0$ such that
the point $p$ lies in $U$ whenever $(p,t)$ satisfies~(\ref{eq:grad-delta})
for $\delta=\delta_U$.
Otherwise, there would exist a sequence $p_\nu\notin U$ with the
property that $H(p_\nu)\to 0$, as $\nu\to\infty$.
By local properness there is a subsequence $p_{\nu_k}$ which converges
to a point $p_\infty \in \Sigma=H^{-1}(0)$ contradicting
the assumption that none of the $p_\nu$ lies in the open neighborhood
$U$ of the compact set $\Sigma$.
\\
Given $\mu>0$, we choose $U(\mu)$:
Since zero is a regular value of $H$ and $\Sigma=H^{-1}(0)$ is compact
there exists an open neighborhood $U(\mu)$ of $\Sigma$ and constants
$c,C>0$ such that
\[
   c\le \inf_U \abs{\Babla{}H}
   ,\quad
   \sup_U\abs{\Babla{}F}\le C
   ,\quad
   \sup_U F\le\max f+\tfrac{\mu}{2}
   ,\quad
   \inf_U F\ge\min f-\tfrac{\mu}{2} .
\]
We choose $\delta=\delta(\mu)$:
Choose $\delta<\min\{\delta_{U(\mu)},C^2,\frac{\mu^2 c^2}{16\eps^2C^2}\}$.
From~(\ref{eq:grad-delta}) we deduce firstly that $p\in U(\mu)$
and secondly that, together with
$$
   \sqrt{\delta}\ge \abs{\Babla{}F(p)+t\Babla{}H(p)}
   \ge \abs{t\Babla{}H(p)}-\abs{\Babla{}F(p)} ,
$$
we obtain
\[
   \abs{t}
   \le\tfrac{\sqrt{\delta}+\abs{\Babla{}F(p)}}{\abs{\Babla{}H(p)}}
   \le\tfrac{\sqrt{\delta}+C}{c} .
\]
From this we get that
\begin{equation*}
\begin{split}
   F_H(p,t)
   =F(p)+t H(p)
   &\le \max f+\tfrac{\mu}{2}+\tfrac{\sqrt{\delta}+C}{c}\eps\sqrt{\delta}
   \\
   &\le\max f+\tfrac{\mu}{2}
   +\tfrac{2C}{c}\eps\frac{\mu c}{4\eps C}
   \\
   &=\max f+\mu .
\end{split}
\end{equation*}
This proves the upper bound in~(\ref{eq:grad-delta-2}).
The lower bound follows similarly.

\medskip
\noindent
\textbf{Step 2.}
{\it
If $(u,\tau)$ is a finite energy solution of the
$\eps$-equation.
Then
\[
   \min f\le F_H(u_s,\tau_s)\le\max f.
\]
}

\noindent
We prove the upper bound in Step~2, the lower bound follows analogously.
Assume by contradiction that there exists a time $s_0\in\R$
such that $F_H(u_{s_0},\tau_{s_0})>\max f$.
Let $\mu>0$ be determined by the difference
$2\mu:=F_H(u_{s_0},\tau_{s_0})-\max f$.
Let $\delta=\delta(\mu)$ be as in Step~1.
Since $(u,\tau)$ has finite energy there exists $s_1\le s_0$
such that $\abs{\nabla^\eps F_H(u_{s_1},\tau_{s_1})}_\eps^2\le\delta$.
Hence, by~(\ref{eq:grad-delta-2}), we have
\[
   F_H(u_{s_1},\tau_{s_1})
   \le \max f+\mu
   <\max f+2\mu
   =F_H(u_{s_0},\tau_{s_0}) .
\]
However, the action is decreasing along the negative gradient flow.
This contradiction proves the upper bound.
\end{proof}

\boldmath
\section{Linearized operators}
\label{sec:linear-operators}
\unboldmath

\boldmath
\subsection{Base $\Sigma$}
\label{sec:base-linear-op}
\unboldmath

\boldmath
\subsubsection{Hilbert manifold $\Qq$ and moduli space $\Mm^0$}
\label{sec:base-moduli-space}
\unboldmath

Fix two critical points~$x^\mp$ of~$f\colon\Sigma\to\R$.
We denote the Hilbert manifold of all absolutely continuous paths
$q\colon\R\to\Sigma$ from $x^-$ to $x^+$
with square integrable derivative\footnote{
  by absolute continuity the derivative, notation $\p_s q$, exists at
  almost every instant $s\in\R$
  }
by
\[
   \Qq_{x^-,x^+}:=\{q\in W^{1,2}(\R,\Sigma)
   \mid\lim_{s\to\mp\infty}q(s)=x^\mp\}.
\]
We obtain charts for the \textbf{Hilbert manifold} $\Qq_{x^-,x^+}$
as follows. Let $q_T\colon\R\to\Sigma$ be a smooth map
with the property that there is a real $T>0$
such that $q_T(s)=x^-$ for $s\le -T$
and $q_T(s)=x^+$ for $s\ge T$.
Let $U_{q_T}$ be the set of vector fields
$\xi\in W^{1,2}(\R,q_T^*T\Sigma)$
such that at each instant of time $s$ the length of $\xi(s)$ is less
than the injectivity radius of $(\Sigma,g)$.
The exponential map of $(\Sigma,g)$ induces a parametrization,
still denoted $\exp$, of a neighborhood of $q_T$ in $\Qq_{x^-,x^+}$
as follows
\[
   \exp_{q_T}\colon U_{q_T}\to \Qq_{x^-,x^+},\quad
   \xi\mapsto \exp_{q_T}\xi,
   \qquad (\exp_{q_T}\xi)(s):=\exp_{q_T(s)}\xi(s).
\]
Consider the tangent bundle of $\Qq_{x^-,x^+}$, namely
\[
   T \Qq_{x^-,x^+}\to \Qq_{x^-,x^+},\quad
   \Ww_q:=T_q \Qq_{x^-,x^+}=W^{1,2}(\R,q^*T\Sigma),
\]
whose fiber $\Ww_q:=T_q \Qq_{x^-,x^+}$ over a path $q$ are the
$W^{1,2}$ vector fields along $q$ tangent to~$\Sigma$.
Now consider the vector bundle
\begin{equation}\label{eq:Ll-Qq}
   \Ll\to \Qq_{x^-,x^+},\quad
   \Ll_q:=L^2(\R,q^*T\Sigma),
\end{equation}
whose fiber $\Ll_q$ over a path $q$ consists of the $L^2$ vector fields
along $q$ tangent to~$\Sigma$. Corresponding inner products are
defined by
\begin{equation*}
\begin{split}
   \inner{\xi}{\eta}=\inner{\xi}{\eta}_2=\inner{\xi}{\eta}_{\Ll_q}
   :&=\int_{-\infty}^\infty \inner{\xi(s)}{\eta(s)}\, ds
   \\
   \inner{\xi}{\eta}_{1,2}=\inner{\xi}{\eta}_{\Ww_q}
   :&=\int_{-\infty}^\infty \INNER{\xi(s)}{\eta(s)}
   +\INNER{\Nabla{s}\xi(s)}{\Nabla{s}\eta(s)}\, ds
\end{split}
\end{equation*}
for compactly supported smooth vector fields
$\xi,\eta\in C^\infty_0(\R,q^*T\Sigma)$ .
A section of the vector bundle $\Ll\to \Qq_{x^-,x^+}$,
strictly speaking its principal part, is given~by
\begin{equation}\label{eq:Ff^0}
\begin{aligned}
   \Ff^0\colon \Qq_{x^-,x^+}&\to \Ll,
   \\
   q&\mapsto
   \p_sq+\Nabla{}f(q)
   \stackrel{(\ref{eq:DGF-ff-pre})}{=}
   \p_sq+\Babla{}F(q)+\chi(q)\Babla{} H(q) .
\end{aligned}
\end{equation}
The \textbf{base moduli space} is the zero set of
the section $\Ff^0$, in symbols
\begin{equation}\label{eq:findim-lin-Mm0}
   \Mm^0_{x^-,x^+}
   =\left\{q\in\Qq_{x^-,x^+}
   \mid \p_sq+\Babla{}F(q)+\chi(q)\Babla{} H(q)=0\right\} .
\end{equation}

\begin{lemma}[Regularity and finite energy]\label{le:findim-reg0}
Any element $q\in \Mm^0_{x^-,x^+}$ is smooth
and, by~(\ref{eq:energy-0}), of finite energy $E^0(q)=f(x^-)-f(x^+)$.
\end{lemma}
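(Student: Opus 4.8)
The plan is to prove the two assertions of Lemma~\ref{le:findim-reg0} separately: first smoothness (elliptic bootstrapping), then the finite-energy statement (a direct consequence of the energy identity in Lemma~\ref{le:energy-identity-0}).

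For smoothness, the starting point is that any $q\in\Mm^0_{x^-,x^+}$ lies in $W^{1,2}(\R,\Sigma)$ and solves the first-order ODE $\p_s q=-\Nabla{}f(q)=-(\Babla{}F(q)+\chi(q)\Babla{}H(q))$. The right-hand side is a smooth vector field on $\Sigma$ (here one uses that $\chi$ is smooth along $M_{\rm reg}\supset\Sigma$, cf.~(\ref{eq:chi})). First I would observe that a $W^{1,2}$ path is in particular continuous, hence locally bounded; since $\Sigma$ is compact the image is contained in a fixed compact set and the vector field $\Nabla{}f$ together with all its derivatives is uniformly bounded there. Then a standard bootstrap applies: $q\in W^{1,2}_{\rm loc}$ implies $q$ continuous, so $s\mapsto\Nabla{}f(q(s))$ is continuous, so $\p_sq$ is continuous and $q\in C^1$; then $s\mapsto\Nabla{}f(q(s))$ is $C^1$, so $q\in C^2$; inductively $q\in C^k$ for all $k$, hence $q\in C^\infty$. (Equivalently, one may invoke the smooth dependence of the flow of the smooth vector field $-\Nabla{}f$ on initial conditions.) I would phrase this in one or two sentences, citing the compactness of $\Sigma$ and smoothness of $\Nabla{}f$.

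For the finite-energy claim, once $q$ is known to be smooth it is a genuine smooth solution of~(\ref{eq:DGF-ff-pre}) with asymptotic boundary conditions $\lim_{s\to\mp\infty}q(s)=x^\mp$ built into the definition of $\Qq_{x^-,x^+}$. Hence Lemma~\ref{le:energy-identity-0}, in particular the energy identity~(\ref{eq:energy-0}), applies verbatim and gives $E^0(q)=\Norm{\p_sq}^2=f(x^-)-f(x^+)<\infty$. So this half is immediate and requires only a pointer to~(\ref{eq:energy-0}).

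The only mild obstacle is making the bootstrap rigorous in the $W^{1,2}$ setting rather than $W^{1,2}_{\rm loc}$, but since $\Sigma$ is compact there is no issue of the path escaping to infinity in the target, and the global asymptotic conditions are irrelevant to local regularity; the argument is entirely local in $s$. I therefore expect the proof to be short: a sentence on the bootstrap (or on smooth dependence of ODE solutions) for smoothness, and a one-line reference to~(\ref{eq:energy-0}) for finite energy.
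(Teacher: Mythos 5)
Your proposal is correct and follows essentially the same route as the paper: the paper's proof is exactly the bootstrap you describe ($q$ continuous $\Rightarrow$ right-hand side continuous $\Rightarrow$ $q\in C^1$ $\Rightarrow$ $q\in C^2$, and so on), with the finite-energy claim delegated to the energy identity~(\ref{eq:energy-0}) just as in the lemma's statement. No gaps.
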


\begin{proof}
Since by assumption $F,H$ are $C^\infty$ smooth and $q$ is continuous,
we see that the derivative
$\p_sq=-\Babla{}F(q)-\chi(q)\cdot\Babla{} H(q)$ is in fact continuous.
So $q\in C^1$. But then the right-hand side, hence $\p_s q$, is $C^1$,
so $q\in C^2$, and so on.
\end{proof}

\subsubsection{Linearization of base equation}
\label{sec:base-linearization}

Linearizing the section $\Ff^0$ at a zero $q\colon\R\to\Sigma$
we obtain the linear operator
\[
   D^0_q:=d\Ff^0(q)\colon
   W^{1,2}(\R,q^*T\Sigma)\to L^2(\R,q^*T\Sigma)
\]
which is of the form
\begin{equation}\label{eq:lin0-findim-lin}
\begin{aligned}
   D^0_q\xi
   &\stackrel{1}{=}
   \Nabla{s}\xi+\Nabla{\xi}\Nabla{}f|_q
   %
   {\color{gray}\;\stackrel{(\ref{eq:findim-gradf})}{=}
   \Nabla{s}\xi+\Nabla{\xi}\bigl(\Babla{}F|_q+\chi|_q\Babla{} H|_q\bigr)
   }
   \\
   &\stackrel{2}{=}
   \Babla{s}\xi
   +\Babla{\xi}   \bigl(\Babla{}F|_q+\chi|_q\Babla{} H|_q\bigr)
   {\color{gray}\;\stackrel{(\ref{eq:findim-gradf})}{=}
   \Babla{s}\xi+\Babla{\xi}\Nabla{}f(q)}
   \\
   &\stackrel{3}{=}
   \Babla{s}\xi
   +\Babla{\xi}\Babla{}F|_q
   +\chi|_q\Babla{\xi}\Babla{}H|_q
   +\left(d\chi|_q\xi\right)\cdot \Babla{}H|_q .
\end{aligned}
\end{equation}
For general elements $q\in\Mm^0_{x^-,x^+}$ we define $D^0_q$
by~(\ref{eq:lin0-findim-lin}).

Formula 1 arises when linearizing the base formulation
of the section, namely $\Ff^0(q)=\p_s q+\Nabla{}f(q)=0$.

Formula 2 arises when linearizing the ambient formulation of the section,
namely $\Ff^0(q)=\p_s q+\Babla{}F(q)+\chi(q)\cdot\Babla{} H(q)=0$.
Here the second equation in~(\ref{eq:DGF-ff})
imposes the condition that the domain of $D^0_q$ consists of vector fields
$\xi$ along $q$ that must be tangent to $\Sigma$. 
\\
Formula 2 in~(\ref{eq:lin0-findim-lin}) is a sum of vector
fields along $q\colon\SS^1\to\Sigma$ each of which a priori takes
values in $TM$. The sum, however, takes values in $T\Sigma$, indeed
\[
   D^0_q\xi
   =\Babla{s}\xi
   +\Babla{\xi}   \bigl(\Babla{}F|_q+\chi|_q\Babla{} H|_q\bigr)
   \stackrel{(\ref{eq:DGF-ff-pre})}{=}
   \Babla{s}\xi-\Babla{\xi}\p_sq
   =[\p_s q,\xi] ,
\]
but the commutator of vector fields tangent to $\Sigma$ is tangent
to $\Sigma$. The last identity is torsion freeness of the induced
connection $\widebar{\nabla}$, Lemma~\ref{le:findim-ind-conn}~(iv).
The second equation in formula 2 uses the Leibniz rule,
Lemma~\ref{le:findim-ind-conn}~(iii).

\smallskip
{\sc Symmetry} with respect to $g$ of the map
$\xi\mapsto\Nabla{\xi}\Nabla{}f|_q
=\Nabla{\xi}\bigl(\Babla{}F+\chi\Babla{} H\bigr)|_q$,\footnote{
  the map $\xi\mapsto\Babla{\xi}\bigl(\Babla{}F+\chi\Babla{}H\bigr)$
  takes values in $TM$ only, so it cannot be $g$-symmetric
  }
even in the case where $q\in\Sigma$ is a point and
$\xi,\eta\in T_q\Sigma$ vectors, is seen as follows
\begin{equation}\label{eq:adjoint0-findim-lin-2}
\begin{split}
   \inner{\eta}
      {\Babla{\xi}\bigl(\Babla{}F|_q+\chi|_q\Babla{} H|_q\bigr)}_{{\color{red} G}}
   &\stackrel{{\color{gray}\;\;\;\perp\;\;\;}}{=}
   \inner{\eta}{\Nabla{\xi}   \bigl(\Babla{}F|_q+\chi|_q\Babla{} H|_q\bigr)}_g\\
   &\stackrel{(\ref{eq:DGF-ff-pre})}{=}
   \inner{\eta}{\Nabla{\xi}\Nabla{}f|_q}_g\\
   &\stackrel{{\color{gray}\;\;\;\:3\:\;\;\;}}{=}
   \xi \inner{\eta}{\Nabla{}f|_q}_g
   -\inner{\Nabla{\xi}\eta}{\Nabla{}f|_q}_g\\
   &\stackrel{{\color{gray}\;\;\;\:4\:\;\;\;}}{=}
   \bigl(\xi\eta-\Nabla{\xi}\eta\bigr) f|_q\\
   &\stackrel{{\color{gray}\;\;\;\:5\:\;\;\;}}{=}
   \bigl(\eta\xi-\Nabla{\eta}\xi\bigr) f|_q .
\end{split}
\end{equation}
Here step 3 is by metric compatibility of the Levi-Civita connection,
step~4 holds since $\inner{\eta}{\Nabla{} f}
=df(\eta)=\eta f$, and step~5 is torsion freeness of $\nabla$.

\smallskip
{\sc Alternatively}
formula 2 arises from formula 1 by substituting both terms
$ \Nabla{s}\xi$ and $\Nabla{\xi}\Nabla{}f(q)$ by differences
according to~(\ref{eq:findim-decomp-II}):
\begin{equation}\label{eq:adjoint0-findim-lin-alt}
\begin{split}
   \Nabla{s}\xi+\Nabla{\xi}\Nabla{}f|_q
   &\stackrel{\;(\ref{eq:findim-decomp-II})\;}{=}
   \Babla{s}\xi
   -\mathrm{II}(\p_s q,\xi)
   \\
   &\stackrel{{\color{white} (1.15)}}{{\color{white}=}}
   +\Babla{\xi}   \bigl(\Babla{}F|_q+\chi|_q\Babla{} H|_q\bigr)
   -\mathrm{II}\bigl(\xi,   \bigl(\Babla{}F|_q+\chi|_q\Babla{} H|_q\bigr)  \bigr)
   \\
   &\stackrel{(\ref{eq:DGF-ff-pre})}{=}
   \Babla{s}\xi+\Babla{\xi}   \bigl(\Babla{}F|_q+\chi|_q\Babla{} H|_q\bigr).
\end{split}
\end{equation}
To see the second step substitute $\p_sq$, then cancel the two
$\mathrm{II}$-terms by symmetry. Such cancellation will not happen for
the adjoint operator in~(\ref{eq:def-adjoint0-findim-lin}) where
$\Nabla{s}\xi$ appears with the opposite sign, but the other term
keeps its sign.

\begin{lemma}\label{le:findim-ker-D^0}
If $\Ff^0(q)=0$, then the kernel of $D^0_q$ contains the element $\p_s q$.
\end{lemma}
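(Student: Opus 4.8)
The plan is to use the identity $D^0_q\xi=[\p_sq,\xi]$, valid for zeroes $q$ of $\Ff^0$, which the text establishes right after introducing formula~2 of~(\ref{eq:lin0-findim-lin}): substituting the base equation~(\ref{eq:DGF-ff-pre}) into that formula turns $\Babla{\xi}\bigl(\Babla{}F|_q+\chi|_q\Babla{}H|_q\bigr)$ into $-\Babla{\xi}\p_sq$, and torsion-freeness of the induced connection (Lemma~\ref{le:findim-ind-conn}~(iv)) rewrites $\Babla{s}\xi-\Babla{\xi}\p_sq$ as the commutator $[\p_sq,\xi]$. Before applying this, I would check that $\p_sq$ is a legitimate element of the domain $W^{1,2}(\R,q^*T\Sigma)$ of $D^0_q$: by Lemma~\ref{le:findim-reg0} the map $q$ is smooth, $\p_sq$ is tangent to $\Sigma$ since $q$ takes values in $\Sigma$, and $\p_sq\in L^2$ by the energy identity~(\ref{eq:energy-0}); moreover $\Babla{s}\p_sq=-\Babla{\p_sq}\Nabla{}f|_q$ is pointwise bounded by $\const\cdot\Abs{\p_sq}$ on the compact hypersurface $\Sigma$, hence also lies in $L^2$, so that $\p_sq\in W^{1,2}(\R,q^*T\Sigma)$.

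With this in hand the proof is a one-liner: setting $\xi=\p_sq$ in the commutator formula gives
\begin{equation*}
   D^0_q(\p_sq)=[\p_sq,\p_sq]=0 ,
\end{equation*}
since the commutator of a vector field with itself vanishes. Equivalently --- and this is the conceptual content of the statement --- the section $\Ff^0$ is invariant under time translation, so $\sigma\mapsto q(\cdot+\sigma)$ is a one-parameter family of zeroes, and differentiating $\Ff^0\bigl(q(\cdot+\sigma)\bigr)=0$ in $\sigma$ at $\sigma=0$ produces the kernel element $\p_sq$.

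There is no genuine obstacle here. The only points requiring a little care are verifying that $\p_sq$ lies in the $W^{1,2}$ domain and not merely in $L^2$, and --- should one prefer to carry out the differentiation argument rather than invoke the commutator formula --- justifying the covariant chain rule $\Babla{s}\bigl[(\Babla{}F+\chi\Babla{}H)\circ q\bigr]=\Babla{\p_sq}\bigl(\Babla{}F+\chi\Babla{}H\bigr)|_q$ for the pull-back of a vector field along a curve, which is standard.
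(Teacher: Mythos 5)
Your proposal is correct and is essentially the paper's own argument: the paper's proof is the one-line "take the covariant derivative $\Nabla{s}$ of $\p_sq+\Nabla{}f(q)=0$," which is exactly your translation-invariance/differentiation remark, and your commutator formulation $D^0_q\xi=[\p_sq,\xi]$ with $\xi=\p_sq$ unwinds to the same cancellation. The extra care you take in checking $\p_sq\in W^{1,2}(\R,q^*T\Sigma)$ is a reasonable addition that the paper leaves implicit.
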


\begin{proof}
Take the covariant derivative $\Nabla{s}$ of the vector field
$\p_sq+\Nabla{}f(q)=0$.
\end{proof}

\boldmath
\subsubsection{Trivialization of base section and derivative}
\label{sec:base-trivialization}
\unboldmath

Given a map $q\in \Qq_{x^-,x^+}$
and a vector field $\xi$ along
$q$, denote (pointwise for $s\in\R$) parallel transport in
$(\Sigma,g)$ along the geodesic $r\mapsto\exp_q(r\xi)$ by
\[
   \phi=\phi(q,\xi)\colon T_q\Sigma\to T_{\exp_q(\xi)}\Sigma.
\]
A trivialization of the base section $\Ff^0$ is given by the map
\[
   \Ff^0_q(\xi)
   :=\phi(q,\xi)^{-1}\Ff^0(\exp_q\xi)
   =\phi(q,\xi)^{-1}\left(\p_s(\exp_q(\xi))+\Nabla{}f(\exp_q(\xi))\right)
\]
defined on a sufficiently small neighborhood of the origin
(so $\exp$ is injective)
in the Hilbert space scale $h=(h_m)_{m\in\N_0}$ where
$h_m=W^{m+1,2}(\R,q^*T\Sigma)$; see~\cite{Hofer:2021a} or the
introduction~\cite{Weber:2022a}.
The derivative at the origin
\[
   d \Ff^0_q(0)\xi
   =\left.\tfrac{d}{dr}\right|_{r=0}\Ff^0_q(r\xi)
   =D^0_q\xi
\]
coincides with the linearization~(\ref{eq:lin0-findim-lin}) of the
section $\Ff^0$ at a zero; details are spelled out, e.g., in the proof of
Theorem~A.3.1 in~\cite{weber:1999a}.

\boldmath
\subsubsection{Formal adjoint}
\label{sec:base-formal=adjoint}
\unboldmath

For $q\in W^{1,2}$ the
\textbf{formal adjoint} $(D^0_q)^*\colon \Ww_q\to\Ll_q$
is determined by
\begin{equation}\label{eq:def-adjoint0-findim-lin}
   \inner{\eta}{D^0_q\xi}_2
   =\inner{(D^0_q)^*\eta}{\xi}_2
   ,\quad \forall \xi,\eta\in\Ww_q=W^{1,2}(\R,q^*T\Sigma),
\end{equation}
and consequently given by the first formula in what follows, namely
\begin{equation}\label{eq:adjoint0-findim-lin}
\begin{split}
   (D^0_q)^*\xi
   &\stackrel{1}{=}-\Nabla{s}\xi+\Nabla{\xi}\Nabla{}f|_q
   \\
   &\stackrel{2}{=}
   -\Babla{s}\xi
   +\mathrm{II}(\p_s q,\xi)
   \\
   &\quad
   +\Babla{\xi}\Nabla{}f|_q
   -\mathrm{II}\bigl(\xi,\Nabla{}f|_q\bigr)
   \\
   &\stackrel{3}{=}
   -\Babla{s}\xi+\Babla{\xi}\bigl(\Babla{}F|_q+\chi|_q\Babla{}H|_q\bigr)
   +2 \mathrm{II}\bigl(\xi, \p_s q\bigr)\\
   &{\color{gray}\,\,=
   -\Babla{s}\xi+\Babla{\xi}\Nabla{}f|_q
   +2 \mathrm{II}\bigl(\xi, \p_s q\bigr)}
\end{split}
\end{equation}
for every $\xi\in\Ww_q$ and where $\mathrm{II}$ is defined
by~(\ref{eq:findim-II}).
Step~3 holds for $0$-solutions $q$.
To see step~1 it suffices to work
in~(\ref{eq:def-adjoint0-findim-lin}) with
the dense subspace $C_0^\infty(\R,q^*T\Sigma)$. That $\Nabla{s}$
becomes $-\Nabla{s}$ follows by partial integration and compact
support. The map $\xi\mapsto\Nabla{\xi}\Nabla{}f$ is symmetric
by~(\ref{eq:adjoint0-findim-lin-2}) and thus it passes from $D^0_q$ to
the adjoint.
\\
To obtain step~2 we substituted each of the two terms tangential to
$\Sigma$, namely $\Nabla{s}\xi$ and $\Nabla{\xi}\Nabla{}f(q)$,
according to~(\ref{eq:findim-decomp-II}).
In step 3 we replaced $\Nabla{} f$ by $\Babla{}F+\chi\Babla{}H$
using~(\ref{eq:findim-gradf})
and in the $\mathrm{II}$-term by $-\p_sq$
using~(\ref{eq:DGF-ff-pre}) and symmetry of $\mathrm{II}$.

\boldmath
\subsubsection{Base linear estimate}
\label{sec:base-lin-est}
\unboldmath

\begin{proposition}\label{prop:findim-BaseEst-prop.D.2}
Let $q\in C^1(\R,\Sigma\times\R)$ such that
$\norm{\p_sq}_\infty<\infty$ is finite.
Then there is a constant $c_b=c_b(\norm{\p_sq}_\infty ,\norm{f}_{C^2(\Sigma)},
\norm{\mathrm{II}}_{L^\infty(\Sigma)})$ such that
\begin{equation}\label{eq:findim-BaseEst-prop.D.2}
   \Norm{\Nabla{s}\xi}+\Norm{\Babla{s}\xi}
   \le c_b\left(\Norm{D_q^0\xi}+\Norm{\xi}\right)
\end{equation}
for all vector fields $\xi\in W^{1,2}(\R,q^*TM)$.
The estimate also holds for $(D^0_q)^*$.
\end{proposition}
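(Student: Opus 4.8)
The plan is to read the estimate off directly from Formula~1 in~(\ref{eq:lin0-findim-lin}), which writes $D^0_q$ as its leading derivative term $\Nabla{s}\xi$ plus a purely zeroth-order term, the covariant Hessian of $f$ evaluated along $q$ and applied to $\xi$. First I would rearrange Formula~1 to
\[
   \Nabla{s}\xi = D^0_q\xi - \Nabla{\xi}\Nabla{}f|_q .
\]
Since $\Sigma$ is compact, the fibrewise operator norm of the Hessian endomorphism $\Hess_p f\colon T_p\Sigma\to T_p\Sigma$, $\eta\mapsto\Nabla{\eta}\Nabla{}f|_p$, is bounded by a constant $c'$ that depends only on $\norm{f}_{C^2(\Sigma)}$ (all metric quantities of the fixed compact manifold $(\Sigma,g)$, e.g.\ its Christoffel symbols, being swallowed into $c'$). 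Hence pointwise $\Abs{\Nabla{\xi}\Nabla{}f|_{q_s}}\le c'\Abs{\xi_s}$, and taking $L^2$-norms gives
\[
   \Norm{\Nabla{s}\xi}\le\Norm{D^0_q\xi}+c'\Norm{\xi},
\]
the intrinsic half of~(\ref{eq:findim-BaseEst-prop.D.2}). Every term here lies in $L^2$ as soon as $\xi\in W^{1,2}$, using $W^{1,2}(\R,\cdot)\hookrightarrow L^2\cap L^\infty$, so no density argument is strictly needed (though one may also first argue on $C_0^\infty$ and close up).

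To pass from $\Nabla{s}\xi$ to the ambient derivative $\Babla{s}\xi$ I would invoke the Gauss formula~(\ref{eq:findim-decomp-II}) along the curve $q$: for a vector field $\xi$ along $q$ tangent to $\Sigma$,
\[
   \Babla{s}\xi = \Babla{\p_sq}\xi = \Nabla{\p_sq}\xi + \mathrm{II}(\p_s q,\xi) = \Nabla{s}\xi + \mathrm{II}(\p_s q,\xi) .
\]
Pointwise $\Abs{\mathrm{II}(\p_s q_s,\xi_s)}\le\norm{\mathrm{II}}_{L^\infty(\Sigma)}\,\norm{\p_s q}_\infty\,\Abs{\xi_s}$, hence $\Norm{\mathrm{II}(\p_s q,\xi)}\le\norm{\mathrm{II}}_{L^\infty(\Sigma)}\norm{\p_s q}_\infty\Norm{\xi}$; combining with the previous display yields $\Norm{\Babla{s}\xi}\le\Norm{D^0_q\xi}+\bigl(c'+\norm{\mathrm{II}}_{L^\infty(\Sigma)}\norm{\p_s q}_\infty\bigr)\Norm{\xi}$. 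Adding the two bounds and putting $c_b:=2\bigl(1+c'+\norm{\mathrm{II}}_{L^\infty(\Sigma)}\norm{\p_s q}_\infty\bigr)$ proves~(\ref{eq:findim-BaseEst-prop.D.2}), and the claimed dependence of $c_b$ on $\norm{\p_sq}_\infty$, $\norm{f}_{C^2(\Sigma)}$, $\norm{\mathrm{II}}_{L^\infty(\Sigma)}$ is then manifest.

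For the formal adjoint the argument is identical: Formula~1 in~(\ref{eq:adjoint0-findim-lin}), valid for any $q\in W^{1,2}$, reads $(D^0_q)^*\xi=-\Nabla{s}\xi+\Nabla{\xi}\Nabla{}f|_q$, so $\Nabla{s}\xi=-(D^0_q)^*\xi+\Nabla{\xi}\Nabla{}f|_q$ and the two estimates above go through verbatim with $D^0_q$ replaced by $(D^0_q)^*$. I do not expect a genuine obstacle here — this is a soft first-order estimate — the only points deserving care are (i) verifying that the constant can indeed be taken to depend only on the three listed quantities, which is exactly where compactness of $\Sigma$ is used to absorb all metric-geometric constants, and (ii) applying the Gauss formula to honestly $T\Sigma$-valued fields $\xi$, so that $\Nabla{s}\xi$ is defined and $\mathrm{II}(\p_s q,\xi)$ accounts for the entire normal part of $\Babla{s}\xi$.
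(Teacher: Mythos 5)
Your proof is correct and follows essentially the same route as the paper's: isolate $\Nabla{s}\xi$ from $D^0_q\xi$ using the pointwise bound on the Hessian of $f$ over the compact $\Sigma$, then pass to $\Babla{s}\xi$ via the decomposition~(\ref{eq:findim-decomp-II}) and the bound on $\mathrm{II}(\p_s q,\xi)$. The only cosmetic difference is that you use the triangle inequality where the paper expands squares and invokes Cauchy--Schwarz and Young; the two are interchangeable here.
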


\begin{proof}
Expand the square 
$\Norm{D_q^0\xi}^2=\Norm{\Nabla{s}\xi+\Nabla{\xi}\Nabla{}f(q)}^2$
and use Cauchy-Schwarz and Young to get
$
   \norm{\Nabla{s}\xi}^2
   \le 2 \norm{D_q^0\xi}^2
   -2\norm{\nabla\Nabla{}f(q)}_\infty^2\norm{\xi}^2
$.
By~(\ref{eq:findim-decomp-II})
$\norm{\Nabla{s}\xi}^2=\norm{\Babla{s}\xi-\mathrm{II}(\p_s q,\xi)}^2$,
now expand the square. Same for $(D^0_q)^*$.
\end{proof}

\boldmath
\subsubsection{Fredholm property}\label{sec:Fredholm-0}
\unboldmath

Given a path $q\in\Qq_{x^-,x^+}$, it
makes sense to define operators $D^0_q,(D^0_q)^*\colon\Ww_q\to\Ll_q$
by the formulae~(\ref{eq:lin0-findim-lin})
and~(\ref{eq:adjoint0-findim-lin}), respectively.

A continuous linear operator $D$ between Banach spaces is called
\textbf{Fredholm} if kernel and cokernel are finite dimensional.
Finite codimension implies closed image.\footnote{
  Finite codimension of an arbitrary linear subspace $Y$
  does not, in general, imply closedness of $Y$ --
  for an image $Y=\im T$ of a \emph{continuous} operator $T$ it does.
  }
The difference $\dim\ker D-\dim\coker D$ is called the
\textbf{Fredholm index}.

\begin{proposition}
\label{prop:findim-0-Fredholm}
Let $q\in \Qq_{x^-,x^+}$ with $x^\mp\in\Crit f$ non-degenerate.
Then the following is true for the
operators $D^0_q,(D^0_q)^*\colon\Ww_q\to\Ll_q$
defined by~(\ref{eq:lin0-findim-lin})
and~(\ref{eq:adjoint0-findim-lin}).

\begin{itemize}\setlength\itemsep{0ex} 
\item[{\rm\bf (Exp.\,decay)}]
  Any kernel element $\xi=\xi(s)$ of $D^0_q$ or $(D^0_q)^*$
  is $C^\infty$ smooth and decays exponentially
  with all derivatives, as $s\to\mp\infty$.
  Hence $\norm{\xi},\norm{\xi}_\infty<\infty$.

\item[{\rm\bf (Fredholm)}]
  Both operators $D^0_q$ and $(D^0_q)^*$ are Fredholm and the
  Fredholm indices are the Morse index differences, namely
  \[
     \INDEX D^0_q
     =\IND_f(x^-)-\IND_f(x^+)
     =-\INDEX (D^0_q)^* .
  \]
\end{itemize}
\end{proposition}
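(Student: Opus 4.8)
The plan is to recognize $D^0_q$ as a zeroth-order perturbation of the covariant $s$-derivative along a path with non-degenerate asymptotic endpoints, and then invoke the standard Floer-theoretic Fredholm package. First I would fix a global orthonormal (or at least smooth bounded) trivialization of the bundle $q^*T\Sigma\to\R$; this is possible since $q^*T\Sigma$ is a vector bundle over $\R$, hence trivial. Under such a trivialization $\Nabla{s}$ becomes $\tfrac{d}{ds} + A_0(s)$ for some smooth bounded matrix-valued function $A_0$, and the full operator takes the form $D^0_q\xi = \tfrac{d}{ds}\xi + A(s)\xi$ where $A(s) = A_0(s) + B(s)$, with $B(s)$ the matrix of $\xi\mapsto\Nabla{\xi}\Nabla{}f|_{q(s)}$ read off from~(\ref{eq:lin0-findim-lin}). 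Because $q(s)\to x^\mp$ as $s\to\mp\infty$ and $(f,g)$ is smooth, the limits $A^\mp:=\lim_{s\to\mp\infty}A(s)$ exist; moreover, up to the fixed trivialization, $A^\mp$ is conjugate to the Hessian operator $A^0_{x^\mp}=D\Nabla{}f(x^\mp)$, which by non-degeneracy has no eigenvalue zero, i.e. $A^\mp$ is a hyperbolic (invertible symmetric) matrix. The convergence $A(s)\to A^\mp$ is in fact exponential, since along a gradient trajectory of a Morse function the path $q$ converges exponentially to its endpoints — this uses $\p_sq=-\Nabla{}f(q)$ together with non-degeneracy of the endpoints.

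For the exponential decay statement, let $\xi$ be a kernel element of $D^0_q$ (the argument for $(D^0_q)^*$ is identical, since its symbol is $-\tfrac{d}{ds}$ plus bounded, with the same hyperbolic asymptotics). Elliptic regularity for the ODE $\tfrac{d}{ds}\xi = -A(s)\xi$ with smooth coefficients gives $\xi\in C^\infty$ immediately, and bootstrapping the equation gives smoothness of all derivatives. For the decay, near $+\infty$ write the equation as a perturbation of the constant-coefficient hyperbolic system $\xi' = -A^+\xi$; since $\xi\in W^{1,2}$ forces $\xi(s)\to 0$, and since the unstable subspace of $-A^+$ is trivial on the set of decaying solutions, a standard Gronwall/stable-manifold argument (or the variation-of-constants estimate using the exponential dichotomy of $-A^+$ and the exponentially small perturbation $A(s)-A^+$) yields $\abs{\xi(s)}\le C e^{-\delta s}$ for $s\ge 0$ with $\delta>0$ below the spectral gap of $A^+$; differentiating the equation propagates the decay to all derivatives. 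The same at $-\infty$. In particular $\norm{\xi}_\infty<\infty$ and $\xi\in L^2$.

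For the Fredholm property and the index, I would cite the standard result (e.g.\ Robbin--Salamon, or as presented in~\cite{weber:1999a}) that for a first-order operator $\tfrac{d}{ds}+A(s)\colon W^{1,2}(\R,\R^N)\to L^2(\R,\R^N)$ with $A(s)$ bounded, continuous, converging to hyperbolic limits $A^\mp$, the operator is Fredholm with index equal to the spectral flow of the path $\{A(s)\}$, equivalently $\mu_{\mathrm{Morse}}(A^-)-\mu_{\mathrm{Morse}}(A^+)$ where $\mu_{\mathrm{Morse}}$ counts negative eigenvalues. Applying this with $A^\mp$ conjugate to the Hessians $A^0_{x^\mp}$ gives $\INDEX D^0_q=\IND_f(x^-)-\IND_f(x^+)$. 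The index of the adjoint is the negative of this, either directly from the same theorem applied to $-\tfrac{d}{ds}+A(s)^{\!\top}$ (whose asymptotic Morse indices are $N$ minus those of $A^\mp$, reversing the difference), or abstractly from $\coker D^0_q\cong\ker (D^0_q)^*$ and $\ker D^0_q\cong\coker(D^0_q)^*$, which is immediate once one knows the $L^2$-kernels of the formal adjoint on $W^{1,2}$ coincide with the cokernel — this coincidence itself follows from the exponential decay just established, which lets one integrate by parts without boundary terms. The main obstacle is purely bookkeeping: one must be careful that the fixed smooth trivialization of $q^*T\Sigma$ is consistent at both ends so that the asymptotic operators $A^\mp$ are genuinely conjugate to the intrinsic Hessians $A^0_{x^\mp}$ (so that their Morse indices, which are coordinate-independent by the remark following Lemma~\ref{le:findim-Morse-indices}, appear correctly in the index formula); everything else is the routine Floer Fredholm theory, and I would not reproduce its proof here.
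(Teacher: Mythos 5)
Your proposal is correct and follows essentially the same route as the paper: trivialize $q^*T\Sigma$, recognize $D^0_q$ and $(D^0_q)^*$ as operators $\pm\tfrac{d}{ds}+A(s)$ with symmetric, invertible (hyperbolic) asymptotics conjugate to the Hessians at $x^\mp$, and invoke the standard spectral-flow Fredholm package for exponential decay, the Fredholm property, and the index formula. The one place the paper does more work is the identification $\coker D^0_q=\ker(D^0_q)^*$, where the real issue is not boundary terms in the integration by parts but showing that an $L^2$ element of $(\im D^0_q)^\perp$ admits a weak derivative in $L^2$ and hence lies in the domain $W^{1,2}$ of the formal adjoint -- your alternative of applying the standard index theorem directly to $-\tfrac{d}{ds}+A(s)^{\top}$ sidesteps this and is perfectly adequate.
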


\begin{proof} [Proof of Proposition~\ref{prop:findim-0-Fredholm}]
That an operator $\frac{d}{ds}+A(s)$ with invertible
asymptotics $A(\mp\infty)$ has exponentially decaying kernel elements,
that it is Fredholm, and that the index is the asymptotics' Morse
index difference is well known, see e.g.~\cite{schwarz:1993a}.
In suitable trivializations both $D^0_q$ and $(D^0_q)^*$ are of such form.

That the formal adjoint is Fredholm whenever $D^0_q$ is (and of the same
Fredholm index times $-1$) follows immediately from the two vector
space equalities
\begin{equation}\label{eq:ker-coker}
   \ker (D^0_q)^*=\coker D^0_q
   \,\,{\color{gray}:=\left(\im D^0_q\right)^\perp}
   ,\qquad
   \coker (D^0_q)^*=\ker D^0_q .
\end{equation}
Vector space equality one.
`$\subset$'
Pick $\eta\in\ker (D^0_q)^*$.
By definition~(\ref{eq:def-adjoint0-findim-lin}) of $(D^0_q)^*$ we
have $\inner{\eta}{D^0_q\xi}=0$ for every $\xi\in W^{1,2}$.
But this means that $\eta\in\left(\im D^0_q\right)^\perp$.
\\
`$\supset$'
Pick $\eta\in \left(\im D^0_q\right)^\perp\subset L^2$.
Then
\begin{equation*}
\begin{split}
   0
   =
   \inner{\eta}{D^0_q\xi}
   &\stackrel{(\ref{eq:lin0-findim-lin})}{=}
   \inner{\eta}{\Nabla{s}\xi}
   +\inner{\eta}{\Nabla{\xi}\Nabla{}f(q)}\\
   &\stackrel{{\color{white}(1.32)}}{=}
   \inner{\eta}{\Nabla{s}\xi}
   +\inner{\Nabla{\eta}\Nabla{}f(q)}{\xi}
\end{split}
\end{equation*}
for every $\xi\in W^{1,2}$. But this is the definition of weak
derivative. So $\eta$ admits a weak derivative,
again denoted by $\Nabla{s}\eta$, and it is given by
\[
   \Nabla{s}\eta
   =\Nabla{\eta}\Nabla{}f(q)
   =D\Nabla{}f(q)\,\eta
   \in L^2.
\]
Indeed the last term lies in $L^2$, because $\eta$ does
and since $D\Nabla{}f(q)$ is of class $C^\infty$ (as $f$ is and by
Lemma~\ref{le:findim-reg0}) and decays exponentially with all
derivatives: indeed $\Nabla{}f(q)=-\p_s q\in\ker D^0_q$ is a kernel
element by Lemma~\ref{le:findim-ker-D^0}. Thus $\eta\in W^{1,2}$.
Now we can use the defining
identity~(\ref{eq:def-adjoint0-findim-lin}) of the adjoint to get that
\[
   0=\inner{(D^0_q)^*\eta}{\xi}
   =\inner{(D^0_q)^*\eta}{\xi}_g
\]
for every $\xi\in W^{1,2}(\R,q^*T\Sigma)$.
Thus $(D^0_q)^*\eta=0$ by nondegeneracy of $g$.

The proof of vector space equality two is analogous.
\end{proof}

\boldmath
\subsection{Ambience $M\times\R$}
\label{sec:ambient-lin-operator}
\unboldmath

\boldmath
\subsubsection{Hilbert manifold $\Zz$ and moduli space $\Mm^\eps$}
\label{sec: ambient-moduli-space}   
\unboldmath

Fix two critical points $x^\mp$ of $f=F|_\Sigma$.
So $(x^\mp,\chi(x^\mp))\in\Crit F_H$, by Lemma~\ref{le:Crit-F_H}.
We denote the Hilbert manifold of absolutely
continuous paths $z=(u,\tau)\colon\R\to M\times\R$ from $z^-$ to $z^+$
with square integrable derivative by
\[
   \Zz_{x^-,x^+}
   ,\qquad
   x^\mp\in\Crit f,\quad
   \tau^\mp:=\chi(x^\mp),\quad
   z^\mp:=(x^\mp,\tau^\mp)\in\Crit F_H.
\]
The tangent space at an element $z=(u,\tau)$ are the pairs $Z=(X,\ell)$
consisting of a $W^{1,2}$ vector field $X$ along $u$ and a $W^{1,2}$
function $\ell \colon\R\to\R$, in symbols
\[
   \Ww_{u,\tau}:=T_{(u,\tau)} \Zz_{x^-,x^+}
   =W^{1,2}(\R,u^*TM\oplus\R) .
\]
We use the same symbol $\Ll$ as in~(\ref{eq:Ll-Qq}) also for the
vector bundle
\[
   \Ll\to \Zz_{x^-,x^+} ,\qquad
   \Ll_{u,\tau}:=L^2(\R,u^*TM\oplus\R)
\]
whose fiber $\Ll_{u,\tau}$ over a path in $M\times\R$ are
the $L^2$ vector fields along $(u,\tau)$.

Given a parameter value $\eps>0$, a section of the vector bundle
$\Ll\to \Zz_{x^-,x^+}$ is defined by
\begin{equation}\label{eq:Ff-eps}
   \Ff^\eps(u,\tau)
   :=\p_s(u,\tau)+\nabla^\eps F_H(u,\tau)
   \stackrel{(\ref{eq:DGF-def-I})}{=}
   \begin{pmatrix}
      \p_su+\Babla{} F|_u+\tau\Babla{} H|_u\\
      \tau^\prime+\eps^{-2}H\circ u
   \end{pmatrix} .
\end{equation}
By definition the zero set is called the \textbf{ambient} or
\textbf{\boldmath$\eps$-moduli space}, notation
\[
   \Mm^\eps_{x^-,x^+}
   :=\{\Ff^\eps=0\}
   \subset \Zz_{x^-,x^+} .
\]

\boldmath
\subsubsection{Linearization of ambient equation}
\label{sec:ambient-linearization}   
\unboldmath

Linearizing the section $\Ff^\eps$ at a zero
$z=(u,\tau)\colon\R\to M\times\R$
provides the operator
\[
   D^\eps_{u,\tau}:=d\Ff^\eps(u,\tau)\colon
   \Ww_{u,\tau}\to\Ll_{u,\tau}
\]
given by
$
   D^\eps_{u,\tau }Z
   =\Nabla{s}^{\,\eps} Z+\Nabla{Z}^{\,\eps}\Nabla{}^{\,\eps} F_H(u,\tau)
$
or, equivalently, given by
\begin{equation}\label{eq:lin-eps-findim-lin}
\begin{aligned}
   D^\eps_{u,\tau}
   \begin{pmatrix} X\\\ell\end{pmatrix}
   &=\begin{pmatrix}
     \Babla{s} X
     +\Babla{X}\Babla{} F|_u+\tau\Babla{X}\Babla{} H|_u
     +\ell \Babla{} H|_u\\
     \ell^\prime+\eps^{-2}dH|_u X
   \end{pmatrix}.
\end{aligned}
\end{equation}
for $Z=(X,\ell)\in W^{1,2}(\R,u^*TM\oplus\R)$.
For $(u,\tau)\in\Zz_{x^-,x^+}$ define $D^\eps_{u,\tau}$
by~(\ref{eq:lin-eps-findim-lin}).

\boldmath
\subsubsection{Trivialization of ambient section and derivative}
\label{sec:ambient-trivialization}
\unboldmath

Pick a map $(u,\tau)\in \Zz_{x^-,x^+}$
and a vector field $(X,\ell)$ along it.
Denote parallel transport in $(M,G)$ along the geodesic
$
   r\mapsto \Exp_u(rX)
$
by
\[
   \Phi=\Phi(u,X)\colon T_uM\to T_\Gamma M
   ,\qquad
   \Gamma:=\Exp_u(X),
\]
pointwise for $s\in\R$.
A trivialization of the ambient section $\Ff^\eps$ is defined by
\begin{equation}\label{eq:eps-triv}
   \Ff^\eps_{u,\tau}(X,\ell)
   =\begin{pmatrix}
      \Phi^{-1}\left(
      \p_s\Gamma+\Babla{} F|_{\Gamma}
      +(\tau+\ell)\Babla{} H|_{\Gamma}\right)
   \\
      (\tau+\ell)^\prime+\eps^{-2}H|_{\Gamma}
   \end{pmatrix}
\end{equation}
for every vector field $(X,\ell)$ in a sufficiently small (so $\Exp$ is
injective) ball $\Oo$ about the origin of $W^{1,2}(\R,u^*TM\oplus\R)$.

To calculate the derivative at the origin
we utilize the facts about covariant
derivation and exponential maps collected
in~\cite[appendix~A]{weber:1999a} where the details of essentially
the same linearization are spelled out.
Abbreviate $\Phi_r:=\Phi(u,r  X)$ and $\Gamma_r:=\Exp_u(r X)$, then
$\left.\frac{d}{dr}\right|_0 \Gamma_r=  X$ and
\begin{equation}\label{eq:deriv-triv-origin}
\begin{split}
   &d \Ff^\eps_{u,\tau} (0,0)
   \begin{pmatrix} X\\\ell\end{pmatrix}
   :=\tfrac{d}{dr}\bigr|_0
   \Ff^\eps_{u,\tau} (r X,r\ell)
\\
   &\stackrel{{\color{gray} 1}}{=}
   \frac{d}{dr}\Bigr|_0
   \begin{pmatrix}
   \Phi_r^{-1}\left(\p_s(\Gamma_r)+\Babla{} F|_{\Gamma_r}\right)
   +(\tau+r\ell) \Phi_r^{-1}\Babla{} H|_{\Gamma_r}
   \\
     (\tau+r\ell)^\prime+\eps^{-2}H|_{\Gamma_r}
   \end{pmatrix}
\\
   &\stackrel{{\color{gray} 2}}{=}
   \begin{pmatrix}
   \left.\frac{d}{dr}\right|_0
   \left(\Phi_r^{-1}
   \left(\p_s(\Gamma_r)+\Babla{} F|_{\Gamma_r}\right)\right)
   +\ell\Babla{} H|_u
   +\tau\left.\frac{d}{dr}\right|_0
   \left(\Phi_r^{-1}\Babla{} H|_{\Gamma_r}\right)
   \\
   \left.\frac{d}{dr}\right|_0\left(
   (\tau+r\ell)^\prime+\eps^{-2}H|_{\Gamma_r}\right)
   \end{pmatrix}
\\
   &\stackrel{{\color{gray} 3}}{=}
   \begin{pmatrix}
      \Babla{s}  X
      +\Babla{  X}\Babla{}  F|_u
     +\tau\Babla{  X}\Babla{}  H|_u
     +\ell\Babla{} H|_u
    \\
      \ell^\prime
      +\eps^{-2}dH|_u  X
   \end{pmatrix}
   \stackrel{(\ref{eq:lin-eps-findim-lin})}{=}
   D^\eps_{u,\tau}
   \begin{pmatrix} X\\\ell\end{pmatrix} .
\end{split}
\end{equation}
Step~1 is by definition of $\Ff^\eps_{u,\tau}$ and linearity of parallel transport.
Step~2 uses the Levi-Civita connection $\widebar \nabla$ of $(M,G)$
and the Leibniz rule.
Step~3 holds by Theorem~A.3.1 in~\cite{weber:1999a},
more precisely by terms 1 and 3 in the proof.

\boldmath
\subsubsection{Formal adjoint and Fredholm property}
\label{sec:ambient-formal=adjoint}
\unboldmath

The \textbf{formal adjoint}
$(D^\eps_{u,\tau})^*\colon \Ww_{u,\tau}\to\Ll_{u,\tau}$ with
respect to the $(0,2,\eps)$ inner product associated to the
$(0,2,\eps)$ norm, defined in~(\ref{eq:findim-0-2-eps}) below,
is determined~by
\begin{equation}\label{eq:def-adjointeps-findim-lin}
   \INNER{\tilde Z}{D^\eps_{u,\tau}Z}_{0,2,\eps}
   =\INNER{(D^\eps_{u,\tau})^*\tilde Z}{Z}_{0,2,\eps}
   ,\quad \forall Z,\tilde Z\in\Ww_{u,\tau}.
\end{equation}
The formal $(0,2,\eps)$ adjoint is then given by the formula
\begin{equation}\label{eq:lin-epsadjoint-findim-lin}
\begin{split}
   (D^\eps_{u,\tau})^*
   \begin{pmatrix} X\\\ell\end{pmatrix}
   &=
   (D^\eps_z)^*Z\\
   &\stackrel{{\color{gray} 2}}{=}
   -\Nabla{s}^{\,\eps} Z+\Nabla{Z}^{\,\eps}\Nabla{}^{\,\eps} F_H|_z\\
   &\stackrel{{\color{gray} 3}}{=}
   \begin{pmatrix}
     -\Babla{s} X
     +\Babla{X}\Babla{} F|_u+\tau\Babla{X}\Babla{} H|_u
     +\underline{\ell \Babla{} H|_u}\\
     -\ell^\prime+\underline{\eps^{-2}dH|_u X}
   \end{pmatrix}
\end{split}
\end{equation}
for every $Z=(X,\ell)\in\Ww_{u,\tau}=W^{1,2}(\R,u^*TM\oplus\R)$.
Concerning identity~2, an $s$-derivative turns, by partial integration,
into minus an $s$-derivative and the operator
$Z\mapsto \Nabla{Z}^{\,\eps}\Nabla{}^{\,\eps} F_H$
is symmetric by an argument analoguous
to~(\ref{eq:adjoint0-findim-lin-2}).
Alternatively, analyze~(\ref{eq:def-adjointeps-findim-lin})
term by term. Apart from the two arguments we just gave,
the two underlined terms in~(\ref{eq:lin-epsadjoint-findim-lin})
satisfy the identity
\begin{equation}\label{eq:Hess-eps-symmetry}
   \inner{\tilde X}{\underline{\ell\Babla{}H|_u}}
   +\eps^2\inner{\tilde \ell}{\underline{\eps^{-2}dH|_u X}}
   =\inner{\tilde \ell\Babla{}H|_u}{X}
   +\eps^2\inner{\eps^{-2}dH|_u\tilde X}{\ell}.
\end{equation}
Mind the tildes.
To see the equality write out the inner products as integrals.

\begin{proposition}[Fredholm property]
\label{prop:findim-eps-Fredholm}
For a path $z=(u,\tau)\in\Zz^\eps_{x^-,x^+}$ with non-degenerate
boundary conditions $x^\mp\in\Crit f$ the following is true. Both
operators $D^\eps_{u,\tau},(D^\eps_{u,\tau})^*\colon\Ww_{u,\tau}\to\Ll_{u,\tau}$
are Fredholm and
\begin{equation}\label{eq:ker-coker-eps}
   \ker (D^\eps_{u,\tau})^*=\coker D^\eps_{u,\tau}
   \,\,{\color{gray}:=\left(\im D^\eps_{u,\tau}\right)^\perp}
   ,\qquad
   \coker (D^\eps_{u,\tau})^*=\ker D^\eps_{u,\tau} .
\end{equation}
The Fredholm and Morse indices are related by
\begin{equation}\label{eq:F-index-eps}
   \INDEX D^\eps_{u,\tau}
   =\IND_f(x^-)-\IND_f(x^+)
   =-\INDEX (D^\eps_{u,\tau})^* .
\end{equation}
\end{proposition}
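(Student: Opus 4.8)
The plan is to run, for fixed $\eps>0$, the very same argument that proves Proposition~\ref{prop:findim-0-Fredholm}, only now in the product Riemannian manifold $(M\times\R,\,G\oplus\eps^2)$; no uniformity in $\eps$ is claimed here, so the $\eps^{-2}$ in~(\ref{eq:lin-eps-findim-lin}) is harmless. First I would trivialize $\Ff^\eps$ along $z=(u,\tau)$ by parallel transport $\Phi(u,\cdot)$ in $(M,G)$ on the first factor and the identity on the $\R$-factor, exactly as in~(\ref{eq:eps-triv})--(\ref{eq:deriv-triv-origin}). In this trivialization $D^\eps_{u,\tau}$ becomes an operator $\tfrac{d}{ds}+A_\eps(s)$ on $W^{1,2}(\R,\R^{n+2})\to L^2(\R,\R^{n+2})$, where $A_\eps(s)$ is the pointwise endomorphism read off from the zeroth–order part of~(\ref{eq:lin-eps-findim-lin}); its coefficients are continuous and bounded because $u$ is continuous with limits $x^\mp$, hence has precompact image, and $\eps$ is fixed. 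As $s\to\mp\infty$ one has $(u,\tau)\to(x^\mp,\chi(x^\mp))\in\Crit F_H$ by Lemma~\ref{le:Crit-F_H}, so $A_\eps(\mp\infty)$ is the Hessian operator of $F_H$ at $(x^\mp,\chi(x^\mp))$ computed with respect to $G\oplus\eps^2$; it is symmetric for the associated inner product and invertible, since $(x^\mp,\chi(x^\mp))$ is a non-degenerate critical point of $F_H$ by Lemma~\ref{le:findim-Morse-indices}. The same computation applied to formula~(\ref{eq:lin-epsadjoint-findim-lin}) puts $(D^\eps_{u,\tau})^*$ in the form $-\tfrac{d}{ds}+A_\eps^{*}(s)$, again with symmetric invertible endpoints.

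With this normal form, the standard theory of operators $\tfrac{d}{ds}+A(s)$ with invertible symmetric asymptotics, as already quoted in the proof of Proposition~\ref{prop:findim-0-Fredholm} (see e.g.~\cite{schwarz:1993a} and~\cite{weber:1999a}), yields at once that $D^\eps_{u,\tau}$ and $(D^\eps_{u,\tau})^*$ are Fredholm, that their kernel elements are smooth and decay exponentially with all derivatives, and that $\INDEX D^\eps_{u,\tau}$ equals the difference of the numbers of negative eigenvalues of $A_\eps(-\infty)$ and of $A_\eps(+\infty)$, with the sign flip of the $s$–derivative giving $\INDEX(D^\eps_{u,\tau})^*=-\INDEX D^\eps_{u,\tau}$. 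Since the number of negative eigenvalues of a Hessian operator at a critical point equals the (metric-independent) index of the Hessian bilinear form, each of those counts equals $\IND_{F_H}(x^\mp,\chi(x^\mp))=\IND_f(x^\mp)+1$ by Lemma~\ref{le:findim-Morse-indices}; the two $+1$'s cancel in the difference and we obtain~(\ref{eq:F-index-eps}).

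For the kernel--cokernel identities~(\ref{eq:ker-coker-eps}) I would repeat the two-step argument of Proposition~\ref{prop:findim-0-Fredholm}, using the $(0,2,\eps)$ inner product of~(\ref{eq:findim-0-2-eps}) instead of the plain $L^2$ product. The inclusion $\ker(D^\eps_{u,\tau})^*\subset\coker D^\eps_{u,\tau}=(\im D^\eps_{u,\tau})^{\perp}$ is immediate from the defining identity~(\ref{eq:def-adjointeps-findim-lin}). For the reverse inclusion, take $\tilde Z\in(\im D^\eps_{u,\tau})^{\perp}\subset L^2$; testing against $Z\in C^\infty_0$ and reading off~(\ref{eq:lin-eps-findim-lin}) shows that $\tilde Z$ admits a weak $s$–derivative equal to the bounded zeroth–order part of $(D^\eps_{u,\tau})^*\tilde Z$, hence $\tilde Z\in W^{1,2}$, and then~(\ref{eq:def-adjointeps-findim-lin}) forces $(D^\eps_{u,\tau})^*\tilde Z=0$. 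The second identity $\coker(D^\eps_{u,\tau})^*=\ker D^\eps_{u,\tau}$ follows the same way, the symmetry relation~(\ref{eq:Hess-eps-symmetry}) ensuring that the $(0,2,\eps)$–bi-adjoint of $D^\eps_{u,\tau}$ is $D^\eps_{u,\tau}$ itself; alternatively it is forced by a dimension count from the first identity together with the two index formulas.

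The only real point requiring care -- the "main obstacle", though it is bookkeeping rather than analysis -- is the identification of the asymptotic operator $A_\eps(\mp\infty)$ with the $\eps$–Hessian of $F_H$ at $(x^\mp,\chi(x^\mp))$, together with the observation that rescaling the metric does not change the number of negative eigenvalues; this is exactly where Lemma~\ref{le:findim-Morse-indices} and the invariance of the index of a symmetric bilinear form enter. Everything else is the verbatim transcript of Proposition~\ref{prop:findim-0-Fredholm}, and none of it is uniform in $\eps$; the $\eps$–uniform statements are postponed to the later sections.
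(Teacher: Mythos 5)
Your proposal is correct and follows exactly the route the paper takes: the paper's own proof is the one-line remark ``Analogous to Proposition~\ref{prop:findim-0-Fredholm}; use in addition Lemma~\ref{le:findim-Morse-indices}'', and your write-up simply fills in the details that remark leaves implicit (the $\tfrac{d}{ds}+A_\eps(s)$ normal form with invertible symmetric asymptotics, the identification of the asymptotic operator with the $\eps$-Hessian of $F_H$, the cancellation of the two $+1$'s from Lemma~\ref{le:findim-Morse-indices}, and the kernel--cokernel argument transplanted to the $(0,2,\eps)$ inner product). No gaps; nothing further is needed.
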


\begin{proof}
Analogous to
Proposition~\ref{prop:findim-0-Fredholm};
use in addition Lemma~\ref{le:findim-Morse-indices}.
\end{proof}

\boldmath
\subsubsection{Suitable $\eps$-dependent norms}
\unboldmath

To obtain uniform estimates for the right inverse with constants
independent of $\eps>0$ small, we must work
with $\eps$-dependent norms which are suggested
on $L^2$ by the energy identity~(\ref{eq:energy-eps})
and on $W^{1,2}$ by the fundamental estimate~(\ref{eq:amblinest}).
For compactly supported smooth vector fields
$Z=(X,\ell)$ along $(u,\tau)$ define
\begin{equation}\label{eq:findim-0-2-eps}
\begin{split}
   \norm{Z}_{0,2,\eps}
   :&=\left(\norm{X}^2+\eps^2\norm{\ell }^2\right)^{1/2}\\
   &\le\norm{X}+\eps\norm{\ell }
\\
   \norm{Z}_{0,\infty,\eps}
   :&=\norm{X}_\infty+\eps\norm{\ell}_\infty
\\
   \norm{Z}_{1,2,\eps}
   :&=\left(\norm{X}^2+\eps^2\norm{\ell }^2
   +\eps^2\norm{\Nabla{s} X}^2+\eps^4\norm{\ell ^\prime}^2\right)^{1/2}\\
   &\le\norm{X}+\eps\norm{\ell }
   +\eps\norm{\Babla{s} X}+\eps^2\norm{\ell ^\prime}
   \stackrel{(\ref{eq:sum-square})}{\le}
   2^{\frac{3}{2}}\norm{Z}_{1,2,\eps} .
\end{split}
\end{equation}

\begin{lemma}\label{le:0-infty-eps}
Let $(u,\tau)\in W^{1,2}(\R,M\times\R)$ and $\eps>0$. Then
there is the estimate
\begin{equation}\label{eq:cor:infty}
   \eps^{1/2}\norm{Z}_{0,\infty,\eps}
   \le 3\norm{Z}_{1,2,\eps}
\end{equation}
for every $Z=(X,\ell)\in W^{1,2}(\R,u^*TM\oplus\R)$.
\end{lemma}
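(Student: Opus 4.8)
The statement to prove is the Sobolev-type estimate $\eps^{1/2}\norm{Z}_{0,\infty,\eps}\le 3\norm{Z}_{1,2,\eps}$ for $Z=(X,\ell)$, which by the definitions of the two norms unpacks to
\[
   \eps^{1/2}\left(\norm{X}_\infty+\eps\norm{\ell}_\infty\right)
   \le 3\left(\norm{X}^2+\eps^2\norm{\ell}^2+\eps^2\norm{\Nabla{s}X}^2+\eps^4\norm{\ell'}^2\right)^{1/2}.
\]
The plan is to treat the $X$-term and the $\ell$-term separately using the elementary one-dimensional Gagliardo--Nirenberg/Agmon inequality and then recombine. First I would recall the standard scalar inequality: for $g\in W^{1,2}(\R)$ one has $\norm{g}_\infty^2\le 2\norm{g}\,\norm{g'}$ (proof: $g(s)^2=\int_{-\infty}^s \frac{d}{dt}g(t)^2\,dt=2\int_{-\infty}^s g g'\,dt\le 2\norm{g}\norm{g'}$, and symmetrically from $+\infty$). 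Applying this componentwise along $u^*TM$ (working in a parallel frame so $\abs{X}^2$ has weak derivative $2\inner{\Babla{s}X}{X}$) gives $\norm{X}_\infty^2\le 2\norm{X}\,\norm{\Babla{s}X}$, and applied to $\ell$ directly gives $\norm{\ell}_\infty^2\le 2\norm{\ell}\,\norm{\ell'}$.

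Next I would insert the parameter $\eps$ to balance the two factors in each product. For the first term, $\eps\norm{X}_\infty^2\le 2\norm{X}\cdot\eps\norm{\Babla{s}X}$, and by Young's inequality $2ab\le a^2+b^2$ with $a=\norm{X}$, $b=\eps\norm{\Babla{s}X}$, this is $\le\norm{X}^2+\eps^2\norm{\Babla{s}X}^2\le\norm{Z}_{1,2,\eps}^2$. Hence $\eps^{1/2}\norm{X}_\infty\le\norm{Z}_{1,2,\eps}$. For the second term, $\eps^3\norm{\ell}_\infty^2\le 2\,\eps\norm{\ell}\cdot\eps^2\norm{\ell'}\le\eps^2\norm{\ell}^2+\eps^4\norm{\ell'}^2\le\norm{Z}_{1,2,\eps}^2$, so $\eps^{3/2}\norm{\ell}_\infty\le\norm{Z}_{1,2,\eps}$, i.e.\ $\eps^{1/2}\cdot\eps\norm{\ell}_\infty\le\norm{Z}_{1,2,\eps}$. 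Adding the two bounds yields $\eps^{1/2}\norm{Z}_{0,\infty,\eps}=\eps^{1/2}\norm{X}_\infty+\eps^{1/2}\eps\norm{\ell}_\infty\le 2\norm{Z}_{1,2,\eps}$, which is even a bit sharper than the claimed constant $3$; the looser constant presumably just absorbs the passage from $\norm{\Nabla{s}X}$ to $\norm{\Babla{s}X}$ (via~\eqref{eq:findim-decomp-II}, cf.\ the $2^{3/2}$ comparison in~\eqref{eq:findim-0-2-eps}) or is stated generously. Finally I would note the density argument: the inequality is first proved for compactly supported smooth $Z$ and then extended to all $Z\in W^{1,2}(\R,u^*TM\oplus\R)$ by continuity of both sides in the $W^{1,2}$-topology.

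The only mild subtlety — hardly an obstacle — is the vector-valued case: one must justify $\abs{X}\in W^{1,2}(\R)$ with $\bigl|\tfrac{d}{ds}\abs{X}\bigr|\le\abs{\Babla{s}X}$ pointwise a.e., which follows from $\abs{X}\tfrac{d}{ds}\abs{X}=\inner{\Babla{s}X}{X}\le\abs{\Babla{s}X}\abs{X}$ and Cauchy--Schwarz, exactly as in the standard Kato-type inequality; then apply the scalar Agmon inequality to $g=\abs{X}$. Everything else is Young's inequality and bookkeeping of powers of $\eps$, so there is no real difficulty here.
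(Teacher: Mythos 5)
Your proof is correct and follows essentially the same route as the paper: both reduce to the one-dimensional Sobolev/Agmon inequality applied to the scalar functions $\abs{X}$ and $\ell$, with the weights $\eps^{1/2}$ and $\eps^{3/2}$ obtained by balancing the $L^2$-norm of the function against the $\eps$-weighted $L^2$-norm of its derivative. The only cosmetic difference is that the paper inserts $\eps$ by rescaling the time variable ($v_\beta(s):=v(\eps^{2\beta}s)$ in the unweighted bound $\norm{v}_\infty\le\norm{v}_{1,2}$) rather than by your direct weighted Young step, and its constant $3$ arises from the crude passage $(a_1+\dots+a_4)^2\le 2^3(a_1^2+\dots+a_4^2)$ back to the $(1,2,\eps)$-norm, whereas your bookkeeping gives the slightly sharper constant $2$.
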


\begin{proof}
For $v\colon\R\to\R$ of class $C^1$ and compactly supported
it holds that
\[
   \abs{v(s)}\cdot v(s)
   =\int_{-\infty}^s\underbrace{\tfrac{d}{d\sigma}
   \left(\abs{v(\sigma)}\cdot v(\sigma)\right)}_{=2 \abs{v(\sigma)} v^\prime(\sigma)} d\sigma
   =2\inner{\abs{v}}{v^\prime}_{L^2}
   \le 2\norm{v}\cdot\norm{v^\prime}
   \le \norm{v}_{1,2}
\]
where the last step is by Young $ab\le a^2/2+b^2/2$ and
$\norm{v}_{1,2}^2:=\norm{v}^2+\norm{v^\prime}^2$. So
\begin{equation}\label{eq:Linfty-est}
   \norm{v}_\infty\le \norm{v}_{1,2} .
\end{equation}
Use that $C^1_0$ is dense in $W^{1,2}$ on the domain $\R$,
then~(\ref{eq:Linfty-est})
provides the Cauchy property of the approximating
sequence, so~(\ref{eq:Linfty-est}) remains true for $v\in W^{1,2}$.

Now we rescale. For $\beta\in\R$ and $\eps>0$ define
$v_\beta\colon \R\to\R$ by $v_\beta(s):=v(\eps^{2\beta} s)$.
Note that $\norm{v_\beta}_\infty=\norm{v}_\infty$, but the $L^2$-norms
behave as follows
\begin{equation*}
\begin{split}
   \norm{v_\beta}^2
   &=\int_{-\infty}^\infty v(\underbrace{\eps^{2\beta} s}_{\sigma(s)})^2\, ds
   =\eps^{-2\beta}\int_{-\infty}^\infty v(\sigma)^2\, d\sigma
   =\eps^{-2\beta}\norm{v}^2 ,
   \\
   \norm{v_\beta^\prime}^2
   &=\int_{-\infty}^\infty
   (v^\prime(\underbrace{\eps^{2\beta} s}_{\sigma(s)})\eps^{2\beta})^2\, ds
   =\eps^{-2\beta}\eps^{4\beta}\int_{-\infty}^\infty (v^\prime(\sigma))^2\, d\sigma
   =\eps^{2\beta}\norm{v^\prime}^2 .
\end{split}
\end{equation*}
Now square~(\ref{eq:Linfty-est}) to $v_\beta$ to see that
\begin{equation*}
\begin{split}
   \norm{v}_\infty^2
   =\norm{v_\beta}_\infty^2
   \stackrel{(\ref{eq:Linfty-est})}{\le} \norm{v_\beta}^2+\norm{v_\beta^\prime}^2
   &\le\left(\eps^{-\beta}\norm{v}\right)^2
   +\left(\eps^\beta\norm{v^\prime}\right)^2
   \\
   &\le\left(\eps^{-\beta}\norm{v}+\eps^\beta\norm{v^\prime}\right)^2
\end{split}
\end{equation*}
whenever $\beta\in\R$ and $\eps>0$.
Take the square root, then multiply by $\eps^\beta$ to obtain
\begin{equation}\label{eq:Linfty-est-beta}
   \eps^\beta\norm{v}_\infty
   \le\norm{v}+\eps^{2\beta}\norm{v^\prime} .
\end{equation}
With $\beta=\frac12$ apply~(\ref{eq:Linfty-est-beta}) for
$v(s)=\abs{X(s)}=\abs{X(s)}_G$ and $v(s)=\ell(s)$ to obtain
\[
   \sqrt{\eps}\norm{Z}_{0,\infty,\eps}
   \stackrel{(\ref{eq:findim-0-2-eps})}{=}
   \sqrt{\eps}\norm{X}_\infty+\sqrt{\eps}\eps\norm{\ell}_\infty
   \stackrel{(\ref{eq:Linfty-est-beta})}{\le} \norm{X}+\eps\norm{X^\prime}
   +\eps\norm{\ell}+\eps^2\norm{\ell^\prime} .
\]
Now the square root of the inequality for non-negative reals
\begin{equation}\label{eq:sum-square}
   (a_1+\dots+a_k)^2\le 2^{k-1}(a_1^2+\dots+a_k^2)
\end{equation}
in case $k=4$ completes the proof of Lemma~\ref{le:0-infty-eps}.
\end{proof}

\boldmath
\subsubsection{Ambient linear estimate along maps $i(q)$}
\label{sec:amblinest}
\unboldmath

The most important uniform linear estimates in an adiabatic limit
are the fundamental estimate, in our case the ambient linear estimate,
Theorem~\ref{thm:amblinest} below,\footnote{
  In PDE cases, such as~\cite{salamon:2006a},
  the ambient linear estimate is often much weaker than
  in our ODE case, so it must be improved to what we
  refer to as the fundamental estimate.
  }
and the key estimate, Theorem~\ref{thm:KeyEst-thm.3.3}.

In the following we consider maps $q$ that take values in the compact
regular hypersurface $\Sigma$. Thus we can work directly with the
(positive) minimal length $m_H:=\min_\Sigma\abs{\Babla{}H}>0$ along
$\Sigma$, instead of invoking part~(ii) of Theorem~\ref{thm:apriori-intro}
which only works for small $\eps>0$.
In fact, Section~\ref{sec:amblinest} can be generalized to
maps $(u,\tau)\in C^1(\R,M\times \R)$ with
$\norm{\p_su}_\infty+\norm{\tau}_\infty< c_w$ and for $\eps>0$ small.

\begin{theorem}\label{thm:amblinest}
Let $q\in C^1(\R,\Sigma)$. Let $\norm{\p_sq}_\infty< c_w$
be bounded by a constant. Then there is a constant
$c_a=c_a(m_H,c_w,\norm{H}_{C^2(\Sigma)},\norm{F}_{C^2(\Sigma)})>0$
such that
\begin{equation}\label{eq:amblinest}
   \eps^{-1}\norm{dH|_qX}
   +\norm{\ell }
   +\norm{\Babla{s} X}
   +\eps\norm{\ell ^\prime}
   \le c_a\left(
   \norm{D^\eps_q Z}_{0,2,\eps}
   +\norm{X}
   \right)
\end{equation}
for all $\eps>0$ and $Z=(X,\ell)\in W^{1,2}(\R,q^*TM\oplus\R)$.
The estimate continues to hold for $(D^\eps_q)^*$.
The constants $c_a$ is invariant under $s$-shifts of $q$.
\end{theorem}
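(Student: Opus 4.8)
The plan is to exploit the second component of $D^\eps_q$ to show that the normal derivative $h:=dH|_qX$ has $L^2$-norm of order $\eps$, and then to read the three remaining quantities off the two component equations. Write $D^\eps_qZ=(Y,b)$ with
\[
   Y:=\Babla{s}X+\Babla{X}\Babla{}F|_q+\chi(q)\Babla{X}\Babla{}H|_q+\ell\Babla{}H|_q,
   \qquad
   b:=\ell^\prime+\eps^{-2}dH|_qX,
\]
so that $\norm{Y}\le\norm{D^\eps_qZ}_{0,2,\eps}$ and $\eps\norm{b}\le\norm{D^\eps_qZ}_{0,2,\eps}$. Since $q\in C^1$ takes values in the compact hypersurface $\Sigma$ with $\norm{\p_sq}_\infty<c_w$, every pointwise coefficient below --- $\abs{\Babla{}H|_q}$, the Hessians of $F$ and $H$ along $\Sigma$, $\abs{d\chi|_q}$, and $\abs{\chi(q)}$ (the last bounded in terms of $m_H$, $\norm{H}_{C^1}$, $\norm{F}_{C^1}$) --- is bounded by a constant of the type allowed for $c_a$, and $\abs{\Babla{}H|_q}\ge m_H>0$; below $c$ denotes such a constant, which may change from line to line. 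Note $h\in W^{1,2}(\R,\R)$ because $X\in W^{1,2}$ and $q\in C^1$.

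First I would bound $h$ in $L^2$. From $b=\ell^\prime+\eps^{-2}h$ we have $h=\eps^2(b-\ell^\prime)$, hence, integrating by parts (legitimate first on compactly supported smooth vector fields, then by density),
\[
   \norm{h}^2=\eps^2\inner{h}{b}-\eps^2\inner{h}{\ell^\prime}=\eps^2\inner{h}{b}+\eps^2\inner{h^\prime}{\ell}.
\]
Differentiating $h=\inner{\Babla{}H|_q}{X}$ gives $h^\prime=\inner{\Babla{\p_sq}\Babla{}H|_q}{X}+\inner{\Babla{}H|_q}{\Babla{s}X}$, and substituting $\Babla{s}X=Y-\Babla{X}\Babla{}F|_q-\chi(q)\Babla{X}\Babla{}H|_q-\ell\Babla{}H|_q$ from the first component produces
\[
   h^\prime=-\ell\,\abs{\Babla{}H|_q}^2+\inner{\Babla{}H|_q}{Y}+r,\qquad \abs{r(s)}\le c\,\abs{X(s)}.
\]
Inserting this into $\inner{h^\prime}{\ell}$, the first term contributes $-\eps^2\norm{\abs{\Babla{}H|_q}\,\ell}^2\le-\eps^2m_H^2\norm{\ell}^2$, so that
\[
   \norm{h}^2+\eps^2m_H^2\norm{\ell}^2\le\eps^2\abs{\inner{h}{b}}+\eps^2c\,\norm{Y}\norm{\ell}+\eps^2c\,\norm{X}\norm{\ell}.
\]
Now Cauchy--Schwarz and Young absorb $\tfrac14\norm{h}^2+\tfrac12\eps^2m_H^2\norm{\ell}^2$ into the left side; using $\eps^2\norm{b}^2\le\norm{D^\eps_qZ}_{0,2,\eps}^2$ and $\eps^2\norm{Y}^2\le\eps^2\norm{D^\eps_qZ}_{0,2,\eps}^2$, the surviving right side is $\le\eps^2c^2\bigl(\norm{D^\eps_qZ}_{0,2,\eps}^2+\norm{X}^2\bigr)$. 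Taking square roots yields both $\eps^{-1}\norm{dH|_qX}\le c\,(\norm{D^\eps_qZ}_{0,2,\eps}+\norm{X})$ and, the $\eps$'s cancelling, $\norm{\ell}\le c\,(\norm{D^\eps_qZ}_{0,2,\eps}+\norm{X})$.

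With these in hand the rest is bookkeeping. The first component equation gives $\Babla{s}X=Y-\Babla{X}\Babla{}F|_q-\chi(q)\Babla{X}\Babla{}H|_q-\ell\Babla{}H|_q$, so $\norm{\Babla{s}X}\le\norm{Y}+c\norm{X}+c\norm{\ell}\le c_a(\norm{D^\eps_qZ}_{0,2,\eps}+\norm{X})$ by the previous step. The second component equation gives $\ell^\prime=b-\eps^{-2}h$, so $\eps\norm{\ell^\prime}\le\eps\norm{b}+\eps^{-1}\norm{h}\le\norm{D^\eps_qZ}_{0,2,\eps}+c\,(\norm{D^\eps_qZ}_{0,2,\eps}+\norm{X})$. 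Summing the four inequalities gives~(\ref{eq:amblinest}). For $(D^\eps_q)^*$ the computation is verbatim the same: by~(\ref{eq:lin-epsadjoint-findim-lin}) its two components have the same shape with the signs of $\Babla{s}X$ and $\ell^\prime$ reversed, so again $h=\eps^2(b^*+\ell^\prime)$ and $-\eps^2\inner{h^\prime}{\ell}$ produces the favourable term $-\eps^2\norm{\abs{\Babla{}H|_q}\,\ell}^2$; only harmless signs change. Finally $c_a$ depends only on $m_H$, $c_w$, $\norm{H}_{C^2(\Sigma)}$, $\norm{F}_{C^2(\Sigma)}$, none of which is altered by an $s$-shift of $q$, which gives the claimed shift invariance.

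The hard part --- indeed the whole content of the estimate --- is the first step: recognising that the $\eps^{-2}$-weighted second equation pins $dH|_qX$ to size $\eps$ in $L^2$, and that the integration by parts in $\norm{h}^2=\eps^2\inner{h}{b}+\eps^2\inner{h^\prime}{\ell}$ turns the a priori dangerous interaction term into the manifestly good quantity $-\eps^2\norm{\abs{\Babla{}H|_q}\,\ell}^2$, which simultaneously delivers the $\norm{\ell}$-bound and lets the estimate close for \emph{all} $\eps>0$ with no smallness assumption. Everything after that is Young's inequality and back-substitution into the two component equations.
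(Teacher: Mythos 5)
Your proof is correct and rests on exactly the same mechanism as the paper's: integrating by parts the cross term $\inner{\ell'}{dH|_qX}$ coming from the second component and letting it cancel against the term $\inner{\Babla{s}X}{\ell\Babla{}H|_q}$ from the first component, which is what produces the coercive $-\eps^2m_H^2\norm{\ell}^2$ and $\norm{dH|_qX}^2$ contributions uniformly in $\eps$. The only difference is organizational — the paper expands $\norm{D^\eps_qZ}_{0,2,\eps}^2$ as a sum of two squares whose dangerous cross terms cancel upon addition, whereas you isolate $\norm{dH|_qX}^2$ first and back-substitute the component equations — and both handle the adjoint by the same sign check.
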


\begin{proof}
Fix $Z=(X,\ell)$ in the dense subset $C^\infty_0 (\R,q^*TM\oplus\R)$.
Take the square
\begin{equation*}
\begin{split}
   \norm{D^\eps_q Z}_{0,2,\eps}^2
   &=\norm{\Babla{s} X+\Babla{X}\bigl(\Babla{} F|_q
   +\chi|_q\Babla{} H|_q\bigr)+\ell  \Babla{} H|_q}_{L^2_q}^2\\
   &\quad+\eps^2\norm{\ell ^\prime+\eps^{-2}dH|_q X}_{L^2(\R)}^2.
\end{split}
\end{equation*}
Consider the first term in the sum. Expand the square to get
\begin{equation*}
\begin{split}
   &\norm{\Babla{s} X+\Babla{X}\bigl(\Babla{} F|_q
   +\tau\Babla{} H|_q\bigr)+\ell  \Babla{} H|_q}_{L^2_q}^2
\\
   &=\norm{\Babla{s} X}_{L^2_q}^2
   +\norm{\Babla{X}\bigl(\Babla{} F|_q+\tau\Babla{} H|_q\bigr)}_{L^2_q}^2
   +\norm{\ell  \Babla{} H|_q}_{L^2_q}^2
   \\
   &\quad
   +2\INNER{\sqrt{2}\Babla{X}\bigl(\Babla{} F|_q+\tau\Babla{} H|_q\bigr)}
   {\tfrac{1}{\sqrt{2}} \ell  \Babla{} H|_q}_{L^2_q}\\
   &\quad
   +2\INNER{\tfrac{1}{\sqrt{2}}\Babla{s} X}
   {\sqrt{2}\Babla{X}\bigl(\Babla{} F|_q+\tau\Babla{} H|_q\bigr)}_{L^2_q}
   +2\INNER{\Babla{s} X}{\ell  \Babla{} H|_q}_{L^2_q}
\\
   &\ge\tfrac{1}{2}\norm{\Babla{s} X}_{L^2_q}^2
   +\tfrac{1}{2}\norm{\ell  \Babla{} H|_q}_{L^2_q}^2
   -3\norm{\Babla{X}\bigl(\Babla{} F|_q+\tau\Babla{}
     H|_q\bigr)}_{L^2_q}^2
   +2\INNER{\Babla{s} X}{\ell  \Babla{} H|_q}_{L^2_q}
\\
   &\ge\tfrac{1}{2}\norm{\Babla{s} X}_{L^2_q}^2
   +\tfrac{{m_H}^2}{2}\norm{\ell }_{L^2(\R)}^2
   -3\left(\norm{F}_{C^2(\Sigma)}
   +\norm{\tau}_\infty\norm{H}_{C^2(\Sigma)}\right)
   \norm{X}_{L^2_q}^2\\
   &\quad+\underline{2\INNER{\Babla{s} X}{\ell  \Babla{} H|_q}_{L^2_q}} .
\end{split}
\end{equation*}
Here we also used Cauchy-Schwarz followed by Young's
inequality, then we pulled out the $L^\infty$ norms.
Next consider the second term in the sum. Expand
the square and integrate by parts to get
\begin{equation*}
\begin{split}
   &\eps^2\norm{\ell ^\prime+\eps^{-2}dH|_q X}_{L^2(\R)}^2
   \\
   &=\eps^2\norm{\ell ^\prime}_{L^2(\R)}^2+\eps^{-2}\norm{dH|_qX}_{L^2(\R)}^2
   +2\INNER{\ell ^\prime}{\inner{\Babla{}H|_q}{X}_G}_{L^2(\R)}
   \\
   &=\eps^2\norm{\ell ^\prime}_{L^2(\R)}^2+\eps^{-2}\norm{dH|_qX}^2_{L^2(\R)}\\
   &\quad
   -\INNER{\tfrac{m_H}{\sqrt{2}}\ell}{2\tfrac{\sqrt{2}}{m_H}\inner{\Babla{s}\Babla{}H|_q}{X}_G}_{L^2(\R)}
   -2\INNER{\ell }{\inner{\Babla{}H|_q}{\Babla{s} X}_G}_{L^2(\R)}
   \\
   &\ge
   \eps^2\norm{\ell ^\prime}_{L^2(\R)}^2+\eps^{-2}\norm{dH|_qX}^2_{L^2(\R)}
   -\tfrac{{m_H}^2}{4}\norm{\ell }_{L^2(\R)}^2
   \\
   &\quad
   -\tfrac{4 \norm{\p_su}_\infty^2 \norm{H}^2_{C^2(\Sigma)}}{{m_H}^2}\,
   \norm{X}^2_{L^2_q}
   -\underline{2\INNER{\ell }{\inner{\Babla{}H|_q}{\Babla{s} X}_G}_{L^2(\R)}}.
\end{split}
\end{equation*}
To obtain the inequality we used Cauchy-Schwarz followed by Young's
inequality, then we pulled out the $L^\infty$ norms.
Adding the two estimates the underlined terms cancel
and we obtain the estimate~(\ref{eq:amblinest}).

The estimate for the formal adjoint follows exactly the same way.
Here the derivative terms show up with a minus sign. The
underlined terms now show up with a factor $-1$ and so they still cancel.
\end{proof}

\begin{remark}\label{rem:amblinest}
Under the hypotheses of Theorem~\ref{thm:amblinest}
there are $C,\eps_0>0$ with
\begin{equation*}
   \eps^{-1}\norm{dH|_qX}
   +\norm{\ell }
   +\norm{\Babla{s} X}
   +\eps\norm{\ell ^\prime}\\
   \le C\left(
   \norm{D^\eps_q Z}_{0,2,\eps}+\norm{\tan X}
   \right)
\end{equation*}
for every $Z=(X,\ell)\in W^{1,2}(\R,q^*TM\oplus\R)$ and
whenever $\eps\in(0,\eps_0]$. Similarly for $(D^\eps_q)^*$
and the constants $C,\eps_0$ are invariant under $s$-shifts of $q$.

To see this decompose $X=\tan X+\nor\, X$ on the right
of~(\ref{eq:amblinest}) to obtain
\[
   \Norm{X}\le\Norm{\tan X}+\Norm{\nor\, X}
  \stackrel{(\ref{eq:findim-gradf})}{\le}
   \Norm{\tan X}+\tfrac{\eps}{m_H}\eps^{-1}\Norm{dH|_q X}.
\]
Incorporate the last summand into the left-hand side
of~(\ref{eq:amblinest}) for small $\eps$.
\end{remark}

\begin{corollary}\label{cor:amblinest}
Let $q\in C^1(\R,\Sigma)$. Let $\norm{\p_sq}_\infty< c_w$
be bounded by a constant. 
Let $\eps_0$ be the constant in Remark~\ref{rem:amblinest}.
Then there is a constant $C_a>0$ with
\begin{equation}\label{eq:cor-amblinest}
   \tfrac{1}{3}\eps^{1/2}\norm{Z}_{0,\infty,\eps}
   \le\Norm{Z}_{1,2,\eps}
   \le\eps C_a\norm{D^\eps_q Z}_{0,2,\eps}+\norm{\tan X}
\end{equation}
for all $\eps\in(0,\eps_0]$ and $Z=(X,\ell)\in W^{1,2}(\R,q^*TM\oplus\R)$.
The estimate also holds for $(D^\eps_q)^*$.
The constants $C_a,\eps_0$ are invariant under $s$-shifts of $q$.
\end{corollary}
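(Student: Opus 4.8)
The plan is to deduce the two halves of~(\ref{eq:cor-amblinest}) separately from results already in place, so that very little genuinely new work is needed. The left inequality is nothing but Lemma~\ref{le:0-infty-eps}: dividing~(\ref{eq:cor:infty}) by $3$ gives $\tfrac13\eps^{1/2}\norm{Z}_{0,\infty,\eps}\le\norm{Z}_{1,2,\eps}$ for every $\eps>0$ and every $Z\in W^{1,2}(\R,q^*TM\oplus\R)$, in particular for $\eps\in(0,\eps_0]$; this half uses nothing about $q$ and contributes nothing to the constant.

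For the right inequality I would begin from the elementary bound $\norm{Z}_{1,2,\eps}\le\norm{X}+\eps\norm{\ell}+\eps\norm{\Babla{s}X}+\eps^2\norm{\ell'}$ recorded in~(\ref{eq:findim-0-2-eps}). The first summand is the only one without a power of $\eps$ in front, so it must be split: using the orthogonal decomposition $X=\tan X+\nor X$ together with the pointwise estimate $\Abs{\nor X}\le\Abs{dH|_qX}/m_H$ from~(\ref{eq:findim-gradf}) one obtains, after integration, $\norm{X}\le\norm{\tan X}+\eps\,m_H^{-1}\bigl(\eps^{-1}\norm{dH|_qX}\bigr)$. Substituting and pulling the factor $\eps$ out of every term except $\norm{\tan X}$ yields
\[
   \norm{Z}_{1,2,\eps}
   \le\norm{\tan X}
   +\eps\max\{1,m_H^{-1}\}\bigl(\eps^{-1}\norm{dH|_qX}+\norm{\ell}+\norm{\Babla{s}X}+\eps\norm{\ell'}\bigr).
\]
Now I would apply Remark~\ref{rem:amblinest} --- which is valid exactly for $\eps\in(0,\eps_0]$, for $D^\eps_q$ and for $(D^\eps_q)^*$ alike --- to bound the bracket by $C\bigl(\norm{D^\eps_q Z}_{0,2,\eps}+\norm{\tan X}\bigr)$. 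This gives $\norm{Z}_{1,2,\eps}\le\norm{\tan X}+\eps C'\bigl(\norm{D^\eps_q Z}_{0,2,\eps}+\norm{\tan X}\bigr)$ with $C'=C\max\{1,m_H^{-1}\}$; since $\eps\le\eps_0$, the lower-order piece $\eps C'\norm{\tan X}$ is harmless and is absorbed by taking $C_a$ large enough. The formal-adjoint statement follows by the identical argument, both cited inputs being available for $(D^\eps_q)^*$.

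The remaining assertions are immediate. The $s$-shift invariance of $C_a$ (and of $\eps_0$) is inherited, because the constant $3$ in Lemma~\ref{le:0-infty-eps} and the number $m_H=\min_\Sigma\abs{\Babla{}H}$ do not involve $q$ at all, while the constants $C,\eps_0$ of Remark~\ref{rem:amblinest} are already $s$-shift invariant. I do not anticipate a real obstacle here: all the analytic substance lies in Theorem~\ref{thm:amblinest}/Remark~\ref{rem:amblinest} and in Lemma~\ref{le:0-infty-eps}, and what is left is only the careful bookkeeping of the powers of $\eps$ that makes the final constant genuinely independent of $\eps\in(0,\eps_0]$.
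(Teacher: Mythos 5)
Your proposal is correct and follows essentially the same route as the paper: the left inequality is exactly Lemma~\ref{le:0-infty-eps}, and the right one comes from the $(1,2,\eps)$-norm bound in~(\ref{eq:findim-0-2-eps}), the splitting $X=\tan X+\nor X$ with $\norm{\nor X}\le m_H^{-1}\norm{dH|_qX}$ from~(\ref{eq:findim-gradf}), and Remark~\ref{rem:amblinest}. Your closing remark about absorbing the leftover $\eps C'\norm{\tan X}$ ``into $C_a$'' is slightly imprecise, since $C_a$ multiplies the $\norm{D^\eps_qZ}_{0,2,\eps}$ term rather than $\norm{\tan X}$; but the paper's own proof leaves the same $(1+\eps C)\norm{\tan X}$ coefficient implicit, and the discrepancy is harmless in every subsequent application.
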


\begin{proof}
By definition~(\ref{eq:findim-0-2-eps}) of the $(1,2,\eps)$-norm,
by writing $X=\tan X+\nor\, X$, and since
$\Norm{\nor\, X}\le\frac{1}{m_H}\Norm{dH|_q X}$
by~(\ref{eq:findim-gradf}), we get that
\begin{equation*}
\begin{split}
   \norm{Z}_{1,2,\eps}
   &\le\norm{\tan X}+\norm{\nor\, X}+\eps\norm{\ell }
   +\eps\norm{\Babla{s} X}+\eps^2\norm{\ell ^\prime}
   \\
   &\le\norm{\tan X}
   +\eps\cdot \tfrac{\max\{m_H,1\}}{m_H}
   \left(
      \eps^{-1}\norm{dH|_qX}+\norm{\ell }+\norm{\Babla{s} X}
      +\eps\norm{\ell^\prime}
   \right) .
\end{split}
\end{equation*}
Now apply Remark~\ref{rem:amblinest}.
Inequality~(\ref{eq:cor:infty}) concludes the proof.
\end{proof}

\boldmath
\section{Linear estimates}
\label{sec:linear-estimates}
\unboldmath

Throughout Section~\ref{sec:linear-estimates} we study linearized operators
along maps $q$ which take values in the compact hypersurface $\Sigma$.
Thus we can work with the constant
\[
   m_H:=\min_\Sigma \Abs{\Babla{} H}>0,
\]
see~(\ref {eq:findim-gradf}), which does not impose restrictions on
the values of $\eps>0$, in sharp contrast to the constant  $c_\kappa$
that appears in part~(ii), see~\cite{Frauenfelder:2022g}, of the
a priori Theorem~\ref{thm:apriori-intro}
requiring a small parameter interval $(0,\eps_\kappa]$.

\boldmath
\subsection{Canonical embedding and orthogonal projection}
\label{sec:ff-comp-eqs}
\unboldmath

The elements $q$ of the Hilbert manifold $\Qq_{x^-,x^+}$ are paths
that take values in the regular level set $\Sigma=H^{-1}(0)$
along which the map $\chi$ defined by~(\ref{eq:chi}) is well defined. 
By~(\ref{eq:can-emb}) and~(\ref{eq:finfim-i}) there is the
\textbf{canonical embedding}
\[
   i\colon \Qq_{x^-,x^+}\to \Zz_{x^-,x^+}
   ,\quad
   q\mapsto(q,\chi(q)) ,
\]
which is useful to compare the base solutions $q$
and the $\eps$-solutions $(u,\tau)$. At a path $q\in \Qq_{x^-,x^+}$
the linearization of the natural embedding is given by
\begin{equation*}
\begin{split}
   {\color{gray}T_q \Qq_{x^-,x^+}}
   &{\color{gray}\,\to T_{i(q)}i(\Qq_{x^-,x^+})
   \qquad\;\subset T_{i(q)}\Zz_{x^-,x^+}}
   \\
   I_q:=di|_q\colon
   W^{1,2}(\R,q^*T\Sigma)
   &\to W^{1,2}(\R,q^*T{\color{red}\Sigma}\oplus\R)
   \subset 
   W^{1,2}(\R,q^*T{\color{cyan}M}\oplus\R)
   \\
   \xi&\mapsto\left(\xi,d\chi|_q\xi\right) .
\end{split}
\end{equation*}

\begin{definition}[Orthogonal projection]\label{def:orth-proj}
At $q\in \Qq_{x^-,x^+}$ the $(0,2,\eps)$-ortho\-gonal projection
onto the image of the linearized embedding $I_q$
is the composition
\[
   \Pi^\perp_\eps= I_q\pi^\perp_\eps\colon T_{i(q)} \Zz_{x^-,x^+}
   =W^{1,2}(\R,q^*TM\oplus\R)\to W^{1,2}(\R,q^*TM\oplus\R)
\]
whose value on
$Z=(X,\ell)\in W^{1,2}(\R,q^*TM\oplus\R)$ is determined by
\begin{equation}\label{eq:pi_eps-def-ff}
   \INNER{Z- I_q\pi^\perp_\eps Z}{ I_q\xi}_{0,2,\eps}
   =0
\end{equation}
for every vector field $\xi\in T_q\Qq_{x^-,x^+}=W^{1,2}(\R,q^*T\Sigma)$.
\end{definition}

\begin{lemma}
a)~The linear map $\pi^\perp_\eps\colon
T_{i(q)}\Zz_{x^-,x^+}\to T_q\Qq_{x^-,x^+}$ is given by
\begin{equation}\label{eq:pi_eps-ff}
   \pi^\perp_\eps (X,\ell)
   =\left(\1+\eps^2\mu^2\, P\right)^{-1}
   \left(\tan X+\eps^2 \ell \Nabla{}\chi|_q\right)
   ,\qquad
   \mu:=\Abs{\Nabla{}\chi(q)} ,
\end{equation}
for every pair $Z=(X,\ell)\in W^{1,2}(\R,q^*TM\oplus\R)$. Here
$\Nabla{} \chi$ is the gradient in $(\Sigma,g)$
and $P$ is the pointwise orthogonal projection\,\footnote{
  if $\nabla\chi(q(s)=0$ vanishes at some $s$,
  then $\mu_{q(s)}^2 P_{q(s)}=0$ is the zero map at that $s$
  }
\begin{equation}\label{eq:findim-P}
\begin{split}
   P=P_q\colon T_q\Sigma
   &\to V_q:=\R\Nabla{}\chi|_q\subset T_q\Sigma
   \\
   \xi
   &\mapsto \frac{\INNER{\Nabla{}\chi|_q}{\xi}}{\mu^2}\,
   \Nabla{}\chi|_q,
\end{split}
\end{equation}
where $q$ actually abbreviates $q(s)$ for $s\in\R$. By compactness of
$\Sigma$ the constant
$\mu_\infty:=\max\{1,\Norm{\Nabla{}\chi}_{L^\infty(\Sigma)}\}$ is finite.
b)~It holds that $\pi^\perp_\eps I_q=\1$, so
$(\Pi^\perp_\eps)^2=\Pi^\perp_\eps$.
\end{lemma}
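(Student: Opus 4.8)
The plan is to compute the orthogonal projection directly from the defining identity~(\ref{eq:pi_eps-def-ff}). Write $\pi^\perp_\eps Z =: \eta \in W^{1,2}(\R,q^*T\Sigma)$; then~(\ref{eq:pi_eps-def-ff}) says
\[
   \INNER{(X,\ell) - (\eta, d\chi|_q\eta)}{(\xi, d\chi|_q\xi)}_{0,2,\eps}=0
   \qquad\forall\,\xi\in W^{1,2}(\R,q^*T\Sigma).
\]
Expanding the $(0,2,\eps)$ inner product pointwise and using $\INNER{d\chi|_q\eta}{d\chi|_q\xi}=\INNER{\Nabla{}\chi|_q}{\eta}\INNER{\Nabla{}\chi|_q}{\xi}$ together with $\INNER{X}{\xi}_G=\INNER{\tan X}{\xi}_g$, this reads
\[
   \INNER{\tan X+\eps^2\ell\,\Nabla{}\chi|_q - \eta - \eps^2\INNER{\Nabla{}\chi|_q}{\eta}\,\Nabla{}\chi|_q}{\xi}_g=0
\]
for all $\xi$. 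Since $\xi$ ranges over a dense subset, the bracketed $T\Sigma$-valued field vanishes pointwise, i.e. $(\1+\eps^2\mu^2 P)\eta=\tan X+\eps^2\ell\,\Nabla{}\chi|_q$, where I recognize $\INNER{\Nabla{}\chi|_q}{\eta}\,\Nabla{}\chi|_q=\mu^2 P\eta$ from~(\ref{eq:findim-P}). It then remains to invert $\1+\eps^2\mu^2 P$: pointwise this is $\1$ plus a nonnegative multiple of a rank-$\le 1$ orthogonal projection, hence invertible with norm $\le 1$ (its eigenvalues are $1$ and $1+\eps^2\mu^2\ge 1$), giving formula~(\ref{eq:pi_eps-ff}). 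One should note that $\eta\in W^{1,2}$: since $q$ takes values in the compact $\Sigma$, $\mu=\Abs{\Nabla{}\chi(q)}$ and its derivative are bounded, so $(\1+\eps^2\mu^2 P)^{-1}$ maps $W^{1,2}$ to $W^{1,2}$.

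For part b), I would argue as follows. Given any $\zeta\in W^{1,2}(\R,q^*T\Sigma)$, apply~(\ref{eq:pi_eps-ff}) to $Z=I_q\zeta=(\zeta,d\chi|_q\zeta)$. Then $\tan\zeta=\zeta$ and the right-hand side becomes $(\1+\eps^2\mu^2 P)^{-1}\bigl(\zeta+\eps^2\INNER{\Nabla{}\chi|_q}{\zeta}\Nabla{}\chi|_q\bigr)=(\1+\eps^2\mu^2 P)^{-1}(\1+\eps^2\mu^2 P)\zeta=\zeta$, so $\pi^\perp_\eps I_q=\1$. Consequently $(\Pi^\perp_\eps)^2=(I_q\pi^\perp_\eps)(I_q\pi^\perp_\eps)=I_q(\pi^\perp_\eps I_q)\pi^\perp_\eps=I_q\pi^\perp_\eps=\Pi^\perp_\eps$.

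The computation is essentially routine once the inner product is expanded; the only genuine point of care is the bound $\norm{(\1+\eps^2\mu^2 P)^{-1}}\le 1$ (asserted in the excerpt as~(\ref{no-eq:4.1.5-findim})) and the regularity claim that the inverse preserves $W^{1,2}$, which rests on compactness of $\Sigma$ bounding $\mu$ and $\Nabla{s}\mu$. I do not anticipate a serious obstacle; the main thing to be explicit about is passing from "the inner product vanishes for all $\xi$ in a dense subspace" to "the $T\Sigma$-valued integrand vanishes a.e.", which is standard but should be stated, and the finiteness of $\mu_\infty$ which is immediate from compactness.
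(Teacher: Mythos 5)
Your proof is correct and follows essentially the same route as the paper's: expand the defining $(0,2,\eps)$ orthogonality condition pointwise, use $X=\tan X+\nor X$ and the identification $d\chi|_q\eta=\INNER{\Nabla{}\chi|_q}{\eta}$ to arrive at $(\1+\eps^2\mu^2 P)\eta=\tan X+\eps^2\ell\,\Nabla{}\chi|_q$, and invert the pointwise symmetric positive operator; part b) is the same one-line verification. Your extra remarks on density and on $W^{1,2}$-preservation of the inverse are sound refinements of points the paper handles implicitly (the invertibility and norm bound being Lemma~\ref{no-le:4.1.5-findim}).
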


\begin{proof}
a) Let $\xi_0:=\pi^\perp_\eps(X,\ell)$.
By~(\ref{eq:pi_eps-def-ff}) the vector field $\xi_0$
lives in $T\Sigma$ and
\begin{equation*}
\begin{split}
   0
   &=\INNER{X-\xi_0}{\xi}_G+\eps^2\left(\ell -d\chi|_q\xi_0\right)d\chi|_q\xi
   \\
   &=\INNER{\tan X-\xi_0
   +\eps^2\left(\ell -\INNER{\nabla{}\chi}{\xi_0}\right)
   \nabla{}\chi}{\xi}
\end{split}
\end{equation*}
pointwise at $s\in\R$ and for every $\xi\in T_q \Qq_{x^-,x^+}$.
We wrote $X=\tan X+\nor\, X$, we used that $\xi\perp\nor\, X$, and we
replaced the metric $G$ by $g$.
By non-degeneracy
\begin{equation*}
\begin{split}
   \tan X+\eps^2\ell \Nabla{}\chi
   &=\xi_0
   +\eps^2\INNER{\Nabla{}\chi}{\xi_0}
   \Nabla{}\chi
   =\xi_0+\eps^2\mu^2 P\xi_0
\end{split}
\end{equation*}
and so $\pi^\perp_\eps (X,\ell)=\xi_0
=\left(\1+\eps^2\INNER{\Nabla{}\chi}{\1}
   \Nabla{}\chi\right)^{-1}
   \left(\tan X+\eps^2\ell \Nabla{}\chi\right)$.

b)~Apply the isomorphism in~(\ref{no-eq:findim-B}) to the
desired identity $\xi=\pi^\perp_\eps I_q\xi$ to get equivalently
$\xi+\eps^2\mu^2 P\xi=\xi+\eps^2 (d\chi|_q\xi)\Nabla{}\chi$
which is true by definition of $P$.
\end{proof}

\boldmath
\subsubsection{Ansatz for a suitable projection}
\label{no-sec:ff-est-can-proj}
\unboldmath

In previous adiabatic limits~\cite{dostoglou:1994a,gaio:1999a,weber:1999a, Gaio:2005a,salamon:2006a}
-- where the spatial part involves differential
equations, so the flow equation is a PDE and not just an ODE as in
the present article --
it was crucial for the functioning of the Newton iteration
not to choose the operator $\pi^\perp_\eps$ associated to
the orthogonal projection $\Pi^\perp_\eps=I_q\pi^\perp_\eps$.
There the natural orthogonal choice did produce an abundance of powers of
$\eps$ in one component, but a lack in the other one.
To balance this out one can introduce parameters $\alpha,\beta>0$
and make the Ansatz
\begin{equation}\label{no-eq:pi_eps-ff-Ansatz}
   \pi_\eps (X,\ell)
   :=\left(\1+\eps^\alpha\mu^2 P\right)^{-1}
   \left(\tan X+\eps^\beta \ell \Nabla{}\chi|_q\right) .
\end{equation}
It seems a common principle that the epsilon power $\beta=2$
that shows up in the \emph{orthogonal} projection~(\ref{eq:pi_eps-ff})
and also in the $\eps$-equation~(\ref{eq:DGF-def-I}), is the right
value of $\beta$.
Usually the value $\beta=2$ is suggested, too, when
comparing the linear operators $D^0_q$ and $D^\eps_q$,
see the proof of Proposition~\ref{le:4.1.3}.
In the present article the choice $\beta=2$ also
optimizes the Uniqueness Theorem~\ref{thm:uniqueness-findim},
see~(\ref{eq:beta=2}).
For $\alpha=1$ the operator comparison estimate~(\ref{eq:4.1.3})
has a nicely equilibrated right hand side, but the orthogonal
choice $\alpha=2$ works as well.

\begin{lemma}[Le. 4.1.5]\label{no-le:4.1.5-findim}
Let $q\in W^{1,2}(\R,\Sigma)$ and $\alpha\in\R$.
Then
\begin{equation}\label{no-eq:4.1.5-findim}
\begin{aligned}
   \Norm{(\1+\eps^\alpha\mu^2 P)^{-1}\xi}
   &\le\Norm{\xi}
\\
   \Norm{(\1+\eps^\alpha\mu^2 P)^{-1} P\xi}
   &\le\Norm{\xi}
   &&&
   (\1+\eps^\alpha\mu^2 P)^{-1} P
   &=\tfrac{P}{1+\eps^\alpha\mu^2}
\\
   \Norm{(\1+\eps^\alpha\mu^2 P)^{-1}\eps^{\alpha/2}\mu P\xi}
   &\le\tfrac12\Norm{\xi}
   &&&
   \tfrac{\eps^{\alpha/2}}{1+\eps^\alpha\mu(s)^2}&\le\tfrac{1}{2\mu(s)}
\\
   \Norm{(\1+\eps^\alpha\mu^2 P)^{-1}\eps^\alpha\mu^2 P\xi}
   &\le\Norm{\xi}
   &&&
   \tfrac{\eps^{\alpha}}{1+\eps^\alpha\mu(s)^2}&\le\tfrac{1}{\mu(s)^2}
\end{aligned}
\end{equation}
for all constants $\eps>0$, vector fields
$\xi\in W^{1,2}(\R,q^*T\Sigma)$, and reals $s\in\R$.
\end{lemma}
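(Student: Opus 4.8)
The plan is to reduce the whole lemma to pointwise (fiberwise) scalar inequalities on the bundle $q^*T\Sigma$. First I would fix $s\in\R$ and record the structure of the operators there: writing $V_{q(s)}:=\R\Nabla{}\chi|_{q(s)}\subset T_{q(s)}\Sigma$ (and $V_{q(s)}:=\{0\}$ where $\Nabla{}\chi(q(s))=0$), the map $P=P_q$ is the $g$-orthogonal projection onto $V_{q(s)}$, hence self-adjoint with $P^2=P$, and $\mu^2P$ is multiplication by $\mu(s)^2=\Abs{\Nabla{}\chi(q(s))}^2$ on $V_{q(s)}$ and zero on its $g$-orthogonal complement; it is continuous and bounded because $q$ takes values in the compact set $\Sigma$ and $\chi$ is smooth near $\Sigma$ (and $q\in W^{1,2}(\R,\Sigma)$ embeds in $C^0$). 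Consequently, for every $\eps>0$ and $\alpha\in\R$, the endomorphism $\1+\eps^\alpha\mu^2P$ is fiberwise positive, self-adjoint and invertible, with $(\1+\eps^\alpha\mu^2P)^{-1}$ equal to the identity on $(V_{q(s)})^\perp$ and to multiplication by $(1+\eps^\alpha\mu(s)^2)^{-1}$ on $V_{q(s)}$; in particular it is a bounded operator on $L^2(\R,q^*T\Sigma)$. Since all four operators appearing in the lemma respect the splitting $T_{q(s)}\Sigma=V_{q(s)}\oplus(V_{q(s)})^\perp$, it then suffices to bound their pointwise operator norm and integrate over $s$ against $\xi\in W^{1,2}\subset L^2$.

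Next I would dispose of the three identities/inequalities in the right column of~(\ref{no-eq:4.1.5-findim}). The identity $(\1+\eps^\alpha\mu^2P)^{-1}P=P/(1+\eps^\alpha\mu^2)$ is immediate from the fiberwise description, since $P\xi(s)$ lies in $V_{q(s)}$ (and both sides are the zero map where $\mu(s)=0$). Setting $t:=\eps^{\alpha/2}\mu(s)\ge 0$ and $u:=\eps^\alpha\mu(s)^2=t^2$, the bound $\eps^{\alpha/2}/(1+\eps^\alpha\mu(s)^2)\le 1/(2\mu(s))$ for $\mu(s)>0$ is exactly $t/(1+t^2)\le\tfrac12$, i.e.\ the AM--GM inequality $1+t^2\ge 2t$; and $\eps^\alpha/(1+\eps^\alpha\mu(s)^2)\le 1/\mu(s)^2$ is the trivial $\eps^\alpha\mu(s)^2\le 1+\eps^\alpha\mu(s)^2$.

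Then the four $L^2$ estimates follow by reading off pointwise operator norms on $V_{q(s)}$ (on $(V_{q(s)})^\perp$ the first operator is the identity and the other three vanish): with $t$ and $u$ as above, these norms are $(1+u)^{-1}\le 1$ for the first and second operators, $t/(1+t^2)\le\tfrac12$ for the third, and $u/(1+u)\le 1$ for the fourth. Squaring the resulting pointwise bound $\Abs{T\xi(s)}\le C\Abs{\xi(s)}$ and integrating over $s\in\R$ gives $\Norm{T\xi}\le C\Norm{\xi}$ for each of the four operators $T$, with $C=\tfrac12$ for the third and $C=1$ otherwise.

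I do not expect a genuine obstacle here. The only place to be slightly careful is the degenerate locus $\{\,s:\Nabla{}\chi(q(s))=0\,\}$, where the line $V_{q(s)}$ collapses: there each operator in the statement is either the identity on $T_{q(s)}\Sigma$ or the zero map, as recorded in the footnote after~(\ref{eq:findim-P}), so the stated bounds persist and the fiberwise manipulations remain legitimate. Once this setup is in place, the lemma is a one-line consequence of the elementary inequalities $\tfrac{1}{1+t^2}\le 1$, $\tfrac{t}{1+t^2}\le\tfrac12$ and $\tfrac{u}{1+u}\le 1$.
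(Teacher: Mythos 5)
Your proposal is correct and follows essentially the same route as the paper: both reduce to the fiberwise spectral description of $B(s)=\1+\eps^\alpha\mu^2P$ (eigenvalue $1$ on $V_{q(s)}^\perp$, eigenvalue $1+\eps^\alpha\mu(s)^2$ on $V_{q(s)}$) and then invoke the elementary scalar bounds $\tfrac{1}{1+u}\le 1$, $\tfrac{t}{1+t^2}\le\tfrac12$ (Young/AM--GM) and $\tfrac{u}{1+u}\le 1$. Your extra care about the degenerate locus $\{\nabla\chi(q(s))=0\}$ matches the paper's footnote after~(\ref{eq:findim-P}) and is a welcome touch, but not a different argument.
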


Recall that $P^2=P$, pointwise at $q(s)$, is a projection, an
orthogonal one, hence of norm $1$. So estimate one with $\xi$
replaced by $P\xi$ implies estimate two.
Note that estimate two in the lemma allows for removing the square root
$\mu P$, at cost $\eps^{\alpha/2}$, of the operator $(\mu P)^2=\mu^2 P$
that appears in $(\1+\eps^\alpha\mu^2 P)^{-1}$, whereas removing
$(\mu P)^2=\mu^2 P$ itself has cost $\eps^\alpha\mu^2$. These facts
are somewhat hidden since $P^2=P$.
As it turns out only estimates one and two in
Lemma~\ref{no-le:4.1.5-findim} are of significance in the
present ODE adiabatic limit. In sharp contrast,
the refined estimates three and four were foundational in the
PDE adiabatic limit~\cite{salamon:2006a} where $P=\Nabla{t}$ is one
spatial derivative.
At present the finer estimate three in the lemma can still be used
for cosmetics, for example to get constant~$2$ in estimate three
in~(\ref{eq:4.1.4}), as opposed to a factor involving $\mu_\infty$,
see~(\ref{no-eq:findim-norm-xi_0}).

\begin{proof}
Let $\eps>0$ and $\xi\in W^{1,2}(\R,q^*T\Sigma)$. Pick $s\in\R$.
The operator
\begin{equation}\label{no-eq:findim-B}
   B(s):=\1+\eps^\alpha\mu_{q(s)}^2 P_{q(s)}
   \colon T_{q(s)}\Sigma\to T_{q(s)}\Sigma
\end{equation}
is symmetric since the projection is orthogonal, thus the eigenvalues
are real. The eigenvalues of $B(s)$ are positive: The projection $P_{q(s)}$,
defined by~(\ref{eq:findim-P}), has
eigenvalue $0$ on $V_{q(s)}^\perp$ and $1$ on the line
$V_{q(s)}=\R\Nabla{}\chi|_{q(s)}$.
Thus the operator $B(s)$ has eigenvalue
$1$ on $V_{q(s)}^\perp$ and $1+\eps^\alpha\mu_{q(s)}^2$ on $V_{q(s)}$.
Hence $B(s)$ is invertible.
The inverse $B(s)^{-1}$ has spectrum $\{1,(1+\eps^\alpha\mu_{q(s)}^2)^{-1}\}$,
thus norm $1$. Hence
\[
   \Norm{(\1+\eps^\alpha\mu^2 P)^{-1} \xi}
   =\Norm{B(s)^{-1}\xi}
   \le\Norm{\xi}.
\]
This proves estimate one.
For estimate two replace $\xi$ by $P\xi$
and use that by orthogonality $\abs{P_{q(s)}\xi(s)}\le\abs{\xi(s)}$
at any $s\in\R$.
The symmetric operator
\begin{equation*}
   (\1+\eps^\alpha\mu_{q(s)}^2 P_{q(s)})^{-1}P_{q(s)}
   \colon T_{q(s)}\Sigma\to T_{q(s)}\Sigma
\end{equation*}
has eigenvalue $0$ on $V_{q(s)}^\perp$
and $1/(1+\eps^\alpha\mu_{q(s)}^2)$ on
$V_{q(s)}=\im P_{q(s)}=\R\Nabla{}\chi(q(s))$.
This proves in~(\ref{no-eq:4.1.5-findim}) the identity in line two.
By Young $1\cdot\eps^{\alpha/2}\mu\le(1^2+(\eps^{\alpha/2}\mu)^2)/2$,
hence $\eps^{\alpha/2}\mu/(1+\eps^\alpha\mu^2)\le 1/2$ and this implies
estimate three. Clearly $\eps^\alpha\mu^2/(1+\eps^\alpha\mu^2)\le 1$
and this implies estimate four.
\end{proof}

\boldmath
\subsubsection{Component estimates}
\label{sec:components}
\unboldmath

As discussed prior to Lemma~\ref{no-le:4.1.5-findim}
we already choose $\beta=2$.

\begin{lemma}\label{le:4.1.4}
Let $q\in W^{1,2}(\R,\Sigma)$. In $\pi_\eps$ let
$\alpha\in[1,2]$ and $\beta=2$.
Then
\begin{equation}\label{eq:4.1.4}
\begin{aligned}
   \Norm{X-\pi_\eps Z}
   &\le\tfrac{1}{m_H}\Norm{dH|_q X}
   +\eps^\alpha\mu_\infty^2\Norm{P\tan X}
   +\eps^2\mu_\infty\Norm{\ell}
\\
   \Norm{\ell -d\chi|_q\pi_\eps Z}
   &\le
   \mu_\infty\Norm{P\tan X}
   +2\Norm{\ell }
\\
   \Norm{Z-I_q\pi_\eps Z}_{0,2,\eps}
   &\le
   \tfrac{1}{m_H}\Norm{dH|_q  X}
   +2\mu_\infty^2\eps\Norm{P\tan X}
   +4\mu_\infty\eps\Norm{\ell}
\\
   \Norm{\pi_\eps Z}
   &\le\Norm{I_q\pi_\eps Z}_{0,2,\eps}
   \le2\Norm{Z}_{0,2,\eps}
\end{aligned}
\end{equation}
for all constants $\eps\in(0,1]$ and pairs
$Z=(X,\ell)\in W^{1,2}(\R,q^*TM\oplus\R)$ where
\begin{equation}\label{eq:mu-infty}
   m_H:=\min_\Sigma \Abs{\Babla{} H}>0
   ,\qquad
   \mu_\infty:=\max\{1,\Norm{\Nabla{}\chi}_{L^\infty(\Sigma)}\}
   \in[1,\infty) .
\end{equation}
\end{lemma}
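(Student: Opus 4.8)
The plan is to prove the four inequalities in \eqref{eq:4.1.4} by direct computation from the closed formula \eqref{no-eq:pi_eps-ff-Ansatz} for $\pi_\eps$ together with the four operator estimates of Lemma~\ref{no-le:4.1.5-findim} (specialized to $\beta=2$, $\alpha\in[1,2]$), bounding $\mu=\Abs{\Nabla{}\chi(q)}$ pointwise by $\mu_\infty$ and using $\eps\le1$ freely to absorb powers of $\eps$. Abbreviate $B:=\1+\eps^\alpha\mu^2 P$, so $\pi_\eps Z=B^{-1}(\tan X+\eps^2\ell\Nabla{}\chi|_q)$, and recall $\norm{\nor\,X}\le\frac{1}{m_H}\norm{dH|_qX}$ from \eqref{eq:findim-gradf}, which handles the normal part of $X$ throughout.

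For the first estimate write $X-\pi_\eps Z=\nor\,X+(\tan X-\pi_\eps Z)$, and then $\tan X-\pi_\eps Z=\tan X-B^{-1}\tan X-\eps^2\ell B^{-1}\Nabla{}\chi$. Since $B^{-1}=\1-B^{-1}\eps^\alpha\mu^2P$, the first difference equals $B^{-1}\eps^\alpha\mu^2 P\tan X$, whose norm is $\le\eps^\alpha\mu_\infty^2\norm{P\tan X}$ by estimate four of Lemma~\ref{no-le:4.1.5-findim} (applied with $\xi=P\tan X$, using $P^2=P$) — or more crudely by $\norm{B^{-1}}\le1$ and pulling out $\eps^\alpha\mu_\infty^2$. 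The remaining term is $\eps^2 B^{-1}(\ell\Nabla{}\chi)$; since $\Nabla{}\chi\in\im P$ we have $B^{-1}\Nabla{}\chi=\frac{1}{1+\eps^\alpha\mu^2}\Nabla{}\chi$, so its norm is $\le\eps^2\mu_\infty\norm{\ell}$. Adding gives line one. For the second estimate, $\ell-d\chi|_q\pi_\eps Z=\ell-\INNER{\Nabla{}\chi}{\pi_\eps Z}$; since $\pi_\eps Z\in T\Sigma$, pair it against $\Nabla{}\chi$: $\INNER{\Nabla{}\chi}{\pi_\eps Z}=\INNER{\Nabla{}\chi}{B^{-1}\tan X}+\eps^2\ell\INNER{\Nabla{}\chi}{B^{-1}\Nabla{}\chi}$. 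The first term is $\INNER{B^{-1}\Nabla{}\chi}{\tan X}=\frac{1}{1+\eps^\alpha\mu^2}\INNER{\Nabla{}\chi}{\tan X}$, bounded by $\mu_\infty\norm{P\tan X}$ (using $\INNER{\Nabla{}\chi}{\tan X}=\mu^2\INNER{(\text{unit})}{P\tan X}$-type identities, i.e. $|\INNER{\Nabla{}\chi}{\tan X}|=\mu|P\tan X|\cdot\mu$ wait—rather $|\INNER{\Nabla{}\chi}{\tan X}|\le\mu_\infty|P\tan X|$ since $P\tan X=\frac{\INNER{\Nabla{}\chi}{\tan X}}{\mu^2}\Nabla{}\chi$ has $|P\tan X|=\frac{|\INNER{\Nabla{}\chi}{\tan X}|}{\mu}$). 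The second is $\frac{\eps^2\mu^2}{1+\eps^\alpha\mu^2}\ell$, which since $\alpha\le2$ and $\eps\le1$ gives $\frac{\eps^2\mu^2}{1+\eps^\alpha\mu^2}\le\eps^{2-\alpha}\cdot\frac{\eps^\alpha\mu^2}{1+\eps^\alpha\mu^2}\le1$; so $|\ell-d\chi|_q\pi_\eps Z|\le\mu_\infty\norm{P\tan X}+\norm{\ell}+\norm{\ell}$ after the triangle inequality with the leading $\ell$, giving $\mu_\infty\norm{P\tan X}+2\norm{\ell}$.

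For the third line, $Z-I_q\pi_\eps Z=(X-\pi_\eps Z,\ \ell-d\chi|_q\pi_\eps Z)$, so by definition of the $(0,2,\eps)$-norm
\[
   \Norm{Z-I_q\pi_\eps Z}_{0,2,\eps}
   \le\Norm{X-\pi_\eps Z}+\eps\Norm{\ell-d\chi|_q\pi_\eps Z};
\]
insert lines one and two, noting $\eps^\alpha\le\eps$ for $\alpha\ge1$ and $\eps^2\le\eps$, and collect: the $\frac{1}{m_H}\norm{dH|_qX}$ term survives as is, the $P\tan X$ terms combine to at most $2\mu_\infty^2\eps\norm{P\tan X}$, and the $\norm\ell$ terms combine to at most $4\mu_\infty\eps\norm\ell$. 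Finally, for line four, $\Norm{\pi_\eps Z}\le\Norm{I_q\pi_\eps Z}_{0,2,\eps}$ is immediate since $I_q\xi=(\xi,d\chi|_q\xi)$ and the $(0,2,\eps)$-norm of $(\xi,d\chi|_q\xi)$ is $\ge\norm\xi$; then $\Norm{I_q\pi_\eps Z}_{0,2,\eps}\le\Norm{Z}_{0,2,\eps}+\Norm{Z-I_q\pi_\eps Z}_{0,2,\eps}$, and the second summand is $\le\Norm{Z}_{0,2,\eps}$ by the $(0,2,\eps)$-orthogonality \eqref{eq:pi_eps-def-ff} (the projection $\Pi^\perp_\eps$ is an orthogonal projection, hence norm-decreasing, so $\Norm{Z-\Pi^\perp_\eps Z}_{0,2,\eps}\le\Norm{Z}_{0,2,\eps}$), giving the bound $2\Norm{Z}_{0,2,\eps}$.

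I expect the only real subtlety to be bookkeeping: keeping the $P\tan X$ versus $\tan X$ distinction straight (the estimates are sharpest when the $\eps^\alpha\mu^2P\tan X$ error is expressed through $P\tan X$, exploiting $P^2=P$ and estimate four of Lemma~\ref{no-le:4.1.5-findim}), and making sure every place a power $\eps^\alpha$ or $\eps^2$ appears is legitimately downgraded to $\eps$ using $\alpha\in[1,2]$ and $\eps\le1$ only where the displayed constants permit it. There is no genuine analytic obstacle; it is a finite-dimensional pointwise-in-$s$ linear algebra computation followed by taking $L^2$ norms, and all the needed inputs — the inverse-norm bounds, the pointwise projection bound $|P\xi|\le|\xi|$, and the normal-part bound via $m_H$ — are already in place.
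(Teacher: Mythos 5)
Your treatment of the first three lines is correct and is essentially the paper's own computation: the same decomposition $X=\nor X+B^{-1}(B\tan X)$ with $B=\1+\eps^\alpha\mu^2P$, the same use of $\norm{\nor X}\le\frac{1}{m_H}\norm{dH|_qX}$, the same pointwise identities $B^{-1}\Nabla{}\chi=\frac{1}{1+\eps^\alpha\mu^2}\Nabla{}\chi$ and $|\INNER{\Nabla{}\chi}{\tan X}|=\mu|P\tan X|$, and the same summation for line three.

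The gap is in your justification of the last inequality, $\Norm{Z-I_q\pi_\eps Z}_{0,2,\eps}\le\Norm{Z}_{0,2,\eps}$. You invoke the $(0,2,\eps)$-orthogonality (\ref{eq:pi_eps-def-ff}) of $\Pi^\perp_\eps$, but that operator is the Ansatz (\ref{no-eq:pi_eps-ff-Ansatz}) with $\alpha=2$, whereas the lemma allows $\alpha\in[1,2]$. For $\alpha<2$ one has $\pi_\eps I_q\xi=B^{-1}(\1+\eps^2\mu^2P)\xi\neq\xi$ on $\im P$, so $I_q\pi_\eps$ is not even idempotent, let alone an orthogonal projection, and ``orthogonal projections are norm-decreasing'' does not apply. (The inequality itself happens to survive: on the relevant two-dimensional block spanned by $\Nabla{}\chi$ and the $\ell$-direction the symmetric matrix representing $\1-I_q\pi_\eps$ has eigenvalues $1$ and $\tfrac{(\eps^\alpha-\eps^2)\mu^2}{1+\eps^\alpha\mu^2}\in[0,1)$ whenever $\eps^2\le\eps^\alpha$, so its operator norm is exactly $1$ — but that is an argument you would have to supply.) The paper avoids the issue entirely: it bounds $\Norm{\pi_\eps Z}$ and $\Norm{d\chi|_q\pi_\eps Z}$ separately as in (\ref{no-eq:findim-norm-xi_0}), using in particular the finer estimate three of Lemma~\ref{no-le:4.1.5-findim} to get $\Norm{d\chi|_q\pi_\eps Z}\le\tfrac12\eps^{-\alpha/2}\Norm{\tan X}+\eps^{2-\alpha}\Norm{\ell}$, then squares and sums to obtain $\Norm{I_q\pi_\eps Z}_{0,2,\eps}^2\le3(\Norm{\tan X}^2+\eps^2\Norm{\ell}^2)$ and concludes with $\sqrt3\le2$. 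Either repair (the eigenvalue computation or the paper's direct bound) closes the gap; as written, your line four is only proved for $\alpha=2$, while the lemma is used downstream (Proposition~\ref{prop:4.1.2}, Theorem~\ref{thm:KeyEst-thm.3.3}) for all $\alpha\in[1,2]$.
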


\begin{proof}
Given $q$ and $Z=(X,\ell)$, we denote
\begin{equation*}
   \xi_0:=\pi_\eps Z
   =(\1+\eps^\alpha\mu^2 P)^{-1}
   \left(\tan X+\eps^2 \ell \Nabla{}\chi\right).
\end{equation*}
Write $X=\nor\, X+B^{-1}(B\tan X)$, with $B$ given
by~(\ref{no-eq:findim-B}), in order to obtain
\begin{equation*}
   X_1:=X-\xi_0
   =\nor\, X
   +(\1+\eps^\alpha\mu^2 P)^{-1}\left(\eps^\alpha\mu^2 P\tan X
   -\eps^2 \ell \Nabla{}\chi\right)
\end{equation*}
pointwise at $s\in\R$. 
By~(\ref{eq:findim-gradf}) and Lemma~\ref{no-le:4.1.5-findim}, we get
\begin{equation*}
   \Norm{X_1}
   \le\tfrac{1}{m_H}\Norm{dH|_q X}
   +\eps^\alpha\mu_\infty^2 \Norm{P\tan X}
   +\eps^2\mu_\infty\Norm{\ell}
\end{equation*}
Similarly, we get
\begin{equation*}
\begin{split}
   \ell _1:
   &=\ell -d\chi|_q \xi_0\\
   &=\ell -d\chi|_q\left(\1+\eps^\alpha\mu^2 P\right)^{-1}
   \left(\tan X+\eps^2 \ell \Nabla{}\chi\right)\\
   &
   =\ell -\INNER{\Nabla{}\chi}
   {\left(\1+\eps^\alpha\mu^2 P\right)^{-1}
   \left(P\tan X+(\1-P)\tan X+\eps^2 \ell \Nabla{}\chi\right)}
   \\
   &
   \stackrel{4}{=}
   \ell -\tfrac{\inner{\Nabla{}\chi}{P\tan X}}{1+\eps^\alpha\mu^2}-0
   -\tfrac{\eps^2\mu^2}{1+\eps^\alpha\mu^2} \ell 
\end{split}
\end{equation*}
By Lemma~\ref{no-le:4.1.5-findim} we get
\begin{equation*}
   \Norm{\ell _1}
   \le\mu_\infty\Norm{P\tan X}+2\Norm{\ell}.
\end{equation*}
For later use in~(\ref{no-eq:findim-norm-xi_0}),
note that by equality~4 above
\begin{equation*}
   d\chi(q) \xi_0
   =
   \tfrac{\inner{\Nabla{}\chi}{P\tan X}}{1+\eps^\alpha\mu^2}
   +\tfrac{\eps^\alpha\mu^2}{1+\eps^\alpha\mu^2} \eps^{2-\alpha} \ell .
\end{equation*}
Take the sum of the estimates for $X_1$ and $\ell _1$ to obtain
\begin{equation*}
\begin{split}
   \Norm{Z-I_q\pi_\eps Z}_{0,2,\eps}
   &\le\Norm{X_1}+\eps\Norm{\ell _1}
\\
   &\le\tfrac{1}{m_H}\Norm{dH|_q X}
   +\mu_\infty^2\eps(1+\eps^{\alpha-1})\Norm{P\tan X}
   \\
   &\quad+2\mu_\infty\eps(1+\eps)\Norm{\ell}.
\end{split}
\end{equation*}
Now use the hypotheses $\alpha\ge 1$ and $\eps\le 1$.
By Lemma~\ref{no-le:4.1.5-findim}, also using the finer third estimate,
applied to the earlier identity for $\xi_0$, and for $d\chi(q)\xi_0$, we get
\begin{equation}\label{no-eq:findim-norm-xi_0}
\begin{aligned}
   \Norm{\xi_0}
   &\le\Norm{\tan X}+\tfrac12\eps^{2-\frac{\alpha}{2}}\Norm{\ell },
   \\
   \Norm{d\chi|_q\xi_0}
   &\le\tfrac12\eps^{-\frac{\alpha}{2}}\Norm{\tan X}
   +\eps^{2-\alpha}\Norm{\ell}.
\end{aligned}
\end{equation}
Square these two inequalities and take the sum to obtain
\begin{equation*}
\begin{split}
   \Norm{I_q\pi_\eps Z}_{0,2,\eps}^2
   &=\Norm{\xi_0}^2+\eps^2 \Norm{d\chi|_q\xi_0}^2\\
   &\le2(1+\tfrac{1}{4}\eps^{2-\alpha})\Norm{\tan X}^2
   +2\eps^{2-\alpha}(\tfrac{1}{4}+\eps^{2-\alpha})\eps^2\Norm{\ell }^2\\
   &\le3\left(\Norm{\tan X}^2+\eps^2\Norm{\ell }^2\right).
\end{split}
\end{equation*}
Note that $\Norm{\tan X}\le \Norm{X}$ since $\tan$ is an orthogonal
projection.
The proof of Lemma~\ref{le:4.1.4} is complete.
\end{proof}

\boldmath
\subsection{Comparing the base and ambient linear operators}
\label{sec:difference}
\unboldmath

We keep focusing on the special class of the ambient linear operators,
see~(\ref{eq:lin-eps-findim-lin}),
along the canonical embedding $i\colon q\mapsto (q,\chi(q))$.
The aim of this section is to control, downstairs in $q$-space,
the difference between the base linear operator along $q$ and the
ambient linear operator along $i(q)$.

For $q\in C^1(\R,\Sigma)$ denote the ambient linear operators along
the graph of $\chi$ over $q$ by $D^\eps_q:=D^\eps_{q,\chi(q)}$ and
$(D^\eps_q)^*:=(D^\eps_{q,\chi(q)})^*$. These operators
have the form
\begin{equation}\label{eq:D^eps_q}
\begin{split}
   D^\eps_q
   \begin{pmatrix}X\\\ell\end{pmatrix}
   &\stackrel{(\ref{eq:lin-eps-findim-lin})}{=}
   \begin{pmatrix}
      \Babla{s} X+\Babla{X}\Babla{} F|_q
      +\chi(q) \Babla{X}\Babla{} H|_q+\ell \Babla{} H|_q
   \\
      \ell^\prime+\eps^{-2} dH|_q X
   \end{pmatrix}
\\
   (D^\eps_q)^*
   \begin{pmatrix}X\\\ell\end{pmatrix}
   &\stackrel{(\ref{eq:lin-epsadjoint-findim-lin})}{=}
   \begin{pmatrix}
      -\Babla{s} X+\Babla{X}\Babla{} F|_q
      +\chi(q) \Babla{X}\Babla{} H|_q+\ell \Babla{} H|_q
   \\
      -\ell^\prime+\eps^{-2} dH|_q X
   \end{pmatrix}
\end{split}
\end{equation}
for every $Z=(X,\ell)\in W^{1,2}(\R,q^*TM\oplus\R)$.

\begin{proposition}\label{le:4.1.3}
In $\pi_\eps$ let $\alpha>0$ and $\beta=2$.
Let $q\in C^1(\R,\Sigma)$ be a map with bounded
derivative $\p_s q$. Then there is a constant $c_d>0$
such that
\begin{equation}\label{eq:4.1.3}
   \bigl\|(D^0_q)^*\pi_\eps Z
   -\pi_\eps (D^\eps_q)^*Z\Bigr\|
   \le
   \eps c_d\left(
   \tfrac{1}{\eps}\Norm{dH|_q X}
   +\eps^{\alpha-1}\Norm{\tan X}+\eps\Norm{\ell}
   \right)
\end{equation}
for every $Z=(X,\ell )\in W^{1,2}(\R,q^*TM\oplus\R)$ whenever $\eps\in(0,1]$.
The same is true for $D^0_q\pi_\eps-\pi_\eps D^\eps_q$. The constant
$c_d$ is invariant under $s$-shifts of $q$.
\end{proposition}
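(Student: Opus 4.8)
The plan is to compute the two compositions $(D^0_q)^*\pi_\eps$ and $\pi_\eps(D^\eps_q)^*$ explicitly on a pair $Z=(X,\ell)$, using the closed formulas~(\ref{eq:adjoint0-findim-lin}) for $(D^0_q)^*$, (\ref{eq:D^eps_q}) for $(D^\eps_q)^*$, and (\ref{no-eq:pi_eps-ff-Ansatz}) for $\pi_\eps$, and then subtract. Since $\pi_\eps Z=\xi_0=(\1+\eps^\alpha\mu^2 P)^{-1}(\tan X+\eps^2\ell\Nabla{}\chi)$ is a $T\Sigma$-valued vector field, the output of $(D^0_q)^*$ applied to it is genuinely a base object, so the comparison happens entirely ``downstairs'' in $q$-space. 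First I would record that both $(D^0_q)^*$ and $(D^\eps_q)^*$ have the same zeroth-order part $\Babla{X}(\Babla{}F|_q+\chi(q)\Babla{}H|_q)$ up to the tangential/normal bookkeeping and the second-fundamental-form term $\mathrm{II}$; the genuinely $\eps$-dependent discrepancy comes from (a) the first-order term $-\Babla{s}$ hitting the $\eps$-dependent operator $(\1+\eps^\alpha\mu^2 P)^{-1}$ inside $\xi_0$, which produces a term with a factor $\nabla_s\bigl((\1+\eps^\alpha\mu^2 P)^{-1}\bigr)$ that is controlled because $q\in C^1$ has bounded derivative and $\Sigma$ is compact (so $\mu,\nabla\mu,\nabla\chi$ and $\nabla P$ are all bounded), and (b) the second component $-\ell'+\eps^{-2}dH|_q X$ of $(D^\eps_q)^* Z$, which after applying $\pi_\eps$ (whose $\ell$-slot carries an $\eps^2$) contributes an $\eps^2\cdot\eps^{-2}=\mathcal O(1)$ term in $dH|_q X$ plus an $\mathcal O(\eps^2)$ term in $\ell'$. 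The key cancellation to look for is that the term $\ell\Babla{}H|_q$ appearing in the first slot of $(D^\eps_q)^* Z$ is purely normal, so $\tan$ kills most of it, while the residual is exactly matched against part of what $\pi_\eps$ does to the second slot; this is the mechanism already foreshadowed by the underlined-terms cancellation in the proof of Theorem~\ref{thm:amblinest}.

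Concretely the steps are: (1) write $X=\tan X+\nor\, X$ everywhere and note $\nor\, X=\tfrac{dH|_q X}{\abs{\Babla{}H}^2}\Babla{}H$ by~(\ref{eq:findim-gradf}), so every occurrence of $\nor\, X$ carries a visible factor $\Norm{dH|_q X}$; (2) expand $(D^0_q)^*\xi_0$ by~(\ref{eq:adjoint0-findim-lin}), expand $(D^\eps_q)^* Z$ by~(\ref{eq:D^eps_q}), apply $\pi_\eps$ to the latter using linearity of $(\1+\eps^\alpha\mu^2 P)^{-1}$, and form the difference; (3) group the resulting terms into three buckets according to which of $\Norm{dH|_q X}$, $\eps^{\alpha-1}\Norm{\tan X}$, $\eps\Norm{\ell}$ they are bounded by, using Lemma~\ref{no-le:4.1.5-findim} (estimates one and two, that $(\1+\eps^\alpha\mu^2 P)^{-1}$ and $(\1+\eps^\alpha\mu^2 P)^{-1}P$ have norm $\le1$) to absorb the resolvent, and the $C^2$-bounds on $F,H$ together with $\Norm{\p_s q}_\infty<\infty$ to produce the constant $c_d$; (4) observe the shift-invariance of $c_d$ since all quantities entering it ($m_H$, $\mu_\infty$, $\Norm{\p_s q}_\infty$, $\Norm{F}_{C^2(\Sigma)}$, $\Norm{H}_{C^2(\Sigma)}$) are shift-invariant; (5) repeat verbatim for $D^0_q\pi_\eps-\pi_\eps D^\eps_q$, where only the sign of the $\Babla{s}$-terms flips and the $\mathrm{II}$-terms reorganize exactly as in~(\ref{eq:adjoint0-findim-lin-alt}) versus~(\ref{eq:adjoint0-findim-lin}), changing nothing in the estimate.

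The main obstacle I anticipate is the first-order term: unlike the zeroth-order parts, which commute with $\pi_\eps$ up to bounded tensorial corrections, the operator $\Babla{s}$ does \emph{not} commute with $(\1+\eps^\alpha\mu^2 P)^{-1}$, so one must carefully expand $\Babla{s}\bigl((\1+\eps^\alpha\mu^2 P)^{-1}\eta\bigr)$ via the identity $\Babla{s}(B^{-1}\eta)=B^{-1}\Babla{s}\eta-B^{-1}(\Babla{s}B)B^{-1}\eta$ and check that $\Babla{s}B=\eps^\alpha\Babla{s}(\mu^2 P)$ indeed carries the advertised $\eps^\alpha$ (hence $\eps^{\alpha-1}$ after the outer $\eps$ is factored out in~(\ref{eq:4.1.3})), with $\Babla{s}(\mu^2 P)$ bounded because $s\mapsto q(s)$ has bounded speed and $\mu^2 P$ is a smooth tensor on the compact $\Sigma$. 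A secondary subtlety is keeping track of which terms must be paired \emph{before} estimating — e.g.\ the $\mathrm{II}(\p_s q,\xi_0)$ term from $(D^0_q)^*$ and its would-be counterpart from $\pi_\eps D^\eps_q$ must be differenced first, since each alone is only $\mathcal O(1)$ but their difference is $\mathcal O(\eps^2)$ in $\ell$ plus $\mathcal O(1)$ in $dH|_q X$ — otherwise one loses the estimate. Once the bookkeeping of these pairings is set up correctly, the remaining work is the routine triangle-inequality–plus–Young bounding already rehearsed in Lemma~\ref{le:4.1.4}.
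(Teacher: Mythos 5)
Your plan is essentially the paper's proof: expand $(D^0_q)^*\pi_\eps Z$ and $\pi_\eps(D^\eps_q)^*Z$ explicitly, let the $s$-derivative terms (including the $\eps^2\ell'$ ones) cancel in the difference, control every occurrence of $\nor X$ by $\Norm{dH|_qX}/m_H$ via~(\ref{eq:findim-gradf}), and extract the factor $\eps^\alpha$ from the commutator with $B^{-1}=(\1+\eps^\alpha\mu^2P)^{-1}$ through the identity $[A,B^{-1}]=-B^{-1}[A,B]B^{-1}$ together with $B-\1=\eps^\alpha\mu^2P$. This is precisely the mechanism of~(\ref{no-eq:findim-commut}) and the line-by-line estimate of~(\ref{no-eq:findim-diff-adj}); your observation that the $\eps^{-2}dH|_qX$ entry of the second slot meets the $\eps^{\beta}=\eps^2$ weight of $\pi_\eps$ to produce the $\mathcal O(1)\Norm{dH|_qX}$ contribution is the paper's term $L_1$, and your treatment of $\tan\Babla{s}\nor X$ is its $L_4$.

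One step, as you have written it, would fail. You assert that ``the zeroth-order parts commute with $\pi_\eps$ up to bounded tensorial corrections'' and reserve the commutator identity for $\Babla{s}$ alone. Boundedness is not enough for the Hessian term: the difference $\Nabla{B^{-1}\tan X}\Nabla{}f-B^{-1}\Nabla{\tan X}\Nabla{}f=[\Phi,B^{-1}]\tan X$ (with $\Phi\xi=\Nabla{\xi}\Nabla{}f$) is a priori only $\mathcal O(1)\Norm{\tan X}$, whereas~(\ref{eq:4.1.3}) allows only $\eps^\alpha\Norm{\tan X}$ in that bucket — and this power is load-bearing, since in the proof of Proposition~\ref{prop:4.1.2} the $\tan X$ contribution must be absorbed into the left-hand side using the extra factor of $\eps$. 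You must therefore apply the same rewriting $[\Phi,B^{-1}]=\eps^\alpha\mu^2B^{-1}[P,\Phi]B^{-1}$ to the zeroth-order term as well (the paper's $L_3$, estimate~(\ref{no-eq:findim-comm-2})), which yields the needed $\eps^\alpha\mu_\infty^2\norm{f}_{C^2(\Sigma)}\Norm{\tan X}$. Since you already state the general commutator identity, this is a matter of applying your own tool in one more place rather than a missing idea, but without it the estimate is not obtained.
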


Note that for $\alpha=1$ all three terms on the right hand side
of~(\ref{eq:4.1.3}) are of the same quality in terms of powers of $\eps$
as in the ambient linear estimate~(\ref{eq:amblinest}).

\boldmath
\subsubsection{Commutators along $\Sigma$}
\label{sec:comutators}
\unboldmath

The proof of Proposition~\ref{le:4.1.3} below
suggests the value $\beta=2$.
For better reading we set $\beta=2$ already now. Let $\alpha\in\R$.

A commutator with the inverse operator
$\left(\1+\eps^\alpha\mu^2\, P\right)^{-1}$
should be rewritten in terms of a commutator with the operator itself.
The reason is that commutators are additive and the first summand of
$\1+\eps^\alpha\mu^2\, P$ commutes with anybody, thus disappears, and the
second summand then brings in the precious factor $\eps^\alpha$.

Here is an example of this technique, further below
in~(\ref{no-eq:findim-comm-2}) there will be another one.
In preparation to prove Proposition~\ref{le:4.1.3}
note that along $\Sigma$ it holds
\begin{equation*}
\begin{split}
   [\Nabla{s}, \left(\1+\eps^\alpha\mu^2\, P\right)^{-1}]
   &=\left(\1+\eps^\alpha\mu^2\, P\right)^{-1}
   [\1+\eps^\alpha\mu^2\, P,\Nabla{s}]
   \left(\1+\eps^\alpha\mu^2\, P\right)^{-1}\\
   &=\eps^\alpha\left(\1+\eps^\alpha\mu^2\, P\right)^{-1}
   [\mu^2\, P,\Nabla{s}]
   \left(\1+\eps^\alpha\mu^2\, P\right)^{-1}
\end{split}
\end{equation*}
where, by definition~(\ref{eq:findim-P}) of $P$,
the last commutator has the form
\[
    [\mu^2\, P,\Nabla{s}]\xi
   =-\INNER{\Nabla{s}\Nabla{}\chi}{\xi}\Nabla{}\chi
   -\INNER{\Nabla{}\chi}{\xi}\Nabla{s}\Nabla{}\chi
\]
for every $\xi\in W^{1,2}(\R,q^*T\Sigma)$. Thus, abbreviating
$B\stackrel{(\ref{no-eq:findim-B})}{:=} \1+\eps^\alpha\mu^2\, P$, we get
\begin{equation}\label{no-eq:findim-commut}
   [\Nabla{s},B^{-1}]\cdot
   =-\eps^\alpha B^{-1}
   \Bigl(\INNER{\Nabla{s}\Nabla{}\chi}{B^{-1}\cdot}\Nabla{}\chi
   +\INNER{\Nabla{}\chi}{B^{-1}\cdot}\Nabla{s}\Nabla{}\chi\Bigr).
\end{equation}

\begin{proof}[Proof of Proposition~\ref{le:4.1.3}]
Let $Z=(X,\ell ) \in W^{1,2}(\R,q^*TM\oplus\R)$.
We abbreviate $\xi_0:=\pi_\eps Z$ and write the operator $\pi_\eps$ in
the general form
\begin{equation*}
\begin{split}
   \xi_0:=\pi_\eps Z
   =B^{-1} \left(\tan X+\eps^\beta \ell \Nabla{}\chi\right)
   ,\qquad
   B\stackrel{(\ref{no-eq:findim-B})}{:=} \1+\eps^\alpha\mu^2\, P,
\end{split}
\end{equation*}
in order to identify how the natural choice $\beta=2$
arises. For simplicity of reading we mainly omit arguments $q$ and
$q(s)$. 
By~(\ref{eq:adjoint0-findim-lin}) the adjoint of $D^0_q$ is given by
\begin{equation*}
\begin{split}
   (D^0_q)^*\pi_\eps Z
   &\stackrel{(\ref{eq:adjoint0-findim-lin})}{=}
   -\Nabla{s}\xi_0+\Nabla{\xi_0}\Nabla{} f
\\
   &\stackrel{\;\;\,\xi_0\;\;\,}{=}
   -B^{-1}\Nabla{s} \left(\tan X+\eps^\beta \ell \Nabla{}\chi\right)
   -[\Nabla{s},B^{-1}] \left(\tan X+\eps^\beta \ell \Nabla{}\chi\right)
\\
   &\qquad\; +\Nabla{B^{-1}(\tan X+\eps^\beta \ell \Nabla{}\chi)}\Nabla{} f
\\
   &\stackrel{(\ref{no-eq:findim-commut})}{=}
   -B^{-1}
   \Bigl(
   \underline{\Nabla{s}\tan X}
   +\underline{\eps^\beta \ell^\prime\Nabla{}\chi}
   +\eps^\beta \ell \Nabla{s}\Nabla{}\chi
   \Bigr)
\\
   &\qquad\;
   +\eps^\alpha B^{-1}
   \Bigl(\INNER{\Nabla{s}\Nabla{}\chi}{\xi_0}\Nabla{}\chi
   +\INNER{\Nabla{}\chi}{\xi_0}\Nabla{s}\Nabla{}\chi\Bigr)
\\
   &\qquad\; +\Nabla{B^{-1}\tan X}\Nabla{} f
   +\eps^\beta \ell\Nabla{B^{-1}\Nabla{}\chi}\Nabla{} f .
\end{split}
\end{equation*}
The underlined terms annihilate their twins below when we take the difference.
We write $(D^\eps_q)^*Z=:(X^*,\ell ^*)$, where $(D^\eps_q)^*$ is given
by~(\ref{eq:D^eps_q}), then
\begin{equation*}
\begin{split}
   \pi_\eps(D^\eps_q)^*Z
   &=
   \pi_\eps (X^*,\ell ^*)
\\
   &=
   B^{-1}
   \left(\tan X^*+\eps^\beta \ell ^*\Nabla{}\chi\right)
\\
   &\stackrel{3}{=}
   B^{-1} \tan
   \Bigl(-\Babla{s} X
   +\Babla{X}\bigl(\Babla{} F|_q+\chi|_q\Babla{} H|_q\bigr)
   -(d\chi|_q X)\Babla{}H
   +\ell \Babla{}H
   \Bigr)
   \\
   &\quad
   +B^{-1}\left(
      -\eps^\beta \ell ^\prime
      +\eps^{\beta-2}dH|_q X
   \right) \Nabla{}\chi
\\
   &\stackrel{4}{=}
   -B^{-1}\left(
      \underline{\Nabla{s} \tan X}
         +\tan\Babla{s}\nor\, X
      -\Babla{\tan X} \Nabla{}f -\tan\Babla{\nor\, X} \Nabla{}f
   \right)
   \\
   &\quad
   -B^{-1}\left(
      \underline{\eps^\beta \ell ^\prime\Nabla{}\chi}
      -\eps^{\beta-2}(dH|_q X)\Nabla{}\chi
   \right) .
\end{split}
\end{equation*}
In identity~3 we pulled out the term $\Babla{X}$
from the sum of two terms whereby the extra term
$-(d\chi|_q X)\Babla{} H$ arises.
Identity~4 substitutes $\Babla{} F|_q+\chi|_q\Babla{} H|_q$ for $\Nabla{}f$,
by~(\ref{eq:findim-gradf}), and uses that
$\tan\Babla{}H=0=\tan\mathrm{II}$ and that
$\tan\Babla{}\chi=\Nabla{}\chi$, by~(\ref{eq:findim-gradf}).
We wrote $\Babla{s}X=\Babla{s}(\tan X+\nor\, X)$ and
$\Babla{X}\Nabla{}f=\Babla{\tan X}\Nabla{}f+\Babla{\nor\, X}\Nabla{}f$,
then we used~(\ref{eq:findim-decomp-II}) and that
normal parts $\mathrm{II}$ vanish under tangential projection.

Take the difference, so the $s$-derivatives (underlined) disappear,
and utilize~(\ref{eq:findim-decomp-II}), to obtain (the lower signs are
for $D^0_q\pi_\eps-\pi_\eps D^\eps_q$)
\begin{equation}\label{no-eq:findim-diff-adj}
\begin{split}
   &(D^0_q)^*\pi_\eps Z-\pi_\eps(D^\eps_q)^*Z\\
   &=
   -\eps^{\beta-2}(dH|_q X) B^{-1}\Nabla{}\chi
   \mp\eps^\beta \ell 
   \left( B^{-1}\Nabla{s}\Nabla{}\chi
   -\tfrac{1}{1+\eps^\alpha\mu^2}\,\Nabla{\Nabla{}\chi}\Nabla{} f\right)
\\
   &\quad
   \pm\eps^\alpha B^{-1}
   \bigl(\INNER{\Nabla{s}\Nabla{}\chi}{\xi_0}\Nabla{}\chi
   +\INNER{\Nabla{}\chi}{\xi_0}\Nabla{s}\Nabla{}\chi\bigr)
\\
   &\quad
   +\Nabla{B^{-1}\tan X}\Nabla{} f-B^{-1}\Nabla{\tan X}\Nabla{} f
\\
   &\quad
   \pm B^{-1}\tan\Babla{s}\nor\, X
   \mp B^{-1}\tan\Babla{\nor\, X} \Nabla{}f .
\end{split}
\end{equation}

To finish the proof it remains to inspect line by line the $L^2$ norm
of these four lines, denoted by $L_1,\dots,L_4$.
The coefficient $\eps^{\beta-2}$ suggests to choose $\beta\ge 2$.
In view of line four, see analysis below, choosing $\beta>2$
does not improve the overall estimate for the term $dH|_q X$.
So the value $\beta=2$ that appears in the orthogonal projection
will be just fine.\footnote{
  We do not see here the phenomenon that the two most unpleasant terms,
  here $dH|_q X$, appear
  with opposite signs, one with~$\eps^0$ and one with~$\eps^{\beta-2}$
  thereby \emph{enforcing} the choice $\beta=2$,
  as opposed to~\cite[p.\,1132, formula for $\pi_\eps\Dd_u^\eps\zeta$,
  unpleasant terms $\Nabla{t}\eta$ already cancelled]{salamon:2006a}.
}

\smallskip
To estimate line one $L_1$ we use that $\Norm{B^{-1}}\le1$,
by~(\ref{no-eq:4.1.5-findim}),
to obtain
\[
   \Norm{L_1}
   \le\mu_\infty \eps^{\beta-2} \Norm{dH|_q X}
   +c_a \eps^\beta\Norm{\ell }
\]
where $c_a$ depends on $\Norm{\p_s q}_\infty$, the $C^2(\Sigma)$-norms
of $\chi$ and $f$, and on $\mu_\infty$.

\smallskip
Concerning line two $L_2$, by definition of $\xi_0$ and since
$\Norm{B^{-1}}\le 1$, we obtain
\[
   \Norm{L_2}
   \le C\eps^\alpha \Norm{\xi_0}
   ,\qquad
   \Norm{\xi_0}
   \le\Norm{\tan X}+\mu_\infty\eps^\beta\Norm{\ell} ,
\]
where $C$ depends on $\Norm{\p_s q}_\infty$, the $C^2(\Sigma)$-norm
of $\chi$, and $\mu_\infty$.

\smallskip
Line three $L_3$ in~(\ref{no-eq:findim-diff-adj}) is of the form
\[
   [\Phi,B^{-1}]
   =B^{-1}[B,\Phi]B^{-1}
   =B^{-1}[\1+\eps^\alpha\mu^2 P,\Phi]B^{-1}
   =\eps^\alpha\mu^2 B^{-1}[P,\Phi]B^{-1}
\]
where $\Phi\colon W^{1,2}(\R,q^*TM)\to W^{1,2}(\R,q^*TM)$ is given by
$\Phi\xi=\Nabla{\xi}\Nabla{} f$.
Thus
\begin{equation}\label{no-eq:findim-comm-2}
\begin{split}
   \Norm{L_3}
   &=\Norm{[\Phi,B^{-1}]\tan X}\\
   &=\Norm{\eps^\alpha\mu^2B^{-1}
   \left(
      P\Nabla{B^{-1}\tan X}\Nabla{} f-\Nabla{PB^{-1}\tan X}\Nabla{} f
   \right)
   }
   \\
   &\le\eps^\alpha\mu_\infty^2\Norm{f}_{C^2(\Sigma)}\Norm{\tan X}
\end{split}
\end{equation}
since $\Norm{B^{-1}}\le 1$, by~(\ref{no-eq:4.1.5-findim}),
and since orthogonal projection have $\Norm{P}=1$.

\smallskip
Line four $L_4$ in~(\ref{no-eq:findim-diff-adj}):
For the first summand, by~(\ref{eq:findim-orth-split})
and Leibniz, we get
\begin{equation*}
\begin{split}
   \Babla{s}\nor\, X
   &=
   \left(\tfrac{\inner{\Babla{} H}{X}}{\abs{\Babla{} H}^2}\right)^\prime
   \Babla{} H
   +\tfrac{\inner{\Babla{} H}{X}}{\abs{\Babla{} H}^2}
   \Babla{s}\Babla{} H .
\end{split}
\end{equation*}
Now use orthogonality $\Babla{} H\perp \tan X$
and write $X=\tan X+\nor\, X$ to obtain
\begin{equation*}
\begin{split}
   \tan\Babla{s}\nor\, X
   &=\tfrac{\inner{\Babla{} H}{\nor\, X}}{\abs{\Babla{} H}^2}
   \tan\Babla{s}\Babla{} H
\end{split}
\end{equation*}
where the right-hand side is linear in $\nor\, X$.
Use this formula to get the estimate
\begin{equation}\label{no-eq:findim-tan-nor}
   \Norm{\tan\Babla{s}\nor\, X}
   \le\Norm{\tfrac{\tan\Babla{s}\Babla{} H}{\abs{\Babla{} H}}}_\infty
   \Norm{\nor\, X}
   \stackrel{(\ref{eq:findim-gradf})}{\le}
   \tfrac{\norm{\Babla{\cdot}\Babla{} H}_\infty\norm{\p_sq}_\infty}{m_H^2}
   \Norm{dH|_q X}
\end{equation}
where $\norm{\Babla{\cdot}\Babla{} H}_\infty$ is over the compact $\Sigma$.
For the second summand of $L_4$ we get
\begin{equation*}
   \Norm{\tan\Babla{\nor\, X} \Nabla{}f}
   \le\Norm{\Babla{\nor\, X} \Nabla{}f}
   \le\Norm{\Babla{\cdot}\Nabla{}f}_\infty
   \Norm{\nor\, X}
   \stackrel{(\ref{eq:findim-gradf})}{\le}
   \tfrac{\Norm{\Babla{\cdot}\Nabla{}f}_\infty}{m_H}
   \Norm{dH|_q X} .
\end{equation*}
For $\alpha>0$, $\beta=2$, and $\eps>0$
the estimates together prove the $L^2$ bound~(\ref{eq:4.1.3}).
All estimates are invariant under $s$-shifts of $q$,
because all constants depend on the $L^\infty$
norm of $\p_s q$.
The proof of Proposition~\ref{le:4.1.3} is complete.
\end{proof}

\boldmath
\subsection{Right inverse -- key estimate}
\label{sec:ff-right-inverse}
\unboldmath

In this section we show that if the base flow is Morse-Smale,
then so is the ambient $\eps$-flow for all $\eps>0$ small,
see Theorem~\ref{thm:KeyEst-thm.3.3}.

\boldmath
\subsubsection*{Definition of right inverse}
\unboldmath

Suppose that $q\in\Mm^0_{x^-,x^+}$. By Morse-Smale the linear operator
\[
   D^0_q\colon W^{1,2}(\R,\Sigma)\to L^2(\R,\Sigma) 
\]
is surjective. By~(\ref{eq:ker-coker}) this is equivalent to
injectivity of the adjoint $(D^0_q)^*$.
Here the Fredholm operator property of $D^0_q$ and $(D^0_q)^*$ enters
which holds true, see Proposition~\ref{prop:findim-0-Fredholm},
since  Morse-Smale implies Morse.

\smallskip
The main result of this section, Theorem~\ref{thm:KeyEst-thm.3.3},
tells that surjectivity of $D^0_q$ implies, for $\eps>0$ small,
surjectivity of $D^\eps_q$, equivalently injectivity of $(D^\eps_q)^*$.
As $\ker D^\eps_q=\im (D^\eps_q)^*$, by analogy to~(\ref{eq:ker-coker}),
the composition $D^\eps_q{D^\eps_q}^*\colon W^{2,2}\to L^2$ is a
bijection and, as a composition of bounded operators, it is bounded.
So $D^\eps_q{D^\eps_q}^*$ has a bounded inverse by the open
mapping theorem. Then the operator
\begin{equation}\label{eq:Reps}
   R^\eps_q
   :=(D^\eps_q)^*\left(D^\eps_q(D^\eps_q)^*\right)^{-1}
   \colon L^2
   \stackrel{(...)^{-1}}{\longrightarrow} W^{2,2}
   \stackrel{(D^\eps_q)^*}{\longrightarrow} W^{1,2}
\end{equation}
is bounded and a right inverse of the operator $D^\eps_q$
given by~(\ref{eq:D^eps_q}).

\smallskip
Boundedness of $R^\eps_q$ is not enough to get a bijection
$\Tt^\eps\colon \Mm^0_{x^-,x^+}\to\Mm^\eps_{x^-,x^+}$
between base and ambient moduli spaces
for every parameter value $\eps>0$ small.
To achieve this via the Newton method,
what we need is a \emph{uniform} bound that works for every $\eps>0$ small.
Uniform boundedness of the right inverse amounts to establishing
uniform estimates for $D^\eps_q$ along the image of the formal adjoint.
This is also part of Theorem~\ref{thm:KeyEst-thm.3.3}.
To have a chance to obtain uniform bounds in~$\eps$
one works with Sobolev norms $\norm{\cdot}_{0,2,\eps}$
and $\norm{\cdot}_{1,2,\eps}$ weighted by suitable powers of $\eps$,
see~(\ref{eq:findim-0-2-eps}).
The weights are suggested by, respectively, the $\eps$-energy identity
and the ambient linear estimate.

\boldmath
\subsubsection{The Fredholm operator interchange estimate}
\label{sec:gen-Fred}
\unboldmath

In adiabatic limit analysis when one proves the key estimates
for the linearized operator along the image of the adjoint
(in the present article Theorem~\ref{thm:KeyEst-thm.3.3})
one needs to interchange the base and ambient operators at some point.
For future reference we include the proof of an abstract
version of~\cite[Le.\,D.7]{salamon:2006a} for Fredholm operators
$D$ and $D^\prime$. In practice $D^\prime$ is the formal adjoint
of $D$, so the isomorphism hypothesis on the maps $A$ and $B$ is
satisfied automatically.

\begin{lemma}\label{le:4.4.5}
Let $D,D^\prime\colon W\to E$ be Fredholm operators between Banach spaces
such that $W$ is contained and dense in $E$ and such that the maps defined by
\begin{equation*}
\begin{aligned}
   A\colon\ker D
   &\stackrel{\simeq}{\to}
   coker D^\prime:=\tfrac{E}{\im D^\prime},
   &&&
   B\colon\ker D^\prime
   &\stackrel{\simeq}{\to}
   \coker D :=\tfrac{E}{\im D},
\\
   \xi
   &\mapsto \xi+\im D^\prime
  &&&
  \eta
   &\mapsto \eta+\im D
\end{aligned}
\end{equation*}
are isomorphisms.
Let $D$ be surjective.
Then there is a constant $c$ such that
\begin{equation}\label{eq:4.4.5}
\begin{split}
   \norm{\eta}_W
   &\le c
   \norm{D^\prime\eta}_E
   \\
   \norm{\xi}_W
   &\le c
   \left(
   \norm{\xi-D^\prime\eta}_E+\norm{D\xi}_E
   \right)
\end{split}
\end{equation}
for all $\xi,\eta\in W$.
\end{lemma}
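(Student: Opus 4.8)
\textbf{Proof plan for Lemma~\ref{le:4.4.5}.}

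The plan is to reduce both inequalities to the existence of bounded inverses of suitable operators and then invoke the open mapping theorem, exactly the kind of soft functional-analytic argument that~\cite[Le.\,D.7]{salamon:2006a} uses in the concrete setting.

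First I would treat the first inequality. Since $D$ is surjective with finite-dimensional kernel, the hypothesis that $A\colon\ker D\to\coker D'$ is an isomorphism forces $\coker D'$ to be finite-dimensional of the same dimension as $\ker D$; combined with the Fredholm property of $D'$ this gives $\dim\ker D'-\dim\coker D'=\INDEX D'$, but more importantly it tells us that $\im D'$ is a \emph{closed} subspace of $E$ of finite codimension. Now consider the restriction of $D'$ to a topological complement of $\ker D'$ in $W$, say $D'\colon C'\to\im D'$ where $W=\ker D'\oplus C'$ as Banach spaces (possible since $\ker D'$ is finite-dimensional). This restricted map is a continuous bijection between Banach spaces, hence has bounded inverse by the open mapping theorem. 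For general $\eta\in W$ write $\eta=\eta_0+\eta_1$ with $\eta_0\in\ker D'$, $\eta_1\in C'$; then $D'\eta=D'\eta_1$, so $\norm{\eta_1}_W\le c\norm{D'\eta}_E$. The issue is controlling $\eta_0$: this is where the hypothesis that $D$ is surjective is essential. Because $B\colon\ker D'\to\coker D$ is an isomorphism but $\coker D=E/\im D=0$ by surjectivity of $D$, we get $\ker D'=0$. Hence $\eta_0=0$ and the first inequality follows with the same constant $c$.

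For the second inequality I would argue similarly but now using surjectivity of $D$ directly, without needing to pass through $D'$ much. Since $D$ is surjective with finite-dimensional kernel $\ker D$, fix a topological complement $W=\ker D\oplus C$ and note $D\colon C\to E$ is a continuous bijection, so it has a bounded right inverse $R$ with $DR=\Id_E$ and $\norm{R}<\infty$. Given $\xi\in W$ and any $\eta\in W$, set $w:=\xi-R D\xi\in\ker D$. Then $\xi = w + RD\xi$, so $\norm{\xi}_W\le\norm{w}_W+\norm{R}\,\norm{D\xi}_E$. It remains to bound $\norm{w}_W$ for $w\in\ker D$ in terms of $\norm{\xi-D'\eta}_E+\norm{D\xi}_E$. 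Observe $w = \xi - RD\xi$, and I want to compare $w$ with $\xi-D'\eta$ modulo $\im D'$: the composition $\ker D\hookrightarrow E\twoheadrightarrow E/\im D'$ is precisely the isomorphism $A$, so there is a constant $c_A$ with $\norm{w}_W\le c_A\,\dist_E(w,\im D')\le c_A\norm{w-D'\eta}_E$ for every $\eta\in W$ (using that $\im D'$ is closed, so the quotient norm is genuine and the restriction of the quotient map to the finite-dimensional $\ker D$ is bounded below). Finally $w-D'\eta=(\xi-D'\eta)-RD\xi$, so $\norm{w-D'\eta}_E\le\norm{\xi-D'\eta}_E+\norm{R}\,\norm{D\xi}_E$ (using that $W\hookrightarrow E$ is continuous to bound $\norm{RD\xi}_E$ by $\norm{RD\xi}_W$). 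Chaining these inequalities and enlarging $c$ to absorb $c_A$, $\norm{R}$, and the embedding constant $W\hookrightarrow E$ gives the second estimate.

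The main obstacle is purely bookkeeping: making sure the various complements and quotient norms are set up so that the finite-dimensionality of $\ker D$, $\ker D'$, $\coker D$, $\coker D'$ is actually used to get \emph{bounded-below} maps (the open mapping theorem and the fact that a linear injection out of a finite-dimensional space is automatically bounded below), and keeping track that $W$ sits continuously in $E$ so that $W$-norms and $E$-norms on the relevant finite-dimensional pieces are comparable. No single step is deep; the care is in not circular-reasoning with the isomorphism hypotheses on $A$ and $B$ and in correctly using surjectivity of $D$ to kill $\coker D$ (and hence, via $B$, to kill $\ker D'$) in the first estimate.
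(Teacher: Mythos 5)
Your proof is correct. For the first inequality you argue exactly as the paper does: surjectivity of $D$ gives $\coker D=0$, the isomorphism $B$ then forces $\ker D'=0$, and the open mapping theorem applied to the injective operator $D'$ with closed range yields the bound. For the second inequality the two routes differ in packaging. The paper forms the single operator $T\colon W\to E\oplus E/\im D'$, $\xi\mapsto(D\xi,P\xi)$ with $P$ the quotient projection, shows $T$ is injective (via injectivity of $A$) with closed range, applies the open mapping theorem once to $T$, and then replaces $\norm{P\xi}$ by $\norm{P(\xi-D'\eta)}\le C\norm{\xi-D'\eta}_E$ using $D'\eta\in\ker P$. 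You instead split $\xi=w+RD\xi$ with $R$ a bounded right inverse of $D$ and $w\in\ker D$, and control $\norm{w}_W$ by the bounded-below property of the finite-dimensional isomorphism $A$ together with $\dist(w,\im D')\le\norm{w-D'\eta}_E$. The ingredients (open mapping theorem, injectivity of $A$, closedness of $\im D'$, finite dimensionality of $\ker D$) are identical; your version is more explicit and exhibits the constant as a product of $\norm{R}$, the lower bound of $A$, and the embedding constant, while the paper's is more compact. Note that both arguments (yours explicitly at the step $\norm{RD\xi}_E\le C\norm{RD\xi}_W$, the paper's implicitly in the continuity of $T$) require the inclusion $W\hookrightarrow E$ to be continuous, which the hypotheses do not state but clearly intend.
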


\begin{proof}[Proof of Lemma~\ref{le:4.4.5}]
Since $D$ is surjective $D^\prime$ is injective as the isomorphism $B$ shows.
Hence estimate one in~(\ref{eq:4.4.5}) follows from the open
mapping theorem; see e.g.~\cite[Thm.\,4.13]{Rudin:1991b}

The linear map $P\colon E\to E/\im D^\prime$, defined by
$\xi\mapsto \xi+\im D^\prime$, is continuous since the target space is of
finite dimension. The operator
\[
   T\colon W\to E\oplus\tfrac{E}{\im D^\prime},\quad
   \xi\mapsto\left(D\xi, P\xi\right) ,
\]
is an injective Fredholm operator: Linearity is clear
and continuity holds by continuity of $D$ and of $P$.
Note that $\ker T\subset \ker D$.
For injectivity let $\xi\in\ker T$, then $D\xi=0$ and $0=P\xi=A\xi$. But
then $\xi=0$ since $A$ is an isomorphism.
The image of $T$ is closed, since so is the image of $D$
and since the dimension of $\ker D$ is finite.
The image of $T$ has finite codimension, since so has $D$ and
since $\frac{E}{\im D^\prime}$ is of finite  dimension.

By injectivity and closed range the operator $T$, as a map
$W\to\im T$, is a bijection between Banach spaces.
Thus by the open mapping theorem,
see e.g.~\cite[Cor.\,2.12\,(c)]{Rudin:1991b},
there is a constant $c>0$ such that
\[
   \norm{\xi}_W
   \le c\norm{T\xi}
   =c\left(\norm{D\xi}_E+\norm{P\xi}_{E/\im D^\prime}\right)
\]
for every $\xi\in W$. Given $\eta\in W$, then
$
   D^\prime\eta\in\im D^\prime=\ker P
$.
Thus, by continuity of $P$ with constant $C$, we get
 $\norm{P\xi}=\norm{P(\xi- D^\prime\eta)}
\le C\norm{\xi- D^\prime\eta}_E$.
\end{proof}

\boldmath
\subsubsection{Weak injectivity estimate of $(D^\eps_q)^*$}
\unboldmath

To show injectivity of $(D^\eps_q)^*\colon W^{1,2}\to L^2$ amounts to
prove the last estimate in~(\ref{eq:prop:4.1.2-findim}) with the
$(1,2,\eps)$-norm on the left-hand side. In this section we aim for
the weaker $(0,2,\eps)$-norm and this is why we use the term weak
injectivity.

\begin{proposition}[Weak injectivity of adjoint $(D_q^\eps)^*$]
\label{prop:4.1.2}
In $\pi_\eps$ let $\alpha\in[1,2]$ and $\beta=2$.
Let $x^\mp\in\Crit f$ be non-degenerate and
$q\in\Mm^0_{x^-,x^+}$ a connecting base trajectory
such that $D^0_q\colon W^{1,2}\to L^2$ is surjective.
Then there are constants $c>0$ and $\eps_0\in(0,1]$
such that for any parameter value $\eps\in(0,\eps_0]$ it holds that
\begin{equation}\label{eq:prop:4.1.2-findim}
\begin{aligned}
   \norm{X}
   &\le c\left(
   \eps\norm{(D^\eps_q)^* Z }_{0,2,\eps}
   +\norm{\pi_\eps (D^\eps_q)^* Z }
   \right)
   \\
   \norm{dH(u)X}+\eps\norm{\ell}
   &\le c\left(
   \eps\norm{(D^\eps_q)^*  Z }_{0,2,\eps}
   +{\color{red}\eps}\norm{\pi_\eps (D^\eps_q)^* Z }
   \right)
   \\
   \norm{ Z }_{0,2,\eps}
   &\le c\left(
   \eps\norm{(D^\eps_q)^* Z }_{0,2,\eps}
   +\norm{\pi_\eps (D^\eps_q)^* Z }
   \right)
   \\
   \norm{ Z }_{0,2,\eps}
   &\le c
   \norm{(D^\eps_q)^*  Z }_{0,2,\eps}
   \quad{\color{gray}\text{\small\rm(weak injectivity estimate)}}
\end{aligned}
\end{equation}
for every $Z=(X,\ell)\in W^{1,2}(\R,q^*TM\oplus\R)$.
\end{proposition}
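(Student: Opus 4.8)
The plan is to prove the four estimates in~(\ref{eq:prop:4.1.2-findim}) in order, deriving each from its predecessors together with the ingredients assembled above. Throughout, fix $Z=(X,\ell)\in W^{1,2}(\R,q^*TM\oplus\R)$ and abbreviate $Z^*:=(X^*,\ell^*):=(D^\eps_q)^*Z$. The key idea is to apply Lemma~\ref{le:4.4.5} to the pair $D:=D^0_q$, $D':=(D^0_q)^*$, which are Fredholm by Proposition~\ref{prop:findim-0-Fredholm}, with $D^0_q$ surjective by hypothesis; the isomorphism hypotheses on $A$ and $B$ hold automatically by~(\ref{eq:ker-coker}). Estimate one of~(\ref{eq:4.4.5}) applied to $\eta:=\pi_\eps Z$ gives $\norm{\pi_\eps Z}_{1,2}\le c\norm{(D^0_q)^*\pi_\eps Z}$, and estimate two applied to a suitable $\xi$ controls $\xi$ in terms of $\norm{\xi-(D^0_q)^*\eta}+\norm{D^0_q\xi}$.

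First I would prove the bound on $\norm{X}$. Write $X=\tan X+\nor\,X$ and note $\norm{\nor\,X}\le\tfrac{1}{m_H}\norm{dH|_qX}$ by~(\ref{eq:findim-gradf}). For the tangential part, apply the second inequality of Lemma~\ref{le:4.4.5} with the role of $\xi$ played by $\pi_\eps Z$: then $\norm{\pi_\eps Z}_{1,2}\le c(\norm{\pi_\eps Z-(D^0_q)^*\eta}+\norm{D^0_q\pi_\eps Z})$ for an appropriate $\eta$, but more directly one estimates $\norm{\tan X}\le\norm{\tan X-\pi_\eps Z}+\norm{\pi_\eps Z}$, uses the first line of~(\ref{eq:4.1.4}) to bound $\norm{X-\pi_\eps Z}$ (hence $\norm{\tan X-\pi_\eps Z}$) by $\tfrac{1}{m_H}\norm{dH|_qX}+\eps^\alpha\mu_\infty^2\norm{P\tan X}+\eps^2\mu_\infty\norm{\ell}$, and bounds $\norm{\pi_\eps Z}$ by surjectivity of $D^0_q$ via Lemma~\ref{le:4.4.5}: one has $D^0_q(\pi_\eps Z^*_{\mathrm{part}})$ controlled by $\pi_\eps(D^\eps_q)^*Z$ up to the commutator term $(D^0_q)^*\pi_\eps Z-\pi_\eps(D^\eps_q)^*Z$, which Proposition~\ref{le:4.1.3} estimates by $\eps c_d(\eps^{-1}\norm{dH|_qX}+\eps^{\alpha-1}\norm{\tan X}+\eps\norm{\ell})$. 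The terms $\norm{dH|_qX}$ and $\eps\norm{\ell}$ on the right are then absorbed using the ambient linear estimate Theorem~\ref{thm:amblinest} applied to $(D^\eps_q)^*$: it gives $\eps^{-1}\norm{dH|_qX}+\norm{\ell}+\norm{\Babla{s}X}+\eps\norm{\ell'}\le c_a(\norm{(D^\eps_q)^*Z}_{0,2,\eps}+\norm{X})$, and the resulting $\norm{X}$ on the far right is moved back to the left after multiplying through by a small power of $\eps$, for $\eps\le\eps_0$ small. This yields estimate one.

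Next, estimate two follows by feeding the bound on $\norm{X}$ back into Theorem~\ref{thm:amblinest} for $(D^\eps_q)^*$: $\eps^{-1}\norm{dH|_qX}+\norm{\ell}\le c_a(\norm{(D^\eps_q)^*Z}_{0,2,\eps}+\norm{X})$, and substituting estimate one (and multiplying the $\norm{\pi_\eps(D^\eps_q)^*Z}$ term that enters through $\norm{X}$ by the extra factor $\eps$ that appears because $\norm{dH|_qX}$ and $\eps\norm{\ell}$ sit on the left) gives the stated form with the red $\eps$. Estimate three is then immediate: $\norm{Z}_{0,2,\eps}\le\norm{X}+\eps\norm{\ell}$ by~(\ref{eq:findim-0-2-eps}), and the two summands are controlled by estimates one and two respectively (the $\eps\norm{\pi_\eps(D^\eps_q)^*Z}$ from estimate two being dominated by $\norm{\pi_\eps(D^\eps_q)^*Z}$). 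Finally, for estimate four — the weak injectivity estimate — I would bound $\norm{\pi_\eps(D^\eps_q)^*Z}=\norm{\pi_\eps Z^*}\le 2\norm{Z^*}_{0,2,\eps}=2\norm{(D^\eps_q)^*Z}_{0,2,\eps}$ by the last line of~(\ref{eq:4.1.4}), and insert this into estimate three; the term $\eps\norm{(D^\eps_q)^*Z}_{0,2,\eps}$ is trivially $\le\norm{(D^\eps_q)^*Z}_{0,2,\eps}$ for $\eps\le1$, giving the claim.

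The main obstacle is the bookkeeping in the proof of estimate one: one must run Lemma~\ref{le:4.4.5}, Proposition~\ref{le:4.1.3}, Lemma~\ref{le:4.1.4}, and Theorem~\ref{thm:amblinest} simultaneously and verify that every occurrence of $\norm{X}$, $\norm{dH|_qX}$, $\norm{\ell}$, or $\norm{\tan X}$ produced on a right-hand side carries either a genuinely small power of $\eps$ (so it can be absorbed on the left after choosing $\eps_0$ small) or is already one of the two allowed terms $\eps\norm{(D^\eps_q)^*Z}_{0,2,\eps}$, $\norm{\pi_\eps(D^\eps_q)^*Z}$. The delicate point is that the commutator estimate~(\ref{eq:4.1.3}) contributes $\eps^{-1}\cdot\eps=\eps^0$ weight to $\norm{dH|_qX}$, which is exactly borderline and only absorbable because Theorem~\ref{thm:amblinest} supplies an independent bound on $\eps^{-1}\norm{dH|_qX}$; tracking this interplay carefully, and the requirement $\alpha\ge1$ (which is what makes $\eps^{\alpha-1}\le1$), is where the argument has to be done with care rather than by routine estimation.
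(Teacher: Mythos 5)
Your proposal is correct and follows essentially the same route as the paper: triangle inequality $\norm{X}\le\norm{X-\pi_\eps Z}+\norm{\pi_\eps Z}$, the component estimates of Lemma~\ref{le:4.1.4}, injectivity of $(D^0_q)^*$ from Lemma~\ref{le:4.4.5}, the commutator bound of Proposition~\ref{le:4.1.3}, and the ambient linear estimate~(\ref{eq:amblinest}) to absorb the borderline $\norm{dH|_qX}$ and $\norm{X}$ terms for small $\eps$, then feeding estimate one back into~(\ref{eq:amblinest}) for estimates two through four. The only cosmetic point is that only the first inequality of~(\ref{eq:4.4.5}) is needed here (the second enters later, in Theorem~\ref{thm:KeyEst-thm.3.3}), and your preliminary splitting $X=\tan X+\nor\,X$ is already subsumed in the first line of~(\ref{eq:4.1.4}).
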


\begin{proof}
Let $\eps\in(0,1]$.
A base connecting trajectory $q\in\Mm^0_{x^-,x^+}$
is smooth, by Lemma~\ref{le:findim-reg0},
and $\norm{\p_sq}\le\osc f$ is finite, by the energy
identity~(\ref{eq:energy-0-osc}).
So the difference Proposition~\ref{le:4.1.3} applies.
By Lemma~\ref{le:4.4.5}, which applies due to the Fredholm
Proposition~\ref{prop:findim-0-Fredholm},
there is a constant $c_0>0$ such that
\begin{equation*}
   \norm{\xi}
   \le c_0\norm{(D_q^0)^*\xi}
\end{equation*}
for every $\xi\in W^{1,2}(\R,q^*T\Sigma)$.
The inequality for $\xi=\pi_\eps Z$ is used in step 2 of what follows.
In step~1 and~3 add zero and use the triangle inequality to get
\begin{equation*}
\begin{split}
   \norm{X}
   &\stackrel{{\color{white} (2.34)}}{\le}
   \norm{X-\pi_\eps Z}
   +\norm{\pi_\eps Z}
   \\
   &\stackrel{\text{comps.}\atop(\ref{eq:4.1.4})}{\le}
   \tfrac{1}{m_H}\Norm{dH|_q X}
   +\eps^\alpha\mu_\infty^2\Norm{P\tan X}
   +\eps^2\mu_\infty\Norm{\ell}
   +c_0\norm{(D_q^0)^*\pi_\eps Z}
   \\
   &\stackrel{{\color{white} (2.34)}}{\le}
   \tfrac{1}{m_H}\Norm{dH|_q X}
   +\eps^\alpha\mu_\infty^2\Norm{P\tan X}
   +\eps^2\mu_\infty \Norm{\ell}
   +c_0\norm{\pi_\eps  (D^\eps_q)^*Z}
   \\
   &\qquad\;+c_0\norm{(D_q^0)^*\pi_\eps Z
   -\pi_\eps  (D^\eps_q)^*Z}
   \\
   &\stackrel{\text{diff.}\atop(\ref{eq:4.1.3})}{\le}
   \tfrac{1}{m_H}\Norm{dH|_q X}
   +\eps^\alpha\mu_\infty^2\Norm{P\tan X}
   +\eps^2\mu_\infty\Norm{\ell}
   +c_0\norm{\pi_\eps  (D^\eps_q)^*Z}
   \\
   &\qquad\;
   +c_0c_d\left(
   \Norm{dH|_q X}
   +\eps^\alpha\Norm{\tan X}+\eps^2\Norm{\ell}
   \right)
   \\
   &\stackrel{{\color{white} (2.34)}}{\le}
   \eps \left(\tfrac{1}{m_H}+c_0c_d+\mu_\infty^2\right)
   \left(
   \tfrac{1}{\eps}\Norm{dH|_q X}
   +\eps\Norm{\ell}
   +\eps^{\alpha-1}\Norm{X}
   \right)
   \\
   &\qquad\;
   +c_0\norm{\pi_\eps  (D^\eps_q)^*Z}
   \\
   &\stackrel{\text{amb.}\atop(\ref{eq:amblinest})}{\le}
   \eps (c_a+1)\left(\tfrac{1}{m_H}+c_0c_d+\mu_\infty^2\right)
   \left(\norm{(D^\eps_q)^*Z}_{0,2,\eps}+\norm{X}\right)
   \\
   &\qquad\;
   +c_0\norm{\pi_\eps  (D^\eps_q)^*Z} .
\end{split}
\end{equation*}
Here~(\ref{eq:4.1.4}) requires $\alpha\in[1,2]$, the last step $\alpha\ge 1$.
Choose $\eps_0>0$ so small that
\[
   \eps_0 C:=\eps_0 (c_a+1)\left(\tfrac{1}{m_H}+c_0c_d+\mu_\infty^2\right)
   \le\tfrac12 .
\]
Then we can incorporate the term $\Norm{X}$ into the left-hand side
and get that
\begin{equation}\label{eq:findim-111}
   \norm{X}
   \le 
   2C 
   \eps\norm{(D^\eps_q)^*Z}_{0,2,\eps}
   +2c_0\norm{\pi_\eps  (D^\eps_q)^*Z} .
\end{equation}
Multiply by $\eps$ the ambient estimate~(\ref{eq:amblinest})
for $(D^\eps_q)^*$ with constant $c_a$ to obtain
\begin{equation*}
\begin{split}
   \norm{dH(u)X}+\eps\norm{\ell}
   &\stackrel{\text{amb.}\atop(\ref{eq:amblinest})}{\le}
   \eps c_a\Bigl(
   \norm{(D^\eps_q)^* Z}_{0,2,\eps}+\norm{X}
   \Bigr)
   \\
   &\stackrel{(\ref{eq:findim-111})}{\le}
   \eps c_a \Bigl((1+2\eps C)\norm{(D^\eps_q)^*Z}_{0,2,\eps}
   +2c_0\norm{\pi_\eps  (D^\eps_q)^*Z}\Bigr) .
\end{split}
\end{equation*}
The previous two estimates provide inequality two in the following
\begin{equation*}
\begin{split}
   \norm{Z}_{0,2,\eps}
   &\stackrel{(\ref{eq:findim-0-2-eps})}{\le}
   \norm{X}+\eps\norm{\ell}
   \\
   &\stackrel{{\color{white} (4.42)}}{\le}
   \eps(2C+c_a(1+2\eps C)) \norm{(D^\eps_q)^*Z}_{0,2,\eps}
   +2c_0(1+c_a\eps)\norm{\pi_\eps  (D^\eps_q)^*Z}
   \\
   &\stackrel{(\ref{eq:4.1.4})}{\le}
   \eps(2C+c_a(1+2\eps C)) \norm{(D^\eps_q)^*Z}_{0,2,\eps}
   +4c_0(1+\eps c_a) \norm{(D^\eps_q)^*Z}_{0,2,\eps}
\end{split}
\end{equation*}
where the last step uses the last estimate in~(\ref{eq:4.1.4}).
This proves the final assertions three and four of
Proposition~\ref{prop:4.1.2} whose proof is thereby complete.
\end{proof}

\boldmath
\subsubsection{Surjectivity of $D^\eps_q$ and key estimate}
\label{sec:key-estimate}
\unboldmath

\begin{theorem}[Surjectivity and key estimates for $D^\eps_q$ on
image of $(D^\eps_q)^*$]
\label{thm:KeyEst-thm.3.3}
In $\pi_\eps$ let $\alpha\in[1,2]$ and $\beta=2$.
Let $x^\mp\in\Crit f$ be non-degenerate and
$q\in\Mm^0_{x^-,x^+}$ a connecting base trajectory
such that $D^0_q\colon W^{1,2}\to L^2$ is surjective.
Then there are positive constants $c$ and $\eps_0$
(invariant under $s$-shifts of $q$) such that, for every
$\eps\in(0,\eps_0]$, the following is true.
The operator $D_q^\eps\colon W^{1,2}\to L^2$ is onto and along the
image of the to $W^{2,2}$ restricted adjoint, that is for every pair
\[
   Z^*:=(X^*,\ell^*)
   \in\im (D^\eps_q)^*|_{W^{2,2}} \subset W^{1,2}(\R,q^*TM\oplus\R),
\]
there are the key estimates
\begin{equation}\label{eq:thm:4.4.4}
\begin{aligned}
   {\color{gray}\norm{X^*}\le\;}
   \norm{ Z^*}_{1,2,\eps}
   &\le c\left(\eps
   \norm{D^\eps_q  Z^*}_{0,2,\eps}
   +\norm{\pi_\eps (D^\eps_q  Z^*)}
   \right)
\\
   \eps^{1/2}\norm{ Z^*}_{0,\infty,\eps}
   +\norm{ Z^*}_{1,2,\eps}
   &\le c
   \norm{D^\eps_q  Z^*}_{0,2,\eps}
\\
   \norm{dH|_qX^*}+\eps\norm{\ell^*}
   &+\eps\norm{\Babla{s} X^*}
   +\eps^2\norm{(\ell^*)^\prime}\\
   &\le c\left(\eps\norm{D^\eps_q  Z^*}_{0,2,\eps}
   +{\color{red}\eps}\norm{\pi_\eps (D^\eps_q  Z^*)}
   \right)
\\
   &\le 3c\eps
   \norm{D^\eps_q  Z^*}_{0,2,\eps} .
\end{aligned}
\end{equation}
\end{theorem}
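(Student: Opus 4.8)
The strategy is to bootstrap from the weak injectivity estimate for $(D^\eps_q)^*$ established in Proposition~\ref{prop:4.1.2} to the desired estimates for $D^\eps_q$ on the image of $(D^\eps_q)^*$. First I would address \emph{surjectivity} of $D^\eps_q$: by Proposition~\ref{prop:findim-eps-Fredholm} the operator $(D^\eps_q)^*$ is Fredholm, and the weak injectivity estimate $\norm{Z}_{0,2,\eps}\le c\norm{(D^\eps_q)^*Z}_{0,2,\eps}$ from the last line of~(\ref{eq:prop:4.1.2-findim}) forces $\ker (D^\eps_q)^*=0$; then the vector space identity $\ker(D^\eps_q)^*=\coker D^\eps_q$ from~(\ref{eq:ker-coker-eps}) gives that $D^\eps_q$ is onto. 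Once $D^\eps_q$ is onto, $D^\eps_q(D^\eps_q)^*\colon W^{2,2}\to L^2$ is a bijection (as in the discussion around~(\ref{eq:Reps})) and every $Z^*\in\im(D^\eps_q)^*|_{W^{2,2}}$ can be written $Z^*=(D^\eps_q)^*W$ for a unique $W\in W^{2,2}$; this is the substitution that converts the estimates of Proposition~\ref{prop:4.1.2}, which are phrased for $(D^\eps_q)^*$ applied to an argument, into estimates controlling $Z^*$ by $D^\eps_q Z^*=D^\eps_q(D^\eps_q)^*W$. Concretely, applying the first three lines of~(\ref{eq:prop:4.1.2-findim}) with $Z$ replaced by $W$ and observing $(D^\eps_q)^*W=Z^*$, $D^\eps_q(D^\eps_q)^*W=D^\eps_q Z^*$, one directly obtains $\norm{X^*}$-bounds, $\norm{dH|_qX^*}+\eps\norm{\ell^*}$-bounds, and $\norm{Z^*}_{0,2,\eps}$-bounds, all in terms of $\eps\norm{D^\eps_q Z^*}_{0,2,\eps}$ and $\norm{\pi_\eps(D^\eps_q Z^*)}$.

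\textbf{Upgrading to the $(1,2,\eps)$-norm.} The remaining work is to control the derivative terms $\eps\norm{\Babla{s}X^*}$ and $\eps^2\norm{(\ell^*)^\prime}$ that enter $\norm{Z^*}_{1,2,\eps}$ but not $\norm{Z^*}_{0,2,\eps}$. This is precisely what the ambient linear estimate Theorem~\ref{thm:amblinest} (equivalently Corollary~\ref{cor:amblinest}) is designed for: multiplying~(\ref{eq:amblinest}) by $\eps$ and regrouping gives
\[
   \norm{dH|_qX^*}+\eps\norm{\ell^*}+\eps\norm{\Babla{s}X^*}+\eps^2\norm{(\ell^*)^\prime}
   \le c_a\eps\left(\norm{D^\eps_q Z^*}_{0,2,\eps}+\norm{X^*}\right),
\]
and then substituting the already-obtained bound for $\norm{X^*}$ yields the third line of~(\ref{eq:thm:4.4.4}), with the final $\le 3c\eps\norm{D^\eps_q Z^*}_{0,2,\eps}$ coming from the last estimate in~(\ref{eq:4.1.4}) applied to bound $\norm{\pi_\eps(D^\eps_q Z^*)}\le 2\norm{D^\eps_q Z^*}_{0,2,\eps}$. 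Adding this to the $(0,2,\eps)$-bound for $Z^*$ gives the first line $\norm{Z^*}_{1,2,\eps}\le c(\eps\norm{D^\eps_q Z^*}_{0,2,\eps}+\norm{\pi_\eps(D^\eps_q Z^*)})$. Finally, the second line follows by first invoking the last estimate of~(\ref{eq:prop:4.1.2-findim})-type bounds to absorb the $\pi_\eps$-term into $\norm{D^\eps_q Z^*}_{0,2,\eps}$, giving $\norm{Z^*}_{1,2,\eps}\le c\norm{D^\eps_q Z^*}_{0,2,\eps}$, and then adding the $L^\infty$-estimate~(\ref{eq:cor:infty}) of Lemma~\ref{le:0-infty-eps}, namely $\eps^{1/2}\norm{Z^*}_{0,\infty,\eps}\le 3\norm{Z^*}_{1,2,\eps}$.

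\textbf{Main obstacle.} The conceptually delicate point is not any single inequality but the \emph{interchange} built into Proposition~\ref{prop:4.1.2}: the weak injectivity estimate there already packaged the hard analysis (Proposition~\ref{le:4.1.3} comparing $D^0_q$ and $D^\eps_q$, the surjective base estimate via Lemma~\ref{le:4.4.5}, and the component estimates~(\ref{eq:4.1.4})), so the real difficulty in this theorem is the bookkeeping needed to make sure that after the substitution $Z^*=(D^\eps_q)^*W$ every occurrence of the auxiliary $W$ disappears in favor of $D^\eps_q Z^*$, and that the projection term $\norm{\pi_\eps(\cdot)}$ appears with the correct power of $\eps$ (note the $\color{red}\eps$ in the third line of~(\ref{eq:thm:4.4.4}), which traces back to the $\color{red}\eps$ in the second line of~(\ref{eq:prop:4.1.2-findim})). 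One must also check that $Z^*\in W^{1,2}$ genuinely suffices for all these manipulations — it does, because $Z^*=(D^\eps_q)^*W$ with $W\in W^{2,2}$ makes $Z^*$ as regular as needed and $D^\eps_q Z^*\in L^2$. The $s$-shift invariance of the constants is inherited automatically since every ingredient (Theorem~\ref{thm:amblinest}, Proposition~\ref{le:4.1.3}, Proposition~\ref{prop:4.1.2}) has $s$-shift-invariant constants, the base operator estimates depending only on $\norm{\p_s q}_\infty\le\osc f$ and the $C^2$-data of $F,H$ on the compact $\Sigma$.
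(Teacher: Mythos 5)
There is a genuine gap at the heart of your argument. You claim that writing $Z^*=(D^\eps_q)^*W$ with $W\in W^{2,2}$ and then ``applying the first three lines of~(\ref{eq:prop:4.1.2-findim}) with $Z$ replaced by $W$'' directly yields bounds on $\norm{X^*}$, $\norm{dH|_qX^*}+\eps\norm{\ell^*}$ and $\norm{Z^*}_{0,2,\eps}$ in terms of $\eps\norm{D^\eps_qZ^*}_{0,2,\eps}+\norm{\pi_\eps(D^\eps_qZ^*)}$. It does not. In Proposition~\ref{prop:4.1.2} the left-hand sides control the \emph{argument} of $(D^\eps_q)^*$ and the right-hand sides involve its \emph{image}; substituting $Z=W$ therefore produces $\norm{W}_{0,2,\eps}\le c\bigl(\eps\norm{Z^*}_{0,2,\eps}+\norm{\pi_\eps Z^*}\bigr)$ — a bound on the auxiliary $W$ with the quantity you want to control, $Z^*$, sitting on the \emph{right}. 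Applying the proposition to $Z=Z^*$ instead puts $(D^\eps_q)^*Z^*=(D^\eps_q)^*(D^\eps_q)^*W$ on the right, which is not $D^\eps_qZ^*$ either. No substitution converts an estimate for $(D^\eps_q)^*$ into one for $D^\eps_q$ on $\im(D^\eps_q)^*$; your remark that ``the interchange is built into Proposition~\ref{prop:4.1.2}'' is not correct — that proposition only uses the injectivity half of Lemma~\ref{le:4.4.5} (for the base adjoint $(D^0_q)^*$), not the interchange half.

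The missing idea is the \emph{second} estimate of the Fredholm interchange Lemma~\ref{le:4.4.5}, applied with $\xi=\pi_\eps Z^*$ and $\eta=\pi_\eps Z$ where $Z^*=(D^\eps_q)^*Z$. This gives
\begin{equation*}
   \norm{\pi_\eps Z^*}\le c_F\bigl(\norm{\pi_\eps Z^*-(D^0_q)^*\pi_\eps Z}+\norm{D^0_q\pi_\eps Z^*}\bigr),
\end{equation*}
after which one adds zero twice and invokes the difference Proposition~\ref{le:4.1.3} \emph{twice} — once to compare $\pi_\eps(D^\eps_q)^*Z$ with $(D^0_q)^*\pi_\eps Z$ and once to compare $D^0_q\pi_\eps Z^*$ with $\pi_\eps D^\eps_qZ^*$ — together with the ambient linear estimate for both operators and~(\ref{eq:le:4.1.6-1,2,eps-proof}) to absorb the remaining $\norm{X}$ and $\norm{X^*}$ terms. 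Only then do the component estimates~(\ref{eq:4.1.4}) let you pass from $\norm{\pi_\eps Z^*}$ to $\norm{Z^*}_{0,2,\eps}$, yielding~(\ref{eq:zeta^*}). The rest of your outline — surjectivity via injectivity of $(D^\eps_q)^*$ and~(\ref{eq:ker-coker-eps}), the upgrade to the $(1,2,\eps)$-norm via Theorem~\ref{thm:amblinest}/Corollary~\ref{cor:amblinest}, the $L^\infty$ bound via~(\ref{eq:cor:infty}), the factor-$2$ bound $\norm{\pi_\eps(D^\eps_qZ^*)}\le2\norm{D^\eps_qZ^*}_{0,2,\eps}$ from~(\ref{eq:4.1.4}), and the $s$-shift invariance of the constants — matches the paper and is fine once this central step is repaired.
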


\begin{proof}
A base connecting trajectory $q\in\Mm^0_{x^-,x^+}$
is smooth, by Lemma~\ref{le:findim-reg0},
and $\norm{\p_sq}\le\osc f$ is finite, by the energy
identity~(\ref{eq:energy-0-osc}).
So we are in position to apply the difference
Proposition~\ref{le:4.1.3} with constant $c_d$
and the weak injectivity Proposition~\ref{prop:4.1.2} which provides
a constant $\eps_0\in(0,1]$. Let $\eps\in(0,\eps_0]$.

\smallskip
To see surjectivity of the Fredholm operator $D_q^\eps$ or,
equivalently, injectivity of $(D^\eps_q)^*$, 
pick $Z=(X,\ell)\in W^{1,2}(\R,q^*TM\oplus\R)$.
Use consequence~(\ref{eq:cor-amblinest})
of the ambient linear estimate with
constant $C_a$ (shrink $\eps_0>0$ if necessary) to obtain
\begin{equation}\label{eq:le:4.1.6-1,2,eps-proof}
\begin{split}
   {\color{gray}\norm{X}\le\;}
   \norm{ Z}_{1,2,\eps}
   &\le\eps C_a\norm{(D^\eps_q)^* Z}_{0,2,\eps}+\norm{\tan X}
   \\
   &\le (\eps C_a+c_w) \norm{(D^\eps_q)^* Z}_{0,2,\eps} .
\end{split}
\end{equation}
In the second step we used
$\norm{\tan X}\le\norm{X}\le \norm{ Z}_{0,2,\eps}$, then we
applied the weak injectivity estimate~(\ref{eq:prop:4.1.2-findim}) with
constant $c_w$. Thus $(D^\eps_q)^*$ is injective.

\smallskip
Now pick $Z=(X,\ell)\in W^{2,2}(\R,q^*TM\oplus\R)$
and set $Z^*:=(D^\eps_q)^* Z$.
To prove the first two lines in~(\ref{eq:thm:4.4.4})
let $c_F$ be the constant of the Fredholm interchange Lemma~\ref{le:4.4.5}. 
By~(\ref{eq:4.4.5}) in Lemma~\ref{le:4.4.5}, with $\xi=\pi_\eps Z^*$ and
$\eta=\pi_\eps Z$, we have
\begin{equation*}
\begin{split}
   \norm{\pi_\eps Z^*}   
   &\stackrel{(\ref{eq:4.4.5})}{\le}
   c_F\norm{\pi_\eps Z^*-(D^0_q)^*\pi_\eps Z}
   +c_F\norm{D^0_q\pi_\eps Z^*}
   \\
   &\stackrel{\text{add $0$}}{\le}
   c_F\left(
{\color{brown}
   \norm{\pi_\eps(D^\eps_q)^* Z-(D^0_q)^*\pi_\eps Z}
}
   +
{\color{cyan}
   \norm{D^0_q\pi_\eps Z^*-\pi_\eps D^\eps_q Z^*}
}
   +\norm{\pi_\eps D^\eps_q Z^*}
   \right)
   \\
   &\stackrel{\text{diff.}\atop(\ref{eq:4.1.3})}{\le}
   c_F
{\color{brown}\,
   c_d\eps \left(
   \tfrac{1}{\eps}\Norm{dH|_q X}
   +\eps^{\alpha-1}\Norm{\tan X}+\eps\Norm{\ell}
   \right)
}
   +c_F\norm{\pi_\eps D^\eps_q Z^*}
   \\
   &\qquad
   +c_F
{\color{cyan}\,
   c_d\eps \left(
  \tfrac{1}{\eps}\Norm{dH|_q X^*}
   +\eps^{\alpha-1}\Norm{\tan X^*}+\eps\Norm{\ell^*}
   \right)
}
   \\
   &\stackrel{\alpha\in[1,2]}{\le}
   c_Fc_d\eps c_a\left(\underline{
   {\color{brown}\norm{ Z^*}_{0,2,\eps}+\norm{X}}}
   +{\color{cyan}\norm{D^\eps_q Z^*}_{0,2,\eps}
   +\,}\underline{{\color{cyan}\norm{X^*}}}
   \right)
   +c_F\norm{\pi_\eps D^\eps_q Z^*}
   \\
   &\stackrel{(\ref{eq:le:4.1.6-1,2,eps-proof})}{\le}
   c_1 \underline{\eps\norm{ Z^*}_{0,2,\eps}}
   +c_Fc_dc_a \eps\norm{D^\eps_q Z^*}_{0,2,\eps}
   +c_F\norm{\pi_\eps D^\eps_q Z^*}
\end{split}
\end{equation*}
where $c_1=c_Fc_dc_a(2+\eps C_a+c_w)$.
In step~4 we used twice the ambient linear
estimate~(\ref{eq:amblinest})
with constant $c_a$,
once for ${\color{brown} (D^\eps_q)^*}$ and once for
${\color{cyan} D^\eps_q}$.
In the final step (underlined terms) we estimate $\norm{X}$
by~(\ref{eq:le:4.1.6-1,2,eps-proof}) and $\norm{X^*}$ by
$\norm{Z^*}_{0,2,\eps}$.

Now add zero and use the formula for the linearized injection $I_q$
prior to Definition~\ref{def:orth-proj}, then
apply estimate three of the component Lemma~\ref{le:4.1.4} to~get
\begin{equation*}
\begin{split}
   &\norm{ Z^*}_{0,2,\eps}
   \\
   &\stackrel{{\color{white}(2.37)}}{\le}
    \norm{ Z^*-I_q\pi_\eps Z^*}_{0,2,\eps}
   +{\color{brown}\norm{(\pi_\eps Z^*,d\chi|_q \pi_\eps Z^*)}_{0,2,\eps}}
   \\
   &\stackrel{\text{comps.}\atop(\ref{eq:4.1.4})}{\le}
   3\mu_\infty^2\eps
   \left(
   \tfrac{\eps^{-1}}{m_H}\norm{dH|_qX^*}+\norm{\tan X^*}+\norm{\ell^*}
   \right)
   +{\color{brown}\norm{\pi_\eps Z^*}
   +\eps\norm{d\chi|_q \pi_\eps Z^*}}
   \\
   &\stackrel{\text{amb.}\atop(\ref{eq:amblinest})}{\le}
   \eps c_2
   \left(\norm{D^\eps_q Z^*}_{0,2,\eps}+\norm{X^*}\right)
   +\;{\color{brown}(1+\mu_\infty\eps)\norm{\pi_\eps Z^*}}
   \\
   &\stackrel{{\color{white}(2.37)}}{\le}
   (c_2+c_3c_Fc_dc_a)\eps\norm{D^\eps_q Z^*}_{0,2,\eps}
   +(c_2+c_3c_1)\eps\norm{ Z^*}_{0,2,\eps}
   +c_3 c_F\norm{\pi_\eps D^\eps_q Z^*}
\end{split}
\end{equation*}
where $c_2=\tfrac{3\mu_\infty^2\max\{1,m_H\}}{m_H} c_a$
and $c_3=(1+\mu_\infty\eps)$.
Inequality three uses the ambient linear estimate~(\ref{eq:amblinest})
and definition~(\ref{eq:mu-infty}) of the constant $\mu_\infty\ge 1$.
The final inequality four uses that $\norm{X^*}\le\norm{Z^*}_{0,2,\eps}$
and the previously established estimate for $\norm{\pi_\eps Z^*}$.
Choosing $\eps_0>0$ sufficiently small, we obtain
\begin{equation}\label{eq:zeta^*}
   {\color{gray}\norm{\tan X^*}\le\norm{X^*}\le\;}
   \norm{ Z^*}_{0,2,\eps}
   \le c_4\eps\norm{D^\eps_q Z^*}_{0,2,\eps}
   +2c_3c_F\norm{\pi_\eps D^\eps_q Z^*} .
\end{equation}
By the ambient linear estimate consequence~(\ref{eq:cor-amblinest})
for $D^\eps_q$, constant $C_a$, we have
\begin{equation*}
\begin{split}
   \norm{ Z^*}_{1,2,\eps}
   &\le \eps C_a\norm{D^\eps_q Z^*}_{0,2,\eps}+\norm{\tan X^*} .
\end{split}
\end{equation*}
Combining this with~(\ref{eq:zeta^*}) proves inequality one
in~(\ref{eq:thm:4.4.4}). 
Inequality two, second summand $\norm{Z^*}_{1,2,\eps}$, follows from line
one via the last estimate in~(\ref{eq:4.1.4}) with constant $2$.
To incorporate the first summand $\eps^{1/2}\norm{Z^*}_{0,\infty,\eps}$
simply use~(\ref{eq:cor:infty}).

To prove inequality three in~(\ref{eq:thm:4.4.4})
multiply the ambient linear estimate~(\ref{eq:amblinest}),
for $D^\eps_q$, by $\eps$
to obtain that
\begin{equation*}
\begin{split}
   \norm{dH|_qX^*}+\eps\norm{\ell^*}
   +\eps\norm{\Babla{s} X^*}+\eps^2\norm{(\ell^*)^\prime}
   &\stackrel{(\ref{eq:amblinest})}{\le}
   \eps c_a\norm{D^\eps_q Z^*}_{0,2,\eps}
   +\eps c_a\norm{X^*} .
\end{split}
\end{equation*}
Combining this with~(\ref{eq:zeta^*}) proves inequality three
in~(\ref{eq:thm:4.4.4}). Inequality four holds by
estimate four in~(\ref{eq:4.1.4}).
This concludes the proof of Theorem~\ref{thm:KeyEst-thm.3.3}.
\end{proof}

\boldmath
\section[Implicit function theorem I -- detect ambient solutions]
{Implicit function theorem I -- Ambience}
\label{sec:IFT}
\unboldmath

\begin{theorem}[IFT I -- Existence]\label{thm:existence-findim}
Assume $(f,g)$ is Morse-Smale.
Then there are constants $c>0$ and $\eps_0\in(0,1]$
such that the following holds.
For every $\eps\in(0,\eps_0]$, every pair $x^\mp\in\Crit f$
of index difference one, and every $q\in\Mm^0_{x^-,x^+}$,
there exists a pair $(u^\eps,\tau^\eps)\in\Mm^\eps_{x^-,x^+}$ of the form
\[
   u^\eps=\Exp_q X,\qquad
   \tau^\eps=\chi(q)+\ell,\qquad
   (X,\ell)\in\im(D_q^\eps)^* ,
\]
where the difference $Z=(X,\ell)\in C^\infty(\R,q^*TM\oplus\R)$
is smooth and bounded by
\begin{equation}\label{eq:existence-p}
   \norm{Z}_{1,2,\eps}
   \le\norm{X}+\eps\norm{\ell }
   +\eps\norm{\Babla{s} X}+\eps^2\norm{\ell ^\prime}
   \le c\eps^2
\end{equation}
and by
\begin{equation}\label{eq:existence-infty}
   \norm{X}_\infty\le c\eps^{3/2},\qquad
   \norm{\ell}_\infty\le c\eps^{1/2} .
\end{equation}
\end{theorem}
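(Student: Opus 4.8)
The plan is to run the Newton iteration set up in the Outline, using the trivialized section $\Ff^\eps_q$ on the Banach space $W^{1,2}(\R,q^*TM\oplus\R)$, whose zeroes correspond to zeroes of $\Ff^\eps$ near $i(q)=(q,\chi(q))$, and to extract both the $(1,2,\eps)$-bound and the $(0,\infty,\eps)$-bound from the uniform key estimates of Theorem~\ref{thm:KeyEst-thm.3.3}. The three ingredients required for Newton's method are already in place: (a) the initial point $Z_0=(0,0)$ is an approximate zero, since by~(\ref{eq:approx-zero-intro}) one has $\Ff^\eps_q(0,0)=(0,d\chi|_q\p_sq)$, hence $\norm{\Ff^\eps_q(0,0)}_{0,2,\eps}=\eps\norm{d\chi|_q\p_sq}\le\const\cdot\eps$, using that $q$ is smooth with $\norm{\p_sq}\le\osc f$ by Lemma~\ref{le:findim-reg0} and the energy identity~(\ref{eq:energy-0-osc}); (b) the right inverse $R^\eps_q=(D^\eps_q)^*(D^\eps_q(D^\eps_q)^*)^{-1}$ from~(\ref{eq:Reps}) exists for $\eps\le\eps_0$ by Theorem~\ref{thm:KeyEst-thm.3.3} (surjectivity of $D^\eps_q$), and is uniformly bounded in the relevant weighted norms along the image of $(D^\eps_q)^*$, again by~(\ref{eq:thm:4.4.4}); (c) the quadratic estimate~(\ref{eq:quadest-I}) controls the oscillation of $d\Ff^\eps_q$ near $Z_0$.

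First I would make the iteration precise. Set $\zeta_0:=-R^\eps_q\Ff^\eps_q(0)$, so $D^\eps_q\zeta_0=-\Ff^\eps_q(0)=(0,-d\chi|_q\p_sq)$, and apply the first key estimate in~(\ref{eq:thm:4.4.4}): since $\zeta_0\in\im(D^\eps_q)^*$ one gets
\[
   \norm{\zeta_0}_{1,2,\eps}
   \le c\left(\eps\norm{(0,d\chi|_q\p_sq)}_{0,2,\eps}
   +\norm{\pi_\eps(0,d\chi|_q\p_sq)}\right)
   \le\const\cdot\eps^2,
\]
where the $\pi_\eps$-term is $O(\eps^2)$ by the formula~(\ref{eq:pi_eps-ff}) with $\beta=2$ and $\norm{(\1+\eps^2\mu^2P)^{-1}}\le1$ from~(\ref{no-eq:4.1.5-findim}). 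Then $Z_1:=Z_0+\zeta_0$, and writing $\Ff^\eps_q(Z_1)=\Ff^\eps_q(\zeta_0)-\Ff^\eps_q(0)-D^\eps_q\zeta_0$ and invoking the quadratic estimate gives $\norm{\Ff^\eps_q(Z_1)}_{0,2,\eps}\le\const\cdot\eps^{5/2}$ — a gain of $\eps^{1/2}$. Iterating, $\zeta_\nu:=-R^\eps_q\Ff^\eps_q(Z_\nu)$ and $Z_{\nu+1}:=Z_\nu+\zeta_\nu$, the key estimate plus quadratic estimate yield a contraction with ratio $O(\eps^{1/2})$, so $\norm{\zeta_\nu}_{1,2,\eps}\le\const\cdot\eps^{2}(\const\cdot\eps^{1/2})^\nu$, making $(Z_\nu)$ a Cauchy sequence in the $(1,2,\eps)$-norm. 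Its limit $Z^\eps=(X,\ell)$ satisfies $\Ff^\eps_q(Z^\eps)=0$, lies in $\im(D^\eps_q)^*$ (a closed subspace containing every $\zeta_\nu$), and obeys $\norm{Z^\eps}_{1,2,\eps}\le\const\cdot\eps^2$, which unwinds via~(\ref{eq:findim-0-2-eps}) to the chain of inequalities in~(\ref{eq:existence-p}). Smoothness of $Z^\eps$, equivalently of $(u^\eps,\tau^\eps)=(\Exp_qX,\chi(q)+\ell)$, follows by elliptic bootstrapping on the gradient-flow ODE exactly as in Lemma~\ref{le:findim-reg0}. Setting $u^\eps:=\Exp_qX$, $\tau^\eps:=\chi(q)+\ell$ produces the asserted element of $\Mm^\eps_{x^-,x^+}$ with asymptotics $(x^\mp,\chi(x^\mp))$, the convergence at $\pm\infty$ being inherited from $q$ together with the exponential decay of kernel/cokernel elements underlying the Fredholm setup.

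For the $L^\infty$-bounds~(\ref{eq:existence-infty}), I would apply the second key estimate in~(\ref{eq:thm:4.4.4}) — or directly~(\ref{eq:cor:infty}) — to $Z^\eps\in\im(D^\eps_q)^*$: since $Z^\eps=-R^\eps_q\Ff^\eps_q(\,\cdot\,)$ in the limit, one has $D^\eps_qZ^\eps=-\Ff^\eps_q(Z^\eps)+(\text{correction})$; more cleanly, one keeps track that each partial sum $Z_\nu$ satisfies $\eps^{1/2}\norm{Z_\nu}_{0,\infty,\eps}\le3\norm{Z_\nu}_{1,2,\eps}\le\const\cdot\eps^2$ by Lemma~\ref{le:0-infty-eps}, hence $\norm{Z^\eps}_{0,\infty,\eps}\le\const\cdot\eps^{3/2}$. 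Unravelling $\norm{Z^\eps}_{0,\infty,\eps}=\norm{X}_\infty+\eps\norm{\ell}_\infty$ gives $\norm{X}_\infty\le\const\cdot\eps^{3/2}$ and $\norm{\ell}_\infty\le\const\cdot\eps^{1/2}$, as claimed.

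The main obstacle is not any single step but the bookkeeping of $\eps$-powers: one must verify that the contraction ratio coming from the quadratic estimate composed with the uniformly bounded right inverse really is $O(\eps^{1/2})$ rather than $O(1)$, which is precisely where the weighted norms $\norm{\cdot}_{0,2,\eps}$, $\norm{\cdot}_{1,2,\eps}$ (and the factor $\eps^{1/2}$ in~(\ref{eq:cor:infty})) are essential. A subtlety to check carefully is that the quadratic estimate~(\ref{eq:quadest-I}) holds on a ball of $(1,2,\eps)$-radius that is \emph{not} shrinking too fast as $\eps\to0$, so that all iterates $Z_\nu$, whose norms are $O(\eps^2)$, stay inside it; granting that, the Newton scheme closes and the constants $c,\eps_0$ are uniform in $q$ (and $s$-shift invariant) because every estimate invoked — the ambient linear estimate, the key estimates, the quadratic estimate — has this uniformity built in.
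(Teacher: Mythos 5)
Your proposal is correct and follows essentially the same route as the paper: the modified Newton iteration $\zeta_\nu=-R^\eps_q\Ff^\eps_q(Z_\nu)$ starting from $Z_0=(0,0)$, with the key estimate~(\ref{eq:thm:4.4.4}) supplying the uniformly bounded right inverse and the quadratic estimates controlling the contraction, exactly as in Section~\ref{sec:existence}. The only bookkeeping point you gloss over is that the induction step needs \emph{both} quadratic estimates, since for $\nu\ge1$ one must also control $d\Ff^\eps_q(Z_\nu)\zeta_\nu-D^\eps_q\zeta_\nu$ via~(\ref{eq:quadest-II}), the correction terms being defined through the linearization at the origin rather than at $Z_\nu$.
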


\begin{theorem}[IFT I -- Uniqueness]\label{thm:uniqueness-findim}
Assume $(f,g)$ is Morse-Smale.
Then there are constants $\delta_0,\eps_0\in(0,1]$
such that, for any $\eps\in(0,\eps_0]$,
any pair $x^\mp\in\Crit f$ of index difference one,
and any $q\in\Mm^0_{x^-,x^+}$ the following holds. If
\[
   (X_i,\ell_i)\in\im(D_q^\eps)^*,\qquad
   \norm{X_i}_\infty\le\delta_0\sqrt{\eps} ,
\]
for $i=1,2$ and both pairs of maps $(u_1^\eps,\tau_1^\eps)$ and
$(u_2^\eps,\tau_2^\eps)$ defined by
\begin{equation}\label{eq:uniqueness-hyp}
%
   u_i^\eps:=\Exp_q X_i,\qquad
   \tau_i^\eps:=\chi(q)+\ell_i ,
\end{equation}
belong to the moduli space $\Mm^\eps_{x^-,x^+}$, then they are equal
$(u_1^\eps,\tau_1^\eps)=(u_2^\eps,\tau_2^\eps)$.
\end{theorem}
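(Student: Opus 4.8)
\textbf{Proof proposal for the Uniqueness Theorem~\ref{thm:uniqueness-findim}.}

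The plan is to set up the two candidate solutions as zeros of the trivialized section $\Ff^\eps_q$ and to show that the difference of the corresponding correction vector fields is forced to vanish by the key estimate together with a quadratic estimate and the Newton‑type smallness assumptions. First I would record that, for $i=1,2$, the pair $(X_i,\ell_i)$ satisfies $\Ff^\eps_q(X_i,\ell_i)=0$ in the trivialization, and that by the hypothesis $(X_i,\ell_i)\in\im(D^\eps_q)^*$ both vectors lie in the range of the adjoint restricted to $W^{2,2}$ (elliptic regularity/Lemma~\ref{le:findim-reg0}‑type bootstrapping makes $u^\eps_i$, hence $X_i$, smooth, so the $W^{2,2}$ membership is legitimate). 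Writing $Z:=(X_1,\ell_1)-(X_2,\ell_2)$, the key point is that $Z\in\im(D^\eps_q)^*$ as well, so Theorem~\ref{thm:KeyEst-thm.3.3} applies to $Z^*:=Z$ and gives
\[
   \norm{Z}_{1,2,\eps}
   \le c\left(\eps\norm{D^\eps_q Z}_{0,2,\eps}
   +\norm{\pi_\eps(D^\eps_q Z)}\right)
   \le 3c\eps\norm{D^\eps_q Z}_{0,2,\eps} .
\]

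Next I would expand $0=\Ff^\eps_q(X_1,\ell_1)-\Ff^\eps_q(X_2,\ell_2)$ by adding and subtracting $D^\eps_q Z$, in the form
\[
   D^\eps_q Z
   =\Ff^\eps_q(X_2,\ell_2)+D^\eps_q Z-\Ff^\eps_q(X_1,\ell_1)
   =-\bigl(\Ff^\eps_q(X_1,\ell_1)-\Ff^\eps_q(X_2,\ell_2)-D^\eps_q Z\bigr),
\]
and then invoke the quadratic estimate~(\ref{eq:quadest-I}) referenced in the outline of Section~\ref{sec:IFT}. That estimate should give a bound of the shape
\[
   \norm{D^\eps_q Z}_{0,2,\eps}
   \le C_q\bigl(\norm{(X_1,\ell_1)}_{0,\infty,\eps}+\norm{(X_2,\ell_2)}_{0,\infty,\eps}\bigr)
   \norm{Z}_{1,2,\eps}
   \le C_q\cdot \const\cdot\delta_0\sqrt{\eps}\cdot\eps^{-1/2}\norm{Z}_{1,2,\eps},
\]
where in the last step I use $\norm{(X_i,\ell_i)}_{0,\infty,\eps}\le 3\eps^{-1/2}\norm{(X_i,\ell_i)}_{1,2,\eps}$ together with the hypothesis $\norm{X_i}_\infty\le\delta_0\sqrt\eps$ (and a control of the $\ell_i$‑component along the image of $i$, which is exactly where the nice behaviour of the $\tau=\chi(q)$ component, emphasized in Section~\ref{sec:linear-estimates}, is used; one may also shrink $\delta_0$ and $\eps_0$ so that the a priori Theorem~\ref{thm:apriori-intro} controls $\ell_i$). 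Combining with the key estimate yields
\[
   \norm{Z}_{1,2,\eps}\le 3c\eps\cdot C_q\cdot\const\cdot\delta_0\,\norm{Z}_{1,2,\eps}
   \le \tfrac12\norm{Z}_{1,2,\eps}
\]
once $\delta_0$ (and $\eps_0$) are chosen small enough, forcing $Z=0$, i.e. $(X_1,\ell_1)=(X_2,\ell_2)$, hence $(u^\eps_1,\tau^\eps_1)=(u^\eps_2,\tau^\eps_2)$.

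The main obstacle I anticipate is getting the quadratic estimate in precisely the $\eps$‑weighted norms so that the gain is a clean factor of $\eps^{1/2}$ (or better): the nonlinearity $\Ff^\eps_q$ carries the dangerous $\eps^{-2}H(u)$ term in its second component, so one must check that the Hessian‑type remainder $\Ff^\eps_q(X_1,\ell_1)-\Ff^\eps_q(X_2,\ell_2)-D^\eps_q Z$ is still controlled in $\norm{\cdot}_{0,2,\eps}$ by $\norm{Z}_{1,2,\eps}$ times the $(0,\infty,\eps)$‑sizes of the two points, without losing powers of $\eps$ — this is the step that dictated the choice $\beta=2$ as remarked after~(\ref{eq:beta=2}), and it is where the second key estimate $\norm{dH|_qX^*}+\eps\norm{\ell^*}\le 3c\eps\norm{D^\eps_q Z^*}_{0,2,\eps}$ of Theorem~\ref{thm:KeyEst-thm.3.3} is indispensable, since it absorbs the problematic $\eps^{-2}dH$ contribution. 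Once that bookkeeping is done, the contraction argument above closes routinely.
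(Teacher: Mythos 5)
Your overall strategy (take the difference $\zeta^*$ of the two correction fields, note $\zeta^*\in\im(D^\eps_q)^*$, combine the key estimate with the quadratic estimates, and contract) is the paper's strategy, but the two inequalities you actually write down are not available, and the gap between them is exactly the "main obstacle" you flag at the end without resolving. First, Theorem~\ref{thm:KeyEst-thm.3.3} does \emph{not} give $\norm{Z}_{1,2,\eps}\le 3c\,\eps\norm{D^\eps_qZ}_{0,2,\eps}$: the factor $3c\eps$ appears only in front of the quantities $\norm{dH|_qX^*}+\eps\norm{\ell^*}+\dots$ in the third/fourth lines of~(\ref{eq:thm:4.4.4}); for the full $(1,2,\eps)$-norm one only has $\norm{Z^*}_{1,2,\eps}\le c\bigl(\eps\norm{D^\eps_qZ^*}_{0,2,\eps}+\norm{\pi_\eps(D^\eps_qZ^*)}\bigr)$, with no $\eps$ on the $\pi_\eps$-term. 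Second, the quadratic remainder is \emph{not} bounded in $\norm{\cdot}_{0,2,\eps}$ by $C\delta_0\norm{\zeta^*}_{1,2,\eps}$: by~(\ref{eq:quadest-I}) the second component satisfies only $\eps\norm{f}\le c\eps^{-1}\norm{\hat X}_\infty\norm{\hat X}\le c\,\delta_0\,\eps^{-1/2}\norm{\zeta^*}_{1,2,\eps}$, so the honest chain $\norm{\zeta^*}_{1,2,\eps}\le c\norm{D^\eps_q\zeta^*}_{0,2,\eps}\le cC\delta_0\eps^{-1/2}\norm{\zeta^*}_{1,2,\eps}$ is not a contraction as $\eps\to0$. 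Your two errors happen to cancel numerically, but neither step is valid.

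The paper closes this gap by using precisely the first line of~(\ref{eq:thm:4.4.4}): in $\eps\norm{D^\eps_q\zeta^*}_{0,2,\eps}$ the dangerous component enters as $\eps\cdot\eps\norm{f}$, and in $\norm{\pi_\eps(D^\eps_q\zeta^*)}$ the choice $\beta=2$ in~(\ref{no-eq:pi_eps-ff-Ansatz}) re-weights it as $\eps^2\norm{f}\le c\norm{\hat X}_\infty\norm{\hat X}$ — see~(\ref{eq:beta=2}); this is where the $\eps^{-2}$ is genuinely neutralized. In addition, the term $\norm{\hat X}_\infty\norm{\hat\ell}$ in~(\ref{eq:quadest-I}) cannot be absorbed either (the $(1,2,\eps)$-norm only controls $\eps\norm{\hat\ell}$), so the paper first runs a separate bootstrap for $\norm{\ell^*}\cdot\norm{X^*}_\infty$ using the fourth key estimate $\eps\norm{\ell^*}\le3c\eps\norm{D^\eps_q\zeta^*}_{0,2,\eps}$, and only then the main contraction; you cite that estimate as "indispensable" but never use it. Finally, your fully symmetric comparison of two arbitrary solutions requires bounds on $\norm{\Babla{s}X_i}$ and $\norm{\ell_i}_\infty$ that the hypotheses do not supply; the paper avoids this by anchoring one of the two points at the existence solution $\Tt^\eps(q)$, for which $\norm{Z}_{1,2,\eps}\le c\eps^2$.
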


Observe that each pair $(X_i,\ell_i)$ is smooth by
hypothesis~(\ref{eq:uniqueness-hyp}). Hence, by
exponential decay of the derivatives of $(u_i^\eps,\tau_i^\eps)$,
each pair $(X_i,\ell_i)$ belongs to $W^{k,2}(\R,q^*TM\oplus\R)$
for every integer $k\ge 0$.

\begin{definition}\label{thm:Teps-findim}
Assume $(f,g)$ is Morse-Smale.
Choose constants $\eps_0,\delta_0\in(0,1]$ and $c>0$
such that the assertions of Theorem~\ref{thm:existence-findim}
and~\ref{thm:uniqueness-findim} hold with these constants.
Shrink $\eps_0$ so that $c\eps_0<\delta_0$.
Given a pair $x^\mp\in\Crit f$ of index difference one, define
for $\eps\in(0,\eps_0)$ the map
\begin{equation}\label{eq:Teps-findim}
   \Tt^\eps\colon\Mm^0_{x^-,x^+}\to\Mm^\eps_{x^-,x^+},\quad
   q\mapsto (u^\eps,\tau^\eps)
   :=\bigl(\Exp_q X,\chi(q)+\ell\bigr) ,
\end{equation}
where the pair $(X,\ell)\in\im(D_q^\eps)^*$
is chosen such that~(\ref{eq:existence-p})
and~(\ref{eq:existence-infty}) are satisfied
and $(\Exp_qX,\chi(q)+\ell)\in\Mm^\eps_{x^-,x^+}$.
Such a pair exists, by Theorem~\ref{thm:existence-findim},
and is unique, by Theorem~\ref{thm:uniqueness-findim}.
The map $\Tt^\eps$ is time shift equivariant.
\end{definition}

\begin{lemma}[Injectivity]\label{le:injectivity}
Assume $(f,g)$ is Morse-Smale.
Then there is a constant $\eps_0\in(0,1]$, such that
for every $\eps\in(0,\eps_0]$ and every pair $x^\mp\in\Crit f$ of
index difference one, the map
$\Tt^\eps\colon\Mm^0_{x^-,x^+}\to\Mm^\eps_{x^-,x^+}$ is injective.
\end{lemma}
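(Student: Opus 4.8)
The plan is to deduce injectivity of $\Tt^\eps$ from the uniqueness Theorem~\ref{thm:uniqueness-findim} together with the equivariance of $\Tt^\eps$ under time shifts. First I would recall that $\Tt^\eps(q)=(\Exp_qX,\chi(q)+\ell)$ where $(X,\ell)\in\im(D^\eps_q)^*$ satisfies the bounds~(\ref{eq:existence-p}) and~(\ref{eq:existence-infty}); in particular, after shrinking $\eps_0$ so that $c\eps_0^{1/2}<\delta_0$, the sup-norm bound $\norm{X}_\infty\le c\eps^{3/2}\le\delta_0\sqrt\eps$ holds, so the pair $(X,\ell)$ lies in the admissible range of Theorem~\ref{thm:uniqueness-findim}.

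The core step is to show: if $q_1,q_2\in\Mm^0_{x^-,x^+}$ and $\Tt^\eps(q_1)=\Tt^\eps(q_2)=:(u^\eps,\tau^\eps)$, then $q_1=q_2$. The subtle point here is that the base points $q_1$ and $q_2$ need not a priori be the same, so one cannot directly invoke Theorem~\ref{thm:uniqueness-findim}, which compares two corrections over the \emph{same} base trajectory $q$. To handle this I would argue that $u^\eps$ takes values in a tubular neighborhood $U_\Sigma$ of $\Sigma$ (by the $C^0$-bound $\norm{X}_\infty\le c\eps^{3/2}$ and compactness of $\Sigma$, for $\eps$ small), hence by the normal form~(\ref{eq:normal-form-u}) it has a well-defined base projection $\qfrak:=\rho\circ u^\eps\colon\R\to\Sigma$; one then needs that both $q_1$ and $q_2$ are the nearest-point / normal-fibre projection of $u^\eps$, which identifies $q_1=q_2=\qfrak$. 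More carefully, from $u^\eps=\Exp_{q_i}X_i$ with $X_i$ small one extracts that $X_i(s)$ is close to the normal exponential coordinate of $u^\eps(s)$ over $\Sigma$; since the decomposition $T_{q_i}M=T_{q_i}\Sigma\oplus N_{q_i}\Sigma$ and the condition $(X_i,\ell_i)\in\im(D^\eps_q)^*$ do not by themselves force $X_i$ normal, the cleanest route is: both $(q_i,X_i)$ represent the same point $u^\eps(s)$ under the diffeomorphism $\Exp$ restricted to a small ball, so as $\eps\to0$ the $q_i$ converge uniformly to $\qfrak$ and a short open–closed or direct estimate pins down $q_1=q_2$ exactly for $\eps\le\eps_0$. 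Once $q_1=q_2=:q$, the two corrections $(X_1,\ell_1)$ and $(X_2,\ell_2)$ both lie in $\im(D^\eps_q)^*$, both satisfy $\norm{X_i}_\infty\le\delta_0\sqrt\eps$, and both yield $(u^\eps,\tau^\eps)\in\Mm^\eps_{x^-,x^+}$ via~(\ref{eq:uniqueness-hyp}); Theorem~\ref{thm:uniqueness-findim} then gives $(X_1,\ell_1)=(X_2,\ell_2)$, but this is automatic once $q_1=q_2$ since $\Exp$ is injective on the relevant ball, so the real content is the identification of the base points.

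The main obstacle I anticipate is precisely this identification of $q_1$ with $q_2$ from the coincidence of their images under $\Tt^\eps$: one must rule out that two distinct base trajectories get mapped, via nonnormal corrections in $\im(D^\eps_q)^*$, to the same ambient trajectory. I expect this to follow from the quantitative bounds: the correction $X$ is $O(\eps^{3/2})$ in $C^0$ and $O(\eps^2)$ in $W^{1,2}_\eps$, so $q$ is determined by $u^\eps$ up to an error tending to $0$, and combined with the fact that $\Mm^0_{x^-,x^+}$ is a $0$-dimensional manifold (index difference one, Morse--Smale, modulo time shift the trajectories are rigid), the map $q\mapsto\qfrak$ has no room to be multivalued; alternatively one invokes that $u^\eps$ determines its own nearest-point projection onto the compact submanifold $\Sigma$ uniquely. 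Finally, since $\Tt^\eps$ is time-shift equivariant (Definition~\ref{thm:Teps-findim}), the whole argument is compatible with passing to the quotients $\Mm^0_{x^-,x^+}/\R$ and $\Mm^\eps_{x^-,x^+}/\R$ if one prefers to phrase injectivity there; but as stated the conclusion is injectivity of $\Tt^\eps$ itself on the unquotiented moduli spaces, which the above gives directly.
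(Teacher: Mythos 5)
Your overall strategy --- exploit the $C^0$-closeness $\norm{X}_\infty\le c\eps^{3/2}$ of $\Tt^\eps(q)$ to $q$ together with the rigidity of the index-difference-one moduli space --- is the same as the paper's, but the step you yourself flag as ``the real content'', namely deducing $q_1=q_2$ from $\Tt^\eps(q_1)=\Tt^\eps(q_2)$, is exactly where your argument has a gap. Discreteness of $\Mm^0_{x^-,x^+}/\R$ does \emph{not} mean that two distinct elements of the unquotiented space $\Mm^0_{x^-,x^+}$ are uniformly separated: $q$ and its time shift $q(\cdot+\sigma)$ are distinct elements that are arbitrarily $C^0$-close for $\sigma$ small. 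So closeness of $q_1$ and $q_2$ to the common image $(u^\eps,\tau^\eps)$ (or to its nearest-point projection $\qfrak$) only yields $[q_1]=[q_2]$ in the quotient, i.e.\ $q_1=q_2(\cdot+\sigma)$ for some $\sigma\in\R$; the claim that the moduli space has ``no room to be multivalued'' is false at the level of parametrized trajectories. The paper makes the quotient step quantitative by introducing the positive minimal separation $d_{\rm min}$ between \emph{distinct classes} in the finite set $\Mm^0_{x^-,x^+}/\R$, measured by $\sup_s\inf_t\dist(q_1(s),q_2(t))$ which is insensitive to reparametrization, and by requiring $2c\eps_0^{3/2}<d_{\rm min}$; no projection $\rho\circ u^\eps$ and no open--closed argument is needed, a triangle inequality through $u^\eps(s)$ suffices.

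The missing final step is to rule out $\sigma\neq 0$. This is where the time-shift equivariance of $\Tt^\eps$ --- which you mention only as an afterthought about descending to quotients --- is actually used: from $q_1=\sigma_*q_2$ one gets $\Tt^\eps(q_1)=\sigma_*\Tt^\eps(q_2)=\sigma_*\Tt^\eps(q_1)$, and a nonconstant finite-energy trajectory cannot coincide with a nontrivial time shift of itself since $F_H$ is strictly decreasing along it; hence $\sigma=0$ and $q_1=q_2$. You are right, on the other hand, that Theorem~\ref{thm:uniqueness-findim} plays no role in the injectivity argument itself (it is needed only to make $\Tt^\eps$ well defined): once $q_1=q_2$, injectivity of $\Exp$ on small balls does the rest, just as you observe.
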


\begin{proof}
As $\Sigma$ is compact, the index difference is $1$, and the metric
is Morse-Smale, the moduli space $\widetilde\Mm^0_{\mp}
:=\Mm^0_{x^-,x^+}/\R$ is a finite set.
So the smallest distance
\[
   d_{\rm min}
   :=\min_{[q_1]\not=[q_2]\in \widetilde\Mm^0_{\mp}}
   \;\sup_{s\in\R}\;\inf_{t\in\R}\;
   \dist(q_1(s),q_2(t)))
   >0
\]
is positive.
Choose the constant $\eps_0>0$ in Theorem~\ref{thm:existence-findim}
smaller if necessary such that $2c{\eps_0}^{3/2}<d_{\rm min}$.
By construction of $\Tt^\eps$, for $\eps\in(0,\eps_0)$,
an element $\Tt^\eps(q_1)=\Tt^\eps(q_2)$
lies in both radius $c\eps^{3/2}$ balls, the one about $q_1$ and the one
about $q_2$. Thus we must have $[q_1]=[q_2]$
since otherwise these two balls, by definition of $d_{\rm min}$,
would be disjoint.
But $[q_1]=[q_2]$ means that there exists $\sigma\in\R$ such that
$q_1=\sigma_* q_2:=q_2(\cdot+\sigma)$.
Since $\Tt^\eps$ is time shift invariant
we have $\Tt^\eps(q_1)=\sigma_*\Tt^\eps(q_2)=\sigma_*\Tt^\eps(q_1)$.
This implies $\sigma=0$, hence $q_1=q_2$.
\end{proof}

To prove Theorem~\ref{thm:existence-findim} we carry out a modified
Newton iteration to detect a zero of $\Ff^\eps$ near an approximate zero
for which we choose the pair $(q,\chi(q))$ with $q\in\Mm^0_{x^-,x^+}$.
The first step is to define a suitable map between Banach spaces
for which we choose the local trivialization
$\Ff^\eps_q:=\Ff^\eps_{q,\chi(q)}$, see~(\ref{eq:eps-triv}).
In this model the origin corresponds to our approximate zero.
One finds a true zero nearby if three conditions are satisfied.
Firstly, a small initial value $\Ff^\eps_q(0)$ where smallness will be taken care
of by the weights in the $(0,2,\eps)$ norm.
Secondly, a uniformly bounded right inverse $R^\eps_q$ of
$D^\eps_q=d\Ff^\eps_q(0)$ which holds due to the key
estimate~(\ref{eq:thm:4.4.4}).
Thirdly, we need quadratic estimates to gain control on the
variation of the derivative $d\Ff^\eps_q(Z)$ for $Z$ near the origin.

\boldmath
\subsection{Quadratic estimates}
\unboldmath

Pick a map $q\in W^{1,2}(\R,\Sigma)$.
Consider the map $z=(q,\chi(q))\in W^{1,2}(\R,M\times\R)$ and
let $Z=(X,\ell) \in W^{1,2}(\R,q^*\Oo\oplus\R)$ be a vector field
along it.\footnote{
  For $q\in\Sigma$ let $\Oo_q$ be the maximal
  domain of the exponential map $\Exp_q\colon T_qM\to M$.
  The subset $\Oo_q$ is open and star-shaped about $0$;
  see e.g.~\cite[\S 5 4.\,Cor.]{oneill:1983a}.
  The maximal domain of $\Exp\colon T_\Sigma M\to M$
  is an open neighborhood $\Oo\subset T_\Sigma M$ of the zero section
  with $\Oo\cap T_q M=\Oo_q$.
  }
Denote parallel transport in $(M,G)$ along the geodesic
$
   r\mapsto \Exp_{q(s)}(rX(s))
$
by
\begin{equation}\label{eq:par-trans}
   \Phi=\Phi_q(X)\colon T_qM\supset\Oo_q\to T_{E(q,X)} M
   ,\qquad
   \Gamma_0=\Exp_q(X),
\end{equation}
pointwise for $s\in\R$. A 
trivialization of the ambient section $\Ff^\eps$ is defined by
\begin{equation}\label{eq:triv-section-eps}
   \Ff^\eps_q(X,\ell)
   =\begin{pmatrix}
      \Phi_q^{-1}(X)\left(
      \p_s \Gamma_0+\Babla{} F|_{\Gamma_0}
      +(\chi(q)+\ell)\Babla{} H|_{\Gamma_0}\right)
   \\
      (\chi(q)+\ell)^\prime+\eps^{-2}H|_{\Gamma_0}
   \end{pmatrix}
\end{equation}
for every vector field $(X,\ell)\in W^{1,2}(\R,q^*\Oo\oplus\R)$.
To compute the derivative of the trivialization $\Ff^\eps_q$
at a point $Z=(X,\ell)$ in direction $\zeta=(\hat X,\hat\ell)$
abbreviate
\[
   \Phi_r:=\Phi_q(X+r\hat X),\qquad
   \Gamma_r:=E(q,X+r\hat X).
\]
Then $\left.\frac{d}{dr}\right|_0 \Gamma_r=E_2(q,X)\hat X$ and
the derivative is given by
\begin{equation*}
\begin{split}
   &d \Ff^\eps_q (X,\ell)
   \begin{pmatrix} \hat X\\\hat \ell\end{pmatrix}
   :=\tfrac{d}{dr}\bigr|_0
   \Ff^\eps_q (X+r\hat X,\ell+r\hat \ell)
\\
   &\stackrel{{\color{gray} 1}}{=}
   \frac{d}{dr}\Bigr|_0
   \begin{pmatrix}
   \Phi_r^{-1}\left(\p_s\Gamma_r+\Babla{} F|_{\Gamma_r}\right)
   +(\chi(q)+\ell+r\hat\ell) \Phi_r^{-1}\Babla{} H|_{\Gamma_r}
   \\
     (\chi(q)+\ell+r\hat\ell)^\prime+\eps^{-2}H|_{\Gamma_r}
   \end{pmatrix}
\\
   &\stackrel{{\color{gray} 2}}{=}
   \begin{pmatrix}
   \left.\frac{d}{dr}\right|_0
   \left(\Phi_r^{-1}
   \left(\p_s\Gamma_r+\Babla{} F|_{\Gamma_r}\right)\right)
   +\hat\ell\Phi_0^{-1}\Babla{} H|_{\Gamma_0}
   +(\chi(q)+\ell)\left.\frac{d}{dr}\right|_0
   \left(\Phi_r^{-1}\Babla{} H|_{\Gamma_r}\right)
   \\
   \hat\ell^\prime+\eps^{-2}dH|_{\Gamma_0} E_2(q,X)\hat X
   \end{pmatrix}
\\
   &\stackrel{{\color{gray} 3}}{=}
   \begin{pmatrix}
   \left.\frac{d}{dr}\right|_0 \Phi_r^{-1}\p_s\Gamma_r
   +\left.\frac{d}{dr}\right|_0 \Phi_r^{-1}\Babla{} F|_{\Gamma_r}
   +(\chi(q)+\ell) \left.\frac{d}{dr}\right|_0 \Phi_r^{-1}\Babla{} H|_{\Gamma_r}
   +\hat\ell\Phi_0^{-1}\Babla{} H|_{\Gamma_0}
   \\
   \hat\ell^\prime+\eps^{-2}dH|_{\Gamma_0} E_2(q,X)\hat X
   \end{pmatrix}
\end{split}
\end{equation*}
where step~1 is by definition of $\Ff^\eps_q$ and step~3 by
linearity of parallel transport.

\begin{proposition}[Quadratic estimate I]
\label{prop:quadest-I}
There is a constant $\delta\in(0,1]$ with the following significance.
For every $c_0>0$ there is a constant $c>0$ such~that the following is true.
Let $q\in W^{1,2}(\R,\Sigma)$ be a map and
$Z=(X,\ell)$, $\zeta=(\hat X,\hat\ell)\in W^{1,2}(\R,q^*TM\times\R)$
be two vector fields along $z=(q,\chi(q))$ such~that
\[
   \norm{\p_s q}_\infty+\norm{\chi(q)}_\infty\le c_0
   ,\qquad
   \norm{X}_\infty+\norm{\hat X}_\infty\le \delta .
\]
Then the components $F$ and $f$ of the vector field along $z$,
defined by
\begin{equation}\label{eq:quadest-I-diff}
   \Ff^\eps_q(Z+\zeta)-\Ff^\eps_q (Z)-d\Ff^\eps_q (Z)\zeta
   =:\begin{pmatrix} F\\f \end{pmatrix} ,
\end{equation}
satisfy the inequalities
\begin{equation}\label{eq:quadest-I}
\begin{aligned}
   \norm{F}
   &\le 
   c\norm{\hat X}_\infty\left(\norm{\hat X}
      +\norm{\hat\ell}
      +\norm{\Babla{s}\hat X}\cdot\norm{\hat X}_\infty\right)
   \\
   &\quad
   +c\norm{X}_\infty
   \left(
   \norm{\hat X}
   +\norm{\Babla{s}\hat X}\cdot\norm{X}_\infty
   \right)
   +c\norm{\ell}_\infty \norm{\hat X}_\infty \norm{\hat X}
   \\
   &\quad
   +c \norm{\hat X}_\infty \norm{\Babla{s} X}
      \left(\norm{\hat X}_\infty+\norm{X}_\infty\right)
\\
   \eps\norm{f}
   &\le c\eps^{-1}\norm{\hat X}_\infty\norm{\hat X}
\end{aligned}
\end{equation}
whenever $\eps>0$.
\end{proposition}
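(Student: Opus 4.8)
The statement is a standard quadratic estimate for the trivialized section, and the strategy is the familiar one: Taylor expansion in $r$ with explicit remainder. Writing $g(r):=\Ff^\eps_q(Z+r\zeta)$ for $r\in[0,1]$, the left-hand side of~(\ref{eq:quadest-I-diff}) is $g(1)-g(0)-g'(0)=\int_0^1 (1-r)\,g''(r)\,dr$, so the whole task reduces to bounding $g''(r)=\tfrac{d^2}{dr^2}\Ff^\eps_q(Z+r\zeta)$ pointwise in $s$ and then taking $L^2$-norms. First I would fix $s\in\R$ and work entirely in the fibre $T_{q(s)}M$, suppressing $s$; the hypothesis $\norm{X}_\infty+\norm{\hat X}_\infty\le\delta$ with $\delta$ small guarantees that the whole segment $r\mapsto X+r\zeta_1$ stays in the star-shaped domain $\Oo_q$ of $\Exp_q$, so the geodesic $\Gamma_r=\Exp_q(X+r\hat X)$ and the parallel transport $\Phi_r$ along $r\mapsto\Exp_q(rX)$ are all well-defined and smooth in $r$, and their $r$-derivatives up to second order are controlled by universal constants times geometric data (curvature of $(M,G)$ and its first derivative, evaluated along a compact region — since $q$ takes values in the compact $\Sigma$ and $\norm{X}_\infty\le\delta$, everything lives in a fixed compact neighbourhood of $\Sigma$). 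These are exactly the Jacobi-field and parallel-transport estimates recorded in~\cite[appendix~A]{weber:1999a}, which I would quote rather than re-derive.

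Second, I would differentiate the two components of $\Ff^\eps_q$ from~(\ref{eq:triv-section-eps}) twice in $r$. For the second component, $(\chi(q)+\ell+r\hat\ell)'+\eps^{-2}H(\Gamma_r)$, the $\ell$-terms are affine in $r$ so contribute nothing to $g''$, and $\tfrac{d^2}{dr^2}\bigl(\eps^{-2}H(\Gamma_r)\bigr)=\eps^{-2}\bigl(\Hess H(\dot\Gamma_r,\dot\Gamma_r)+dH(\tfrac{D}{dr}\dot\Gamma_r)\bigr)$; using $\abs{\dot\Gamma_r}\le c\abs{\hat X}$ and $\abs{\tfrac{D}{dr}\dot\Gamma_r}\le c\abs{X}\,\abs{\hat X}$ (Jacobi field bounds) this is $\le c\eps^{-2}\abs{\hat X}^2$ pointwise, hence $\eps\norm{f}\le c\eps^{-1}\norm{\hat X}_\infty\norm{\hat X}$ after one factor $\abs{\hat X}$ is pulled out in $L^\infty$ and the other kept in $L^2$ — this gives the second line of~(\ref{eq:quadest-I}) directly. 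The first component is the laborious one: one differentiates $\Phi_r^{-1}\bigl(\p_s\Gamma_r+\Babla{}F|_{\Gamma_r}\bigr)+(\chi(q)+\ell+r\hat\ell)\Phi_r^{-1}\Babla{}H|_{\Gamma_r}$ twice. The $\p_s\Gamma_r$ piece is the delicate one because it involves the $s$-derivative, producing terms with $\Babla{s}\hat X$ and $\Babla{s}X$ (hence the corresponding summands in the estimate for $\norm{F}$); the $\Babla{}F$ and $\Babla{}H$ pieces produce terms in $\abs{X},\abs{\hat X}$ and their products; and the $\hat\ell\,\Phi_r^{-1}\Babla{}H$ cross term, being affine in $r$, only contributes through $\tfrac{d}{dr}\Phi_r^{-1}$ and so gives a term $\abs{\hat\ell}\abs{\hat X}$ (matching $\norm{\hat\ell}$ in line one). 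In each case I would pull out one $L^\infty$ factor of $\abs{\hat X}$ (coming from the outer Taylor remainder) and estimate the rest in $L^2$, bookkeeping the powers so they line up with the displayed right-hand side; the factor $\norm{\ell}_\infty$ appears only from the term where the $\ell$-coefficient multiplies a second-order variation of $\Phi_r^{-1}\Babla{}H|_{\Gamma_r}$.

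Third, having the pointwise bound $\abs{g''(r)(s)}\le(\text{sum of the quadratic monomials listed})$, I would integrate $\int_0^1(1-r)\abs{g''(r)}\,dr$, apply Minkowski's inequality in $L^2(\R,ds)$, and pull out the $L^\infty$-norms of the lowest-order factors; since $\delta\le1$, the mixed higher-order terms like $\norm{\Babla{s}\hat X}\cdot\norm{\hat X}_\infty^2$ can be absorbed or kept as displayed. Collecting constants gives $c=c(c_0)$ depending only on the $C^2$-geometry of $(M,G,F,H)$ over the fixed compact neighbourhood of $\Sigma$ determined by $c_0$ and $\delta$, plus universal constants from the Jacobi-field estimates. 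The only genuine obstacle is organizational rather than conceptual: keeping track, in the second $r$-derivative of the first component, of \emph{which} term carries \emph{which} combination of $\abs{X},\abs{\hat X},\abs{\Babla{s}X},\abs{\Babla{s}\hat X},\abs{\hat\ell},\norm{\ell}_\infty$, so that the final pointwise bound has precisely the shape needed to reproduce~(\ref{eq:quadest-I}) — in particular making sure no term with a bare $\eps^{-2}$ survives in the first component (it does not, because the first component of $\Ff^\eps_q$ has no $\eps^{-1}$ in it, unlike the second). I would present the proof by first isolating the Jacobi-field/parallel-transport lemmas from~\cite{weber:1999a}, then doing the second-component estimate (short), then the first-component estimate term by term, and finally the integration step.
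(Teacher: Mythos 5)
Your overall strategy---integral form of the second-order Taylor remainder in $r$, Eliasson's $E_i,E_{ij}$ calculus for $\Exp$ and parallel transport, pull one $L^\infty$ factor out and keep one $L^2$ factor---is the same skeleton as the paper's proof (which packages the pointwise step as Lemma~\ref{le:5.0.9} applied to an explicit decomposition $F=F_1+\dots+F_4$, $f=f_1+f_2$, rather than writing $\int_0^1(1-r)g''(r)\,dr$). Your treatment of the second component and of the $\Babla{}F$, $\Babla{}H$, $\hat\ell$ terms is fine.

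There is, however, one genuine gap, and it sits exactly where you declare the problem to be ``organizational rather than conceptual.'' Consider the cross term produced by differentiating $A(r)\,\Babla{s}(X+r\hat X)$ twice in $r$, where $A(r):=\Phi_q^{-1}(X+r\hat X)\,E_2(q,X+r\hat X)$: it is $2A'(r)\Babla{s}\hat X$. Your stated input---that the $r$-derivatives of $\Phi_r^{-1}$ and $\Gamma_r$ are ``controlled by universal constants times geometric data''---gives only $\abs{A'(r)}\le c\abs{\hat X}$, hence the bound $c\norm{\hat X}_\infty\norm{\Babla{s}\hat X}$. But the right-hand side of~(\ref{eq:quadest-I}) contains $\norm{\Babla{s}\hat X}$ only with the \emph{quadratically} small coefficients $\norm{\hat X}_\infty^2$ and $\norm{X}_\infty^2$; since $\norm{\hat X}_\infty\le\delta\le 1$, the naive bound is strictly weaker and cannot be absorbed anywhere on the displayed right-hand side. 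To get the extra smallness factor one needs the second-order tangency $\Phi_q^{-1}(Y)E_2(q,Y)=\1+O(\abs{Y}^2)$, i.e.\ the identity $\widebar D\Phi_q^{-1}|_0+E_{22}(q,0)=0$ of~(\ref{eq:par-exp-relation})--(\ref{eq:DPhi=0}) (a Christoffel-symbol cancellation, not a generic curvature bound); with it, $\abs{A'(r)}\le c(\abs{X}+\abs{\hat X})\abs{\hat X}$ and the term lands where it should. This is not a cosmetic refinement: in the uniqueness proof the term becomes $\norm{X^*}_\infty^{2}\norm{\Babla{s}X^*}\le 4\delta_0^2\eps\cdot\eps^{-1}\norm{\zeta^*}_{1,2,\eps}$, where the two factors of $\norm{X^*}_\infty\sim\delta_0\sqrt{\eps}$ exactly cancel the $\eps^{-1}$ carried by $\norm{\Babla{s}X^*}$; with only one factor the coefficient is $\sim\delta_0\eps^{-1/2}$ and the absorption argument fails. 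So your plan needs this one additional identity made explicit (and the analogous use of $h(0)=0$ for the $E_{12},E_{22}$ coefficient functions); your remark that the mixed terms ``can be absorbed or kept as displayed since $\delta\le1$'' points in the wrong direction---these terms must be kept at full quadratic strength.
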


By compactness of $\Sigma$ the injectivity radius of the Riemannian
vector bundle $(T_\Sigma M,G)$ is positive.
The choice $\delta=\iota(T_\Sigma M)/2>0$ takes care that $X$ and
$\hat X$ are in the domain of $\Exp$.

\begin{proposition}[Quadratic estimate II]
\label{prop:quadest-II}
There is a constant $\delta\in(0,1]$ with~the following
significance.  For
any $c_0>0$ there is a constant $c>0$~such that the following is true.
Let $q\in W^{1,2}(\R,\Sigma)$ be a map and
$Z=(X,\ell)$, $\zeta=(\hat X,\hat\ell)\in W^{1,2}(\R,q^*TM\times\R)$
be two vector fields along $z=(q,\chi(q))$ such~that
\[
   \norm{\p_s q}_\infty+\norm{\chi(q)}_\infty\le c_0
   ,\qquad
   \norm{X}_\infty\le \delta .
\]
Then the components $F$ and $f$ of the vector field along $z$,
defined by
\begin{equation}\label{eq:quadest-II-diff}
   d\Ff^\eps_q (Z)\zeta-d\Ff^\eps_q (0)\zeta
   =:\begin{pmatrix} \FF\\\ff \end{pmatrix} ,
\end{equation}
satisfy the inequalities
\begin{equation}\label{eq:quadest-II}
\begin{aligned}
   \norm{\FF}
   &\le c\norm{X}_\infty
   \left(
   \norm{\hat X}+\norm{\hat\ell}
   +\norm{\Babla{s}\hat X}\cdot\norm{X}_\infty
   \right)
   \\
   &\quad
   +c \norm{\ell}_\infty \norm{\hat X}
   +c\norm{X}_\infty\norm{\hat X}_\infty\norm{\Babla{s} X}
\\
   \eps\norm{\ff}
   &\le c\eps^{-1} \norm{X}_\infty\norm{\hat X}
\end{aligned}
\end{equation}
whenever $\eps>0$.
\end{proposition}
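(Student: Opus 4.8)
The plan is to read off the two components $\FF$ and $\ff$ from the explicit formula for $d\Ff^\eps_q(Z)\zeta$ computed just before Proposition~\ref{prop:quadest-I} and from $d\Ff^\eps_q(0)\zeta=D^\eps_q\zeta$, cf.~(\ref{eq:D^eps_q}), and then to estimate the increment of each summand in the base point $Z=(X,\ell)$ about $Z=0$, pointwise in $s\in\R$. All coefficient functions that occur are built from the fixed smooth data $F,H,G$, from $q$ with values in the compact set $\Sigma$, and from the exponential map $\Exp$ of $(M,G)$; hence on the region determined by $\norm{\p_sq}_\infty+\norm{\chi(q)}_\infty\le c_0$ and $\norm{X}_\infty\le\delta$ --- with $\delta=\iota(T_\Sigma M)/2$ as in Proposition~\ref{prop:quadest-I}, so that $X$ and $\hat X$ lie in the domain of $\Exp$ --- every derivative of these coefficient functions that enters the estimate is uniformly bounded by a constant depending only on $c_0$ and the ambient data, and this furnishes the constant $c$.

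The second component is immediate. By the derivative formula, the second component of $d\Ff^\eps_q(Z)\zeta$ is $\hat\ell^\prime+\eps^{-2}\,dH|_{\Gamma_0}E_2(q,X)\hat X$ with $\Gamma_0=E(q,X)=\Exp_qX$ and $E_2(q,X)=d\Exp_q|_X$, whereas at $Z=0$ it is $\hat\ell^\prime+\eps^{-2}\,dH|_q\hat X$; hence $\ff=\eps^{-2}\bigl(dH|_{\Exp_qX}\circ E_2(q,X)-dH|_q\bigr)\hat X$. The $\Hom(T_qM,\R)$-valued map $X\mapsto dH|_{\Exp_qX}\circ E_2(q,X)$ is smooth and agrees with $dH|_q$ at $X=0$, so pointwise it differs from $dH|_q$ by at most $C\abs{X}$; multiplying by $\hat X$ and integrating gives $\norm{\ff}\le C\eps^{-2}\norm{X}_\infty\norm{\hat X}$, i.e.\ $\eps\norm{\ff}\le c\eps^{-1}\norm{X}_\infty\norm{\hat X}$.

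For the first component I would split $\FF$ according to the four summands of the first component of $d\Ff^\eps_q(Z)\zeta$: the $\p_s\Gamma_r$-term, the $\Babla{}F|_{\Gamma_r}$-term, the $(\chi(q)+\ell)\Babla{}H|_{\Gamma_r}$-term, and the $\hat\ell\,\Phi_0^{-1}\Babla{}H|_{\Gamma_0}$-term. After differentiating in $r$ at $r=0$, each is a smooth expression in $\bigl(q,\p_sq,X,\Babla{s}X,\ell\bigr)$ linear in $\bigl(\hat X,\Babla{s}\hat X,\hat\ell\bigr)$, and $\FF$ is its increment from $(X,\Babla{s}X,\ell)=(0,0,0)$; note $\Babla{s}X$ genuinely occurs only in the $\p_s\Gamma_r$-term. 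Taylor-expanding in $(X,\Babla{s}X,\ell)$ and using the uniform bounds, the contributions organise as follows. Moving the foot point from $q$ to $\Exp_qX$, together with the ``pure $\hat X$'' part of the $\p_s\Gamma_r$-term, produces coefficients differing from their $X=0$ values by $O(\abs{X})$, hence $\le C\norm{X}_\infty\norm{\hat X}$; the $\ell$-linear part of the $(\chi(q)+\ell)\Babla{}H|_{\Gamma_r}$-term contributes $\le C\norm{\ell}_\infty\norm{\hat X}$; and the $\hat\ell$-coefficient $\Phi_q(X)^{-1}\Babla{}H|_{\Exp_qX}$ differs from $\Babla{}H|_q$ by $O(\abs{X})$, contributing $\le C\norm{X}_\infty\norm{\hat\ell}$. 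The two remaining, more delicate contributions come from the $\p_s\Gamma_r$-term: its $\Babla{s}\hat X$-coefficient is $\Phi_q(X)^{-1}E_2(q,X)$, which equals $\id$ at $X=0$ and, by the Jacobi-field expansion $\Phi_q(X)^{-1}E_2(q,X)=\id-\tfrac16R(\cdot,X)X+O(\abs{X}^3)$, differs from $\id$ only by $O(\abs{X}^2)$, contributing $\le C\norm{X}_\infty^2\norm{\Babla{s}\hat X}$; and the part of its $\hat X$-coefficient linear in $\Babla{s}X$ has a coefficient of the form $\Phi_q(X)^{-1}\bigl(\partial_Y E_2(q,Y)\bigr)|_{Y=X}[\hat X]$ plus a $\partial_Y\Phi_q$-term, which vanishes at $X=0$ --- again by the same expansion, since $\partial_Y\Phi_q|_0=0$ and $E_2(q,Y)=\Phi_q(Y)+O(\abs{Y}^2)$ --- hence is $O(\abs{X})$ and contributes $\le C\norm{X}_\infty\norm{\hat X}_\infty\norm{\Babla{s}X}$. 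Collecting these five bounds gives exactly~(\ref{eq:quadest-II}).

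The first three, linear-in-$X$ contributions are routine. The main obstacle is obtaining the correct powers $\norm{X}_\infty^2$ and $\norm{X}_\infty\norm{\hat X}_\infty$ in the last two terms, which hinges not on mere smoothness of the exponential-map calculus but on the quadratic vanishing $\Phi_q(X)^{-1}E_2(q,X)=\id+O(\abs{X}^2)$ and on the companion expansions for the $\Babla{s}$-derivatives of $\Phi_q$ and $E_2$. These are precisely the estimates of \cite[Appendix~A]{weber:1999a} already invoked for Theorem~\ref{thm:KeyEst-thm.3.3} and Proposition~\ref{prop:quadest-I}; one applies them termwise, keeping careful track of which monomial in $\norm{X}_\infty,\norm{\hat X}_\infty$ multiplies each of $\norm{\hat X},\norm{\hat\ell},\norm{\Babla{s}\hat X},\norm{\Babla{s}X}$. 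Alternatively one may write $\FF=\int_0^1 d^2\Ff^\eps_q(tZ)[Z,\zeta]\,dt$ and read the bounds off a uniform estimate for the bilinear form $d^2\Ff^\eps_q$ near the origin, but that second-derivative estimate is of essentially the same difficulty.
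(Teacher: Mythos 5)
Your proposal follows essentially the same route as the paper: the same splitting of $\FF$ into the four summands coming from the $\p_s\Gamma_r$-, $\Babla{}F$-, $(\chi(q)+\ell)\Babla{}H$- and $\hat\ell$-terms, the same first-order Taylor/Lipschitz estimate for $\ff$, and the same two key vanishing facts — $\Phi_q^{-1}(X)E_2(q,X)=\1+O(\abs{X}^2)$ and $\widebar D\Phi_q^{-1}|_0=0$ — that the paper records in~(\ref{eq:par-exp-relation}) and~(\ref{eq:DPhi=0}) to produce the $\norm{X}_\infty^2\norm{\Babla{s}\hat X}$ and $\norm{X}_\infty\norm{\hat X}_\infty\norm{\Babla{s}X}$ terms. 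The term-by-term accounting matches the paper's estimates for $F_1,\dots,F_4,f_2$, so the argument is correct and not materially different.
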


\boldmath
\subsubsection*{Tools}
\unboldmath

\begin{theorem}[Exponential map -- derivatives]
\label{thm:exp-map}
Let $u$ be a point in a Riemannian manifold $M$ and $X\in\Oo_u$
a tangent vector. Then there are linear maps
$$
  E_i (u,X):T_uM\to T_{\Exp_uX}M ,\qquad
  E_{ij}(u,X):T_uM\times T_uM\to T_{\Exp_uX}M
$$
for $i,j\in\{1,2\}$ such that
the following is true.
If $u:\R\to M$ is a smooth curve
and $X,Y$ are smooth vector fields 
along $u$ with $X(s)\in\Oo_{u(s)}$
for every $s$, then the maps $E_i$ and $E_{ij}$
are characterized (uniquely determined)
by the identities
\begin{equation*} 
\begin{split}
     \frac{d}{ds}\Exp_u(X)
    &=E_1(u,X)\p_su
     +E_2(u,X)\Babla{s}X
    \\
     \Babla{s}\left( E_1(u,X)Y\right)
    &=E_{11}(u,X)\left(Y,\p_s u\right)
      +E_{12}(u,X)\left(Y,\Babla{s}X\right)
      +E_1(u,X)\Babla{s}Y
    \\
     \Babla{s}\left( E_2(u,X)Y\right)
    &=E_{21}(u,X)\left(Y,\p_s u\right)
      +E_{22}(u,X)\left(Y,\Babla{s}X\right)
      +E_2(u,X)\Babla{s}Y.
\end{split}
\end{equation*}
Here $\Babla{}$ is the Levi-Civita connection.\footnote{
  Our convention for derivatives, example $\p_j E_i$, is to put both,
  the derivative index $j$ and the arising new linear factor to the
  right. This way index order and linear factor order coincide,
  example $\p_j (E_i(x_i,x_j) X_i)=E_{ij} (x_i,x_j)\left(X_i,X_j\right)$.
  }
Furthermore, there are the identities
\begin{equation}\label{eq:E_ij(0)}
     E_1(u,0)=E_2(u,0)=\1
     ,\quad
     E_{11}(u,0)=E_{21}(u,0)=E_{22}(u,0)=0 .
\end{equation}
For 
all $u\in M$, $X\in\Oo_u$, and $Y,Z\in T_uM$
there are the symmetry properties
\begin{equation*} 
\begin{split}
     E_{12}(u,X)\left(Y,Z\right)
     =E_{21}(u,X)\left(Z,Y\right)
   \quad
     E_{22}(u,X)\left(Y,Z\right)
     =E_{22}(u,X)\left(Z,Y\right)
\end{split}
\end{equation*}
and the identity
$
     E_{11}(u,X)\left(Y,Z\right)
     -E_{11}(u,X)\left(Z,Y\right)
     =E_{2}(u,X) \widebar R(Y,Z)X
$
where $\widebar R$ is the Riemannian curvature operator.
\end{theorem}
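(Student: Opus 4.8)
The statement is classical, and the facts collected in \cite[Appendix~A]{weber:1999a} contain essentially this; the plan is to recall the three ingredients.

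\textbf{Intrinsic definition of $E_1,E_2$.} Fix $u\in M$ and $X\in\Oo_u$, and let $c\colon[0,1]\to M$, $c(t):=\Exp_u(tX)$, be the geodesic it generates. For $Y\in T_uM$ let $J^{\mathrm I}_Y$ and $J^{\mathrm{II}}_Y$ be the unique Jacobi fields along $c$ with boundary data $(J^{\mathrm I}_Y(0),\Babla{t}J^{\mathrm I}_Y(0))=(Y,0)$ and $(J^{\mathrm{II}}_Y(0),\Babla{t}J^{\mathrm{II}}_Y(0))=(0,Y)$, and set $E_1(u,X)Y:=J^{\mathrm I}_Y(1)$ and $E_2(u,X)Y:=J^{\mathrm{II}}_Y(1)$. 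Linearity in $Y$ and smoothness in $(u,X)$ are inherited from the Jacobi equation, and $E_2(u,X)$ is just $d(\Exp_u)_X$. For a curve $u\colon\R\to M$ and a vector field $X$ along it, the two-parameter map $w(s,t):=\Exp_{u(s)}(tX(s))$ is, for each $s$, a geodesic in $t$, so $V(s,t):=\partial_s w(s,t)$ is a Jacobi field along it; since $V(s,0)=\partial_s u$ and $\Babla{t}V(s,0)=\Babla{t}\partial_s w|_{t=0}=\Babla{s}\partial_t w|_{t=0}=\Babla{s}X$ by torsion-freeness, evaluating at $t=1$ and using linearity of the Jacobi equation in the initial data yields $\frac{d}{ds}\Exp_u(X)=E_1(u,X)\partial_s u+E_2(u,X)\Babla{s}X$. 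When $X=0$ the geodesic is constant, the curvature term drops from the Jacobi equation, $J(t)=J(0)+t\Babla{t}J(0)$, and hence $E_1(u,0)=E_2(u,0)=\1$.

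\textbf{Definition of the $E_{ij}$.} Covariantly differentiate $s\mapsto E_i(u(s),X(s))Y(s)$. Since $E_i$ is a smooth bundle map depending on the two variables $u$ and $X$, the Leibniz rule splits $\Babla{s}(E_i(u,X)Y)$ into a part tensorial in the pair $(Y,\partial_s u)$, a part tensorial in $(Y,\Babla{s}X)$, and the term $E_i(u,X)\Babla{s}Y$; this defines bilinear maps $E_{i1}(u,X),E_{i2}(u,X)$ satisfying the last two identities, with the index bookkeeping dictated by the paper's convention. Uniqueness of all four maps is immediate: at a point one may prescribe $\partial_s u$, $\Babla{s}X$ and $Y$ freely and independently. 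The vanishing $E_{11}(u,0)=E_{21}(u,0)=E_{22}(u,0)=0$ follows by applying the identities along curves with $X\equiv0$ and using $E_i(u,0)=\1$; the dependence of $E_i(u,X)$ on $X$ near $X=0$ is of order $\abs{X}^2$ because the only $X$-dependence in the Jacobi equation, the curvature term $\widebar R(\cdot,c')c'$, is.

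\textbf{Symmetries and the curvature term.} Apply the two $E_{ij}$-identities to a two-parameter map $w(s,\sigma):=\Exp_{u(s,\sigma)}(X(s,\sigma))$: expand $\Babla{\sigma}\partial_s w$ by first writing $\partial_s w=E_1\partial_s u+E_2\Babla{s}X$ and then applying $\Babla{\sigma}$, and symmetrically expand $\Babla{s}\partial_\sigma w$. Torsion-freeness gives $\Babla{\sigma}\partial_s w=\Babla{s}\partial_\sigma w$ and $\Babla{\sigma}\partial_s u=\Babla{s}\partial_\sigma u$, so the two expansions coincide; matching coefficients of the independent bilinear combinations $\Babla{s}X\otimes\Babla{\sigma}X$ and $\partial_s u\otimes\Babla{\sigma}X$ gives $E_{22}(Y,Z)=E_{22}(Z,Y)$ and $E_{12}(Y,Z)=E_{21}(Z,Y)$, while in the coefficient of $\partial_s u\otimes\partial_\sigma u$ the two $E_2$-terms differ by $E_2(\Babla{\sigma}\Babla{s}-\Babla{s}\Babla{\sigma})X=E_2\,\widebar R(\partial_\sigma u,\partial_s u)X$, producing $E_{11}(Y,Z)-E_{11}(Z,Y)=E_2\,\widebar R(Y,Z)X$ after using antisymmetry of $\widebar R$.

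Steps 1--2 are routine (they are the smooth dependence of Jacobi fields on basepoint and endpoint, spelled out in \cite[Appendix~A]{weber:1999a}); the one place that demands care is Step 3 --- isolating which bilinear terms are genuinely free so that the coefficient comparison is legitimate, and tracking signs so that the curvature contribution emerges exactly as $E_2\,\widebar R(Y,Z)X$ rather than its negative.
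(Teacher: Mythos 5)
The paper does not actually prove Theorem~\ref{thm:exp-map}: its ``proof'' is a bare citation to El\u{\i}asson and to \cite[Sec.~3.1.1]{gaio:1999a} and \cite{Weber:2022a}. Your sketch reconstructs precisely the standard argument that those references carry out --- $E_1,E_2$ as endpoint values of Jacobi fields along $t\mapsto\Exp_u(tX)$ with initial data $(Y,0)$ and $(0,Y)$, the $E_{ij}$ as the horizontal/vertical components of the covariant derivative of the bundle maps $E_i$ over $\Oo\subset TM$, and the symmetries from equating $\Babla{\sigma}\p_s w$ with $\Babla{s}\p_\sigma w$ for the two-parameter family $w(s,\sigma)=\Exp_{u(s,\sigma)}(X(s,\sigma))$ and matching the independent bilinear coefficients, with the curvature term entering through $\Babla{\sigma}\Babla{s}X-\Babla{s}\Babla{\sigma}X=\widebar R(\p_\sigma u,\p_s u)X$. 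So there is no divergence of method to report; you have simply supplied the argument the paper outsources.

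One small point of care in your Step~2: taking a curve with $X\equiv 0$ forces $\Babla{s}X=0$, so that device isolates only $E_{11}(u,0)$ and $E_{21}(u,0)$; it says nothing about $E_{22}(u,0)$, whose argument slot is being multiplied by $\Babla{s}X$. The vanishing of $E_{22}(u,0)$ really does require your supplementary observation that $E_2(u,X)=\1+O(\abs{X}^2)$ in a parallel frame (equivalently, that the second derivative of $\Exp_u$ at the origin vanishes, as one sees in normal coordinates), so that the vertical derivative of $E_2$ at $X=0$ is zero. Since you state that remark, the proof is complete, but the two mechanisms should not be conflated: the ``$X\equiv0$'' curves give the first two vanishings, the quadratic-order expansion gives the third.
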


\begin{proof}
El\u{\i}asson~\cite{Eliasson:1967a}.
For details see also~\cite[sec.~3.1.1]{gaio:1999a}
or~\cite{Weber:2022a}.
\end{proof}

The following lemma is a major technical tool in the proof of the
pointwise quadratic estimates. The proof is standard, for details see
e.g.~\cite[Le.\,5.0.9]{weber:1999a}.
Note that the lemma remains valid for covariant derivatives
$\widebar D=d+\Gamma\,\hat X$ since the Christoffel symbol
$\Gamma$ arrives together with the direction $\hat X$.

\begin{lemma}\label{le:5.0.9}
Let $m,n\in\N$ and $ h\in C^2(\R^m,\R^n)$.
Then for any $\delta>0$ there exists a continuous function
$c_\delta\in C^0(\R^m,\R^+)$ such that
\begin{itemize}\setlength\itemsep{0ex} 
\item[\rm i)]
   $\abs{ h(X+\hat X)- h(X)}\le c_\delta(\hat X)\abs{\hat X}$
\item[\rm ii)]
   $\abs{ h(X+\hat X)- h(X)-d h(X)\,\hat X}\le c_\delta(\hat X)\abs{\hat X}^2$
\end{itemize}
for all $X\in\R^m$ with $\abs{X}\le\delta$ and all $\hat X\in\R^m$.
\end{lemma}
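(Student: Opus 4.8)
The plan is to derive both estimates from the integral forms of the first- and second-order Taylor expansions of $h$ about $X$, and to take for $c_\delta$ a supremum of $\abs{dh}$ and $\abs{d^2h}$ over a closed ball whose radius grows with $\abs{\hat X}$. The point of the statement is exactly that the constant is allowed to depend on the displacement $\hat X$, so no uniformity over $\R^m$ is needed — only over balls, which are compact.

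First I would treat i). For $X$ with $\abs{X}\le\delta$ and arbitrary $\hat X\in\R^m$ the fundamental theorem of calculus gives
\[
   h(X+\hat X)-h(X)=\int_0^1 dh(X+t\hat X)\,\hat X\,dt ,
\]
and since the segment $\{X+t\hat X:t\in[0,1]\}$ lies in the closed ball of radius $\delta+\abs{\hat X}$, one gets $\abs{h(X+\hat X)-h(X)}\le M_1(\delta+\abs{\hat X})\,\abs{\hat X}$, where $M_1(r):=\max_{\abs{Y}\le r}\abs{dh(Y)}$ is finite because $dh$ is continuous and the ball compact. For ii) I would use the second-order Taylor formula with integral remainder,
\[
   h(X+\hat X)-h(X)-dh(X)\,\hat X=\int_0^1(1-t)\,d^2h(X+t\hat X)\bigl(\hat X,\hat X\bigr)\,dt ,
\]
which yields $\abs{h(X+\hat X)-h(X)-dh(X)\,\hat X}\le\tfrac12 M_2(\delta+\abs{\hat X})\,\abs{\hat X}^2$ with $M_2(r):=\max_{\abs{Y}\le r}\abs{d^2h(Y)}$. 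Then I would set
\[
   c_\delta(\hat X):=1+M_1\bigl(\delta+\abs{\hat X}\bigr)+M_2\bigl(\delta+\abs{\hat X}\bigr) ,
\]
a single strictly positive function that majorises both right-hand sides and hence serves in i) and ii) simultaneously.

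It remains to check that $c_\delta$ is continuous on $\R^m$, which reduces to continuity of $r\mapsto M_i(r)$ on $[0,\infty)$: for $0\le r\le r'$ one has $0\le M_i(r')-M_i(r)\le\omega_i(r'-r)$, where $\omega_i$ is a modulus of continuity of $\abs{d^ih}$ on the ball of radius $r'$, so each $M_i$ is (monotone and) continuous; composing with the continuous map $\hat X\mapsto\delta+\abs{\hat X}$ gives continuity of $c_\delta$. The remark about replacing $d$ by a covariant derivative $\widebar D=d+\Gamma\,\hat X$ needs no separate argument, since the extra Christoffel term is linear in $\hat X$ and is absorbed by the same estimates. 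There is no genuine obstacle in this lemma; the only mildly delicate point is the continuity bookkeeping for $c_\delta$ just described, the rest being the routine Taylor estimates above.
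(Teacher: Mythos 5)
Your proof is correct and is exactly the standard argument the paper invokes (it cites \cite[Le.\,5.0.9]{weber:1999a} rather than writing it out): Taylor expansion with integral remainder, with $c_\delta$ taken as a supremum of $\abs{dh}$ and $\abs{d^2h}$ over the compact ball of radius $\delta+\abs{\hat X}$, plus the routine check that $r\mapsto\max_{\abs{Y}\le r}\abs{d^ih(Y)}$ is continuous. Nothing is missing.
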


\boldmath
\subsubsection*{Proofs}
\unboldmath

\begin{proof}[Proof of Proposition~\ref{prop:quadest-I}.]
Write $F=F_1+F_2+F_3+F_4$ and $f=f_1+f_2$
where the summands $F_i$ and $f_j$ are defined now.
The summand $F_1$ is defined by
\begin{equation*}
\begin{split}
   F_1:
   &=\Phi_q^{-1}(X+\hat X)\tfrac{d}{ds} E(q,X+\hat X)
   -\Phi _q^{-1}(X)\tfrac{d}{ds} E(q,X)
   \\
   &\quad
   -\left(\left.\tfrac{\widebar D}{dr}\right|_0 \Phi_q^{-1}(X+r\hat X)\right)
   \tfrac{d}{ds}E(q,X)
   -\Phi_q^{-1}(X)\left.\tfrac{\widebar D}{dr}\right|_0
      \tfrac{d}{ds}E(q,X+r\hat X)
\\
   &\stackrel{{\color{gray} 2}}{=}
   \Phi_q^{-1}(X+\hat X)
   \left( E_1(q,X+\hat X)\p_sq+E_2(q,X+\hat X)
   \bigl(\Babla{s} X+\Babla{s}\hat X\bigr)\right)
   \\
   &\quad-\Phi_q^{-1}(X) \left(E_1(q,X)\p_sq + E_2(q,X)\Babla{s} X\right)
   \\
   &\quad-\widebar D\Phi_q^{-1}|_X\left(E_1(q,X)\p_sq, \hat X\right)
   -\widebar D\Phi_q^{-1}|_X\left(E_2(q,X)\Babla{s} X, \hat X\right)
   \\
   &\quad-\Phi_q^{-1}(X)\left(E_{12}(q,X)\bigl(\p_s q,\hat X\bigr)
   +E_{22}(q,X)\bigl(\Babla{s} X,\hat X\bigr)
   +E_2(q,X)\Babla{s}\hat X\right)
\\
   &\stackrel{{\color{gray} 3}}{=}
   \Phi_q^{-1} (X+\hat X) E_1(q,X+\hat X)\p_sq
   -\Phi_q^{-1}(X) E_1(q,X)\p_sq
   \\
   &\qquad\quad
   -\widebar D\Phi_q^{-1}|_X\left(E_1(q,X)\p_sq, \hat X\right)
   \\
   &\quad+\Phi_q^{-1}(X+\hat X) E_2(q,X+\hat X)\Babla{s} X
   -\Phi_q^{-1}(X) E_2(q,X)\Babla{s} X
   \\
   &\qquad\quad
   -\widebar D\Phi_q^{-1}|_X\left(E_2(q,X)\Babla{s} X, \hat X\right)
   \\
   &\quad
   -\Phi_q^{-1}(X) E_{12}(q,X)\bigl(\p_sq,\hat X\bigr)
   -\Phi_q^{-1}(X) E_{22}(q,X)\bigl(\Babla{s} X,\hat X\bigr)
   \\
   &\quad +\left({\color{gray}\Phi_q^{-1}(X+\hat X) E_2(q,X+\hat X)-\1\;}
   +\1-\Phi_q^{-1}(X) E_2(q,X)\right)\Babla{s}\hat X .
\end{split}
\end{equation*}
To get identity 2 we carried out the derivatives with respect to $s$
and $r$ using the characterizing identities from
Theorem~\ref{thm:exp-map}. 
In identity 3 we only reordered the summands.
The estimate for $\norm{F_1}$ is obtained by
applying pointwise Lemma~\ref{le:5.0.9} followed by integration.
More precisely, for the first triple of summands one applies part~ii)
of the lemma, same for the second triple.
To the next two summands apply part~i) individually.
For example define and note that
\[
   h(X):=\Phi_q^{-1}(X) E_{22}(q,X)
   ,\qquad
   h(0)\stackrel{(\ref{eq:E_ij(0)})}{=}0.
\]
Part~ii) also applies to the final line
where we added $-\1+\1=0$.
To deal with the second part of the final line
{\color{gray} (analogously part one)} define and note that
\begin{equation}\label{eq:par-exp-relation}
   h(X):=\Phi_q^{-1}(X) E_2(q,X)-\1
   ,\qquad
   h(0)\stackrel{(\ref{eq:E_ij(0)})}{=}0
   ,\qquad
   \widebar D h(0)X=0.
\end{equation}
It remains to show that the derivative vanishes, indeed
\begin{equation*}
\begin{split}
   \widebar D h(0) X
   &=\left.\tfrac{\widebar D}{dr}\right|_0 h(rX)
   \\
   &=\widebar D\Phi_q^{-1}|_0\left(E_2(q,0)\cdot, X\right)
   +\Phi_q^{-1}(0)E_{22}(q,0)\left(\cdot, X\right)
   \\
   &=\left(\widebar D\Phi_q^{-1}|_0
   +E_{22}(q,0) \right) \left(\cdot,X\right)
   \\
   &=0.
\end{split}
\end{equation*}
The last step holds since both summands vanish individually,
namely $E_{22}(q,0)=0$ and a short calculation in local coordinates
shows that
\begin{equation}\label{eq:DPhi=0}
   \left(\left.\tfrac{\widebar D}{dr}\right|_0\Phi_q^{-1}(r\hat X)\right)^k_j
   =\left(\widebar D\Phi_q^{-1}|_0\bigl(\cdot,\hat X\bigr)\right)^k_j
   =\underbrace{\left.\tfrac{d}{dr}\right|_0\Phi_q^{-1}(r\hat X)_j^k}
      _{=-\Gamma_{ij}^k \hat X^i}
   +\Gamma_{ij}^k \hat X^i
   =0
\end{equation}
where the under-braced identity is Lemma~A.1.3 in~\cite{weber:1999a}.
Recall from the primer article
(remark in quadratic estimate section)
that $L^\infty$ norms should go preferably on the base point
$Z=(X,\ell)$, but never on derivatives.
As pointwise estimate for $F_1$, written in the same order as above,
we obtain
\begin{equation*}
\begin{split}
   \abs{F_1}
   &\le c_{\delta,\hat X}\norm{\p_s q}_\infty\abs{\hat X}^2
   +c_{\delta,\hat X}\abs{\hat X}^2\abs{\Babla{s} X}
   +c_{\delta,X}\norm{\p_s q}_\infty\abs{X}\cdot\abs{\hat X}
   \\
   &\quad
   +c_{\delta,X}\abs{\hat X}\cdot\abs{X}\cdot\abs{\Babla{s} X}
   +c_{\delta,X+\hat X}\abs{X+\hat X}^2\abs{\Babla{s}\hat X}
   +c_{\delta,X}\abs{X}^2\abs{\Babla{s}\hat X}
   \\
   &\le \tilde c_1\left(
   \abs{\hat X}^2(1+\abs{\Babla{s} X})
   +\abs{X}\cdot\abs{\hat X}(1+\abs{\Babla{s} X})
   +(\abs{X}^2+\abs{\hat X}^2\abs{\Babla{s}\hat X})
   \right)
\\
   \norm{F_1}
   &\le
   c_1\norm{\hat X}_\infty\left(\norm{\hat X}
      +\norm{\Babla{s}\hat X}\cdot\norm{\hat X}_\infty\right)
   +c_1\norm{X}_\infty\left(\norm{\hat X}
      +\norm{\Babla{s}\hat X}\cdot\norm{X}_\infty\right)
   \\
   &\quad+c_1\norm{\hat X}_\infty
   \left(\norm{\hat X}_\infty+\norm{X}_\infty\right)
   \norm{\Babla{s} X}
\end{split}
\end{equation*}
for suitable positive constants $\tilde c_1$ and $c_1$.
In step~2 of the pointwise estimate we used that
$\abs{X+\hat X}^2\le 2\abs{X}^2+2\abs{\hat X}^2$.
The $L^2$ estimate for $F_1$ follows by squaring the estimate
for $\abs{F_1}$, integrate the result, and pull out $L^\infty$ norms.
The summand $F_2$ is defined and then, via Lemma~\ref{le:5.0.9}~ii),
estimated by
\begin{equation*}
\begin{split}
   F_2:
   &=
   \Phi_q^{-1}(X+\hat X)\Babla{} F|_{E(q,X+\hat X)}
   -\Phi_q^{-1}(X)\Babla{} F|_{E(q,X)}
   -\left.\tfrac{d}{dr}\right|_0\left(\Phi_r^{-1}\Babla{}F|_{\Gamma_r}\right)
   \\
   &\quad
   =h(\hat X)-h(0)-dh(0)\hat X
   ,\quad h(\hat X):=\Phi_q^{-1}(X+\hat X)\Babla{} F|_{E(q,X+\hat X)}
\\
   \norm{F_2}
   &\le 
   c_2\norm{\hat X}_\infty\norm{\hat X}
\end{split}
\end{equation*}
for suitable $c_2>0$.
Analogous to $F_2$ we define and treat the summand $F_3$ by
\begin{equation*}
\begin{split}
   F_3:
   &=(\chi(q)+\ell)
   \Bigl(
   \Phi_q^{-1}(X+\hat X)^{-1}\Babla{} H|_{E(q,X+\hat X)}
   -\Phi_q^{-1}(X)^{-1}\Babla{} H|_{E(q,X)}
   \\
   &\quad
   -\left.\tfrac{d}{dr}\right|_0
      \left(\Phi_q^{-1}(X+r\hat X)\Babla{} H|_{E(q,X+r\hat X)}\right)
   \Bigr)
\\
   \norm{F_3}
   &\le c_3 
   \left(\norm{\hat X}_\infty \norm{\hat X}
   +\norm{\ell}_\infty\norm{\hat X}\cdot\norm{\hat X}_\infty\right) 
\end{split}
\end{equation*}
for suitable $c_3>0$.
For suitable $c_4>0$ we define and treat summand $F_4$ by
\begin{equation*}
\begin{split}
   F_4:
   &=\hat\ell
   \left(
   \Phi_q^{-1}(X+\hat X)\Babla{} H|_{E(q,X+\hat X)}
   -\Phi_q^{-1}(X)^{-1}\Babla{} H|_{E(q,X)}
   \right),
\\
   \norm{F_4}
   &\le c_4 
   \norm{\hat X}_\infty \norm{\hat\ell}.
\end{split}
\end{equation*}
Summand $f_1$ is defined by
$
   f_1:
   =(\chi(q)+\ell+\hat\ell)^\prime-(\chi(q)+\ell)^\prime-\hat\ell^\prime
   =0
$
and $f_2$ by
\begin{equation*}
\begin{split}
   f_2:
   &=\eps^{-2}\left(
   H|_{E(q,X+\hat X)}-H|_{E(q,X)}-dH|_{E(q,X)} E_2(q,X)\hat X
   \right),
\\
   \norm{f_2}
   &\le\eps^{-2} c_5 
   \norm{\hat X}_\infty\norm{\hat X} .
\end{split}
\end{equation*}
This concludes the proof of Proposition~\ref{prop:quadest-I}
(Quadratic Estimate I).
\end{proof}

\begin{proof}[Proof of Proposition~\ref{prop:quadest-II}]
The derivative of $\Ff^\eps_q$ at $0$ in direction
$\zeta=(\hat X,\hat\ell)$ is
\begin{equation*}
\begin{split}
   &d \Ff^\eps_q (0,0)
   \begin{pmatrix} \hat X\\\hat \ell\end{pmatrix}
\\
   &\stackrel{(\ref{eq:deriv-triv-origin})}{=}
   \begin{pmatrix}
   \left.\frac{d}{dr}\right|_0 \Phi _q^{-1} (r\hat X)
   \left(
   \p_sE(q,r\hat X)+\Babla{} F|_{E(q,r\hat X)}+\chi(q)\Babla{} H|_{E(q,r\hat X)}
   \right)
   +\hat\ell\Babla{} H|_q
   \\
   \hat\ell^\prime+\eps^{-2}dH|_q\hat X
   \end{pmatrix} .
\end{split}
\end{equation*}

Write $F=F_1+F_2+F_3+F_4$ and $f=f_1+f_2$ where the summands
$F_i$ and $f_j$ are defined in what follows. 
The summand $F_1$ is defined by
\begin{equation*}
\begin{split}
   F_1:
   &=
   \left(\left.\tfrac{d}{dr}\right|_0 \Phi _q^{-1}(X+r\hat X)\right)
      \tfrac{d}{ds}E(q,X)
   -\left(\left.\tfrac{d}{dr}\right|_0 \Phi_q^{-1}(r\hat X)\right)
   \tfrac{d}{ds}E(q,0)
   \\
   &\quad
   +\Phi_q^{-1}(X)\left.\tfrac{\widebar D}{dr}\right|_0
      \tfrac{d}{ds}E(q,X+r\hat X)
   -\underline{\Phi_q^{-1}(0)\left.\tfrac{\widebar D}{dr}\right|_0
      \tfrac{d}{ds}E(q,r\hat X)}
\\
   &\stackrel{{\color{gray} 2}}{=}
   \widebar D\Phi_q^{-1}|_X
   \left(E_1(q,X)\p_s q+E_2(q,X)\Babla{s} X, \hat X\right)
   -\widebar D\Phi_q^{-1}|_0\left(\p_sq,\hat X\right)
   -\underline{\Babla{s}\hat X}
   \\
   &\quad+\Phi_q^{-1}(X)
   \left(
   E_{12}(q,X)\bigl(\p_sq,\hat X\bigr)
   +E_{22}(q,X)\bigl(\Babla{s} X,\hat X\bigr)
   +E_2(q,X)\Babla{s}\hat X
   \right)
\\
   &\stackrel{{\color{gray} 3}}{=}
   \widebar D\Phi_q^{-1}|_X\left(E_1(q,X)\p_s q, \hat X\right)
   -\widebar D\Phi_q^{-1}|_0\left(\p_sq,\hat X\right)
   \\
   &\quad
   +\Phi_q^{-1}(X) E_{12}(q,X)\bigl(\p_sq,\hat X\bigr)
  +\left(\Phi_q^{-1}(X) E_2(q,X)-\underline{\1}\right)\Babla{s}\hat X
   \\
   &\quad
   +\widebar D\Phi_q^{-1}|_X \left(E_2(q,X)\Babla{s} X,\hat X\right)
  +\Phi_q^{-1}(X) E_{22}(q,X)\bigl(\Babla{s} X,\hat X\bigr) .
\end{split}
\end{equation*}
To get identity 2 we carried out the derivatives with respect to $s$
and $r$ using the characterizing identities from
Theorem~\ref{thm:exp-map}. 
In identity 3 we only reordered the summands.
The estimate for $\norm{F_1}$ is obtained by
applying pointwise Lemma~\ref{le:5.0.9} followed by integration.
One uses the same techniques as for term $F_1$ in quadratic
estimate~I, in particular~(\ref{eq:par-exp-relation}) and the
identities $E_1(q,0)=\1=E_2(q,0)$ and $E_{12}(q,0)=0=E_{22}(q,0)$.
Note that the last but one term
\[
   g(X)
   :=
   \widebar D\Phi_q^{-1}|_X\left(E_2(q,X)\Babla{s} X,\hat X\right)
   ,\qquad 
   g(0)=0,
\]
vanishes at the origin as we saw earlier in~(\ref{eq:DPhi=0}).
Recall from the primer article
(remark in quadratic estimate section)
that $L^\infty$ norms should go preferably on the base point
$Z=(X,\ell)$, but not on derivatives.
We get the estimate
\begin{equation*}
\begin{split}
   \Norm{F_1}
   \le c_1\left(\norm{\p_sq}_\infty\norm{X}_\infty\norm{\hat X}
   +\norm{X}_\infty^2\norm{\Babla{s}\hat X}
   +\norm{\hat X}_\infty \norm{X}_\infty\norm{\Babla{s} X}
   \right).
\end{split}
\end{equation*}
The summand $F_2$ is defined, and then estimated, by
\begin{equation*}
\begin{split}
   F_2:
   &=
   \left.\tfrac{d}{dr}\right|_0
      \left(\Phi_q^{-1}(X+r\hat X)\Babla{}F|_{E(q,X+r\hat X)}\right)
   -\left.\tfrac{d}{dr}\right|_0
   \left(\Phi_q^{-1}(r\hat X)\Babla{} F|_{E(q,r\hat X)}\right)
   \\
   &=\widebar D\Phi_q^{-1}|_X\left(\Babla{} F|_{E(q,X)},\hat X\right)
   -\widebar D\Phi_q^{-1}|_0\left(\Babla{} F|_q,\hat X\right)
   \\
   &\quad
   +\Phi_q^{-1}(X)\widebar D \Babla{} F|_{E(q,X)}\,E_2(q,X) \hat X
   -\widebar D \Babla{} F|_q\,\hat X ,
\\
   \norm{F_2}
   &\le c_2\norm{X}_\infty\norm{\hat X}.
\end{split}
\end{equation*}
Summand $F_3$ is defined, and then estimated, by
\begin{equation*}
\begin{split}
   F_3:
   &=\ell
   \left.\tfrac{d}{dr}\right|_0
   \left(\Phi_q^{-1}(X+r\hat X)\Babla{} H|_{E(q,X+r\hat X)}\right)
   \\
   &\quad
   +\chi(q) \left.\tfrac{d}{dr}\right|_0\left(
   \Phi_q^{-1}(X+r\hat X)\Babla{} H|_{E(q,X+r\hat X)}
   -
   \Phi_q^{-1}(r\hat X)\Babla{} H|_{E(q,r\hat X)}
   \right),
\\
   &=\ell \widebar D\Phi_q^{-1}|_X
      \left(\Babla{} H|_{E(q,X)},\hat X\right)
   +{\color{red} \ell\,} \Phi_q^{-1}(X)\widebar D \Babla{} H|_{E(q,X)}\,E_2(q,X)
   {\color{red}\, \hat X}
   \\
   &\quad+\chi(q) \widebar D\Phi_q^{-1}|_X
      \left(\Babla{} H|_{E(q,X)},\hat X\right)
   -\chi(q) \widebar D\Phi_q^{-1}|_0\left(\Babla{} H|_q,\hat X\right)
   \\
   &\quad
   +\chi(q) \Phi_q^{-1}(X)\widebar D \Babla{} H|_{E(q,X)}\,E_2(q,X) \hat X
   -\chi(q)\widebar D \Babla{} H|_q\,\hat X ,
\\
   \norm{F_3}
   &\le c_3\Bigl(
   \norm{X}_\infty\norm{\ell}_\infty+{\color{red}\norm{\ell}_\infty\,}
   +\norm{\chi(q)}_\infty \norm{X}_\infty
   \Bigr)\norm{\hat X} .
\end{split}
\end{equation*}
Summand $F_4$ is defined by
\begin{equation*}
\begin{split}
   F_4:
   &=\hat\ell
   \left(\Phi_q^{-1}(X)\Babla{} H|_{E(q,X)}-\Babla{} H|_q\right),
\\
   \norm{F_4}
   &\le c_4\norm{X}_\infty\norm{\hat\ell}.
\end{split}
\end{equation*}
Summand $f_1$ is defined by
$
   f_1:
   =\hat\ell^\prime-\hat\ell^\prime
   =0
$
and $f_2$ by
\begin{equation*}
\begin{split}
   f_2:
   &=\eps^{-2}\left(dH|_{E(q,X)} E_2(q,X)-dH|_q\right) \hat X ,
   \\
   \norm{f_2}
   &\le\eps^{-2} c_5\norm{X}_\infty
   \norm{\hat X} .
\end{split}
\end{equation*}
This concludes the proof of Proposition~\ref{prop:quadest-II}
(Quadratic Estimate II).
\end{proof}

\boldmath
\subsection{Existence -- definition of $\Tt^\eps$}
\label{sec:existence}
\unboldmath

We prove Theorem~\ref{thm:existence-findim}.
Assume the Morse-Smale condition holds true.
Up to time-shift there are only finitely many elements
$q$ of $\Mm^0_{x^-,x^+}$, that is base solutions~$q$
between critical points of $f$ of Morse index difference $1$.
The constant
\[
   c_0:=\max
   \left\{\norm{\p_s q}_\infty\mid q\in\Mm^0_{x^-,x^+}\right\}
   +\norm{\chi}_{L^\infty(\Sigma)}
   <\infty
\]
is finite since the function $\chi$ is bounded along the compact
$\Sigma$ and since $\norm{\p_s q}_\infty$ is finite due to
exponential decay and since, by index difference one, there are only
finitely many $q$'s up to time shift.
Fix $\eps_0>0$ sufficiently small such that the key estimate,
Theorem~\ref{thm:KeyEst-thm.3.3}, applies to all $q\in\Mm^0_{x^-,x^+}$
and $\eps\in(0,\eps_0]$.

Pick $q\in\Mm^0_{x^-,x^+}$. Recall that $\chi$ is defined by~(\ref{eq:chi}).
The trivialized section
along the canonical embedding $i(q)=\left(q,\chi(q)\right)$, namely
$\Ff^\eps_q(X,\ell)$ defined by~(\ref{eq:triv-section-eps}),
acts on the elements $Z=(X,\ell)$
of the Banach space $W^{1,2}(\R,q^*TM\oplus\R)$.
At the origin the first component vanishes
\begin{equation}\label{eq:triv-eps-q}
   \Ff^\eps_q\begin{pmatrix}0\\0\end{pmatrix}
   =
   \begin{pmatrix}
      \p_sq+\Babla{} F(q)+\chi(q)\Babla{} H(q)\\
      (\chi(q))^\prime+\eps^{-2}H(q)
   \end{pmatrix}
   =\begin{pmatrix}
      0\\
      d\chi|_q\p_s q
   \end{pmatrix} 
\end{equation}
since $H(q)\equiv 0$. Therefore for the initial point
\[
   Z_0:=(0,0)
\]
we have
\[
   \Norm{\Ff^\eps_q(Z_0)}_{0,2,\eps}
   =\Norm{\Ff^\eps(q,\chi(q))}_{0,2,\eps}
   =\Norm{(0,d\chi|_q\p_sq)}_{0,2,\eps}
   \le\eps\mu_\infty \sqrt{c^*}
\]
where $\mu_\infty$ is defined by~(\ref{eq:mu-infty}) and
\[
   \Norm{\p_s q}
   \stackrel{(\ref{eq:energy-0})}{=}
   \sqrt{f(x^-)-f(x^+)}=:\sqrt{c^*} .
\]
Now define the initial correction term
$\zeta_0=(\hat X_0,\hat\ell_0)$ by
\[
   \zeta_0
   :=-{D^\eps_q}^*\left(D^\eps_q{D^\eps_q}^*\right)^{-1}
   \Ff^\eps_q(0)
\]
where $D^\eps_q=d \Ff^\eps_q(0,0)$.
Recursively, for $\nu\in\N$, define the sequence 
$\zeta_\nu=(\hat X_\nu,\hat \ell_\nu)$ of correction terms by
\begin{equation}\label{eq:zeta-nu}
\begin{aligned}
   \zeta_\nu=(\hat X_\nu,\hat\ell_\nu)
   &:=-{D^\eps_q}^*\left(D^\eps_q{D^\eps_q}^*\right)^{-1}
   \Ff^\eps_q(Z_\nu) ,
   \\
   Z_\nu=(X_\nu,\ell_\nu)
   &:=\sum_{k=0}^{\nu-1}\zeta_k
   =Z_{\nu-1}+\zeta_{\nu-1} .
\end{aligned}
\end{equation}
We prove by induction that there is a constant $c>0$ such that
\begin{equation}\label{eq:H-nu}\tag{$H_\nu$}
\begin{aligned}
   \eps^{1/2}\norm{\zeta_\nu}_{0,\infty,\eps}+\norm{\zeta_\nu}_{1,2,\eps}
   &\le \frac{c}{2^\nu}\eps^2
   \\
   \norm{\Ff^\eps_q(Z_{\nu+1})}_{0,2,\eps}
   &\le \frac{c}{2^\nu}\eps^{5/2}
\end{aligned}
\end{equation}
for every $\nu\in\N_0$. The $(1,2,\eps)$ and $(0,\infty,\eps)$ norms
were defined in~(\ref{eq:findim-0-2-eps}).

\medskip
\noindent
\textbf{Initial step: \boldmath$\nu=0$.}
By definition of $\zeta_0$ we have
\begin{equation}\label{eq:nu=0}
   D^\eps_q\zeta_0
   =-\Ff^\eps_q(0)
   =
   \begin{pmatrix}
      0\\
      -d\chi|_q\p_s q\\
   \end{pmatrix} .
\end{equation}
Thus, by the key estimate, Theorem~\ref{thm:KeyEst-thm.3.3},
(with constant $c_1>0$) we get
\begin{equation}\label{eq:zeta_0}
\begin{split}
   \norm{\zeta_0}_{1,2,\eps}
   &\stackrel{(\ref{eq:thm:4.4.4})}{\le}   
   c_1\left(
   \eps\norm{(0, d\chi|_q\p_s q)}_{0,2,\eps}
   +\norm{\pi_\eps (0, d\chi|_q\p_s q)}
   \right)
   \\
   &\stackrel{(\ref{eq:pi_eps-ff})}{\le}   
   c_1\left(\eps^2\mu_\infty \norm{\p_s q}
   +\norm{(\1+\eps^2\mu^2P)^{-1}\eps^2(d\chi|_q \p_sq)\Nabla{}\chi}
   \right)
   \\
   &\stackrel{(\ref{no-eq:4.1.5-findim})}{\le}   
   2c_1\mu_\infty^2 \sqrt{c^*} \eps^2
\\
   \norm{\zeta_0}_{0,\infty,\eps}
   &\stackrel{(\ref{eq:cor:infty})}{\le}
   3 \eps^{-1/2}\norm{\zeta_0}_{1,2,\eps}
   \\
   &\stackrel{{\color{white} (4.47)}}{\le}
   6c_1\mu_\infty^2\sqrt{c^*}\eps^{3/2} \le \delta .
\end{split}
\end{equation}
To get the bound $\delta$ (needed by the quadratic estimates
Proposition~\ref{prop:quadest-I} and~\ref{prop:quadest-II})
choose $\eps_0>0$ smaller if necessary.
This proves estimate one in $(H_\nu)$ for $\nu=0$
and with a suitable constant $c>0$ depending only on $c_1$
and the $L^\infty$-norms of $\Nabla{}\chi\colon\Sigma\to T\Sigma$ and $\p_s q$.
To prove estimate two we observe that $Z_1=\zeta_0$ and hence, by
Proposition~\ref{prop:quadest-I} (with constant $c_2>0$), we get
\begin{equation}\label{eq:F(Z_1)}
\begin{split}
   \norm{\Ff^\eps_q(Z_1)}_{0,2,\eps}
   &\stackrel{(\ref{eq:nu=0})}{=}
   \norm{\Ff^\eps_q(\zeta_0)
   \overbrace{-\Ff^\eps_q(0)-D^\eps_u\zeta_0}^{=0}}_{0,2,\eps}
   \\
   &\stackrel{(\ref{eq:quadest-I})}{\le}
   \frac{c_2}{\eps}\left(\norm{\hat X_0}_\infty
   \bigl(
   \norm{\hat X_0}
   +\eps\norm{\hat\ell_0}
   +\eps\norm{\Babla{s}\hat X_0}\cdot
      \underline{\norm{\hat X_0}_\infty}
   \bigr)
   \right)
   \\
   &\stackrel{(\ref{eq:quadest-I})}{\le}
   \frac{2c_2}{\eps}\norm{\zeta_0}_{0,\infty,\eps}\,
   \norm{\zeta_0}_{1,2,\eps}
   \\
   &\stackrel{(\ref{eq:zeta_0})}{\le}
   48 c_1^2c_2\mu_\infty^4 c^*\,\eps^{5/2} .
\end{split}
\end{equation}
In step 3 we discarded the underlined term
$\norm{\hat X_0}_\infty\le 1$. Then, up to a factor
$2$, see~(\ref{eq:sum-square}), the $(1,2,\eps)$
norm~(\ref{eq:findim-0-2-eps}) appears.
This proves $(H_\nu)$ for $\nu=0$.
From now on we fix the constant $c$ for which the estimate $(H_0)$ has
been established.

\medskip
\noindent
\textbf{Induction step: \boldmath$\nu-1\Rightarrow\nu$.}
Let $\nu\ge 1$ and assume that the
hypotheses~$(H_0),\dots,(H_{\nu-1})$
are true. Then we obtain that
\begin{equation}\label{eq:Z-nu}
\begin{split}
   \eps^{1/2}\norm{Z_\nu}_{0,\infty,\eps}+\norm{Z_\nu}_{1,2,\eps}
   &\stackrel{{\color{white}(H_{0\dots\nu-1})}}{\le}
   \sum_{k=0}^{\nu-1}\left(\eps^{1/2}\norm{\zeta_k}_{0,\infty,\eps}
   +\norm{\zeta_k}_{1,2,\eps}\right)
   \\
   &\stackrel{(H_{0\dots\nu-1})}{\le} c\eps^2\sum_{k=0}^{\nu-1} 2^{-k}
   \le 2c\eps^2 \le \delta
   \\
\end{split}
\end{equation}
(for the bound $\delta$ choose $\eps_0>0$ smaller if necessary)
and we also obtain that
\begin{equation}\label{eq:F(Z_nu)}
\begin{split}
   \norm{\Ff^\eps_q(Z_\nu)}_{0,2,\eps}
   &\stackrel{(H_{\nu-1})}{\le}\frac{c}{2^{\nu-1}}\eps^{5/2} .
\end{split}
\end{equation}
By~(\ref{eq:zeta-nu}), using the property of a right inverse, we have
\[
   D^\eps_q\zeta_\nu
   =-\Ff^\eps_q(Z_\nu),\quad
   \zeta_\nu\in\im(D^\eps_q)^* .
\]
Hence, together with the key estimate~(\ref{eq:thm:4.4.4}), (with constant
$c_1>0$), we get
\begin{equation}\label{eq:zeta-nu-12}
\begin{split}
   \eps^{1/2}\norm{\zeta_\nu}_{0,\infty,\eps}
   +\norm{\zeta_\nu}_{1,2,\eps}
   &\stackrel{(\ref{eq:thm:4.4.4})}{\le}
   c_1\norm{\Ff^\eps_q(Z_\nu)}_{0,2,\eps}
   \\
   &\stackrel{(\ref{eq:F(Z_nu)})}{\le}
   c_1\eps^{1/2} \frac{c}{2^{\nu-1}}\eps^2
   \le\frac{c}{2^{\nu}}\eps^2\le\delta .
\end{split}
\end{equation}
The last but one inequality holds if $9c_1\sqrt{\eps_0}\le\frac12$.
The last inequality holds by the last inequality in~(\ref{eq:Z-nu}).
This proves the first estimate in $(H_\nu)$.

In what follows in step~1 add twice zero and in step~2 apply the
quadratic estimates, Proposition~\ref{prop:quadest-I}
and~\ref{prop:quadest-II} (with constant $c_2>0$), in order to obtain
\begin{equation*}
\begin{split}
   &\norm{\Ff^\eps_q(Z_{\nu+1})}_{0,2,\eps}
   \\
   &\stackrel{{\color{white}(7.110)}}{\le}
   \norm{\Ff^\eps_q(Z_\nu+\zeta_\nu)-\Ff^\eps_q(Z_\nu)
   -d\Ff^\eps_q(Z_\nu)\zeta_\nu}_{0,2,\eps}
   +\norm{d\Ff^\eps_q(Z_\nu)\zeta_\nu-D^\eps_q\zeta_\nu}_{0,2,\eps}
   \\
   &\stackrel{{\color{white}(7.110)}}{\le}
   \frac{c_2}{\eps}\norm{\hat X_\nu}_\infty
      \left(\norm{\hat X_\nu}
      +\eps\norm{\hat\ell_\nu}
      +\eps\norm{\Babla{s}\hat X_\nu}\right)
%
%
   +\underline{c_2\norm{\Babla{s} X_\nu}\cdot\norm{\hat X_\nu}_\infty}
   \\
   &\quad\qquad
   +\frac{c_2}{\eps}\norm{X_\nu}_\infty
   \left(
   \norm{\hat X_\nu}
   +\eps\norm{\hat\ell_\nu}
   +\eps\norm{\Babla{s}\hat X_\nu}
   \right)
   +c_2\norm{\ell_\nu}_\infty\norm{\hat X_\nu}
   \\
   &\stackrel{{\color{white}(7.110)}}{\le}
   \frac{c_2}{\eps}\left(\norm{\zeta_\nu}_{0,\infty,\eps}
   +\norm{Z_\nu}_{0,\infty,\eps}\right)
   \norm{\zeta_\nu}_{1,2,\eps}
   +\underline{c_2\eps^{-1}
      \norm{Z_\nu}_{1,2,\eps}\norm{\zeta_\nu}_{0,\infty,\eps}}
   \\
   &\stackrel{(\ref {eq:zeta-nu-12})}{\le}
   \underbrace{c_2\eps^{-1}
   \left(c\eps^{3/2}+2c\eps^{3/2}\right)
   c_1}_{\le 1/4}
   \frac{c}{2^{\nu-1}}\eps^{5/2}
   +\underbrace{c_22c\eps^{1/2}c_1}_{\le 1/4}\frac{c}{2^{\nu-1}}\eps^{5/2}
   \\
   &\stackrel{{\color{white}(7.110)}}{\le}
   \frac{c}{2^\nu}\eps^{5/2}.
\end{split}
\end{equation*}
In inequality two
we already estimated some factors $\norm{\hat X}_\infty\le 1$
and $\norm{X}_\infty\le 1$ in triple products.
The last inequality holds by choosing $\eps_0>0$ sufficiently small.
This completes the induction and proves $(H_\nu)$ for every
$\nu\in\N_0$.

\medskip
\noindent
\textbf{Conclusion.}
It follows from $(H_\nu)$ that $Z_\nu$ is a Cauchy sequence
with respect to $\norm{\cdot}_{1,2,\eps}$. We denote its limit by
\[
   Z^\eps:=\lim_{\nu\to\infty} Z_\nu
   =\sum_{\nu=0}^\infty \zeta_\nu
   \in W^{1,2}(\R,q^*TM\oplus\R).
\]
By construction, and since the image of $(D^\eps_q)^*$ is closed,
the limit satisfies
\[
   \eps^{1/2}\norm{Z^\eps}_{1,\infty,\eps}+\norm{Z^\eps}_{1,2,\eps}
   \stackrel{(\ref{eq:Z-nu})}{\le} 2c\eps^2
   ,\qquad
   \Ff^\eps_q(Z^\eps)=0
   ,\qquad
   Z^\eps\in\im(D^\eps_q)^* .
\]
This concludes the proof of Theorem~\ref{thm:existence-findim}.

\boldmath
\subsection{Uniqueness -- injectivity of $\Tt^\eps$}
\unboldmath

We prove Theorem~\ref{thm:uniqueness-findim} under the conventions
and notations of Section~\ref{sec:existence}, in particular
Section~\ref{sec:existence} provides $\eps_0\in(0,1]$, whereas
$\delta\in(0,1]$ is the constant that appears in the quadratic estimates.
Shrink $\delta_0>0$ such that $\delta_0\sqrt{\eps_0}\le\delta/4$.
Pick $q\in\Mm^0_{x^-,x^+}$ and $\eps\in(0,\eps_0]$.
Let the base point $Z=(X,\ell):=\Tt^\eps(q)$ be
the zero of the trivialized section $\Ff^\eps_q$ provided by the
existence Theorem~\ref{thm:existence-findim}. Then
\[
   Z\in\im(D^\eps_q)^*,\qquad
   \Ff^\eps_q(Z)=0,\qquad
   \eps^{1/2}\norm{Z}_{0,\infty,\eps}+\norm{Z}_{1,2,\eps}
   \le c\eps^2 \le
{\color{gray}\,
   \delta/4 .
}
\]
for a suitable constant $c>0$ and where the norms are defined
by~(\ref{eq:findim-0-2-eps}) and the $\delta$ estimate holds by
choosing $\eps_0>0$ smaller, if necessary. Shrink $\eps_0>0$
further such that $c\eps_0<\delta_0$.
Now assume $\zeta=(\hat X,\hat\ell)$ satisfies the hypotheses
of the present Theorem~\ref{thm:uniqueness-findim}, that is
\begin{equation*}
   \zeta=(\hat X,\hat\ell)\in\im(D^\eps_q)^*,\qquad
   \Ff^\eps_q(\zeta)=0,\qquad
   \norm{\hat X}_\infty
   \le\delta_0\eps^{1/2} .
\end{equation*}
The difference
\[
{\color{gray}
   (X^*,\ell^*) =
}\;
   \zeta^*
   :=\zeta-Z
\;{\color{gray}
   =(\hat X-X,\hat\ell-\ell)
   \in\im(D^\eps_q)^*
}
\]
then satisfies the inequalities\footnote{
  a numerical bound $\norm{\ell^*}_\infty< C$ is irrelevant in the
  proof, only finiteness ($<\infty$) matters
  }
\[
   \norm{X^*}_\infty
   \le\left(\delta_0+c\eps\right)\eps^{1/2}
   \le2\delta_0\eps^{1/2}
{\color{gray}\,
   \le \delta/2 ,\qquad
}
   \norm{\ell^*}_\infty <\infty.
\]
With the difference abbreviations~(\ref{eq:quadest-I-diff})
and~(\ref{eq:quadest-II-diff}) and since both
$\zeta=Z+\zeta^*$ and $Z$ are zeroes of $\Ff^\eps_q$ we get
the first identity in the following
\begin{equation*}
\begin{split}
   &\norm{{\color{brown}D^\eps_q\zeta^*}}_{0,2,\eps}\\
   &=\bigl\|\bigl(
   \underbrace{\Ff^\eps_q(Z+\zeta^*)-\Ff^\eps_q(Z)-d\Ff^\eps_q(Z)\zeta^*}
      _{=:(F,f)}\bigr)
   +\bigl(
   \underbrace{d\Ff^\eps_q(Z)\zeta^*
   -{\color{brown}d\Ff^\eps_q(0)\zeta^*}}_{=:(\FF,\ff)}\bigr)\bigr\|_{0,2,\eps}\\
   &=\norm{{\color{brown}(F+\FF,f+\ff)}}_{0,2,\eps}\\
   &\le\norm{F}+\norm{\FF}+\eps\norm{f}+\eps\norm{\ff}.
\end{split}
\end{equation*}
By definition~(\ref{no-eq:pi_eps-ff-Ansatz}) of
$\pi_\eps$ with ${\color{red} \beta=2}$ and $\alpha\in[1,2]$
and by Lemma~\ref{no-le:4.1.5-findim} we obtain
\begin{equation}\label{eq:beta=2}
\begin{split}
   \norm{\pi_\eps {\color{brown}D^\eps_q\zeta^*}}
   &=\norm{\pi_\eps{\color{brown}(F+\FF,f+\ff)}}\\
   &=\norm{(\1+\eps^\alpha\mu^2 P)^{-1}
      (\tan(F+\FF)+\eps^{\color{red} 2}(f+\ff)\Nabla{}\chi)}\\
   &\le\norm{F}+\norm{\FF}
   +\mu_\infty\eps^{\color{red} 2}\norm{f}
   +\mu_\infty\eps^{\color{red} 2}\norm{\ff}
\end{split}
\end{equation}
where we also used that $\norm{\tan}\le 1$
since the projection $\tan$ is orthogonal.
The choice ${\color{red} \beta=2}$ neutralizes
the toxic factor $\eps^{-2}$ that comes with the $f$ and $\ff$ terms.

Thus, by estimate four in the key estimate~(\ref{eq:thm:4.4.4}), with
a constant $c_1>0$, by the quadratic estimates~(\ref{eq:quadest-I})
and~(\ref{eq:quadest-II}), with a constant $c_2\ge 2$, we obtain
\begin{equation*}
\begin{split}
   &\norm{\ell^*}\cdot {\color{brown}\norm{X^*}_\infty}\\
   &\le c_1\norm{D^\eps_q\zeta^*}_{0,2,\eps}{\color{brown}\norm{X^*}_\infty}
   \\
   &\le c_1\left(
   \norm{F}+\norm{\FF}+{\color{red}\eps}\norm{f}+{\color{red}\eps}\norm{\ff}
   \right) {\color{brown}\norm{X^*}_\infty}
   \\
   &\le c_1c_2 {\color{brown}\norm{X^*}_\infty}
\Bigl(
   {\color{red}\tfrac{1}{\eps}} \underline{\norm{X^*}_\infty\norm{X^*}}
   + {\color{cyan}\norm{\ell^*}\cdot\norm{X^*}_\infty}
   +\norm{\Babla{s} X^*}\cdot \norm{X^*}_\infty^2
   \\
   &\qquad+\norm{X}_\infty
      \left({\color{red}\tfrac{1}{\eps}}\norm{X^*}
   +\norm{\ell^*}+\norm{\Babla{s} X^*}\right)
   +\norm{\ell}_\infty\norm{X^*}
   +\underline{\underline{\norm{X^*}_\infty}} \norm{\Babla{s} X}
\Bigr)
   \\
   &\le c_1c_2
\Bigl(
   \underline{4\delta_0^2}+8\delta_0^3\sqrt{\eps}+2c\delta_0\eps
   +2c\delta_0\eps +2c\delta_0\eps +2c\delta_0\eps
\Bigr)
   \norm{\zeta^*}_{1,2,\eps}\\
   &\quad+c_1c_22\delta_0\sqrt{\eps}
   \Bigl(\underline{\underline{\tfrac{1}{\sqrt{\eps}}\norm{X^*}
      +\sqrt{\eps}\norm{\Babla{s} X^*}}}\Bigr) c\eps
   +c_1c_22\delta_0\sqrt{\eps}{\,\color{cyan}\norm{\ell^*}\cdot\norm{X^*}_\infty}
   \\
   &\le \tfrac{1}{8\cdot2\mu_\infty c_1c_2}\norm{\zeta^*}_{1,2,\eps}
   +\tfrac12 \norm{\ell^*}\cdot\norm{X^*}_\infty .
\end{split}
\end{equation*}
In inequality three we already discarded in a few triple products
some factors $\norm{X^*}_\infty\le 1$ or $\norm{X}_\infty\le 1$.
The once underlined term enforces the smallness
assumption in Theorem~\ref{thm:uniqueness-findim}.
The doubly underlined estimate in inequality three
is by~(\ref{eq:Linfty-est-beta}) with $\beta=1/2$.
The final inequality holds by choosing $\delta_0$ and $\eps_0$ sufficiently
small. We summarize the estimate, which comes in handy below, by
\[
   2\mu_\infty c_1c_2 
{\,\color{cyan}
   \norm{\ell^*}\cdot\norm{X^*}_\infty
}
   \le
{\,\color{cyan}
   \tfrac{1}{4} \norm{\zeta^*}_{1,2,\eps}
}
   .
\]
Similarly, by estimate one in the key estimate~(\ref{eq:thm:4.4.4}), with
a constant $c_1>0$, by the quadratic estimates~(\ref{eq:quadest-I})
and~(\ref{eq:quadest-II}), with a constant $c_2\ge 2$, and with the
constant $\mu_\infty$ defined by~(\ref{eq:mu-infty}), we obtain
\begin{equation*}
\begin{split}
   \norm{\zeta^*}_{1,2,\eps}
   &\le c_1\left(\eps \norm{D^\eps_q\zeta^*}_{0,2,\eps}
   +\norm{\pi_\eps (D^\eps_q\zeta^*)}\right)
   \\
   &\le 2\mu_\infty c_1\left(
   \norm{F}+\norm{\FF}+\eps^2\norm{f}+\eps^2\norm{\ff}
   \right)
   \\
   &\le 2\mu_\infty c_1c_2
\Bigl(
   \norm{X^*}\cdot\norm{X^*}_\infty
   +{\color{cyan}\norm{\ell^*}\cdot\norm{X^*}_\infty}
   +\norm{\Babla{s} X^*}\cdot\underline{\norm{X^*}_\infty^2}
   \\
   &\quad+\norm{X}_\infty
      \left(\norm{X^*}+\norm{\ell^*}+\norm{\Babla{s} X^*}\right)
   +\norm{\ell}_\infty\norm{X^*}
   +\underline{\underline{\norm{X^*}_\infty}} \norm{\Babla{s} X}
\Bigr)
   \\
   &\le 2\mu_\infty c_1c_2
\Bigl(
   \delta_0\sqrt{\eps}
   +\underline{\delta_0^2}
   +c\eps^{3/2}
   +c\eps^{1/2}
   +c\eps^{3/2}\eps^{-1}+c\sqrt{\eps}
\Bigr)
   \norm{\zeta^*}_{1,2,\eps}\\
   &\quad+ 2\mu_\infty c_1c_2
   \left(\underline{\underline{\eps^{-1/2}\norm{X^*}
      +\eps^{1/2}\norm{\Babla{s} X^*}}}\right) c\eps
   +{\color{cyan}  \tfrac{1}{4} \norm{\zeta^*}_{1,2,\eps}}\\
   &\le \frac12
   \norm{\zeta^*}_{1,2,\eps} .
\end{split}
\end{equation*}
In inequality three we discarded in a few triple products
some factors $\norm{X^*}_\infty\le 1$ or $\norm{X}_\infty\le 1$.
The once underlined term enforces the smallness
assumption in Theorem~\ref{thm:uniqueness-findim}.
The doubly underlined estimate in inequality three
is by~(\ref{eq:Linfty-est-beta}) with $\beta=1/2$.
The final inequality holds by choosing $\delta_0$ and $\eps_0$ sufficiently
small. Thus the element $\zeta^*=\zeta-Z$ is zero in $W^{1,2}$.
This proves Theorem~\ref{thm:uniqueness-findim}.

\bibliographystyle{alpha}
\addcontentsline{toc}{section}{References}
\bibliography{$HOME/Dropbox/0-Libraries+app-data/Bibdesk-BibFiles/library_math,$HOME/Dropbox/0-Libraries+app-data/Bibdesk-BibFiles/library_math_2020,$HOME/Dropbox/0-Libraries+app-data/Bibdesk-BibFiles/library_physics}{}

\begin{thebibliography}{{Fra}22}

\bibitem[AM18]{Abbondandolo:2018c}
Alberto Abbondandolo and Will~J. Merry.
\newblock Floer homology on the time-energy extended phase space.
\newblock {\em J. Symplectic Geom.}, 16(2):279--355, 2018.

\bibitem[CFO10]{Cieliebak:2010b}
Kai Cieliebak, Urs Frauenfelder, and Alexandru Oancea.
\newblock Rabinowitz {F}loer homology and symplectic homology.
\newblock {\em Ann. Sci. \'Ec. Norm. Sup\'er. (4)}, 43(6):957--1015, 2010.

\bibitem[CFV21]{Cieliebak:2021a}
Kai {Cieliebak}, Urs {Frauenfelder}, and Evgeny {Volkov}.
\newblock {A variational approach to frozen planet orbits in helium}.
\newblock {\em arXiv e-prints}, March 2021.
\newblock To appear in Annales de l'Institut Henri Poincar\'e C , Analyse Non
  Lin\'eaire.

\bibitem[{Cie}]{Cieliebak:2022a}
Kai {Cieliebak}.
\newblock Lagrange multiplier functionals and their applications in symplectic
  geometry and string topology.
\newblock To appear in ICM 2022 proceedings.

\bibitem[CO18]{Cieliebak:2015a}
Kai Cieliebak and Alexandru Oancea.
\newblock {Symplectic homology and the Eilenberg-Steenrod axioms}.
\newblock {\em Algebr. Geom. Topol.}, 18(4):1953--2130, 2018.
\newblock Appendix written jointly with Peter Albers.

\bibitem[dL06]{Lagrange:1806a}
Joseph~Louis de~Lagrange.
\newblock {\em {Le\c cons sur le calcul des fonctions}}.
\newblock Courcier, 1806.
\newblock
  \href{https://archive.org/details/leonssurlecalcu01lagrgoog/page/n6/mode/2up}{Access
  PDF.}

\bibitem[DS94]{dostoglou:1994a}
Stamatis Dostoglou and Dietmar~A. Salamon.
\newblock Self-dual instantons and holomorphic curves.
\newblock {\em Ann. of Math. (2)}, 139(3):581--640, 1994.

\bibitem[El{\u{\i}}67]{Eliasson:1967a}
Halld\'{o}r~I. El{\u{\i}}asson.
\newblock Geometry of manifolds of maps.
\newblock {\em J. Differential Geometry}, 1:169--194, 1967.

\bibitem[Fey48]{Feynman:1948a}
R.~P. Feynman.
\newblock Space-time approach to non-relativistic quantum mechanics.
\newblock {\em Rev. Modern Physics}, 20:367--387, 1948.

\bibitem[Fra06]{Frauenfelder:2006a}
Urs Frauenfelder.
\newblock Vortices on the cylinder.
\newblock {\em Int. Math. Res. Not.}, pages Art. ID 63130, 34, 2006.

\bibitem[{Fra}20]{Frauenfelder:2020b}
Urs {Frauenfelder}.
\newblock {Nullity bounds for certain Hamiltonian delay equations}.
\newblock {\em arXiv e-prints}, page arXiv:2005.07535, May 2020.
\newblock To appear in the Kyoto Journal of Mathematics.

\bibitem[{Fra}22]{Frauenfelder:2022a}
Urs {Frauenfelder}.
\newblock {The Gradient flow equation of Rabinowitz action functional in a
  symplectization}.
\newblock {\em arXiv e-prints}, February 2022.
\newblock \href{https://arxiv.org/abs/2202.00281}{arXiv:2202.00281}.

\bibitem[FW22a]{Frauenfelder:2022g}
Urs {Frauenfelder} and Joa {Weber}.
\newblock {Lagrange multipliers and adiabatic limits II}.
\newblock {\em \href{https://vixra.org/author/joa_weber}{viXra e-prints}},
  2022.
\newblock \href{https://vixra.org/abs/2210.0057}{viXra:2210.0057}.

\bibitem[FW22b]{Frauenfelder:2022h}
Urs {Frauenfelder} and Joa {Weber}.
\newblock {Primer on adiabatic limits}.
\newblock {\em \href{https://vixra.org/author/joa_weber}{viXra e-prints}},
  2022.
\newblock 

\bibitem[Gai99]{gaio:1999a}
A.R. Gaio.
\newblock {\em J-holomorphic curves and moment maps}.
\newblock PhD thesis, Warwick University, 1999.

\bibitem[GS05]{Gaio:2005a}
Ana Rita~Pires Gaio and Dietmar~A. Salamon.
\newblock Gromov-{W}itten invariants of symplectic quotients and adiabatic
  limits.
\newblock {\em J. Symplectic Geom.}, 3(1):55--159, 2005.

\bibitem[HWZ21]{Hofer:2021a}
Helmut Hofer, Krzysztof Wysocki, and Eduard Zehnder.
\newblock {\em Polyfold and {F}redholm theory}, volume~72 of {\em Ergebnisse
  der Mathematik und ihrer Grenzgebiete. 3. Folge. A Series of Modern Surveys
  in Mathematics}.
\newblock Springer, Cham, 2021.
\newblock \href{https://arxiv.org/abs/1707.08941}{Preliminary version on
  arXiv:1707.08941}.

\bibitem[O'N83]{oneill:1983a}
Barrett O'Neill.
\newblock {\em Semi-{R}iemannian geometry}, volume 103 of {\em Pure and Applied
  Mathematics}.
\newblock Academic Press, Inc. [Harcourt Brace Jovanovich, Publishers], New
  York, 1983.
\newblock With applications to relativity.

\bibitem[Rud91]{Rudin:1991b}
Walter Rudin.
\newblock {\em Functional analysis}.
\newblock International Series in Pure and Applied Mathematics. McGraw-Hill,
  Inc., New York, second edition, 1991.

\bibitem[Sch93]{schwarz:1993a}
Matthias Schwarz.
\newblock {\em Morse homology}, volume 111 of {\em Progress in Mathematics}.
\newblock Birkh\"auser Verlag, Basel, 1993.

\bibitem[Stu41]{Stueckelberg:1941b}
E.~C.~G. Stueckelberg.
\newblock La signification du temps propre en m\'ecanique ondulatoire.
\newblock {\em Helvetica Phys. Acta}, 14:322--323, 1941.

\bibitem[SW06]{salamon:2006a}
Dietmar Salamon and Joa Weber.
\newblock Floer homology and the heat flow.
\newblock {\em Geom. Funct. Anal.}, 16(5):1050--1138, 2006.

\bibitem[SX14]{Schecter:2014a}
Stephen Schecter and Guangbo Xu.
\newblock Morse theory for {L}agrange multipliers and adiabatic limits.
\newblock {\em J. Differential Equations}, 257(12):4277--4318, 2014.

\bibitem[Web99]{weber:1999a}
Joa Weber.
\newblock {\em {J-holomorphic curves in cotangent bundles and the heat flow}
  \href{http://www.math.sunysb.edu/~joa/PUBLICATIONS/1999-prom.pdf}{\rm (access
  pdf)}}.
\newblock PhD thesis, TU Berlin, June 1999.

\bibitem[Web22]{Weber:2022a}
Joa Weber.
\newblock {\em Scale Calculus and M-Polyfolds -- An Introduction}.
\newblock 2022.
\newblock Book in preparation.

\end{thebibliography}

%


\end{document}